\newtheoremstyle{example}{\topsep}{\topsep}%
     {}%         Body font
     {}%         Indent amount (empty = no indent, \parindent = para indent)
     {\rmfamily}% Thm head font
     {}%        Punctuation after thm head
     {\newline}%     Space after thm head (\newline = linebreak)
     {\thmname{#1}\thmnumber{ #2}\thmnote{ #3}}%         Thm head spec
   \theoremstyle{example}
\newcommand{\indic}{\mathbb{I}}
\numberwithin{equation}{section}
\theoremstyle{plain}
\newtheorem{thm}{Theorem}[section]
\newtheorem{prop}{Proposition}[section]
\newtheorem{lem}{Lemma}[section]
\newtheorem{rem}{Remark}[section]
\newtheorem{cor}{Corollary}[section]
\newcommand{\Lower}[2]{\smash{\lower #1 \hbox{#2}}}
\newcommand{\ben}{\begin{enumerate}}
\newcommand{\een}{\end{enumerate}}
\newcommand{\bi}{\begin{itemize}}
\newcommand{\ei}{\end{itemize}}
\begin{document}

\begin{frontmatter}
\title{Stick-breaking PG$(\alpha,\zeta)$-Generalized Gamma Processes\protect} \runtitle{$\mathrm{PG}(\alpha,\zeta)$-processes}
%\thankstext{T1}{Footnote to the title with the `thankstext' command.}

\begin{aug}
\author{\fnms{Lancelot F.} \snm{James}\thanksref{t1}\ead[label=e1]{lancelot@ust.hk}}

\thankstext{t1}{Supported in
part by the grant RGC-HKUST 601712 of the HKSAR.}

\runauthor{Lancelot F. James}

\affiliation{Hong Kong University of Science and Technology}

\address[a]{Lancelot F. James\\ The Hong Kong University of Science and
Technology, \\Department of Information Systems, Business Statistics and Operations Management,\\
Clear Water Bay, Kowloon, Hong Kong.\\ \printead{e1}.}

\contributor{James, Lancelot F.}{Hong Kong University of Science
and Technology}

\end{aug}

\begin{abstract}
Results are developed for a large family that is derived from Proposition 21 of  Pitman and Yor's~\cite{PY97}
paper on the remarkable two parameter Poisson Dirichlet family, with law denoted as $\mathrm{PD}(\alpha,\theta)$ for $0\leq \alpha<1,$ $\theta>-\alpha.$ Proposition 21 describes representations of  $\mathrm{PD}(\alpha,\theta)$ mass partitions, $(P_{i}),$ for the less general range of $\theta \ge 0,$ in terms of a generalized gamma subordinator whose jumps are restricted to a (random) gamma distributed time.   There is a more general class
dubbed $\mathrm{PG}(\alpha,\zeta),$ formed by replacing the gamma time by an arbitrary non-negative random variable $\zeta,$ which can be seen as a change of measure. When $\zeta$ is fixed this is the important generalized gamma class originally investigated by McCloskey~\cite{McCloskey}.
Here a larger class, $\mathrm{EPG}(\alpha,\zeta),$ is constructed which contains  $\mathrm{PD}(\alpha,\theta)$ for the entire range, among many others, Hence encompassing many of the random processes of this type used in applications as well as new ones.

 Our results center around the development of various properties of the $\mathrm{EPG}(\alpha,\zeta),$ which also gives additional insights into known
sub-classes. As highlights, an explicit and tractable stick-breaking representation, derived from size biased sampling, is obtained for the entire range of parameters
$(\alpha,\zeta).$
This result represents the first explicit case, outside of $\mathrm{PD}(\alpha,\theta)$ that holds for Poisson Kingman models generated by a stable subordinator for all $\alpha\in [0,1).$
Furthermore Markov chains are derived which establish links between Markov chains derived from stick-breaking(insertion/deletion), as described in Perman, Pitman and Yor and
Markov chains derived from successive usage of dual coagulation fragmentation operators described in Bertoin and Goldschmidt and Dong, Goldschmidt and Martin. Which have connections to certain types of fragmentation trees and coalescents. Our results are also suggestive of  new models and tools, for applications in Bayesian Nonparametrics/Machine Learning, where
$\mathrm{PD}(\alpha,\theta)$  bridges are often referred to as  Pitman-Yor
processes..
\end{abstract}

\begin{keyword}[class=AMS]
\kwd[Primary ]{60C05, 60G09} \kwd[; secondary ]{60G57,60E99}
\end{keyword}

\begin{keyword}
\kwd{Coagulation-Fragmentation Duality, Poisson Kingman models, Two
parameter Poisson Dirichlet processes, Pitman-Yor process, Stick-Breaking}
\end{keyword}

\end{frontmatter}
%\tableofcontents
\section{Introduction}
The paper by~Pitman and Yor~\cite{PY97} consists of an extensive survey and development of additional results for the remarkable two parameter Poisson Dirichlet distribution denoted as $\mathrm{PD}(\alpha,\theta)$, for parameters $0\leq \alpha<1,$ and $\theta>-\alpha.$  This law is specified
for an exchangeable sequence of ranked probabilities $(P_{i}),$ taking values in the space
$\mathcal{P}_{\infty}=\{\mathbf{s}=(s_{1},s_{2},\ldots):s_{1}\ge
s_{2}\ge\cdots\ge 0 {\mbox { and }} \sum_{i=1}^{\infty}s_{i}=1\}$ of partitions of mass, or \emph{mass partitions}, summing to $1.$  In many applications these $(P_{i})$ are coupled with a collection
$(U_{i}),$ of iid $\mathrm{Uniform}[0,1]$ variables independent of the $(P_{i}),$ leading to the construction of  $\mathrm{PD}(\alpha,\theta)$-bridges,
\begin{equation}
P_{\alpha,\theta}(y)=\sum_{k=1}^{\infty}P_{k}\indic_{\{U_{k}\le y\}} {\mbox { for }} y\in [0,1],
\label{PitmanYorProcess}
\end{equation}
which are random cumulative distribution functions. Perhaps the most remarkable and in many respects most important result described is its stick-breaking representation derived from size-biased sampling the $(P_{i}).$   Now, as in Pitman\cite[p. 110]{Pit06}, for a  sequence of probabilities $(p_{i}),$ such that $\sum_{i=1}^{\infty}p_{i}\leq 1,$ with possible \emph{dust} component $p_{0}=1-\sum_{k=1}^{\infty}p_{i},$ let $\mathrm{Rank}((p_{i}),p_{o})\in \mathcal{P}_{\infty}$ be a decreasing re-arrangement of terms in the sequence $((p_{i}),p_{o}).$  Furthermore let $\beta_{a,b}$ denote a $\mathrm{Beta}(a,b)$ distributed random variable.

\begin{enumerate}
\item[(PD)]
 Then from \cite{PY97}, Let $(P_{i})\sim \mathrm{PD}(\alpha,\theta)$ and let $(W_{k})$ denote a sequence of independent beta distributed random variables such that
$W_{k}\overset{d}=\beta_{\theta+k\alpha,1-\alpha}$  then the sequence $(\tilde{P}_{k}),$ obtained by size-biased sampling from $(P_{i}),$
can be represented as $\tilde{P}_{k}=(1-{W}_{k})\prod_{l=1}^{k-1}{W}_{l},$ where for each $k,$
$1-{W_{k}}\overset{d}=\beta_{1-\alpha,\theta+k\alpha}$ is the first size biased pick from a $\mathrm{PD}(\alpha,\theta+(k-1)\alpha)$  mass partititon. It follows that for any version of the independent $(W_{k}),$
\begin{equation}
(P_{i}):=\mathrm{Rank}(\tilde{P}_{k})\overset{d}=\mathrm{Rank}(1-W_{1},((1-W_{k})\prod_{l=1}^{k-1}W_{l})_{\{k\ge 2\}})
\label{firststick}
\end{equation}
and as a consequence a $\mathrm{PD}(\alpha,\theta)$-bridge can be represented as
\begin{equation}
P_{\alpha,\theta}(y)=\sum_{k=1}^{\infty}P_{k}\indic_{\{U_{k}\le y\}}\overset{d}=\sum_{k=1}^{\infty}(1-W_{k})\prod_{l=1}^{k-1}W_{l}\indic_{\{U_{k}\le y\}}.
\label{PitmanYorProcess2}
\end{equation}
\end{enumerate}
When $\alpha=0,$ $(W_{k})$ are iid $\mathrm{Beta}(\theta,1)$ random variables and in this case~(\ref{firststick}) corresponds to the Griffiths-Engen-McCloskey $\mathrm{GEM}(\theta)$ model for the Poisson Dirichlet model with mutation rate $\theta,$ here denoted as $\mathrm{PD}(0,\theta).$
See \cite{ABT,Ferg73,Griffiths} for more on this case.
 ~(\ref{firststick}) is seen as a two-parameter generalization sometimes denoted as $\mathrm{GEM}(\alpha,\theta).$
Additionally
$P_{0,\theta}$~(\ref{PitmanYorProcess2}) coincides with the stick-breaking representation of the Dirichlet process by Sethuraman~\cite{Sethuraman}.
Ishwaran and James~\cite{IJ2001}, named the process in~(\ref{PitmanYorProcess2}), $P_{\alpha,\theta},$ a \emph{Pitman-Yor Process}. Their work, see also~\cite{IJ2003}, played a key role in  popularizing its usage, and name, in Bayesian Nonparametric and Machine Learning applications.
See for example, \cite{RyanAdams,Goldwater,Teh, Wood1, Wood2}.
In general, and a point we shall revisit in terms of references, there are numerous uses of these  size-biased representations in the broad literature, many of which
are reflected in the area generally known as \emph{Combinatorial Stochastic Processes} and the investigation of coagulation/fragmentation phenomena as
described in the monographs of Bertoin~\cite{BerFrag} and Pitman~\cite{Pit06}. Moreover, while the independence properties described above are a key to its practical implementation and wider usage, what is also important is the interpretations gained from the size biased framework in~\cite{PPY92,PY97,Pit06}, see also \cite{BPY,PY92,PY97length} for more on this class and its relations to excursions and occupations times. This is not obvious from the independent representations above for $(W_{k}),$ and we believe this has not been fully exploited, in  applications where one's expertise might be quite removed from excursion theory. Simply put, there exists many unexplored Markov chains for novel modelling purposes. We next give an extended preview of one our focus points in this paper.
\subsection{$\mathrm{PG}(\alpha,\zeta)$ class and Stick-breaking Result}
  Our interest centers around another result, Pitman and Yor~\cite[Proposition 21]{PY97} which describes a subordinator representation of the
$\mathrm{PD}(\alpha,\theta)$ family for the restricted range of $\theta\ge 0$ as follows.  First throughout let $\gamma_{a}$ denote a gamma variable with shape parameter $a$ and scale $1,$ with law $\mathrm{Gamma}(a).$ Let $S_{\alpha}$ denote a positive stable random variable of index $0<\alpha<1$ with density $ f_{\alpha}$ and Laplace transform $\mathbb{E}[{\mbox e}^{-\omega S_{\alpha}}]={\mbox e}^{-\omega^{\alpha}}.$ Let $(\tau_{\alpha}(s): s\ge0)$
denote a generalized gamma subordinator such that at time $1,$ $\tau_{\alpha}(1)$ has the density ${\mbox e}^{-(t-1)}f_{\alpha}(t).$ Now from Proposition 21,
let $J_{1}>J_{2}>J_{3}\ldots,$ denote the ranked jumps of $\tau_{\alpha}$ over the random time interval $[0,\gamma_{\theta/\alpha},].$ It follows that
$\tau_{\alpha}(\gamma_{\theta/\alpha})=\sum_{k=1}^{\infty}J_{i}\overset{d}=\gamma_{\theta}$ and is independent of the normalized jumps
$(J_{k}/\tau_{\alpha}(\gamma_{\theta/\alpha}))\sim\mathrm{PD}(\alpha,\theta)$ for $\theta\ge 0.$
Hence this result contains $\mathrm{PD}(\alpha,0)$
as a limiting case $\theta\rightarrow 0,$
and by taking limits as $\alpha\rightarrow 0$ the important Poisson-Dirichlet $\mathrm{PD}(0,\theta)$ case, but does not contain the range $-\alpha<\theta<0.$ Nonetheless the range $\theta \ge 0$ is a significant range for the $\mathrm{PD}(\alpha,\theta)$ and Proposition 21 has many implications.

A more careful look reveals a larger class, in the following sense. First replace   $\gamma_{\theta/\alpha},$ with a constant $v.$
It follows that $\tau_{\alpha}(v)/v^{1/\alpha}$  has density ${\mbox e}^{-(v^{1/\alpha}t-v)}f_{\alpha}(t)$ and hence as described in Pitman~\cite{Pit02}, the corresponding law on $\mathcal{P}_{\infty}$
is the generalized gamma class first investigated by McCloskey~\cite{McCloskey}. This class also plays an important role in the more recent literature where its corresponding bridge is referred to as a Normalized Generalized Gamma process. We will use the notation $\mathrm{NGG}(v)$ to denote the law of the corresponding~$(P_{i}).$
Now taking $\zeta$ to be an arbitrary non-negative random variable  and restricting the interval now to $[0,\zeta]$ yields a super-class we call $\mathrm{PG}(\alpha,\zeta),$ for Poisson Gamma, which gains additional modelling flexibility through different choices of $\zeta.$ Note this class is described in the proof of Proposition 21 in \cite{PY97}.

We now state one of our major results which will be proved formally in Section 8. This is the size-biased stick-breaking representation for the entire $\mathrm{PG}(\alpha,\zeta)$ class. We point out that this result is extremely difficult to deduce by  attempting a direct assault via the integral formulae that are provided by \cite{PPY92}. This is due in part to the general intractability of the stable density $f_{\alpha},$ but as we shall show we can express the corresponding integrals in terms of an explicit integrand. Such representations might be useful for computer simulation using data augmentation methods but do not seem to help identify explicitly the random variables we now describe.

In order to emphasize the relation of our result to those of Pitman and Yor~\cite{PY97} we use the same notation they use for variables having exactly the same meaning in their context. Let $X_{1}>X_{2}>\ldots,$ denote the ranked points of a homogeneous Poisson process on $(0,\infty).$
Furthermore, $X_{k}=\Delta^{-\alpha}_{k}$ where $(\Delta_{k})$ are the ranked points of an $\alpha$-stable subordinator. For
$(\mathrm{e}_{l-1})$ a collection of iid exponential$(1)$ variables there is the relation $X_{1}=\mathrm{e}_{0}\overset{d}=\gamma_{1},$ and $X_{k}=\sum_{l=1}^{k-1}\mathrm{e}_{l}+X_{1}.$ In this case  there are independent variables $(R_{k})$ satisfying
$
R_{k}:={(X_{k}/X_{k+1})}^{1/\alpha}=\Delta_{k+1}/\Delta_{k}\overset{d}=\beta_{ka,1}.
$
See Pitman and Yor~\cite[p.870 eq, (63)]{PY97}. Now first condition  on the exponential variable $X_{1},$ then applying a change of measure we can replace $X_{1}$ with the general
$\zeta.$ So there are variables $\zeta_{k-1}:=\sum_{l=1}^{k-1}\mathrm{e}_{l}+\zeta,$ with $\zeta_{0}=\zeta.$

\begin{enumerate}
\item[(PG)]Suppose that $(P_{i})\sim \mathrm{PG}(\alpha,\zeta),$ then there is a stick-breaking representation obtained from size biased sampling that is of the form~(\ref{firststick}) where now $(\tilde{P}_{k})$ are defined by
$$
1-W_{k}=\beta^{(k)}_{1-\alpha,\alpha}[1-R_{k}]
$$
where $(\beta^{(k)}_{1-\alpha,\alpha})$ are iid $\mathrm{Beta}(1-\alpha,\alpha)$ random variables independent of
$R_{k}={(\zeta_{k-1}/\zeta_{k})}^{1/\alpha},$ for all $k.$
\end{enumerate}

Notice that although the sticks $(W_{k})$ are generally dependent, they have  remarkably simple forms that not only has implications in terms of practical modelling capabilities but also connects directly with results in \cite{PY97}. Here again we only present the result in its basic form that does not explicitly connect it up with other entities that appear in the excursion setting of~\cite{PPY92}. Those will be revealed later.
We close out this section with a few comments that also apply to our more general forthcoming result.
Note the largest class generated by a stable subordinator is denoted as $\mathrm{PK}_{\alpha}(h\cdot f_{\alpha})$ formed by conditioning on the total mass of a stable subordinator and mixing with respect to the density  $h(t)f_{\alpha}(t).$ The $\mathrm{PD}(\alpha,\theta)$ case arises by choosing a density proportional to
$t^{-\theta}f_{\alpha}(t).$ A significant feature of the $\mathrm{PK}_{\alpha}(h\cdot f_{\alpha})$ class and their limiting cases is that they generate the only infinite exchangeable partitions of $\mathbb{N},$ equivalently $[n]=\{1,2,\ldots,n\},$ such that their \emph{exchangeable partition probability functions}, $\mathrm{EPPF},$ is of product or Gibbs form, consistent in $n.$ This has many implications, a simple one is the generalized Chinese restaurant process used to generate such partitions has a relatively simple seating rule.  These results are described in~Pitman\cite{Pit02,Pit06} and Gnedin and Pitman\cite{GnedinPitmanI}. See also~\cite{HJL} for more on these calculations.

However, while it is possible to get a handle on the EPPF for many choices of $h,$ stick-breaking results are another matter entirely. Our results represent the first explicit unconditional stick-breaking representations that hold for such a class for all $0<\alpha<1$ outside of the $\mathrm{PD}(\alpha,\theta).$ In the case of $\alpha=1/2,$ where the $f_{1/2}(t)$ density is that of a $C/\gamma_{1/2}$ variable, for some positive C, and hence is completely explicit, Aldous and Pitman~\cite{AldousPit}, achieve the most general stick-breaking result possible by obtaining an explicit description in the case of the law  $\mathrm{PD}(1/2|t),$ that is the distribution of $(P_{i})|T=t.$  This follows as a fairly straightforward application of \cite[Theorem 2.1]{PPY92}.
Also in the $\alpha=1/2$ case, a  conditional result is given~in~\cite{Fav1}, for a special case which is the Inverse Gaussian distribution which corresponds to mixing $\mathrm{PD}(1/2|t),$  over the density proportional to $e^{-vt}f_{1/2}(t).$
Lastly, among the general Poisson-Kingman models discussed in~\cite{Pit02}, which is the class of laws represented by the framework of~\cite{PPY92}, the only case where one will obtain independent sticks $(W_{k})$ is the $\mathrm{PD}(\alpha,\theta)$ case.
\subsection{Further Discussion and General Outline}
The relationship between $\mathrm{PG}(\alpha,\zeta)$ models and $\mathrm{PD}(\alpha,\theta)$ models can be represented as follows
$$
\mathrm{PD}(\alpha,\theta)\subseteq \mathrm{PG}(\alpha,\zeta) {\mbox { for }}\theta\ge 0; \mathrm{PD}(\alpha,\theta)\cap \mathrm{PG}(\alpha,\zeta)=\emptyset  {\mbox { for }}-\alpha<\theta< 0.
$$
Not satisfied with this, a larger class is constructed called $\mathrm{EPG}(\alpha,\zeta),$ for extended Poisson Gamma, which naturally contains $\mathrm{PG}(\alpha,\zeta),$ and otherwise satisfies
$$
\mathrm{PD}(\alpha,\theta)\subseteq \mathrm{EPG}(\alpha,\zeta) {\mbox { for }}\theta>-\alpha.
$$
The bulk of this paper concentrates on identifying properties of this class, its precise relationship to $\mathrm{PG}(\alpha,\zeta),$ and its relationship to $\mathrm{PD}(\alpha,\theta).$ This leads to connections between various Markov chains in the literature among other results. Naturally we also obtain an explicit stick-breaking representation for the $\mathrm{EPG}(\alpha,\zeta)$ class, furthermore we provide an analysis of a related class  of variables where special cases have appeared in some guise in the literature. An outline is presented next. Section 2 presents some more preliminaries related to the class of laws generated by a stable subordinator (with some repetition). Section 3 presents the $\mathrm{PG}(\alpha,\zeta),$  in more detail.
Section 3.3. discusses key identities. Section 4 describes the explicit construction of $\mathrm{EPG}(\alpha,\zeta),$ models and begins to discuss various connections with the coagulation/fragmentation framework in BDGM\cite{BertoinGoldschmidt2004,Dong2006}.
Section 4.3 is of practical interest but also plays a key role in helping us understand the precise nature of  $\mathrm{EPG}(\alpha,\zeta)$ laws. Section 4.4 describes how the sampling schemes in Section 4.3 connects to BDGM \cite{BertoinGoldschmidt2004,Dong2006} and raises points about similarities to coalescent/fragmentation processes on partitions of $[n]=\{1,2,\ldots,n\}$ appearing in the literature. We close Section 4 with an open problem/question involving Aldous's beta-splitting model, used in this case as rule for coalescence, and the beta-coalescent. Section 5 is a key section that identifies in more detail what $\mathrm{EPG}(\alpha,\zeta)$ models are. This also plays a major role in the developments of the subsequent sections. Section 6 describes a Markov chain that is complementary to the stick-breaking induced chains in Pitman, Perman and Yor~\cite{PPY92}, and describes how such chains are directly related to BDGM. From this we identify explicitly a class of variables in the $\mathrm{EPG}(\alpha,\zeta)$ setting, where special cases have appeared in the literature on fragmentation trees as well as BDGM. Section 6.4 perhaps suggests a different way of constructing flows of bridges hence certain type of coalescent processes. Moreover it presents new ways to approximate Pitman-Yor processes. Sections 7 and 8 present the stick-breaking results which also connects up with the variables in Section 6. As mentioned previously, although we can express the \cite{PPY92} formulae in somewhat simpler forms, we only use this as guidelines to develop our results.
That section closes by showing how we obtained the simplest representations in the $\mathrm{PG}(\alpha,\zeta)$ case which was/is otherwise  not at all obvious. Section 9 revisits connections to the variables we described earlier appearing in \cite{PY97}. Note some of the ideas presented here have their origins in the unpublished manuscript James~\cite{James2010}

The $\mathrm{PD}(\alpha,\theta)$ distribution, and related models, have connections to Normal, gamma, stable and Poisson laws and hence it is not completely surprising that it arises in many guises in a wide variety of seemingly unrelated topics involving probability and statistics. Including
Bayesian statistics, population genetics, statistical physics, finance and machine learning. A sample of
additional references
follows,~\cite{RyanAdams,Cerquetti, Chatterjee, DurrettSchweinsberg, Feng, JLP,
Kerov, Ruelle, Sudderth,TalagrandSpin2,Vershik}.
In regards to terminology and style in this work our primary references will  be the monographs of~\cite{BerFrag,Pit06}  with references and citations representing a substantial stream of the burgeoning literature on the $\mathrm{PD}(\alpha,\theta)$ model. We will employ simple bridges as described in~\cite[Section 4]{BerFrag}. Otherwise we assume the reader has a basic knowledge of terms such as EPPF, Chinese restaurant process etc that we shall occasionally use.

\section{Pitman-Yor processes and Poisson-Kingman distributions determined by a stable subordinator}
For $0<\alpha<1,$ let  $T_{\alpha}:=(T_{\alpha}(s):s\ge 0)$ denote a $\alpha$-stable subordinator of index $\alpha.$  That is to say $T_{\alpha}(\cdot)$ is an independent increment process such that for each fixed $t,$ $T_{\alpha}(t)\overset{d}=t^{1/\alpha}S_{\alpha};$ where $S_{\alpha}$ is a positive stable random variable of index $\alpha, $with density denoted as $f_{\alpha},$ and  whose log Laplace transform is given by $-C\omega^{\alpha}$ for some constant $C>0$ and each $\omega>0.$
Hereafter, due to scaling properties, we can take $C=1.$ It is well-known that for each $u\in [0,1],$ one can set
$$
T_{\alpha}(u)=\sum_{k=1}^{\infty}\Delta_{k}\indic_{\{U_{k}\leq u\}}
$$
where $\Delta_{1}\ge\Delta_{2}\ge\cdots$ are the ranked jumps of $T_{\alpha}.$ As in Kingman~\cite{Kingman75}, one can define a random probability measure by
\begin{equation}
P_{\alpha,0}(u)\overset{d}=\frac{T_{\alpha}(u)}{T_{\alpha}(1)}=\sum_{k=1}^{\infty}P_{k}\indic_{\{U_{k}\leq u\}}
\label{rpmstable}
\end{equation}
In this case, setting $T_{\alpha}(1):=T,$ the ranked sequence of probabilities $(P_{i}=\Delta_{i}/T)$ is said to have a two-parameter Poisson-Dirichlet $(\alpha,0)$ law, denoted as $\mathrm{PD}(\alpha,0).$ The sequence, as discussed in~\cite{Pit06,PPY92, PY97}, arises in the study of various phenomena related to Brownian Motion, $\alpha=1/2,$ and more general Bessel processes. The random probability measure  $P_{\alpha,0}$ is sometimes referred to as a Pitman-Yor $(\alpha,0)$ process with, in this case, a Uniform base measure $\mathbb{U},$ defined by $\mathbb{E}[P_{\alpha,0}(y)]=y=\mathbb{U}(y)$, for $y\in [0,1].$ For this choice of base measure we refer to $P_{\alpha,0}$ as a $\mathrm{PD}(\alpha,0)$-bridge.

The $\mathrm{PD}(\alpha,0)$-bridge, represents one book-end of the Pitman-Yor $(\alpha,\theta)$ family of processes. The other is the  Dirichlet Process with total mass parameter $(\theta),$ popularized by Ferguson\cite{Ferg73}. As is well known, this may be constructed by normalizing a gamma subordinator in a manner similar to the $P_{\alpha,0}$ construction. The corresponding sequence of probabilities $(P_{i})$ is  said to follow a Poisson-Dirichlet law of index $\theta,$ denoted as $\mathrm{PD}(0,\theta).$
The general $\mathrm{PD}(\alpha,\theta)$ family can be obtained from the following construction. Let $(P_{i})$ be the ranked sequence of probabilities following a PD$(\alpha,0)$ law as described above. Note that $T_{\alpha}(1):=T,$ is a measurable function of $(P_{i})$ as seen by the relation
\begin{equation}
T=Z^{-1/\alpha}:=\lim_{i\rightarrow \infty}{(i\Gamma(1-\alpha)P_{i})}^{-1/\alpha}
\label{alphadiversity}
\end{equation}
almost surely
where $Z=T^{-\alpha}$ is the $\alpha$-diversity of a $\mathrm{PD}(\alpha,0)$ partition of the integers $[n]=\{1,\ldots,n\}.$
This point makes conditioning on $T$ meaningful. Then conditioning on $(P_{i})|T=t$ yields a conditional law on the sequence $(P_{i}),$ denoted as $\mathrm{PD}(\alpha|T=t):=\mathrm{PD}(\alpha|t).$

Now define the variables $S_{\alpha,\theta}$ for each $\theta>-\alpha,$ as having a density, denoted by $f_{\alpha,\theta},$ formed by polynomially tilting a stable density as follows
\begin{equation}
f_{\alpha,\theta}(t)=c_{\alpha,\theta}t^{-\theta}f_{\alpha}(t)
\label{PDdiversity}
\end{equation}
where
$
c_{\alpha,\theta}:=\Gamma(\theta+1)/\Gamma(\theta/\alpha+1),
$
and satisfies for $\delta+\theta>-\alpha$
\begin{equation}
\mathbb{E}[S^{-\delta}_{\alpha,\theta}]=\frac{\Gamma(\theta+1)}{\Gamma(\theta/\alpha+1)}
\mathbb{E}[S^{-(\delta+\theta)}_{\alpha}]=\frac{\Gamma(\frac{(\theta+\delta)}{\alpha}+1)}{\Gamma({\theta+\delta}+1)}\frac{\Gamma(\theta+1)}{\Gamma(\theta/\alpha+1)}.
\label{moment}
\end{equation}

Then the $\mathrm{PD}(\alpha,\theta)$ law on $(P_{i})$ is obtained by mixing $PD(\alpha|t)$ with respect to $f_{\alpha,\theta}(t)$ as follows
$$
\mathrm{PD}(\alpha,\theta):=\int_{0}^{\infty}\mathrm{PD}(\alpha|t)f_{\alpha,\theta}(t).
$$
A $\mathrm{PD}(\alpha,\theta)$ bridge or Pitman-Yor-$(\alpha,\theta)$ process, represented as in (\ref{PitmanYorProcess2}), is obtained by applying this change of measure to the bridge in~(\ref{rpmstable}). Since the $(U_{i})$ are independent of the $(P_{i}),$ this is equivalent to choosing $(P_{i})\sim \mathrm{PD}(\alpha,\theta).$ As described in Pitman~\cite{Pit02,Pit06}, this idea may be generalized to quite general mixing densities which we may write as $h(t)f_{\alpha}(t),$ for $h$ satisfying  $\mathbb{E}[h(S_{\alpha})]=1.$ Hence $(P_{i})$ and related quantities are said to have distributions determined by a Poisson-Kingman law generated by a $\alpha$-stable subordinator with mixing distribution $h\cdot f_{\alpha},$ say $\mathrm{PK}_{\alpha}(h\cdot f_{\alpha}),$ defined by
\begin{equation}
\mathrm{PK}_{\alpha}(h\cdot f_{\alpha}):=\int_{0}^{\infty}\mathrm{PD}(\alpha|t)h(t)f_{\alpha}(t).
\label{PKh}
\end{equation}
\begin{rem} From Pitman~\cite{Pit02,Pit06} it follows that $(P_{i})\sim\mathrm{PK}_{\alpha}(h\cdot f_{\alpha})$ satisfies (\ref{alphadiversity}), where $T$ has density $h(t)f_{\alpha}(t)$ and $Z=T^{-\alpha}$ is the corresponding $\alpha$-diversity. In the $\mathrm{PD}(\alpha,\theta)$ setting $T=S_{\alpha,\theta}$ and $Z=S^{-\alpha}_{\alpha,\theta}$ is the $\alpha$-diversity.
\end{rem}

Note one may replace the $(U_{i}),$ with arbitrary iid variables $(Y_{k})$ having non-atomic distribution $H$, which is equivalent to applying  a standard composition  operation, $P_{\alpha,\theta}\circ H$
i.e. $P_{\alpha,\theta}(H(s)).$ with expectation $\mathbb{E}[P_{\alpha,\theta}(H(s))]=H(s).$ This also holds when $H$ has atoms, however in that case the bijection, i.e. \emph{Kingman's correspondence}, between the bridge law and that of the $(P_{i})$ breaks down, which is easily seen by taking $H$ to be point mass. In particular, the $\mathrm{PD}(\alpha,\theta)$ generalized Chinese restaurant process, (see \cite{Pit96}), generating partitions of $[n]=\{1,2,\ldots,n\}$ for these models is not the same
See \cite{Buntine, Teh, Wood1,Wood2} for applications utilizing~(\ref{PitmanYorProcess}) with a discrete $H,$ which is appropriate for Natural Language models. See the forthcoming Section~\ref{samplingcoag} for a general approach to sampling.

\subsection{Exchangeable bridges and partitions}
Besides the $\mathrm{PD}(\alpha,\theta)$ class, we will work extensively with simple bridges, As such we present some relevant details that is essentially lifted from Bertoin's~\cite{BerFrag} book. Following Bertoin~\cite[Definition 2.1, p.67]{BerFrag},(see also
Pitman\cite[Section 5]{Pit06}), an infinite numerical sequence
$\mathbf{s}=(s_{1},s_{2},\ldots)$ is said to be a
\textbf{mass-partition} if $\mathbf{s}$ is an element of the
space,
$$
\mathcal{P}_{\mathrm{m}}=\{\mathbf{s}=(s_{1},s_{2},\ldots):s_{1}\ge
s_{2}\ge\cdots\ge 0 {\mbox { and }} \sum_{i=1}^{\infty}s_{i}\leq
1\}.
$$
The quantity
$
s_{0}:=1-\sum_{i=1}^{\infty}s_{i},
$
which may be $0,$ is referred to as the \emph{total mass of dust}.
From Bertoin~(\cite{BerFrag}, Definition 4.6, p. 191), a random
caglad process $b_{\mathbf{s}}$ on $[0,1]$ is said to be an
\textbf{s-bridge} if it is distributed as
$$
b_{\mathbf{s}}(y)=s_{0}\mathbb{U}(y)+\sum_{i=1}^{\infty}s_{i}\indic_{(U_{i}\leq
y)}, y\in[0,1],
$$
for $(U_{i})$  a sequence of iid $\mathrm{Uniform}[0,1]$ random variables.
If $\mathbf{s}\sim\mathbb{P}$, i.e. if
$\mathbf{s}$ is randomized according to some law $\mathbb{P},$
then $b_{\mathbf{s}}$ is said to be a $\mathbb{P}$-bridge. It
follows that $\mathcal{P}_{\infty}$ is a subspace of
$\mathcal{P}_{\mathrm{m}}$ such that $\sum_{i=1}^{\infty}s_{i}=1.$
Furthermore, for all $\mathbf{s}\in P_{\mathbf{m}},$
$\mathrm{Rank}(s_{0}, s)\in \mathcal{P}_{\infty}.$
Additionally let
\begin{equation}
b^{-1}_{\mathbf{s}}(r)=\inf\{v\in [0,1]: b_{\mathbf{s}}(v)>r\},
r\in[0,1]
\label{inversebridge}
\end{equation}
denote the right continuous inverse of the bridge. Equivalently
this is a random quantile function.   An
exchangeable partition of $[n]=\{1,2,\ldots,n\}$ generated from an exchangeable
bridge, say $b_{\mathbf{s}},$ can be obtained by the equivalence relations
$$
i\sim j{\mbox { iff }}b^{-1}_{\mathbf{s}}(U'_{i})=b^{-1}_{\mathbf{s}}(U'_{j})
$$
based on $n$ iid Uniform$[0,1]$ variables
$(U'_{1},\ldots,U'_{n}).$ An infinite partition, $\Pi$ of $\mathbb{N}$ is formed by
considering a countably infinite set of uniforms.
The distribution of such infinite exchangeable partitions is referred to as an exchangeable partition probability function (EPPF). We see that the random probability measure in~(\ref{PitmanYorProcess2})
is a $\mathbb{P}$-bridge , with $(s_{i})\overset{d}=(P_{i})\in \mathcal{P}_{\infty}$ distributed according to the law $\mathbb{P}=\mathrm{PD}(\alpha,\theta)$ with $s_{0}=0.$ An important property,  which we shall exploit, is that the law of the $\mathbb{P}$-bridge is in bijection to the law of the sequence $(P_{i})\sim \mathbb{P},$ and also to the corresponding EPPF , specifying the law of the exchangeable partition $\Pi$ with ranked frequencies $(P_{i}),$ which we shall refer to as a $\mathbb{P}$-EPPF.
\subsubsection{Simple Bridges}
In this manuscript we will also utilize properties of simple bridges. In particular if $\mathbf{s}=(u,0,\ldots)$ is a simple mass-partition then
$b_{u}(y)=(1-u)\mathbb{U}(y)+u\indic_{(U_{1}\leq y)}$ is referred to as a
\textbf{simple bridge}. If $u=s_{1}$ is a random variable then one has a randomized simple bridge given by,
\begin{equation}
b_{s_{1}}(y)=s_{0}\mathbb{U}(y)+s_{1}\indic_{(U_{1}\leq y)}
\label{simplebridge}
\end{equation}
The inverse of a simple bridge, defined as a special case of~(\ref{inversebridge}), is denoted as
$b^{-1}_{s_{1}}.$ From
Bertoin(\cite{BerFrag}, eq. (4.14), p. 194) one sees that for
$(U'_{k})_{k\ge 1}$ iid Uniform$[0,1]$ random variables
independent of $U_{1},$
$$b^{-1}_{s_{1}}(U'_{k})=U_{1}, {\mbox { iff }}
U'_{k}\in(s_{0}U_{1}, (1-s_{0})+s_{0}U_{1}),$$ having  length
$s_{1}=1-s_{0}$ and otherwise
$b^{-1}_{s_{1}}(U'_{k})\overset{d}=U^{0}_{k}$ has an independent
Uniform$[0,1]$ distribution, that is for $U'_{k}\in
[0,s_{0}U_{1}]\cup[s_{0}U_{1}+1-s_{0},1].$ More precisely, define
for each $k,$
\begin{equation}
I_{k}\overset{d}=\indic_{(b^{-1}_{s_{1}}(U'_{k})=U_{1})}\overset{d}=\indic_{(U'_{k}\leq
s_{1})}\sim \mathrm{Bernoulli}(s_{1}) \label{indicator}
\end{equation}
then
$
\mathbb{P}(b^{-1}_{s_{1}}(U'_{k})\leq y|I_{k}=0)=y, y\in [0,1].
$
\section{The $\mathrm{PG}(\alpha,\zeta)$ Generalized Gamma family of processes}
We now show how to construct the $\mathrm{PG}(\alpha,\zeta)$ Generalized Gamma family of processes. The family constitutes processes that are governed by a large sub-class of $\mathrm{PK}_{\alpha}(h\cdot f_{\alpha})$ distributions that are, for each $\alpha,$ indexed further by the choice of any non-negative random variable $\zeta.$ We first present a description which directly relates to the $\mathrm{PK}_{\alpha}(h\cdot f_{\alpha})$ construction discussed in the previous section. First condition $(P_{i})\sim \mathrm{PD}(\alpha,0)$ on $T=s,$ yielding $(P_{i})\sim \mathrm{PD}(\alpha|s)$ and then mix with respect to the density of  random variables
of the form
$$
T=\frac{\tau_{\alpha}(\zeta)}{\zeta^{1/\alpha}}.
$$
$\zeta$ is a non-negative random variable taken independent of  $(\tau_{\alpha}(s),s>0),$ which is a generalized gamma subordinator whose L\'evy exponent, i.e. its -log Laplace transform of $\tau_{\alpha}(1),$ is given by
\begin{equation}
\psi_{\alpha}(\omega)=(1+\omega)^{\alpha}-1
\label{GGexp}
\end{equation}
for $\omega>0.$ Note, unless specified otherwise, for non-negative quantities $a,b$ we will consider $\tau_{\alpha}(a)+\tau_{\alpha}(b)=\tau_{\alpha}(a+b),$
rather than equivalent in distribution, and otherwise $\tau_{\alpha}(a)$ and $\tau_{\alpha}(b)$ are independent. Meaning, for instance, that we are looking at the decomposition of the interval $(0,a+b)$ where $b$ represents the length of the interval $(a,a+b).$
The conditional density of $T|\zeta$ is given by
$$
f_{\alpha}(t|\zeta):=f_{\alpha}(s){\mbox e}^{-(s\zeta^{1/\alpha}-\zeta)},
$$
which is the density of a generalized gamma random variable, $\tau_{\alpha}(\zeta)/\zeta^{1/\alpha}.$ The random variable is infinitely divisible and is specified for fixed $\zeta$ by its -log Laplace transform,
\begin{equation}
\psi_{\alpha,\zeta}(\omega):=\zeta\psi_{\alpha}(\omega/\zeta^{1/\alpha})=(\zeta^{1/\alpha}+\omega)^{\alpha}-\zeta
\label{GGexp2}
\end{equation}
As such, conditional on $\zeta,$ we can associate this variable with a generalized gamma subordinator of the form
$(\tau_{\alpha,\zeta}(s),0\leq s\leq 1),$ such that for each $s,$ $\tau_{\alpha,\zeta}(s)$ has L\'evy exponent
$s\psi_{\alpha,\zeta}(\omega),$ and $\tau_{\alpha,\zeta}(1)\overset{d}=T.$
It follows that the (unconditional) density of $T$ can be expressed as,
$$
f_{\alpha}(s)\mathbb{E}[{\mbox e}^{-(s\zeta^{1/\alpha}-\zeta)}].
$$
Hence the $(P_{i})$ constructed in this manner follow a $\mathrm{PK}_{\alpha}(h\cdot f_{\alpha})$ distribution with
\begin{equation}
h(s)=\mathbb{E}[{\mbox e}^{-(s\zeta^{1/\alpha}-\zeta)}].
\label{genh}
\end{equation}
Clearly, in addition to $\alpha,$ this family is indexed by $\zeta,$ and we write $(P_{i})~\sim \mathrm{PG}(\alpha,\zeta)$, to denote this class of Poisson Kingman laws for the ranked sequence $(P_{i}).$ It follows that a $\mathrm{PG}(\alpha,\zeta)-$bridge, say $\{Q_{\alpha,\zeta}(y):0\leq y\leq 1\},$ can be represented as,
\begin{equation}
Q_{\alpha,\zeta}(y)\overset{d}=\sum_{k=1}^{\infty}P_{k}\indic_{\{U_{k}\leq y\}}, {\mbox { for }}(P_{i})\sim \mathrm{PG}(\alpha,\zeta).
\label{PGbridge1}
\end{equation}
As mentioned in the introduction, this construction of $\mathrm{PG}(\alpha,\zeta)$ laws coincides with random processes discussed in Pitman and Yor \cite[p. 877-878]{PY97}. This construction is used to prove  Pitman and Yor \cite[Proposition 21, p. 869]{PY97}. Proposition 21 of that work shows that if $\zeta\overset{d}=\gamma_{\theta/\alpha}$ where $\gamma_{\theta/\alpha}$ denotes a random variable with a gamma distribution with shape parameter $(\theta/\alpha),$ and scale $1,$  then $$\mathrm{PG}(\alpha,\gamma_{\theta/\alpha})=\mathrm{PD}(\alpha,\theta){\mbox { and hence }}Q_{\alpha,\gamma_{\theta/\alpha}}(y)\overset{d}=P_{\alpha,\theta}(y),
$$ for $\theta>0.$ Note this does not include the case of $\mathrm{PD}(\alpha,\theta)$ for $-\alpha<\theta<0.$ It is evident from results for $\mathrm{PD}(\alpha,\theta),$ that the  stable and Dirichlet cases arise as,
$$\mathrm{PG}(\alpha,0)=\lim_{\theta\rightarrow 0}\mathrm{PG}(\alpha,\gamma_{\theta/\alpha})=
\mathrm{PD}(\alpha,0){\mbox { and }}\lim_{\alpha\rightarrow 0}\mathrm{PG}(\alpha,\gamma_{\theta/\alpha})=\mathrm{PD}(0,\theta).$$ Furthermore, the case $\alpha=1$ is well defined, as $\mathrm{PG}(1,\zeta)$ for any $\zeta$ corresponds to the case where $(P_{i})=(0,0,\ldots).$ Equivalently, it is simple to show that,
$$
Q_{1,\zeta}(y):=\lim_{\alpha\rightarrow 1}Q_{\alpha,\zeta}(y)\overset{d}=\mathbb{U}(y), {\mbox { for }}y\in[0,1].
$$
That is when $\alpha\rightarrow 1,$ the random probability measure converges converges to a simple bridge with dust $s_{0}=1.$
 When $\zeta=v$ is a positive constant, $\mathrm{PG}(\alpha,v)$ corresponds to the case of the Poisson-Kingman model determined by the generalized gamma subordinator as described in Pitman \cite[Section 5.2]{Pit02} and attributed to McCloskey~\cite{McCloskey}. In this case the bridge has been studied from a Bayesian perspective in \cite{James2002,LMP1,LMP2, JLP2}, where it is often referred to as a Normalized generalised gamma process. We write $\mathrm{PG}(\alpha,v)=\mathrm{NGG}(
 \alpha,v)$ to denote the law of the normalized jumps. Due to the generality of $\zeta$, the class of $\mathrm{PG}(\alpha,\zeta)$  laws is significantly larger than the special cases mentioned.  See Donati-Martin and Yor~\cite{Donati} for an interpretation of $(\tau_{\alpha,\zeta}(s),0\leq s\leq 1),$
as the inverse-local time of  a \emph{downwards Bessel process} with random drift $\zeta^{1/\alpha}.$
\subsection{Representation as a randomly time-changed normalized generalized gamma process}
As noted above, for fixed $\zeta:=v,$ the random probability measures correspond to the class of normalized generalized gamma processes studied for instance in  \cite{James2002,LMP1,LMP2, JLP2} and more recently in~\cite{Changyou}.
It is then not difficult to see that one can write a $\mathrm{PG}(\alpha,\zeta)-$bridge as
\begin{equation}
Q_{\alpha,\zeta}(y)\overset{d}=\frac{\tau_{\alpha}(\zeta y)}{\tau_{\alpha}(\zeta)}\overset{d}=\sum_{i=1}^{\infty}P_{i}\indic_{(U_{i}\leq
y)},  y\in[0,1]
\label{Laplacerpm}
\end{equation}
where, $(P_{i})\sim \mathrm{PG}(\alpha,\zeta).$ Note that by scaling
$$
Q_{\alpha,\zeta}(y)\overset{d}=\frac{\tau_{\alpha}(\zeta y)}{\tau_{\alpha}(\zeta)}\overset{d}=\frac{\tau_{\alpha,\zeta}
(y)}{\tau_{\alpha,\zeta}(1)}.
$$
\subsubsection{Representing a $\mathrm{PitmanYor}(\alpha,\theta)$ process for $\theta\ge 0$ as a normalized process}\label{construction}
As we mentioned previously and demonstrated in terms of $(P_{i})\sim PD(\alpha,\theta),$ the fact that Pitman-Yor processes with parameters $0\leq \alpha<1$, and $\theta\ge 0$ are $\mathrm{PG}(\alpha,\zeta)$  processes follows as a direct consequence of Pitman and Yor \cite[Proposition 21, p. 869]{PY97}, see also \cite{BerFrag,Dong2006,Feng}. The $(\alpha,0)$ model and $(0,\theta)$ Dirichlet process arise as limiting cases.
Setting $\zeta\overset{d}=\gamma_{\theta/\alpha}$ it follows that a $\mathrm{PitmanYor}(\alpha,\theta)$ process can be represented as
\begin{equation}
P_{\alpha,\theta}(y)\overset{d}=\frac{\tau_{\alpha}(\gamma_{\theta/\alpha} y)}{\tau_{\alpha}(\gamma_{\theta/\alpha})}=
Q_{\alpha,\gamma_{\frac{\theta}{\alpha}}}(y).
\label{PYb}
\end{equation}
We note also the following important identities that can be found in Pitman and Yor \cite[equations (98,99,100)]{PY97}, for $\theta\ge 0$
\begin{equation}
\label{PYlamperti}
\frac{\tau_{\alpha}(\gamma_{\theta/\alpha})}{\gamma^{1/\alpha}_{\theta/\alpha}}{\mbox { is independent of  }}
\tau_{\alpha}(\gamma_{\theta/\alpha})\overset{d}=\gamma_{\theta}.
\end{equation}
This implies, generalizing the case of the Dirichlet process, that $P_{\alpha,\theta}$ constructed as (\ref{PYb}) is independent of its total mass which has a Gamma$(\theta)$ distribution.
\subsection{Mixing distribution with respect to $\zeta$}
As we discussed above, $\zeta$ may be taken to be an arbitrary non-negative random variable. However, similar to Poisson-Kingman mixing distributions for the stable case, $h(s)f_{\alpha}(s),$ where the canonical distribution is that of $S_{\alpha}$, we will express the distribution of $\zeta,$ so that its canonical form corresponds to a exponential$(1)$ distribution. This is done as follows, let $g(s)$ denote a non-negative function such that
$$
\mathbb{E}[g(\gamma_{1})]=1.
$$
Then a general mixing distribution for $\zeta$ may be expressed as
\begin{equation}
f_{\zeta}(y)=g(y){\mbox e}^{-y}
\label{zetamix}
\end{equation}
Setting $g(y)=\delta_{v}(y){\mbox e}^{y}$ corresponds to $\zeta=v.$
Using~(\ref{zetamix}), $h(s)$ in~(\ref{genh}) may be expressed as
$$
h(s)=\Gamma(1+\alpha)s^{-\alpha}\mathbb{E}[g({(s/\gamma_{\alpha})}^{\alpha})]=\alpha\int
_{0}^{\infty}g(y^{\alpha})y^{\alpha-1}{\mbox e}^{-ys}dy.
$$
The latter expression is the Laplace transform of a general function $\tilde{g}_{\alpha}(y):=\alpha y^{\alpha-1}g(y^{\alpha})$ evaluated at $s.$ Writing this Laplace transform as $\mathbb{L}_{\tilde{g}_{\alpha}}(s),$ gives
$$
h(s)f_{\alpha}(s)=f_{\alpha,\alpha}(s)\mathbb{E}[g({(s/\gamma_{\alpha})}^{\alpha})]=f_{\alpha}(s)\mathbb{L}_{\tilde{g}_{\alpha}}(s).
$$
Hence
\begin{equation}
\mathrm{PG}(\alpha,\zeta)=\int_{0}^{\infty}\mathrm{NGG}(\alpha,v)f_{\zeta}(v)dv=\mathrm{PK}_{\alpha}(\mathbb{L}_{\tilde{g}_{\alpha}}\cdot f_{\alpha}).
\label{Laplacetransformclass}
\end{equation}
Setting $g(v)=1,$ corresponding to the  canonical case where $\zeta$ has an exponential$(1)$ distribution,
gives
$$
\mathrm{PG}(\alpha,\gamma_{1})=\mathrm{PD}(\alpha,\alpha){\mbox { and }}Q_{\alpha,\gamma_{1}}\sim \mathrm{PitmanYor}(\alpha,\alpha)
$$

\begin{rem}
Note that there is the identity, see \cite{JamesLamperti} and related references therein,
$$
\frac{S_{\alpha,\alpha}}{\gamma_{\alpha}}\overset{d}=\frac{S_{\alpha}}{\gamma_{1}}\overset{d}=\gamma^{-1/\alpha}_{1},
$$
leading to $\int_{0}^{\infty}f_{\alpha,\alpha}(s)\mathbb{E}[g({(s/\gamma_{\alpha})}^{\alpha})]ds=\mathbb{E}[g(S^{\alpha}_{\alpha,\alpha}/\gamma^{\alpha}_{\alpha})]=1.$
\end{rem}

\begin{rem}Several representations for the law $\mathrm{PG}(\alpha,\zeta)$ have been given above. For practical matters, such as statistical simulation of relevant quantities produced by $\mathrm{PG}(\alpha,\zeta),$ and in fact for the derivation of the results herein, we believe the easiest one to use is the form of $h(s)$ given in terms of $\zeta,$ equation~(\ref{genh}). However, the description of $h(s)$ in terms of the Laplace transform $\mathbb{L}_{\tilde{g}_{\alpha}},$ (\ref{Laplacetransformclass}), perhaps more clearly captures the characteristics of the type of sub-models of $\mathrm{PK}_{\alpha}(h\cdot f_{\alpha})$ that are within $\mathrm{PG}(\alpha,\zeta).$
\end{rem}

\subsection{Identities}
This section develops key identities that we shall use. Note, similar to \cite[Section 8]{Kingman75}, this represents part of a calculus that allows one to replace integrals with respect to the implicit form of $f_{\alpha}$ with explicit quantities that hold for any $\alpha \in(0,1).$ In a sense this is a continuation of related work in James~\cite{JamesLamperti} and has connections to \cite{PPY92,Pit06,PY92} and references discussed therein.

Before we present new results we highlight two, what we would describe as, fundamental identities. In both cases the identities hold for the range $\theta>-\alpha,$ and the variables appearing on the right hand side of the equation are independent.
First from Perman, Pitman and Yor~\cite[Lemma 3.11]{PPY92}, see also Pitman and Yor~\cite[Theorem 1.3]{PY92} and Pitman~\cite[p. 82-83]{Pit06}, there is,
$$
S_{\alpha,\theta}\overset{d}=S_{\alpha,\theta+\alpha}\times \beta^{-1}_{\theta+\alpha,1-\alpha}.
$$
and importantly a \emph{version} of these variables where $(S^{-\alpha}_{\alpha,\theta}, S^{-\alpha}_{\alpha,\theta+\alpha})$ can be interpreted as the $\alpha$-diversities of $
(\mathrm{PD}(\alpha,\theta),\mathrm{PD}(\alpha,\theta+\alpha))$
related by the \emph{equality}
\begin{equation}
S_{\alpha,\theta}=S_{\alpha,\theta+\alpha}\times \beta^{-1}_{\theta+\alpha,1-\alpha},
\label{PPYfundid}
\end{equation}
where $1-\beta_{\theta+\alpha,1-\alpha}$ can be interpreted as the first size biased pick from $\mathrm{PD}(\alpha,\theta).$
The identity actually encodes the stick-breaking results in the $\mathrm{PD}(\alpha,\theta)$ case. For instance setting $\theta=0,$ one can easily recover the Markov Chain results of~\cite[Theorem 2.1, Corollary 2.3]{PPY92} in the $\alpha$-stable setting,  hence a result that gives expressions, albeit not necessarily tractable, for all  $\mathrm{PK}_{\alpha}(h\cdot f_{\alpha}).$

That is a Markov Chain $(S_{\alpha,0},S_{\alpha,\alpha},S_{\alpha,2\alpha},\ldots),$ such that the conditional density of
$S_{\alpha,\alpha}|S_{\alpha,0}=t$ is the same as the conditional density of $S_{\alpha,k\alpha}|S_{\alpha,(k-1)\alpha}=t$ for k$=1,2,\ldots$ etc.
Furthermore, conditioning on $S_{\alpha,0}=t$ and then mixing with respect to $h(t)f_{\alpha}(t)$ leads to the a description of the chain in the
 $\mathrm{PK}_{\alpha}(h\cdot f_{\alpha})$ case.

We will demonstrate this, and other relevant results, for another related Markov Chain that will be based on the first part of the following distributional identity  that follows as a specialization of James~\cite[p.8, eq. (2.11)]{JamesLamperti},
\begin{equation}
S_{\alpha,\theta}\overset{d}=S_{\alpha,\theta+1}\times \beta^{-1/\alpha}_{\left(\frac{\theta+\alpha}{\alpha},\frac{1-\alpha}{\alpha}\right)}\overset{d}=S_{\alpha,\theta+\alpha}\times \beta^{-1}_{\theta+\alpha,1-\alpha}.
\label{Jamesfundid}
\end{equation}
The correspondence  between (\ref{PPYfundid}) and (\ref{Jamesfundid}) is also of interest.
Similar to (\ref{PPYfundid}, we will establish an appropriate version of (\ref{Jamesfundid}) in Proposition~\ref{MarkovAlpha}.

There is also the related identity from James~\cite{JamesLamperti}, and also Bertoin and Yor~(\cite{BerYor}, Lemma 6), for $\theta>-\alpha,$
\begin{equation}
\gamma^{1/\alpha}_{\frac{\theta+\alpha}{\alpha}}\overset{d}=\frac{\gamma_{\theta+\alpha}}{S_{\alpha,\theta+\alpha}}\overset{d}=\frac{\gamma_{1+\theta}}{S_{\alpha,\theta}}.
\label{gammaid}
\end{equation}
Our first result is now given.
\begin{prop}\label{keyidgen} Let $\gamma_{1}$ and $\tau_{\alpha}(\zeta)$ be independent,  and use the fact that $\gamma_{1}\overset{d}=\tau_{\alpha}(\gamma_{1/\alpha}),$ and $\gamma_{1/\alpha}=\gamma'_{1}+\gamma_{(1-\alpha)/\alpha}.$ Then for any $\zeta\ge 0,$ fixed or random,
\begin{equation}
\frac{\zeta^{1/\alpha}\gamma_{1}}{\tau_{\alpha}(\zeta)}\overset{d}=
{(\gamma'_{1}+\zeta)}^{1/\alpha}-\zeta^{1/\alpha}.
\label{keydd}
\end{equation}

\begin{equation}
\frac{\gamma_{1}}{\tau_{\alpha}(\zeta)}\overset{d}=
\frac{\tau_{\alpha}\left(\gamma_{\frac{(1-\alpha)}{\alpha}}+\gamma'_{1}\right)}{\tau_{\alpha}(\zeta)}\overset{d}=
{\left(\frac{\gamma'_{1}+\zeta}{\zeta}\right)}^{1/\alpha}-1.
\label{keydd2}
\end{equation}
with conditional density, given $\zeta>0,$
\begin{equation}
\alpha \zeta{(y+1)}^{\alpha-1}{\mbox e}^{-\zeta((y+1)^{\alpha}-1)}.
\label{dengenid}
\end{equation}
\begin{proof}Conditioning on $\zeta,$ the survival distribution of $\gamma_{1}/\tau_{\alpha}(\zeta)$ is given by
$$
\mathrm{P}\left(\frac{\gamma_{1}}{\tau_{\alpha}(\zeta)}>y\right)={\mbox e}^{-\zeta\psi_{\alpha}(y)}=\mathrm{P}(\gamma'_{1}>\zeta((y+1)^{\alpha}-1)),
$$
which, by first conditioning on $\tau_{\alpha}(\zeta),$ and then taking its expected value, is the Laplace transform of $\tau_{\alpha}(\zeta),$ specified by~(\ref{GGexp}). It is clear that the last probability is the survival distribution of the random variable appearing on the right-hand side of (\ref{keydd2}). Since this holds for any positive $\zeta,$~(\ref{keydd}) follows by multiplying by $\zeta^{1/\alpha}$
\end{proof}
\end{prop}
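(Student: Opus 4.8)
The plan is to establish everything first for $\zeta$ equal to an arbitrary fixed constant $v>0$, and then to observe that, since $\gamma_{1}$ and the subordinator $\tau_{\alpha}$ are taken independent of $\zeta$, all the identities pass to random $\zeta$ simply by conditioning on its value and integrating out; in particular the conditional law of $\gamma_{1}/\tau_{\alpha}(\zeta)$ given $\zeta$ will come out to be exactly (\ref{dengenid}).

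The heart of the matter is a one-line computation of a survival function. Since $\gamma_{1}$ is standard exponential and independent of $\tau_{\alpha}(v)$, conditioning on $\tau_{\alpha}(v)=t$ gives $\mathrm{P}(\gamma_{1}>yt)=\mathrm{e}^{-yt}$, whence
\[
\mathrm{P}\!\left(\frac{\gamma_{1}}{\tau_{\alpha}(v)}>y\right)=\mathbb{E}\big[\mathrm{e}^{-y\tau_{\alpha}(v)}\big]=\mathrm{e}^{-v\psi_{\alpha}(y)}=\mathrm{e}^{-v((1+y)^{\alpha}-1)},
\]
using that $\tau_{\alpha}(v)$ has L\'evy exponent $v\psi_{\alpha}$ with $\psi_{\alpha}$ as in (\ref{GGexp}). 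The device I would exploit next --- the same ``a gamma$(1)$ variable turns a Laplace exponent back into a tail probability'' trick that pervades \cite{PPY92} and \cite{JamesLamperti} --- is to read the right-hand side as $\mathrm{P}\big(\gamma'_{1}>v((1+y)^{\alpha}-1)\big)$ for an independent standard exponential $\gamma'_{1}$, and then to solve the event $\{\gamma'_{1}>v((1+y)^{\alpha}-1)\}$ for $y$, which is equivalent to $\{((\gamma'_{1}+v)/v)^{1/\alpha}-1>y\}$. This gives the last equality in (\ref{keydd2}), and differentiating $1-\mathrm{e}^{-v((y+1)^{\alpha}-1)}$ in $y$ yields the conditional density (\ref{dengenid}).

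For the middle member of (\ref{keydd2}) I would record the subordination identity $\gamma_{1}\overset{d}{=}\tau_{\alpha}(\gamma_{1/\alpha})$, verified at once via Laplace transforms: conditioning on $\gamma_{1/\alpha}$ gives $\mathbb{E}[\mathrm{e}^{-\omega\tau_{\alpha}(\gamma_{1/\alpha})}]=\mathbb{E}[\mathrm{e}^{-\gamma_{1/\alpha}\psi_{\alpha}(\omega)}]=(1+\psi_{\alpha}(\omega))^{-1/\alpha}=(1+\omega)^{-1}$, the Laplace transform of a standard exponential. Splitting $\gamma_{1/\alpha}\overset{d}{=}\gamma'_{1}+\gamma_{(1-\alpha)/\alpha}$ into independent gamma summands and dividing through by an independent copy of $\tau_{\alpha}(\zeta)$ then produces the middle equality. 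Finally, (\ref{keydd}) is just (\ref{keydd2}) multiplied by $\zeta^{1/\alpha}$, since $\zeta^{1/\alpha}\big(((\gamma'_{1}+\zeta)/\zeta)^{1/\alpha}-1\big)=(\gamma'_{1}+\zeta)^{1/\alpha}-\zeta^{1/\alpha}$.

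I do not expect a genuine obstacle here: modulo the Laplace-transform bookkeeping above, the argument is routine. The only points deserving a word of care are (i) that the extension from fixed $v$ to random $\zeta$ is legitimate precisely because the driving randomness ($\gamma_{1}$ and $\tau_{\alpha}$) is independent of $\zeta$, so one may condition and then integrate the conditional density; and (ii) that in the middle term of (\ref{keydd2}) the numerator subordinator $\tau_{\alpha}(\gamma_{(1-\alpha)/\alpha}+\gamma'_{1})$ should be understood as an independent copy relative to the denominator $\tau_{\alpha}(\zeta)$, so that the statement there is an equality in distribution and not an a.s.\ identity under the interval-decomposition convention fixed earlier.
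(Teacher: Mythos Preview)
Your proof is correct and follows essentially the same approach as the paper: both compute the survival function $\mathrm{P}(\gamma_{1}/\tau_{\alpha}(\zeta)>y)$ by conditioning on $\tau_{\alpha}(\zeta)$ to obtain the Laplace transform $\mathrm{e}^{-\zeta\psi_{\alpha}(y)}$, then reinterpret this as $\mathrm{P}(\gamma'_{1}>\zeta((1+y)^{\alpha}-1))$ and solve for $y$. You are somewhat more explicit than the paper in verifying the middle equality of (\ref{keydd2}) via Laplace transforms and in noting the independence caveat there, but the underlying argument is the same.
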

Note setting $\zeta =0,$ in~(\ref{keydd}) gives the well known relation
\begin{equation}
\frac{\gamma_{1}}{S_{\alpha}}\overset{d}=\gamma^{1/\alpha}_{1}
\label{expstable}
\end{equation}
See \cite{Chaumont,DevroyeLinnik, JamesLamperti}. We now discuss a generalization to the case of variables $\gamma_{1}/S_{\alpha,\theta}.$ First by basic calculations involving the stable density, an expression for the survival distribution $\mathbb{S}_{\alpha,\theta}(y):=\mathrm{P}(\gamma_{1}/S_{\alpha,\theta}>y),$ is
\begin{equation}
\mathbb{S}_{\alpha,\theta}(y)=c_{\alpha,\theta}\int_{0}^{\infty}{\mbox e}^{-ys}s^{-\theta}f_{\alpha}(s)ds
\label{basicsurvival}
\end{equation}
The expression in~(\ref{basicsurvival}), in a practical sense, is not very appealing as it involves the generally unknown form of the stable density. However, this expression is important as it arises in formulae connected to stick-breaking and related quantities. In particular the cases $\theta=n\alpha,$ and $\theta=n.$
Now recall from~(\ref{PYlamperti}), that for $\theta>0$
$$
\frac{\tau_{\alpha}(\gamma_{\theta/\alpha})}{\gamma^{1/\alpha}_{\theta/\alpha}}\overset{d}=S_{\alpha,\theta}, {\mbox { and } } \tau_{\alpha}(\gamma_{\theta/\alpha})\overset{d}=\gamma_{\theta},
$$
specializing~ Proposition~\ref{keyidgen}, yields the following useful generalization of ~(\ref{expstable}), and importantly more tractable forms of~(\ref{basicsurvival}).
\begin{prop}For $\theta\ge 0,$ there is the identity
$$
E_{\alpha,\theta}:=\frac{\gamma_{1}}{S_{\alpha,\theta}}\overset{d}=
{\left(\gamma'_{1}+\gamma_{\frac{\theta}{\alpha}}\right)}^{1/\alpha}-\gamma_{\frac{\theta}{\alpha}}^{1/\alpha}.
$$
For $\theta>-\alpha,$ apply~(\ref{PPYfundid},\ref{Jamesfundid}), to obtain
\begin{equation}
E_{\alpha,\theta}\overset{d}=\beta_{\theta+\alpha,1-\alpha}E_{\alpha,\theta+\alpha}\overset{d}
=
\beta^{1/\alpha}_{\left(\frac{\theta+\alpha}{\alpha},\frac{1-\alpha}{\alpha}\right)}
E_{\alpha,1+\theta}.
\label{biascase1}
\end{equation}
\end{prop}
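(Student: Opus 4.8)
The plan is to obtain the first identity as a direct specialization of Proposition~\ref{keyidgen}, and then to read off the two tilting relations in~(\ref{biascase1}) by substituting the product representations~(\ref{PPYfundid}) and~(\ref{Jamesfundid}) into the definition of $E_{\alpha,\theta}$. First I would dispose of the endpoint $\theta=0$: there $S_{\alpha,0}=S_{\alpha}$ and $\gamma_{\theta/\alpha}=\gamma_{0}=0$, so the asserted right-hand side reduces to $(\gamma'_{1})^{1/\alpha}$, which is exactly~(\ref{expstable}). For $\theta>0$ I would invoke~(\ref{PYlamperti}), which lets me realize $S_{\alpha,\theta}$ as $\tau_{\alpha}(\gamma_{\theta/\alpha})/\gamma^{1/\alpha}_{\theta/\alpha}$ for the generalized gamma subordinator $(\tau_{\alpha}(s):s\ge 0)$ and an independent clock $\gamma_{\theta/\alpha}$, both taken independent of an auxiliary $\gamma_{1}$. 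Then $E_{\alpha,\theta}=\gamma_{1}/S_{\alpha,\theta}\overset{d}=\gamma^{1/\alpha}_{\theta/\alpha}\gamma_{1}/\tau_{\alpha}(\gamma_{\theta/\alpha})$, and applying~(\ref{keydd}) with the \emph{random} time $\zeta=\gamma_{\theta/\alpha}$ — legitimate, since Proposition~\ref{keyidgen} is stated for $\zeta$ fixed or random — yields $(\gamma'_{1}+\gamma_{\theta/\alpha})^{1/\alpha}-\gamma^{1/\alpha}_{\theta/\alpha}$, as claimed.

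For the relations valid on the wider range $\theta>-\alpha$, I would use that $E_{\alpha,\theta}=\gamma_{1}/S_{\alpha,\theta}$ with $\gamma_{1}$ independent of $S_{\alpha,\theta}$, together with the fact that both~(\ref{PPYfundid}) and~(\ref{Jamesfundid}) hold as genuine equalities for versions in which the two factors on the right are independent. Writing $S_{\alpha,\theta}=S_{\alpha,\theta+\alpha}\,\beta^{-1}_{\theta+\alpha,1-\alpha}$ and coupling $\gamma_{1}$ to be independent of both factors, the Beta factor pulls out of the quotient: $E_{\alpha,\theta}\overset{d}=\beta_{\theta+\alpha,1-\alpha}\,(\gamma_{1}/S_{\alpha,\theta+\alpha})=\beta_{\theta+\alpha,1-\alpha}E_{\alpha,\theta+\alpha}$, with $\beta_{\theta+\alpha,1-\alpha}$ independent of $E_{\alpha,\theta+\alpha}$. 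The same manipulation applied to $S_{\alpha,\theta}=S_{\alpha,\theta+1}\,\beta^{-1/\alpha}_{((\theta+\alpha)/\alpha,(1-\alpha)/\alpha)}$ gives $E_{\alpha,\theta}\overset{d}=\beta^{1/\alpha}_{((\theta+\alpha)/\alpha,(1-\alpha)/\alpha)}E_{\alpha,1+\theta}$.

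The computation is short, and I do not expect a genuine obstacle; the only point requiring care is the bookkeeping of independence — namely, that the auxiliary $\gamma_{1}$ entering $E_{\alpha,\theta}$ can always be coupled to be independent of whatever ingredients ($\tau_{\alpha}$, the clock $\gamma_{\theta/\alpha}$, or the Beta variables) appear in the representation of $S_{\alpha,\theta}$ being used, which is permissible because $E_{\alpha,\theta}$ depends only on the marginal law of $S_{\alpha,\theta}$. A secondary check is that the $\theta=0$ endpoint of the first identity is consistent both with~(\ref{PYlamperti}) in the limit $\theta\downarrow 0$ and with the known relation~(\ref{expstable}).
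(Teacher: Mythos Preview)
Your proposal is correct and follows essentially the same route as the paper: the first identity is obtained by invoking~(\ref{PYlamperti}) to represent $S_{\alpha,\theta}$ as $\tau_{\alpha}(\gamma_{\theta/\alpha})/\gamma^{1/\alpha}_{\theta/\alpha}$ and then specializing~(\ref{keydd}) with $\zeta=\gamma_{\theta/\alpha}$, while the relations in~(\ref{biascase1}) are read off directly from~(\ref{PPYfundid}) and~(\ref{Jamesfundid}) exactly as you describe. Your extra care about independence bookkeeping and the explicit treatment of the $\theta=0$ endpoint are reasonable additions that the paper leaves implicit.
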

\begin{rem} Setting $\theta=1,$ leads to the interesting identity
$$
\frac{\gamma_{1}}{S_{\alpha,1}}\overset{d}=\gamma^{1/\alpha}_{\frac{1}{\alpha}}\overset{d}=
{\left(\gamma'_{1}+\gamma_{\frac{1}{\alpha}}\right)}^{1/\alpha}-\gamma_{\frac{1}{\alpha}}^{1/\alpha},
$$
where the density of $\gamma^{1/\alpha}_{1/\alpha},$ evaluated at $y,$ is ${\mbox e}^{-y^{\alpha}}=\mathrm{E}[{\mbox e}^{-yS_{\alpha}}].$ Note the first in distribution equality follows from (\ref{gammaid}).
\end{rem}
We next describe the survival distribution which appears in the description of the joint distribution of stick-breaking and related components.
\begin{prop}It follows that for $\theta\ge 0,$
$$\mathbb{S}_{\alpha,\theta}(y):=\mathrm{P}(\gamma_{1}/S_{\alpha,\theta}>y)=
\mathrm{P}(\gamma'_{1}>{(y+\gamma^{1/\alpha}_{\theta/\alpha})}^{\alpha}-\gamma_{\theta/\alpha})
$$
Leading to various representations via change of variables.
\begin{enumerate}
\item[(i)]In particular, $\Gamma(\theta/\alpha)\mathbb{S}_{\alpha,\theta}(y)$ can be expressed as,
\begin{eqnarray}
% \nonumber to remove numbering (before each equation)
\nonumber \int_{0}^{\infty}{\mbox e}^{-{(x^{1/\alpha}+y)}^{\alpha}}x^{\theta/\alpha-1}dx &=&\alpha\int_{0}^{\infty}{\mbox e}^{-{(r+y)}^{\alpha}}r^{\theta-1}dr  \\
   &=&\alpha\int_{y}^{\infty}{\mbox e}^{-{s}^{\alpha}}{(s-y)}^{\theta-1}ds.
   \label{Sid}
\end{eqnarray}
\item[(ii)]For $\theta>-\alpha,$ combining (\ref{biascase1}) and (\ref{Sid}), leads to
$$
\mathbb{S}_{\alpha,\theta}(y)=\frac{1}{\Gamma(\frac{\theta+\alpha}{\alpha})}\int_{y^{\alpha}}^{\infty}{\mbox e}^{-r}{(r^{1/\alpha}-y)}^{\theta}dr.
$$\qed
\end{enumerate}
\end{prop}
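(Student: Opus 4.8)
The plan is to read off both the displayed identity and parts (i)--(ii) from the two preceding Propositions, so that no integration against the implicit stable density $f_{\alpha}$ is actually needed. For the first display I would start from the representation just obtained, $E_{\alpha,\theta}=\gamma_{1}/S_{\alpha,\theta}\overset{d}={(\gamma'_{1}+\gamma_{\theta/\alpha})}^{1/\alpha}-\gamma_{\theta/\alpha}^{1/\alpha}$, valid for $\theta\ge 0$ with $\gamma'_{1}$ and $\gamma_{\theta/\alpha}$ independent. Since $t\mapsto t^{1/\alpha}$ is strictly increasing, $\{E_{\alpha,\theta}>y\}=\{{(\gamma'_{1}+\gamma_{\theta/\alpha})}^{1/\alpha}>y+\gamma_{\theta/\alpha}^{1/\alpha}\}=\{\gamma'_{1}>{(y+\gamma_{\theta/\alpha}^{1/\alpha})}^{\alpha}-\gamma_{\theta/\alpha}\}$, and taking probabilities gives the asserted formula for $\mathbb{S}_{\alpha,\theta}(y)=\mathrm{P}(E_{\alpha,\theta}>y)$.

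For part (i) I would condition on $\gamma_{\theta/\alpha}=x$. The one elementary point is that ${(y+x^{1/\alpha})}^{\alpha}-x\ge 0$ for every $y\ge 0$, again by monotonicity of $t\mapsto t^{\alpha}$, so that, $\gamma'_{1}$ being exponential$(1)$ and independent of $\gamma_{\theta/\alpha}$, $\mathrm{P}(\gamma'_{1}>{(y+x^{1/\alpha})}^{\alpha}-x\mid\gamma_{\theta/\alpha}=x)={\mbox e}^{-({(y+x^{1/\alpha})}^{\alpha}-x)}$. Integrating against the $\mathrm{Gamma}(\theta/\alpha)$ density $\Gamma(\theta/\alpha)^{-1}x^{\theta/\alpha-1}{\mbox e}^{-x}$ the factor ${\mbox e}^{x}$ cancels, giving $\Gamma(\theta/\alpha)\mathbb{S}_{\alpha,\theta}(y)=\int_{0}^{\infty}{\mbox e}^{-{(x^{1/\alpha}+y)}^{\alpha}}x^{\theta/\alpha-1}\,dx$; the substitution $x=r^{\alpha}$ converts this to $\alpha\int_{0}^{\infty}{\mbox e}^{-{(r+y)}^{\alpha}}r^{\theta-1}\,dr$ and then $s=r+y$ to $\alpha\int_{y}^{\infty}{\mbox e}^{-s^{\alpha}}{(s-y)}^{\theta-1}\,ds$, which are the three forms in~(\ref{Sid}).

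For part (ii) I would first dispose of $\theta>0$ by integrating by parts in the last member of~(\ref{Sid}) with $u={\mbox e}^{-s^{\alpha}}$, $dv={(s-y)}^{\theta-1}\,ds$: the boundary terms vanish (at $s=y$ because $\theta>0$, at $s=\infty$ by stretched-exponential decay), leaving $\alpha\int_{y}^{\infty}{\mbox e}^{-s^{\alpha}}{(s-y)}^{\theta-1}\,ds=\tfrac{\alpha^{2}}{\theta}\int_{y}^{\infty}s^{\alpha-1}{\mbox e}^{-s^{\alpha}}{(s-y)}^{\theta}\,ds$, so that the identity $\theta\,\Gamma(\theta/\alpha)=\alpha\,\Gamma(\tfrac{\theta+\alpha}{\alpha})$ together with $r=s^{\alpha}$ produces the stated integral. (An alternative for $\theta>0$ bypassing~(\ref{Sid}): $\gamma_{1}\overset{d}=\beta_{1,\theta}\,\gamma_{1+\theta}$ with independent factors, so by~(\ref{gammaid}) $E_{\alpha,\theta}\overset{d}=\beta_{1,\theta}\,\gamma_{(\theta+\alpha)/\alpha}^{1/\alpha}$; conditioning on $\gamma_{(\theta+\alpha)/\alpha}=r$ and using $\mathrm{P}(\beta_{1,\theta}>y/r^{1/\alpha})={(r^{1/\alpha}-y)}^{\theta}/r^{\theta/\alpha}$, the power $r^{\theta/\alpha}$ cancels against the gamma density and the same integral drops out.) To reach all $\theta>-\alpha$ I would then continue analytically: by~(\ref{basicsurvival}), $\mathbb{S}_{\alpha,\theta}(y)=c_{\alpha,\theta}\,\mathbb{E}[{\mbox e}^{-yS_{\alpha}}S_{\alpha}^{-\theta}]$ extends to an analytic function of complex $\theta$ on $\{\mathrm{Re}\,\theta>-\alpha\}$ (where $\mathbb{E}[S_{\alpha}^{-\theta}]$ is finite and $c_{\alpha,\theta}$ has no pole), while $\tfrac{1}{\Gamma((\theta+\alpha)/\alpha)}\int_{y^{\alpha}}^{\infty}{\mbox e}^{-r}{(r^{1/\alpha}-y)}^{\theta}\,dr$ is analytic on the same region (the only issue, the singularity at the lower limit, being integrable for $\mathrm{Re}\,\theta>-\alpha$, and $1/\Gamma$ being entire); as the two agree for $\theta>0$ they agree throughout $\{\mathrm{Re}\,\theta>-\alpha\}$. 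Alternatively, for $-\alpha<\theta\le 0$ one has $\theta+\alpha>0$ and, by~(\ref{biascase1}), $E_{\alpha,\theta}\overset{d}=\beta_{\theta+\alpha,1-\alpha}E_{\alpha,\theta+\alpha}$, so $\mathbb{S}_{\alpha,\theta}(y)=\mathbb{E}\big[\mathbb{S}_{\alpha,\theta+\alpha}(y/\beta_{\theta+\alpha,1-\alpha})\big]$ reduces to the already-established case via a Beta--Gamma integral.

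The only genuine obstacle is precisely this passage to $-\alpha<\theta<0$ in (ii): there is no $\mathrm{Gamma}(\theta/\alpha)$ variable, the representation $E_{\alpha,\theta}\overset{d}={(\gamma'_{1}+\gamma_{\theta/\alpha})}^{1/\alpha}-\gamma_{\theta/\alpha}^{1/\alpha}$ is not available, and the integral in~(\ref{Sid}) diverges, so the conditioning argument cannot simply be rerun and one must appeal to~(\ref{biascase1}) or to analyticity. Everything else is bookkeeping: a monotone change of variable, the cancellation of a ${\mbox e}^{\pm x}$, and one integration by parts.
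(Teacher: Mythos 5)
Your argument is correct throughout, and for the opening display and part (i) it is exactly what the paper has in mind: read off the survival event from the representation $E_{\alpha,\theta}\overset{d}={(\gamma'_{1}+\gamma_{\theta/\alpha})}^{1/\alpha}-\gamma^{1/\alpha}_{\theta/\alpha}$ using monotonicity of $t\mapsto t^{1/\alpha}$, condition on $\gamma_{\theta/\alpha}=x$ so that $\mathrm{e}^{x}$ cancels against the gamma density, and then change variables $x=r^{\alpha}$ and $s=r+y$. Your observation that the only nontrivial point is the passage to $-\alpha<\theta<0$ in (ii) — where there is no $\gamma_{\theta/\alpha}$ and the integral in~(\ref{Sid}) diverges — is exactly right.

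For part (ii) your route is genuinely different from what the paper's cryptic hint "combining (\ref{biascase1}) and (\ref{Sid})" suggests. You first establish the formula for $\theta>0$ (by integrating~(\ref{Sid}) by parts, or more cleanly by factoring $\gamma_{1}\overset{d}=\beta_{1,\theta}\,\gamma_{1+\theta}$ and invoking~(\ref{gammaid}), which makes the $r^{\theta/\alpha}$ factors cancel immediately), and then you pass to $\theta>-\alpha$ by analytic continuation. The paper instead intends a single beta–gamma computation that covers the whole range $\theta>-\alpha$ at once: take the second equality of~(\ref{biascase1}), $E_{\alpha,\theta}\overset{d}=\beta^{1/\alpha}_{((\theta+\alpha)/\alpha,(1-\alpha)/\alpha)}\,E_{\alpha,1+\theta}$, apply~(\ref{Sid}) to $E_{\alpha,1+\theta}$ (valid since $1+\theta>1-\alpha>0$), and integrate out the beta variable; with the substitution $x\mapsto x/B$ the inner beta expectation reduces to a pure gamma integral in $1/v$ and the exponents conspire to leave $\frac{\alpha}{\Gamma((\theta+\alpha)/\alpha)}\int_{y}^{\infty}\mathrm{e}^{-s^{\alpha}}s^{\alpha-1}(s-y)^{\theta}\,ds$, which is the stated form after $r=s^{\alpha}$. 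Both approaches are sound; the paper's one-step computation avoids the need for analytic continuation, while yours is more elementary for $\theta>0$ and then invokes a soft but clean holomorphy argument. Your second alternative, reducing via $E_{\alpha,\theta}\overset{d}=\beta_{\theta+\alpha,1-\alpha}E_{\alpha,\theta+\alpha}$, is stated a bit loosely — the resulting beta–gamma integral is not as immediate as you imply and would need to be carried through — but the main line of argument does not rely on it.
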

We close with expressions for the density. By definition, the density of $E_{\alpha,\theta}=\gamma_{1}/S_{\alpha,\theta},$ for $\theta\ge 0$ can be expressed in terms of $f_{\alpha}$ as
$$
f_{E_{\alpha,\theta}}(y)=\frac{\Gamma(\theta+1)}{\Gamma(\frac{\theta}{\alpha}+1)}\int_{0}^{\infty}{\mbox e}^{-sy}s^{1-\theta}f_{\alpha}(s)ds
$$
However, since $1-\mathbb{S}_{\alpha,\theta}(y)$ is the distribution function of
$E_{\alpha,\theta},$ there are other forms of $f_{E_{\alpha,\theta}}(y)$ by taking the derivative of $1-\mathbb{S}_{\alpha,\theta}(y).$
\begin{prop}\label{denE }The density of $E_{\alpha,\theta}=\gamma_{1}/S_{\alpha,\theta},$ for $\theta\ge 0$  can be expressed as follows
\begin{enumerate}
\item[(i)]$
f_{E_{\alpha,\theta}}(y)=\frac{y^{\theta+\alpha-1}\alpha^{2}}{\Gamma(\frac{\theta}{\alpha})}\int_{0}^{\infty}{\mbox e}^{-{(r+1)}^{\alpha}y^{\alpha}}r^{\theta-1}{(1+r)}^{\alpha-1}dr.
$
\item[(ii)]$
f_{E_{\alpha,\theta}}(y)=\frac{y^{\theta+\alpha-1}\alpha}{\Gamma(\frac{\theta}{\alpha})}\int_{1}^{\infty}
{\mbox e}^{-vy^{\alpha}}{(v^{1/\alpha}-1)}^{\theta-1}dv.
$
\item[(iii)]$
f_{E_{\alpha,1+\theta}}(y)=\alpha\mathbb{S}_{\alpha,\theta}(y).$
\end{enumerate}
\end{prop}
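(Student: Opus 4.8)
The plan is to obtain all three forms from the survival function $\mathbb{S}_{\alpha,\theta}$ of the previous proposition, using the elementary fact that $\mathbb{S}_{\alpha,\theta}$ is exactly the survival function of $E_{\alpha,\theta}$, so that $f_{E_{\alpha,\theta}}(y)=-\partial_{y}\mathbb{S}_{\alpha,\theta}(y)$. For (i) I would take the middle expression in~(\ref{Sid}), $\Gamma(\theta/\alpha)\mathbb{S}_{\alpha,\theta}(y)=\alpha\int_{0}^{\infty}{\mbox e}^{-(r+y)^{\alpha}}r^{\theta-1}\,dr$, differentiate under the integral sign (justified by dominated convergence for $y$ in a compact subset of $(0,\infty)$, since $r^{\theta-1}$ is locally integrable at $0$ and ${\mbox e}^{-(r+y)^{\alpha}}\le{\mbox e}^{-r^{\alpha}}$ controls the tail), and use $\partial_{y}{\mbox e}^{-(r+y)^{\alpha}}=-\alpha(r+y)^{\alpha-1}{\mbox e}^{-(r+y)^{\alpha}}$ to get
$$
\Gamma(\theta/\alpha)f_{E_{\alpha,\theta}}(y)=\alpha^{2}\int_{0}^{\infty}(r+y)^{\alpha-1}{\mbox e}^{-(r+y)^{\alpha}}r^{\theta-1}\,dr .
$$
The substitution $r=yu$ pulls a factor $y^{\theta+\alpha-1}$ out of the integrand and turns $(r+y)^{\alpha-1},(r+y)^{\alpha},r^{\theta-1}$ into $y^{\alpha-1}(u+1)^{\alpha-1},y^{\alpha}(u+1)^{\alpha},y^{\theta-1}u^{\theta-1}$, which is (i) after relabelling $u$ as $r$.

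For (ii) I would change variables in (i) by $v=(1+r)^{\alpha}$, so that $(1+r)^{\alpha-1}\,dr=\alpha^{-1}\,dv$, $r=v^{1/\alpha}-1$ and $(1+r)^{\alpha}y^{\alpha}=vy^{\alpha}$, with $r\in(0,\infty)$ becoming $v\in(1,\infty)$; the factor $\alpha^{-1}$ absorbs one power of $\alpha$ in the prefactor and the integrand becomes exactly ${\mbox e}^{-vy^{\alpha}}(v^{1/\alpha}-1)^{\theta-1}$, giving (ii). For (iii) I would avoid a further substitution and instead compare $f_{\alpha}$-integral representations: by~(\ref{basicsurvival}), $\mathbb{S}_{\alpha,\theta}(y)=c_{\alpha,\theta}\int_{0}^{\infty}{\mbox e}^{-sy}s^{-\theta}f_{\alpha}(s)\,ds$, while the $f_{\alpha}$-integral form of the density recorded immediately before the proposition, applied with $\theta$ replaced by $1+\theta$, reads $f_{E_{\alpha,1+\theta}}(y)=c_{\alpha,1+\theta}\int_{0}^{\infty}{\mbox e}^{-sy}s^{-\theta}f_{\alpha}(s)\,ds$. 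Hence $f_{E_{\alpha,1+\theta}}(y)=(c_{\alpha,1+\theta}/c_{\alpha,\theta})\,\mathbb{S}_{\alpha,\theta}(y)$, and it remains only to simplify the ratio $c_{\alpha,1+\theta}/c_{\alpha,\theta}$ by means of $c_{\alpha,\theta}=\Gamma(\theta+1)/\Gamma(\theta/\alpha+1)$ and the recursion $\Gamma(z+1)=z\,\Gamma(z)$.

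The one genuine obstacle is purely the bookkeeping of Gamma-function constants and of the powers of $y$ and $\alpha$ across the substitutions — in particular, checking that the ratio of normalizing constants in (iii) collapses to the $\alpha$ on the right-hand side; all the analytic content is differentiation under the integral sign plus elementary changes of variables. As internal consistency checks, one can evaluate each identity at $y=0$, where $\mathbb{S}_{\alpha,\theta}(0)=1$ and every integral becomes a Gamma function, and verify that both sides integrate to $1$ against $dy$ using the moment identity~(\ref{moment}).
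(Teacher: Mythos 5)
Your plan for (i) and (ii) is exactly what the paper intends: the sentence immediately preceding the proposition says these ``other forms'' are obtained by taking the derivative of $1-\mathbb{S}_{\alpha,\theta}(y)$, and the manipulations you describe — differentiate the middle form of~(\ref{Sid}), rescale by $r=yu$, then change variables $v=(1+r)^{\alpha}$ — go through cleanly and reproduce (i) and (ii).

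Part (iii) is a different matter, and the step you defer as ``bookkeeping'' is in fact an obstruction. Your argument correctly gives $f_{E_{\alpha,1+\theta}}(y)=\bigl(c_{\alpha,1+\theta}/c_{\alpha,\theta}\bigr)\,\mathbb{S}_{\alpha,\theta}(y)$, but the ratio
$$
\frac{c_{\alpha,1+\theta}}{c_{\alpha,\theta}}
=(\theta+1)\,\frac{\Gamma(\theta/\alpha+1)}{\Gamma((\theta+1)/\alpha+1)}
=\alpha\,\frac{\Gamma\!\left(\frac{\theta+\alpha}{\alpha}\right)}{\Gamma\!\left(\frac{\theta+1}{\alpha}\right)}
$$
does not collapse to $\alpha$ by the recursion $\Gamma(z+1)=z\Gamma(z)$: the two Gamma arguments $\theta/\alpha+1$ and $(\theta+1)/\alpha+1$ differ by $1/\alpha$, not by $1$. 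For example, at $\alpha=1/2$, $\theta=1$ the ratio is $2\Gamma(3)/\Gamma(5)=1/6$, whereas $\alpha=1/2$; and a direct computation of $f_{E_{1/2,2}}(y)$ (via any of the integral forms) gives $\tfrac{1}{6}(\sqrt{y}+1)e^{-\sqrt{y}}$, not $\tfrac{1}{2}(\sqrt{y}+1)e^{-\sqrt{y}}=\tfrac12\mathbb{S}_{1/2,1}(y)$. The internal consistency check you propose would also have flagged this: integrating~(iii) in $y$ over $(0,\infty)$ gives $1$ on the left and $\alpha\,\mathbb{E}[S_{\alpha,\theta}^{-1}]$ on the right, and by~(\ref{moment}) this equals $\alpha\Gamma((\theta+1)/\alpha+1)\Gamma(\theta+1)/\bigl[\Gamma(\theta+2)\Gamma(\theta/\alpha+1)\bigr]$, which is $1$ only when $\alpha=1$. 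So the identity in~(iii) cannot be established as stated; the constant multiplying $\mathbb{S}_{\alpha,\theta}(y)$ must be $c_{\alpha,1+\theta}/c_{\alpha,\theta}=\alpha\Gamma(\tfrac{\theta+\alpha}{\alpha})/\Gamma(\tfrac{\theta+1}{\alpha})$, not $\alpha$. You should not assume the ``bookkeeping'' will resolve in favour of the printed statement — here it reveals an error in it.
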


\section{Extended $\mathrm{PG}(\alpha,\zeta)$ models}
As discussed in the previous Section, while the class of Pitman-Yor processes, and hence $\mathrm{PD}(\alpha,\theta)$ laws, for the range $\theta\ge 0$ are within the class of $\mathrm{PG}(\alpha,\zeta)$ models, those in the range $-\alpha<\theta<0$ are not. However, as shown in James~\cite{James2010}, one can express a Pitman-Yor process $P_{\alpha,\theta}$ as a normalized process for the entire range $0\leq \alpha<1, \theta>-\alpha,$ as follows
\begin{eqnarray}
% \nonumber to remove numbering (before each equation)
\nonumber  P_{\alpha,\theta}(y) &:=& P_{\alpha,1+\theta}(\lambda_{\alpha,\gamma_{\frac{(\theta+\alpha)}{\alpha}}}(y))  \\
   &=&\frac{\tau_{\alpha}(\gamma_{(\theta+\alpha)/\alpha}\mathbb{U}(y)+\gamma_{(1-\alpha)/\alpha}\indic_{\{U_{1}\leq y\}})}
   {\tau_{\alpha}(\gamma_{(\theta+\alpha)/\alpha}+\gamma_{(1-\alpha)/\alpha})}
\label{PYdef}\\
&=&(1-\beta_{1-\alpha,\theta+\alpha})P_{\alpha,\theta+\alpha}(y)+\beta_{1-\alpha,\theta+\alpha}\indic_{(U_{1}\leq y)}.
\label{deleterep}
\end{eqnarray}
where
\begin{equation}
\lambda_{\alpha,\gamma_{\frac{(\theta+\alpha)}{\alpha}}}(y)=
(1-\beta_{(\frac{1-\alpha}{\alpha},\frac{\theta+\alpha}{\alpha})})\mathbb{U}(y)+\beta_{(\frac{1-\alpha}{\alpha},
\frac{\theta+\alpha}{\alpha})}\indic_{(U_{1}\leq y)}
\label{PDsimple})
\end{equation}
is a simple bridge that is independent of the Pitman-Yor process $P_{\alpha,1+\theta}.$  In terms of an explicit construction on the same space we use the form where $P_{\alpha,1+\theta}$ is represented as a $\mathrm{PG}(\alpha,\gamma_{\frac{1+\theta}{\alpha}})$ bridge for  $\theta>-\alpha.$  That is,
$$
P_{\alpha,1+\theta}(y)=\frac{\tau_{\alpha}(\gamma_{(1+\theta)/\alpha} y)}{\tau_{\alpha}(\gamma_{(1+\theta)/\alpha})}{\mbox { and }}P_{\alpha,\theta+\alpha}(y)=\frac{\tau_{\alpha}(\gamma_{(\theta+\alpha)/\alpha} y)}{\tau_{\alpha}(\gamma_{(\theta+\alpha)/\alpha})}.
$$
Furthermore, the stick length, or otherwise the first size-biased pick from $\mathrm{PD}(\alpha,\theta)$ is
$$
\beta_{1-\alpha,\theta+\alpha}=\frac{\tau_{\alpha}(\gamma_{(1-\alpha)/\alpha})}{\tau_{\alpha}(\gamma_{(1+\theta)/\alpha})}=
P_{\alpha,1+\theta}(\beta_{(\frac{1-\alpha}{\alpha},
\frac{\theta+\alpha}{\alpha})}).
$$

Note there is also an interpretation in terms of T-partitions in the sense of Aldous and
Pitman~(\cite{AldousT}, Section 5.1). This follows, since in the general setting for $(L_{t}; 0\leq
t\leq 1),$  a local time process started at 0 governed by $\mathrm{PD}(\alpha,\theta),$ and letting
$\bar{L}_{t}=L_{t}/L_{1},$  there is the equivalence,
$$
P_{\alpha,\theta}(q)\overset{d}=\inf\{t:\bar{L}_{t}\ge q\}, 0\leq
q\leq 1.
$$
This also interprets as the time spent positive up till time $1$ of a generalized Bessel bridge skewed by $q.$ See Pitman and Yor~\cite{PY97} and James~\cite[Section 5]{JamesLamperti}. Further to this, as described in \cite[Section 5]{JamesLamperti} let $B=(B_{t}:t>0)$ denote a Bessel bridge of dimension $2-2\alpha,$ having normalized excursion lengths following a $\mathrm{PD}(\alpha,0)$ law and symmetrize $B$ such that $P(B_{t}>0)=q.$ Let
$A^{+}_{G_{1}}=\int_{0}^{G_{1}}\indic_{(B_{s}>0)}ds$ denote the
time spent positive of $B$ up till time $G_{1},$ which
is the time of the last zero of $B$ before time $1.$ Now from Pitman and Yor~(\cite{PY92},
Theorem 1.3.1), and Perman, Pitman and Yor~(\cite{PPY92}, Theorem
3.8, Lemma 3.11) the following holds under a change of measure from $\mathrm{PD}(\alpha,0)$ to $\mathrm{PD}(\alpha,\theta)$ ,
$$
(A^{+}_{G_{1}},G_{1})\overset{d}=(G_{1}A^{(br)}_{1},G_{1})\overset{d}=(\beta_{\theta+\alpha,1-\alpha}P_{\alpha,\theta+\alpha}(q),\beta_{\theta+\alpha,1-\alpha})
$$
which reflects a decomposition of $A^{+}_{1}:=\int_{0}^{1}\indic_{(B_{s}>0)}ds,$ corresponding to equation~(\ref{deleterep}),
$$
A^{+}_{1}\overset{d}=A^{+}_{G_{1}}+(1-G_{1})\xi_{q}\overset{d}=G_{1}A^{(br)}_{1}+(1-G_{1})\xi_{q}
$$
for $\xi_{q}$ a $\mathrm{Bernoulli}(q)$ variable. Note further from \cite[Section 5]{JamesLamperti} or otherwise from equation, written in terms of Bernoulli variables,
\begin{eqnarray*}
\nonumber P_{\alpha,\theta}(q)& = & P_{\alpha,1+\theta}
\left(\beta_{(\frac{\theta+\alpha}{\alpha},\frac{1-\alpha}{\alpha})}q+
(1-\beta_{(\frac{\theta+\alpha}{\alpha},\frac{1-\alpha}{\alpha})})
\xi_{q}\right)\\
                   & = & \beta_{\theta+\alpha,1-\alpha}P_{\alpha,\theta+\alpha}(q)+(1-\beta_{\theta+\alpha,1-\alpha})\xi_{q},
\end{eqnarray*}
there is the relation
\begin{equation*}
 P_{\alpha,1+\theta}
\left(\beta_{(\frac{\theta+\alpha}{\alpha},\frac{1-\alpha}{\alpha})}q\right)=\beta_{\theta+\alpha,1-\alpha}P_{\alpha,\theta+\alpha}(q).
\label{coagratecorrespondence}
\end{equation*}

Equation~(\ref{deleterep}) is also a special case of the following equation holding for $0\leq \delta\leq 1,$
\begin{equation}
P_{\alpha,\theta}(y)=\beta_{\theta+\alpha\delta,1-\alpha\delta}P_{\alpha,\theta+\alpha\delta}(y)+
(1-\beta_{\theta+\alpha\delta,1-\alpha\delta})P_{\alpha,-\alpha\delta}(y).
\label{Pitfrag}
\end{equation}
Using a recursive argument it is easy to check that~(\ref{Pitfrag}) encodes Pitman's\cite{Pit99} fragmentation operator through the following representation,
\begin{equation}
P_{\alpha,\theta}(y)=\sum_{k=1}^{\infty}(1-W_{k,\delta})\prod_{l=1}^{k-1} W_{l,\delta}P^{(k)}_{\alpha,-\alpha\delta}(y)
\label{Pitfrag2}
\end{equation}
where $(W_{k,\delta})\sim GEM(\alpha\delta,\theta)$ independent of a sequence of iid $\mathrm{PD}(\alpha,-\alpha\delta)$ bridges $(P^{(k)}_{\alpha,-\alpha\delta}).$

These points motivate the definition of  an extended $\mathrm{PG}(\alpha,\zeta)$ class of models. First, throughout, set
\begin{equation}
\varepsilon_{\alpha}\overset{d}=\gamma_{\frac{1-\alpha}{\alpha}},
\label{gammanotation}
\end{equation}

and notice that $\tau_{\alpha}(\varepsilon_{\alpha})\overset{d}=\gamma_{1-\alpha}.$
This choice of distribution is natural as it appears in the distribution of all sized biased jumps generated by the $\mathrm{PK}_{\alpha}(h\cdot f_{\alpha})$ class. See also the discussion in Pitman and Yor~\cite[Section 3]{PY92}.
\subsection{$\mathrm{EPG}(\alpha,\zeta)$-bridge}
Now, as a generalization of~(\ref{PDsimple}), for each $\zeta\ge 0,$ set
$$
q_{\alpha,\zeta}=\frac{\zeta}{\varepsilon_{\alpha}+\zeta}
$$
and define a simple bridge,
\begin{equation}
\lambda_{\alpha,\zeta}(y):=q_{\alpha,\zeta}\mathbb{U}(y)+(1-q_{\alpha,\zeta})\indic
_{\{U_{1}\leq y\}}.
\label{basicbridge}
\end{equation}
Using the same $(\varepsilon_{\alpha},\zeta)$ variables, construct a  $\mathrm{PG}(\alpha,\varepsilon_{\alpha}+\zeta)-\mathrm{bridge} ,$
$Q_{\alpha,\varepsilon_{\alpha}+\zeta}.$ Then we say a random cumulative distribution function $F_{\alpha,\zeta}$ is an extended $\mathrm{PG}(\alpha,\zeta)$-bridge, with law denoted as $\mathrm{EPG}(\alpha,\zeta)$-bridge, if
\begin{equation}
F_{\alpha,\zeta}(y)\overset{d}=Q_{\alpha,\varepsilon_{\alpha}+\zeta}(\lambda_{\alpha,\zeta}(y)), y\in [0,1].
\label{EPGbridge}
\end{equation}
Using
$
\tau_{\alpha}((\varepsilon_{\alpha}+\zeta) \lambda_{\alpha,\zeta}(y))=\tau_{\alpha}(\zeta y+\varepsilon_{\alpha}\indic
_{\{U_{1}\leq y\}} )=\tau_{\alpha}(\zeta y)+\tau_{\alpha}(\varepsilon_{\alpha})\indic_{\{U_{1}\leq y\}}
$
shows that $F_{\alpha,\zeta}$ can be expressed as an exchangeable bridge without dust,
\begin{equation}
F_{\alpha,\zeta}(y)\overset{d}=\frac{\tau_{\alpha}(\zeta)}{\tau_{\alpha}(\varepsilon_{\alpha})+\tau_{\alpha}(\zeta)}Q_{\alpha,\zeta}(y)
+\frac{\tau_{\alpha}(\varepsilon_{\alpha})}{\tau_{\alpha}(\varepsilon_{\alpha})+\tau_{\alpha}(\zeta)}\indic_{\{U_{1}\leq y\}}.
\label{EPGbridge2}
\end{equation}
Note that $\tau_{\alpha}(\varepsilon_{\alpha})\overset{d}=\gamma_{1-\alpha},$ and is, importantly, independent of $Q_{\alpha,\zeta},$ and $\tau_{\alpha}(\zeta).$
Hereafter, set
\begin{equation}
\tilde{P}^{\dagger}_{\alpha,\zeta}=1-Q_{\alpha,\varepsilon_{\alpha}+\zeta}(q_{\alpha,\zeta})=
\frac{\tau_{\alpha}(\varepsilon_{\alpha})}{\tau_{\alpha}(\varepsilon_{\alpha})+\tau_{\alpha}(\zeta)}\overset{d}=\frac{\gamma_{1-\alpha}}
{\gamma_{1-\alpha}+\tau_{\alpha}(\zeta)}.
\label{LittleBigp}
\end{equation}
Note, we shall later verify that $\tilde{P}^{\dagger}_{
\alpha,\zeta}$ is the first size-biased pick from $\mathrm{EPG}(\alpha,\zeta).$
It is now easy to see from~(\ref{EPGbridge2}) that,
$$
F_{\alpha,\gamma_{\frac{\theta+\alpha}{\alpha}}}(y)\overset{d}=P_{\alpha,\theta}(y){\mbox { for }}\theta>-\alpha.
$$
Furthermore,
$
F_{\alpha,0}(y)\overset{d}=\indic_{\{U_{1}\leq y\}}.
$
\begin{rem}Note that we have used a more liberal in distribution definition of $F_{\alpha,\zeta}.$ In subsequent Sections we shall take a version of $F_{\alpha,\zeta}$ that is equivalent to the constructions on the right hand side of the above  equations (\ref{EPGbridge}) and (\ref{EPGbridge2}).
\end{rem}
\subsection{$\mathrm{EPG}(\alpha,\zeta)$ masses}\label{sampling}
If $(P_{i})\in\mathcal{P}_{\infty}$ are the ranked probability masses of $F_{\alpha,\zeta},$ then the law of this sequence is denoted as $\mathrm{EPG}(\alpha,\zeta),$ and from~(\ref{EPGbridge2}), satisfies
\begin{equation}
(P_{i})\overset{d}=\mathrm{Rank}((1-\tilde{P}_{1,\zeta})(Q_{k}),\tilde{P}^{\dagger}_{1,\zeta})\sim \mathrm{EPG}(\alpha,\zeta)
\label{EPGsequenceInsert}
\end{equation}
where $(Q_{k})\sim \mathrm{PG}(\alpha,\zeta)$ and $\tilde{P}^{\dagger}_{1,\zeta}={\tau_{\alpha}(\varepsilon_{\alpha})}/{(\tau_{\alpha}(\varepsilon_{\alpha})+\tau_{\alpha}(\zeta))}.$
Additionally, using the representation in $(\ref{EPGbridge}),$ it follows that as in~(\ref{indicator}) we can construct indicator variables
\begin{equation}
I_{k}=\indic_{\{\lambda^{-1}_{\alpha,\zeta}(U'_{k})=U_{1}\}}\sim \mathrm{Bernoulli} (1-q_{\alpha,\zeta}),
\label{indicatorzeta}
\end{equation}
where the $(U'_{k})$ are the atoms of the bridge $Q_{\alpha,\varepsilon_{\alpha}+\zeta},$ such that obviously from the construction in $(\ref{EPGbridge}),$
\begin{equation}
(P_{i})\overset{d}=\mathrm{Rank}((Q_{l}:I_{l}=0);\sum_{\{k:I_{k}=1\}}Q_{k} )\sim \mathrm{EPG}(\alpha,\zeta)
\label{EPGcoag}
\end{equation}
when $(Q_{k})\sim \mathrm{PG}(\alpha,\varepsilon_{\alpha}+\zeta).$

Using the same mixing distribution for $\zeta,$ (\ref{zetamix}), it follows that
$$
\mathrm{EPG}(\alpha,\gamma_{1})=\mathrm{PD}(\alpha,0)
$$
is the canonical distribution for the $\mathrm{EPG}(\alpha,\zeta)$ model. Also
$$
\mathrm{EPG}(\alpha,\gamma_{1+\theta/\alpha})=\mathrm{PD}(\alpha,\theta) {\mbox { for }}\theta>-\alpha.
$$
When $\zeta=0,$ $\mathrm{EPG}(\alpha,0)$ is the distribution that assigns point mass to the vector $(1,0,0,\ldots)\in \mathcal{P}_{\infty}.$ Note that by taking limits, $\theta\rightarrow -\alpha,$ we can define $\mathrm{PD}(\alpha,-\alpha):=\mathrm{EPG}(\alpha,0).$
\subsection{Sampling from an $\mathrm{EPG}(\alpha,\zeta)$-bridge}\label{samplingcoag}
Before continuing with the structural properties of $\mathrm{EPG}(\alpha,\zeta),$ we look at  the classical Bayesian framework of sampling from a discrete random probability measure. Notice that conditional on $(\varepsilon_{\alpha},\zeta, \lambda_{\alpha,\zeta}),$ (\ref{EPGbridge}) shows that $F_{\alpha,\zeta}$ is a $\mathrm{NGG}(\alpha,\varepsilon_{\alpha}+\zeta)$ random cumulative distribution function with base measure $\lambda_{\alpha,\zeta}.$
In addition, suppose that $X_{1},\ldots,X_{n}|F_{\alpha,\zeta}$ are iid $F_{\alpha,\zeta},$ then
\begin{equation}
X_{i}\overset{d}=F^{-1}_{\alpha,\zeta}(U'_{i})\overset{d}=\lambda^{-1}_{\alpha,\zeta}\circ Q^{-1}_{\alpha,\varepsilon_{\alpha}+\zeta}(U'_{i})
\label{quantileX}
\end{equation}
where $(U'_{i})$ are iid Uniform$[0,1]$ independent of $F_{\alpha,\zeta}.$ Now let $\hat{U}_{j}$ for $j=1,\dots, \hat{K}_{n}$ denote the unique iid Uniform$[0,1]$ values obtained by sampling from $F_{\alpha,\zeta}.$  It follows that ${\{B_{1},\ldots,B_{\hat{K}_{n}}\}},$ defined by
$$
B_{j}={\{i:F^{-1}_{\alpha,\zeta}(U'_{i})=\hat{U}_{j}\}},
$$
for $j=1,\dots, \hat{K}_{n}$ forms a random partition of $[n],$ whose law is an $\mathrm{EPG}(\alpha,\zeta)$-{EPPF}. Since it is known how to sample a random partition of $[n]$ from a $\mathrm{PG}(\alpha,\zeta)$-EPPF, equation~(\ref{quantileX}) shows how one can obtain ${\{B_{1},\ldots,B_{\hat{K}_{n}}\}},$ via a practical sampling scheme involving several stages.
\begin{enumerate}
\item[(i)] First draw $(\varepsilon_{\alpha},\zeta),$ or directly $(\varepsilon_{\alpha}+\zeta,\varepsilon_{\alpha}/(\varepsilon_{\alpha}+\zeta))$
\item[(ii)] Draw a random partition of $[n],$ ${\{A_{1},\ldots,A_{K_{n}}\}}$ from a $\mathrm{PG}(\alpha,\varepsilon_{\alpha}+\zeta)$-EPPF.
Practically, this is obtained by a Generalized Chinese restaurant process sampling procedure,which does not actually involve evaluating $Q^{-1}_{\alpha,\varepsilon_{\alpha}+\zeta}(U'_{i}).$ Furthermore, conditional on
$\varepsilon_{\alpha}+\zeta=t,$ one can use the EPPF of an $\mathrm{NGG}(\alpha,t)$ process.
\item[(iii)] Draw $(U^{*}_{1},\ldots,U^{*}_{K_{n}})$ iid Uniform$[0,1]$ variables.
\item[(iv)] Recall that $U_{1}$ is the atom of $\lambda_{\alpha,\zeta},$ and has a Uniform$[0,1]$ distribution. The partition
${\{B_{1},\ldots,B_{\hat{K}_{n}}\}},$ is obtained as follows. Blocks of ${\{A_{1},\ldots,A_{K_{n}}\}},$ are merged into a set $B'_{1}$ defined as
$$
B'_{1}={\{A_{i}:\lambda^{-1}_{\alpha,\zeta}(U^{*}_{i})=U_{1}\}},
$$
if $B'_{1}$ is not empty, set $B_{1}=B'_{1}, $the remaining ${K}_{n}-|B'_{1}|=\hat{K}_{n}-1$ blocks of ${\{A_{1},\ldots,A_{K_{n}}\}},$ are relabeled $B_{2},\ldots,B_{\hat{K}_{n}}.$ If $B'_{1}=\emptyset,$ $\hat{K}_{n}=K_{n}$
and one sets $B_{k}=A_{k}$ for $k=1,\ldots,K_{n}.$
\item[(v)] Note in the Pitman-Yor case, $\lambda_{\alpha,\gamma_{(\theta+\alpha)/\alpha}}$ and $P_{\alpha,1+\theta}$ are independent, so conditioning as in [(ii)] is not necessary.
\end{enumerate}
If an application requires the use of a discrete (non-random) base measure, $H,$  such as described in \cite{Buntine,Teh,Wood1,Wood2} for models arising in Machine Learning, say of the form
$$
H(y)=\sum_{k=1}^{N}q_{k}\indic_{\{w_{k}\leq y\}},
$$
for fixed points $(w_{k})$ and probability weights $(q_{k})$ summing to $1.$ First compose $H$ with $F_{\alpha,\zeta},$ i.e. $F_{\alpha,\zeta}\circ H,$ which results in $\mathbb{E}[F_{\alpha,\zeta}(H(y)]=\mathbb{U}\circ H(y)=H(y).$ Then a partition of $[n]$ generated from $F_{\alpha,\zeta} \circ H$ is obtained by adding the following additional step to the procedure described above.
\begin{enumerate}
\item[(D)]\textsc{Step for discrete base measures.} Generate sets $C_{1},\ldots,C_{N}$ by
$$
C_{k}=\{B_{j}:H^{-1}(\hat{U}_{j})=w_{k}\}.
$$
The non-empty sets form a partition of $[n]$
\end{enumerate}
\begin{rem}\label{Binomial}
Note that conditional on $K_{n}$ and $\varepsilon_{\alpha}/(\varepsilon_{\alpha}+\zeta),$
the chance that each block $A_{i}$ is in $B'_{1}$ follows an independent Bernoulli distribution with success probability $1-q_{\alpha,\zeta}=\varepsilon_{\alpha}/(\varepsilon_{\alpha}+\zeta).$ Hence it follows that  $|B'_{1}|,$ the size of $B'_{1},$ follows a Binomial distribution with parameters $K_{n}=k$ and $1-q_{\alpha,\zeta}$ as specified.
\end{rem}
\subsection{First comments on BDGM style coagulation/fragmentation}
The sampling scheme in Section~\ref{samplingcoag} can be seen as a coagulation operation on partitions of $[n]$ as described in~Bertoin\cite[Section 4.2.1]{BerFrag}, except here the coag operator, induced by the simple bridge $\lambda_{\alpha,\zeta},$ is in general not independent of the input which is a partition ${\{A_{1},\ldots,A_{K_{n}}\}},$ having the distribution of a $\mathrm{PG}(\alpha,\varepsilon_{\alpha}+\zeta)-\mathrm{EPPF}.$ The resulting output ${\{B_{1},\ldots,B_{\hat{K}_{n}}\}},$ is shown to have the distribution of a $\mathrm{EPG}(\alpha,\zeta)-\mathrm{EPPF}.$ This coagulation operation on $\mathbb{N}={\{1,2,3,\ldots\}}$ expressed here in the general $\mathrm{EPG}(\alpha,\zeta)$ setting is, by Kingman's correspondence, in bijection to the coagulation operation on $\mathcal{P}_{\infty}$ that was described in~
Bertoin and Goldschmidt~\cite{BertoinGoldschmidt2004} and Dong, Goldschmidt, and
Martin(DGM)~\cite{Dong2006} (denoted as BDGM), for the case of $\mathrm{PD}(\alpha,\theta)$ models. That is the coagulation of $\mathrm{PD}(\alpha,1+\theta)$ partition of $[n],$ or $\mathbb{N},$ by an independent bridge with $q_{\alpha,\gamma_{(\theta+\alpha)/\alpha}}\overset{d}=\beta_{(\theta+\alpha)/\alpha,(1-\alpha)/\alpha}$ leads to a partition derived from a $\mathrm{PD}(\alpha,\theta)$ distribution. Their dual $\mathrm{PD}(\alpha,1-\alpha)$ fragmentation operator can be encoded by the following bridge equation
$$
P_{\alpha,1+\theta}(y)=(1-\beta_{1-\alpha,\theta+\alpha})P_{\alpha,\theta+\alpha}(y)+\beta_{1-\alpha,\theta+\alpha}P_{\alpha,1-\alpha}(y)
$$
where all quantities on the right hand side are independent and
$$
P_{\alpha,1+\theta}(\beta_{\left(\frac{1-\alpha}{\alpha},\frac{\theta+\alpha}{\alpha}\right)})= \beta_{1-\alpha,\theta+\alpha},
$$
is the first size biased pick from a $\mathrm{PD}(\alpha,\theta)$ model.

We find it interesting to elaborate on the partition of $[n]$ based viewpoint of the BDGM coagulation operator as described in the previous Section.  Note that Remark~\ref{Binomial} shows that in step[(iv)] one is performing some sort of $p_{\alpha,\zeta}=(1-q_{\alpha,\zeta})$ merger in the language of
Berestycki~\cite[p. 69-70]{Berestycki}. That is $j$ of the ${\{A_{1},\ldots,A_{K_{n}}\}},$ blocks are said to coalesce if $|B'_{1}|=j\ge 2.$ The next result, which follows from elementary calculations,  describes some more details about the distribution of $|B'_{1}|$ in the $\mathrm{PD}(\alpha,\theta)$ setting.
Write
$$
\rho_{\alpha,\theta}(p)=\frac{\Gamma(\frac{1+\theta}{\alpha})}
{\Gamma(\frac{\theta+\alpha}{\alpha})\Gamma(\frac{1-\alpha}{\alpha})}p^{1/\alpha}{(1-p)}^{\theta/\alpha}
$$

\begin{prop}In the $\mathrm{PD}(\alpha,\theta)$ setting of Section~\ref{samplingcoag}, given blocks  ${\{A_{1},\ldots,A_{K_{n}}\}},$ formed by a $\mathrm{PD}(\alpha,1+\theta)-\mathrm{EPPF},$ in particular given $K_{n}=b\ge 2,$ $2 \leq j\leq b$ blocks are said to coalesce into a single block $B'_{1}$ if $|B'_{1}|=j,$ otherwise no blocks coalesce if  $|B'_{1}|=0$ or $1.$ The general distribution of $|B'_{1}|,$ given $K_{n}=b$ is a  mixed Binomial distribution with probability mass function
$$
p_{\alpha,\theta}(j|b)={b \choose j}\int_{0}^{1}p^{j-2}(1-p)^{b-j}\rho_{\alpha,\theta}(p)dp$$
which can be expressed as
$$
p_{\alpha,\theta}(j|b)={b \choose j}\frac{\Gamma(\frac{1+\theta}{\alpha})
\Gamma(\frac{\theta+\alpha}{\alpha}+b-j)\Gamma(\frac{1}{\alpha}+j-1)}
{\Gamma(\frac{\theta+\alpha}{\alpha})\Gamma(\frac{1-\alpha}{\alpha})
\Gamma(\frac{1+\theta}{\alpha}+b)}
$$

\begin{enumerate}
\item[(i)] In the Brownian cases, $\mathrm{PD}(1/2,\theta),$ $\theta>-1/2,$
$$
p_{1/2,\theta}(j|b)=\frac{(2\theta+1)(2\theta+b-j)!b!}{(2\theta+b+1)!(b-j)!}
$$
In particular $p_{1/2,0}(j|b)=1/(b+1)$ is the discrete uniform distribution on ${\{0,\ldots,b\}},$ and
$p_{1/2,1/2}(j|b)=2(b+1-j)/[(b+1)(b+2)].$
\item[(ii)] In the limiting Dirichlet case, $\mathrm{PD}(0,\theta),$
$$
p_{0,\theta}(j|b)={b \choose j}p^{j}_{\theta}{(1-p_{\theta})}^{b-j}
$$
is a proper Binomial distribution with success probability $p_{\theta}=1/(\theta+1),$ for $\theta>0.$
\end{enumerate}
\end{prop}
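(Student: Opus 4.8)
The plan is to combine Remark~\ref{Binomial} with the beta--gamma algebra and a single Beta integral. First I would invoke Remark~\ref{Binomial}: conditionally on $K_{n}=b$ and on the value $p=1-q_{\alpha,\zeta}=\varepsilon_{\alpha}/(\varepsilon_{\alpha}+\zeta)$, each of the $b$ blocks $A_{i}$ is independently placed in $B'_{1}$ with probability $p$, so $|B'_{1}|$ is $\mathrm{Binomial}(b,p)$. Next I would specialize to the Pitman--Yor case $\zeta\overset{d}=\gamma_{(\theta+\alpha)/\alpha}$, where $\varepsilon_{\alpha}\overset{d}=\gamma_{(1-\alpha)/\alpha}$ is independent of $\zeta$, so that $\varepsilon_{\alpha}+\zeta\overset{d}=\gamma_{(1+\theta)/\alpha}$ and $p\overset{d}=\beta_{(1-\alpha)/\alpha,\,(\theta+\alpha)/\alpha}$. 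The crucial point is that by the beta--gamma algebra $p$ is independent of $\varepsilon_{\alpha}+\zeta$, and the partition $\{A_{1},\ldots,A_{K_{n}}\}$ is generated from $\mathrm{PD}(\alpha,1+\theta)=\mathrm{PG}(\alpha,\gamma_{(1+\theta)/\alpha})$ using only $\gamma_{(1+\theta)/\alpha}$, the stable subordinator, and the allocation uniforms --- equivalently, $\lambda_{\alpha,\gamma_{(\theta+\alpha)/\alpha}}$ is independent of $P_{\alpha,1+\theta}$ by step~(v) of Section~\ref{samplingcoag}. Hence $K_{n}$ is independent of $p$, and the conditional law of $|B'_{1}|$ given $K_{n}=b$ is the plain mixture of $\mathrm{Binomial}(b,p)$ against the law of $p$. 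This reduction is the step I expect to be the main (if mild) obstacle, since without this independence one would be left with a conditional expectation given $K_{n}=b$ rather than an unconditional one.

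The second step is the bookkeeping identifying the density of $p$ with $p^{-2}\rho_{\alpha,\theta}(p)$. The density of $\beta_{(1-\alpha)/\alpha,(\theta+\alpha)/\alpha}$ is $\frac{\Gamma((1+\theta)/\alpha)}{\Gamma((1-\alpha)/\alpha)\Gamma((\theta+\alpha)/\alpha)}\,p^{(1-2\alpha)/\alpha}(1-p)^{\theta/\alpha}$, and comparing with $\rho_{\alpha,\theta}(p)=\frac{\Gamma((1+\theta)/\alpha)}{\Gamma((\theta+\alpha)/\alpha)\Gamma((1-\alpha)/\alpha)}\,p^{1/\alpha}(1-p)^{\theta/\alpha}$ one sees the two differ exactly by the factor $p^{(1-2\alpha)/\alpha-1/\alpha}=p^{-2}$. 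Therefore
\[
p_{\alpha,\theta}(j\mid b)=\int_{0}^{1}\binom{b}{j}p^{j}(1-p)^{b-j}\,p^{-2}\rho_{\alpha,\theta}(p)\,dp=\binom{b}{j}\int_{0}^{1}p^{j-2}(1-p)^{b-j}\rho_{\alpha,\theta}(p)\,dp,
\]
which is the first displayed formula (summing over $j$ recovers $1$ since $p^{-2}\rho_{\alpha,\theta}$ is a genuine density). The closed form then follows by substituting the explicit $\rho_{\alpha,\theta}$ and evaluating the Beta integral $\int_{0}^{1}p^{j-2+1/\alpha}(1-p)^{b-j+\theta/\alpha}\,dp=\Gamma(\tfrac1\alpha+j-1)\Gamma(\tfrac{\theta+\alpha}{\alpha}+b-j)/\Gamma(\tfrac{1+\theta}{\alpha}+b)$ and combining with the constant in $\rho_{\alpha,\theta}$; since $1/\alpha>1$ the exponents are large enough for convergence at $p=0$ even for $j\in\{0,1\}$, so the same expression also covers the non-coalescence cases.

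For part~(i) I would set $\alpha=1/2$, so $1/\alpha=2$, $\theta/\alpha=2\theta$, $(\theta+\alpha)/\alpha=2\theta+1$, $(1-\alpha)/\alpha=1$, $(1+\theta)/\alpha=2\theta+2$; the Gamma ratios collapse via $\Gamma(n+1)=n!$ and $\Gamma(2\theta+2)/\Gamma(2\theta+1)=2\theta+1$, and after using $\binom{b}{j}\Gamma(1+j)=b!/(b-j)!$ one obtains $(2\theta+1)(2\theta+b-j)!\,b!/[(2\theta+b+1)!(b-j)!]$; substituting $\theta=0$ and $\theta=1/2$ gives the discrete uniform and the stated triangular law. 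For part~(ii) I would pass to the limit $\alpha\to0$: the Beta density $p^{-2}\rho_{\alpha,\theta}(p)$ of $\beta_{(1-\alpha)/\alpha,(\theta+\alpha)/\alpha}$ has mean $(1-\alpha)/(1+\theta)\to 1/(1+\theta)$ and variance $\to0$, hence converges weakly to the point mass at $p_{\theta}=1/(\theta+1)$; applying this to the bounded continuous test function $p\mapsto\binom{b}{j}p^{j}(1-p)^{b-j}$ yields $p_{0,\theta}(j\mid b)=\binom{b}{j}p_{\theta}^{j}(1-p_{\theta})^{b-j}$, a genuine Binomial with success probability $p_{\theta}=1/(\theta+1)$.

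Apart from that independence reduction, the remaining work is entirely routine: recognizing the mixing density as a Beta law via beta--gamma algebra, noting the extra $p^{-2}$ factor relative to $\rho_{\alpha,\theta}$ (which is why the displayed integrand carries $p^{j-2}$ rather than $p^{j}$), and one Beta-function evaluation, with the two listed specializations obtained by plugging in $\alpha=1/2$ and by letting $\alpha\to0$.
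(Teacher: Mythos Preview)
Your proposal is correct and follows exactly the route the paper has in mind: the paper states only that the result ``follows from elementary calculations,'' and your argument supplies precisely those calculations by combining Remark~\ref{Binomial} with the independence noted in step~(v) of Section~\ref{samplingcoag}, then identifying the mixing law as $\mathrm{Beta}((1-\alpha)/\alpha,(\theta+\alpha)/\alpha)$ and evaluating the resulting Beta integral. Your observation that the Beta density equals $p^{-2}\rho_{\alpha,\theta}(p)$ cleanly explains the $p^{j-2}$ in the displayed integrand, and your treatment of the $\alpha=1/2$ specialization and the $\alpha\to 0$ degeneration to a point mass at $1/(1+\theta)$ are both sound.
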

Note that when $\theta=0,$ corresponding to a $\mathrm{PD}(\alpha,0)$ model, there is not much simplification for general $\alpha.$ The dual $\mathrm{PD}(\alpha,1-\alpha)$ fragmentation operation of \cite{BertoinGoldschmidt2004,Dong2006} in this case is encoded in the equation
$$
P_{\alpha,1}(y)=(1-\beta_{1-\alpha,\alpha})P_{\alpha,\alpha}+\beta_{1-\alpha,\alpha}P_{\alpha,1-\alpha}(y)
$$
where the variables on the right hand side are independent and,
$$
P_{\alpha,1}(\beta_{\left(\frac{1-\alpha}{\alpha},1\right)})= \beta_{1-\alpha,\alpha},
$$
is the first size biased pick from  $\mathrm{PD}(\alpha,0).$ Their recursive procedure, in general employing a different rule $p_{\alpha,\theta+k-1}$ varying at each step $k,$ leads to the dual Markov Chain with states represented by the laws
$$
\mathrm{PD}(\alpha,0),\mathrm{PD}(\alpha,1),\ldots,\mathrm{PD}(\alpha,n),\ldots,
$$
\subsection{A connection to the Aldous $\beta$-splitting model?}
The case $\theta=1-2\alpha,$ corresponds to a coagulation that results in a $\mathrm{PD}(\alpha,1-2\alpha)$ model. Rather interestingly, setting
$$\beta=\frac{(1-\alpha)}{\alpha}-1,$$
one can see that the probability mass function of $|B'_{1}|,$ given $K_{n}=b$ and the event
 ${\{|B'_{1}|\neq 0{\mbox { or b}}\}},$ equates to the splitting kernel in the $\beta$-splitting model of Aldous~\cite{AldousClad} for the range $\beta>-1.$ That is using the notation in~\cite[p.1824]{Haas},
 $$
 p_{\alpha,1-2\alpha}(j|b)\propto \tilde{q}^{\mathrm{Aldous}-\beta}_{b}(j) {\mbox { for }}  1\leq j\leq b-1.
 $$
 The Yule model case of $\beta=0$ corresponds to a $\mathrm{PD}(1/2,0)$ model with $p_{1/2,0}(j|b)=1/(b+1).$
 The \emph{symmetric random trie} case $\beta\rightarrow\infty$ corresponds to a $\mathrm{PD}(0,1)$ model with $p_{0,1}(j|b)={b \choose j}{(1/2)}^{b}.$
 In particular, for $b\ge 2,$
 $$
 \tilde{q}^{\mathrm{Aldous}-0}_{b}(j)=\frac{1}{b-1}{\mbox { and }} \tilde{q}^{\mathrm{Aldous}-\infty}_{b}(j)={b \choose j}\frac{1}{2^{b}-2}
 $$

\begin{rem}
See \cite{FordD,Haas,McCullagh} for more on the $\beta$-splitting model and its connection to fragmentation trees.  In addition
See \cite[Proposition 14, and page 2004-2005]{PitmanWinkel}, \cite[Section 3.3,and Proposition 27]{PitmanWinkel2}, \cite[Proposition 18]{Haas} and \cite[p. 1738]{Dong2006}
which  describe relations to the $\alpha$-model of Ford~\cite{FordD} and the Brownian CRT of Aldous~\cite{AldousCRTI,AldousCRTIII}.
\end{rem}
\begin{rem}
What is also interesting is that in the above mentioned references the $\beta$-splitting rule  has been naturally linked to fragmentation of trees, and furthermore there are some natural connections made to the $\mathrm{PD}(\alpha,1-\alpha)$ fragmentation operator of \cite{BertoinGoldschmidt2004,Dong2006} rather than their dual coagulation operator.  Here one sees that this \textsc{$\beta$-splitting rule} is being used in a coagulation/coalescent context as a \textsc{merging rule} to coalesce some blocks ${\{A_{1},\ldots,A_{K_{n}}\}},$  of a partition of $[n],$ into a single block $B'_{1}.$
\end{rem}
Notice in the case of the Brownian CRT (embedded in Brownian excursion) \cite[p. 1738]{Dong2006}, which corresponds to the case where $\alpha=1/2,$ and $\theta=1/2$ and otherwise relates to the $\mathrm{PD}(1/2,1/2)$ fragmentation operation in~\cite{Dong2006}, with states having laws, (note that the chain induced by the dual coagulation operation is read right to left),
$$
\mathrm{PD}(1/2,1/2),\mathrm{PD}(1/2,3/2),\ldots,\mathrm{PD}(1/2,n-1/2),\ldots,
$$
is in fact obviously not a special case of the $\mathrm{PD}(\alpha,1-2\alpha)$ model we are discussing. Rather, it is a special case of a $\mathrm{PD}(\alpha,\alpha)$ or $\mathrm{PD}(\alpha,1-\alpha)$ model which are the same in the Brownian Bridge case. Interestingly, in the general $\mathrm{PD}(\alpha,1-2\alpha)$ setting the $\mathrm{PD}(\alpha,1-\alpha)$ fragmentation operator of~\cite{BertoinGoldschmidt2004,Dong2006} is encoded by the equation
$$
P_{\alpha,2-2\alpha}=\beta_{1-\alpha,1-\alpha}P_{\alpha,1-\alpha}(y)+(1-\beta_{1-\alpha,1-\alpha})P'_{\alpha,1-\alpha}(y)
$$
where, $P_{\alpha,1-\alpha}$ and $P'_{\alpha,1-\alpha}$ are independent $\mathrm{PD}(\alpha,1-\alpha)$ bridges, and if we consider $P'_{\alpha,1-\alpha}$ to contain the fragmenting mass partition, then
$$
P_{\alpha,2-2\alpha}(\beta_{\left(\frac{1-\alpha}{\alpha},\frac{1-\alpha}{\alpha}\right)})= \beta_{1-\alpha,1-\alpha}.
$$
is $1$ minus the first size-biased pick from  $\mathrm{PD}(\alpha,1-2\alpha),$ which of course marginally has the same symmetric $\mathrm{Beta}(1-\alpha,1-\alpha)$ distribution. For $0\leq \alpha<1$ the chain induced by the dual operations ~\cite{BertoinGoldschmidt2004,Dong2006} is described by
$$
\mathrm{PD}(\alpha,1-2\alpha),\mathrm{PD}(\alpha,2-2\alpha),\ldots,\mathrm{PD}(\alpha,n-2\alpha),\ldots,
$$
When $\alpha=1/2,$ which agrees with the Yule model $\beta=0,$ we see a fragmentation chain starting from the ranked excursions lengths, that is $\mathrm{PD}(1/2,0),$ induced by a process behaving like Brownian Motion,
$$
\mathrm{PD}(1/2,0),\mathrm{PD}(1/2,1),\ldots,\mathrm{PD}(1/2,n-1),\ldots,
$$
When $\alpha=0$, which is the $\mathrm{PD}(0,1)$ Dirichlet case and agrees with the \emph{symmetric random trie} case $\beta\rightarrow\infty,$ one has
$$
\mathrm{PD}(0,1),\mathrm{PD}(0,2),\ldots,\mathrm{PD}(0,n),\ldots,
$$
\begin{rem}\label{conject}Note that, in the sense of \cite{Pit99} and \cite[p.190]{BerFrag}, $p_{\alpha,\theta}$ for $2\leq j\leq b,$
is proportional to the coagulation rates of a $\Lambda$ coalescent where  the measure $\Lambda$ is a
$\mathrm{Beta}((1+\alpha)/\alpha,(\theta+\alpha)/\alpha)$ distribution.
\end{rem}
\subsection{A question/open problem}
Recall from \cite{Berestycki,BBS} that the Beta-coalescent is usually specified for $\Lambda$ corresponding to $\mathrm{Beta}(2-\delta,\delta)$, for
$0<\delta<2.$  However, for $0<\delta\leq 1,$ one can reparametrixe this in terms of $\alpha,$  to see that this is exactly a $\mathrm{Beta}(1+\alpha,1-\alpha)$ distribution. Furthermore, a variable $\beta_{1-\alpha,1+\alpha}$ is a size biased pick from a $\mathrm{PD}(\alpha,1)$ distribution. In regards to Remark~\ref{conject}, apply the natural operation
$$
P_{\alpha,1+\theta+2\alpha}(\beta_{((1+\alpha)/\alpha,(\theta+\alpha)/\alpha})=\beta_{1+\alpha,\theta+\alpha}
$$

Notice that, besides the $\mathrm{PD}(0,\theta)$ case with $\alpha=0,$ there is only one choice where $\theta+\alpha=1-\alpha,$ and hence can be interpreted in terms of a size-biased pick from a
$\mathrm{PD}(\alpha,\theta)$ distribution. It is of course $\theta=1-2\alpha,$ which agrees with Aldous's $\beta$-splitting case described above. Coincidental perhaps, but this is suggestive of a bijection betwween a coalescent using Aldous's $\beta$-splitting rule and the Beta-coalescent in the sense of \cite{Pit99}, or perhaps in another sense. The question is,; Can one formally describe such processes? To add weight to this, lets look at a set of nested $\mathrm{PD}$ equations, which we believe always tell some sort of story. There is
\begin{eqnarray*}
\nonumber P_{\alpha,1-2\alpha}(y)&=&  P_{\alpha,2-2\alpha}\left(\beta_{\left(\frac{1-\alpha}{\alpha},\frac{1-\alpha}{\alpha}\right)}\mathbb{U}(y)+(1-\beta_{\left(\frac{1-\alpha}{\alpha},\frac{1-\alpha}{\alpha}\right)})\indic_{\{U_{1}\leq y\}}\right)\\
                   & = & \beta_{1-\alpha,1-\alpha}P_{\alpha,1-\alpha}(y)+(1-\beta_{1-\alpha,1-\alpha})\indic_{\{U_{1}\leq y\}},
\end{eqnarray*}
 where
  \begin{eqnarray*}
\nonumber P_{\alpha,1-\alpha}(y)&=&  P_{\alpha,2-\alpha}\left(\beta_{\left(\frac{1}{\alpha},\frac{1-\alpha}{\alpha}\right)}\mathbb{U}(y)+(1-\beta_{\left(\frac{1}{\alpha},\frac{1-\alpha}{\alpha}\right)})\indic_{\{U_{2}\leq y\}}\right)\\
                   & = & \beta_{1,1-\alpha}P_{\alpha,1}(y)+(1-\beta_{1,1-\alpha})\indic_{\{U_{2}\leq y\}},
\end{eqnarray*}
and where
  \begin{eqnarray*}
\nonumber P_{\alpha,1}(y)&=&  P_{\alpha,2}\left(\beta_{\left(\frac{1+\alpha}{\alpha},\frac{1-\alpha}{\alpha}\right)}\mathbb{U}(y)+(1-\beta_{\left(\frac{1+\alpha}{\alpha},\frac{1-\alpha}{\alpha}\right)})\indic_{\{U_{3}\leq y\}}\right)\\
                   & = & \beta_{1+\alpha,1-\alpha}P_{\alpha,1+\alpha}(y)+(1-\beta_{1+\alpha,1-\alpha})\indic_{\{U_{3}\leq y\}}.
\end{eqnarray*}

Starting with the last equation above and reading backwards these equations can be formed by consecutive insertion of size biased picks as described in~\cite[Proposition 35, Section 6.1]{PY97}.
Note in the Dirichlet case, $\alpha=0,$ all 3 equations look the same, due to invariance property of the Dirichlet process under size biased sampling, except for the atom labels. Hence we see for $\ell=1,2,\ldots $  there is the relation, for $\tilde{U}_{l}\sim\mathrm{Uniform}[0,1],$
  \begin{eqnarray*}
\nonumber P^{(\ell)}_{0,1}(y)&=&  P^{(\ell)}_{0,2}\left(1/2\mathbb{U}(y)+1/2\indic_{\{{U}_{\ell}\leq y\}}\right)\\
                   & = & \tilde{U}_{\ell}P^{(\ell+1)}_{0,1}(y)+(1-\tilde{U}_{\ell})\indic_{\{U_{\ell}\leq y\}}.
\end{eqnarray*}
This seems to be the easiest and most promising. Suggesting a relation between cutting according to the \emph{symmetric random trie} case $\beta\rightarrow\infty,$ and the U-Coalescent. See Goldschmidt and Martin~\cite{Gold} for ideas in this case and Abraham and Delmas~\cite{Abraham1,Abraham2} for the cases of  $\alpha\in [1/2,1)$ and specialized to $\alpha=1/2,$ that is our $\mathrm{PD}(1/2,0)$ Brownian setting.

Of course the relevant variables, as described above, under the $\beta$-splitting scheme and those under the $\mathrm{Beta}(1+\alpha,1-\alpha)
$-Coalescent are not independent.  Confirming, in terms of conditional probability, that one could start a description of a $\mathrm{Beta}(1+\alpha,1-\alpha)
$-Coalescent, in the sense of Pitman~\cite{Pit99} from the $\beta$-splitting scheme. However, in terms of the question raised,  we are looking for the precise interpretation/description of this, within that or a similar continuous time context.  Notice, see Section 6, there are the exact equalities,
$$
S^{-\alpha}_{\alpha,1}=S^{-\alpha}_{\alpha,1+\alpha}\times  \beta^{\alpha}_{1+\alpha,1-\alpha}
=S^{-\alpha}_{\alpha,2}\times
\beta_{\left(\frac{1+\alpha}{\alpha},\frac{1-\alpha}{\alpha}\right)}
$$
and there is
$$
\beta_{\left(\frac{1+\alpha}{\alpha},\frac{1-\alpha}{\alpha}\right)}=\frac{S^{-\alpha}_{\alpha,1}}{S^{-\alpha}_{\alpha,2}};
\beta^{\alpha}_{1+\alpha,1-\alpha}=\frac{S^{-\alpha}_{\alpha,1}}{S^{-\alpha}_{\alpha,1+\alpha}},{\mbox { and }}
P_{\alpha,2}(\beta_{\left(\frac{1+\alpha}{\alpha},\frac{1-\alpha}{\alpha}\right)})=\beta_{1+\alpha,1-\alpha}
$$
with similar equations for $S_{\alpha,1-\alpha}$ and $S_{\alpha,1-2\alpha}$, which can be interpreted in terms of $\alpha$-diversities or local times.
Now from~James~\cite{JamesLamperti}, the density of the random variable
$X_{\alpha,1}\overset{d}=S_{\alpha}/S_{\alpha,1},$ is given explicitly as,
$$
\Delta_{\alpha,1}(x)=\frac{1}{\pi}\frac{\sin\left(\frac{1}{\alpha}\arctan\left(\frac{\sin(\pi
\alpha)}{\cos(\pi\alpha)+x^{\alpha}}\right)\right)}
{{[x^{2\alpha}+2x^{\alpha}\cos(\alpha
\pi)+1]}^{\frac{1}{2\alpha}}}.
$$
and the density of $P_{\alpha,1}(q),$  see also \cite{JLP}, is also explicitly given by
\begin{equation}
\Omega_{\alpha,1}(y|q)=\frac{\Delta_{\alpha,1}(
{(\frac{1-q}{q})}^{1/\alpha}\frac{y}{1-y}
)}{(1-y)q^{1/\alpha}}
\label{density1}
\end{equation}
See James~\cite[Proposition 4.3]{JamesLamperti} for the appearance of the variable $X_{\alpha,1}$ in this context.
This easily leads to the following result.
\begin{prop} Consider the relation
$P_{\alpha,2}(\beta_{\left(\frac{1+\alpha}{\alpha},\frac{1-\alpha}{\alpha}\right)})=\beta_{1+\alpha,1-\alpha}$ where $\beta_{\left(\frac{1+\alpha}{\alpha},\frac{1-\alpha}{\alpha}\right)}$ is independent of $P_{\alpha,2}.$ Then conditional on $\beta_{\left(\frac{1+\alpha}{\alpha},\frac{1-\alpha}{\alpha}\right)}=q,$  $\beta_{1+\alpha,1-\alpha}$ can be expressed as
$$
P_{\alpha,2}(q)=UP_{\alpha,1}(q)+(1-U)P'_{\alpha,1}(q)
$$
where $U:=\tau_{\alpha}(\gamma_{1/\alpha})/\tau_{\alpha}(\gamma_{2/\alpha}),$ and hence $U$ is a $\mathrm{Uniform}[0,1]$ variable independent of the iid $\mathrm{PD}(\alpha,1)$ bridges $P_{\alpha,1}(q), P'_{\alpha,1}(q)$ having common explicit density for fixed $q$ given by~(\ref{density1}). Note when $\alpha=0,$ the Dirichlet case, $q=1/2$ and $P_{0,2}(1/2)$ has a $\mathrm{Uniform}[0,1]$ distribution.
\end{prop}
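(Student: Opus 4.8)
The plan is to realize $P_{\alpha,2}$ through the generalized gamma subordinator run over a gamma-distributed time, to split that time additively, and to read off the asserted decomposition by regrouping the jumps. Since the relation $P_{\alpha,2}(\beta_{((1+\alpha)/\alpha,(1-\alpha)/\alpha)})=\beta_{1+\alpha,1-\alpha}$ and the independence of $\beta_{((1+\alpha)/\alpha,(1-\alpha)/\alpha)}$ from $P_{\alpha,2}$ are taken as given, conditioning on $\beta_{((1+\alpha)/\alpha,(1-\alpha)/\alpha)}=q$ immediately shows that $\beta_{1+\alpha,1-\alpha}$ then has the law of $P_{\alpha,2}(q)$ for fixed $q$, so everything reduces to decomposing $P_{\alpha,2}(q)$. (One also recovers that relation itself by writing $\gamma_{2/\alpha}=\gamma_{(1+\alpha)/\alpha}+\gamma_{(1-\alpha)/\alpha}$ and applying~(\ref{PYlamperti}), since then $P_{\alpha,2}(\beta_{((1+\alpha)/\alpha,(1-\alpha)/\alpha)})=\tau_{\alpha}(\gamma_{(1+\alpha)/\alpha})/\tau_{\alpha}(\gamma_{2/\alpha})\overset{d}=\gamma_{1+\alpha}/(\gamma_{1+\alpha}+\gamma_{1-\alpha})$.)

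For the decomposition I would use the representation~(\ref{PYb}) of $P_{\alpha,2}$ as the $\mathrm{PG}(\alpha,\gamma_{2/\alpha})$-bridge, realized through Kingman's correspondence as $P_{\alpha,2}(y)=\sum_{i}P_{i}\indic_{\{U_{i}\le y\}}$, where $(P_{i})$ are the ranked jumps of $\tau_{\alpha}$ on $(0,\gamma_{2/\alpha})$ normalized by $\tau_{\alpha}(\gamma_{2/\alpha})$ and the $(U_{i})$ are iid $\mathrm{Uniform}[0,1]$ labels independent of everything. Split $\gamma_{2/\alpha}=\gamma_{1/\alpha}+\gamma'_{1/\alpha}$ into two iid $\mathrm{Gamma}(1/\alpha)$ pieces independent of $\tau_{\alpha}$, let $T_{1}:=\tau_{\alpha}(\gamma_{1/\alpha})$ and $T_{2}:=\tau_{\alpha}(\gamma_{2/\alpha})-\tau_{\alpha}(\gamma_{1/\alpha})$ be the masses accumulated on the two time intervals, and regroup the sum according to which interval a jump lies in. This yields the pathwise identity $P_{\alpha,2}(y)=U\,P_{\alpha,1}(y)+(1-U)\,P'_{\alpha,1}(y)$ with $U:=T_{1}/(T_{1}+T_{2})=\tau_{\alpha}(\gamma_{1/\alpha})/\tau_{\alpha}(\gamma_{2/\alpha})$, where $P_{\alpha,1}$ and $P'_{\alpha,1}$ are the normalized, uniformly labelled jump bridges of the first and second pieces; by $\mathrm{PG}(\alpha,\gamma_{\theta/\alpha})=\mathrm{PD}(\alpha,\theta)$ with $\theta=1$ these are $\mathrm{PD}(\alpha,1)$-bridges. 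Taking $y=q$ gives the displayed formula.

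The remaining point is the joint law of $(U,P_{\alpha,1},P'_{\alpha,1})$. By the independence and stationarity of the increments of $\tau_{\alpha}$ and the independence of $\gamma_{1/\alpha},\gamma'_{1/\alpha}$, the pairs $(P_{\alpha,1},T_{1})$ and $(P'_{\alpha,1},T_{2})$ are iid; and by the Dirichlet-generalizing property recorded just after~(\ref{PYlamperti}), inside each piece the $\mathrm{PD}(\alpha,1)$-bridge is independent of that piece's total mass, which is distributed as $\gamma_{1}$. Hence $P_{\alpha,1},P'_{\alpha,1},T_{1},T_{2}$ are mutually independent with $T_{1},T_{2}\overset{d}=\gamma_{1}$, so $U=T_{1}/(T_{1}+T_{2})$ is $\mathrm{Uniform}[0,1]$ and independent of the iid $\mathrm{PD}(\alpha,1)$-bridges $P_{\alpha,1},P'_{\alpha,1}$; their common density at a fixed argument $q$ is $\Omega_{\alpha,1}(\cdot\mid q)$ of~(\ref{density1}) by~\cite{JamesLamperti,JLP}. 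Finally, the $\alpha=0$ remark follows since as $\alpha\to0$ the two shape parameters of $\beta_{((1+\alpha)/\alpha,(1-\alpha)/\alpha)}$ both diverge with ratio tending to $1$, forcing that variable to $1/2$ almost surely, while $P_{0,2}$ is the $\mathrm{Dirichlet}(2)$-bridge with uniform base measure, so that $P_{0,2}(1/2)\sim\mathrm{Beta}(1,1)$, consistently with $U\sim\mathrm{Uniform}[0,1]$ and $P_{0,1}(1/2)\sim\mathrm{Beta}(1/2,1/2)$.

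The main obstacle is the independence of $U$ from $(P_{\alpha,1},P'_{\alpha,1})$: it rests on the fact that a $\mathrm{PD}(\alpha,\theta)$ mass partition is independent of the total mass of the generalized gamma subordinator producing it, which is true precisely because the time is gamma distributed and fails for a general $\mathrm{PG}(\alpha,\zeta)$, so the gamma-time representation~(\ref{PYb}) is what makes the argument work. A secondary care point is to work with iid uniform labels rather than with the jump times themselves, so that regrouping the jumps is an exact pathwise identity and not merely an equality in law.
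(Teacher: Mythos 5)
Your proof is correct and is the natural argument consistent with the paper's constructions; the paper states this result without an explicit proof, prefaced only by ``This easily leads to the following result.'' The key steps you supply---realizing $P_{\alpha,2}$ via the $\mathrm{PG}(\alpha,\gamma_{2/\alpha})$ construction~(\ref{PYb}), splitting $\gamma_{2/\alpha}=\gamma_{1/\alpha}+\gamma'_{1/\alpha}$, regrouping the uniformly labelled jumps pathwise into the two halves of the time interval, and invoking~(\ref{PYlamperti}) to get $T_{1},T_{2}\overset{d}=\gamma_{1}$ independent of the two normalized $\mathrm{PD}(\alpha,1)$ bridges so that $U=T_{1}/(T_{1}+T_{2})\sim\mathrm{Uniform}[0,1]$ is independent of them---are all sound and match what the paper leaves implicit.
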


\section{Which $\mathrm{PK}_{\alpha}(h\cdot f_{\alpha})$ class do $\mathrm{EPG}(\alpha,\zeta)$ models correspond to?}
 By construction, the class of distributions $\mathrm{EPG}(\alpha,\zeta),$ should be larger than $\mathrm{PG}(\alpha,\zeta).$ But can one describe a more precise relationship? Furthermore, it is evident the laws constitute a  sub-class of  $\mathrm{PK}_{\alpha}(h\cdot f_{\alpha}).$ However, identifying precisely what this is is not so obvious. Both issues are important, we shall take up the latter one in this Section and deal with the former one in the next Section.
What is needed is an identification of the variable defined in~(\ref{alphadiversity})
The previous Section on sampling and Remark~\ref{Binomial} aids in obtaining the following result.
\begin{prop}\label{MarkovAlpha} Let $\hat{K}_{n}$ denote the random number of blocks of a partition of $[n]$ following a $\mathrm{EPG}(\alpha,\zeta)$-EPPF, that is derived from a version of the construction in~(\ref{EPGbridge}), for $y\in [0,1],$
\begin{eqnarray}
F_{\alpha,\zeta}(y)& = & Q_{\alpha,\varepsilon_{\alpha}+\zeta}(\lambda_{\alpha,\zeta}(y))\\
                   & = & (1-\tilde{P}^{\dagger}_{\alpha,\zeta})Q_{\alpha,\zeta}(y)+\tilde{P}^{\dagger}_{\alpha,\zeta}\indic_{\{U_{1}\leq y\}}
\end{eqnarray}
Following~\cite[Section 6.1]{Pit02}, write $A_{n}\simeq B_{n}$ if $A_{n}/B_{n}\rightarrow 1$ almost surely as $n\rightarrow \infty.$ Then
$$
\hat{K}_{n}\simeq n^{\alpha}\hat{Z} {\mbox { as }} n\rightarrow \infty
$$
where $\hat{Z}=\hat{T}^{-\alpha}$ is the $\alpha$-diversity of an $\mathrm{EPG}(\alpha,\zeta)$ partition, satisfying~(\ref{alphadiversity}) and
\begin{equation}
\hat{T}=\frac{\tau_{\alpha}(\varepsilon_{\alpha}+\zeta)}{\zeta^{1/\alpha}}.
\label{EPT}
\end{equation}
Furthermore if $Z=T^{-\alpha}$ denotes the $\alpha$-diversity of  $\mathrm{PG}(\alpha,\varepsilon_{\alpha}+\zeta)$, constructed on the same space,
then it is known from Pitman and Yor \cite[p. 877-878]{PY97} that,
$$
T=\frac{\tau_{\alpha}(\varepsilon_{\alpha}+\zeta)}{{(\varepsilon_{\alpha}+\zeta)}^{1/\alpha}}.
$$
These points lead to the decomposition
\begin{equation}
\hat{T}=T\times q_{\alpha,\zeta}^{-1/\alpha}.
\label{LJdecomp}
\end{equation}
which gives an interpretation of an otherwise obvious exact equality. In addition, this shows that $\hat{T}$ can also be expressed in terms of the $\alpha$-diversity of a $\mathrm{PG}(\alpha,\zeta)$ model and
$\tilde{P}^{\dagger}_{\alpha,\zeta}$ as follows
\begin{equation}
\hat{T}=\frac{\tau_{\alpha}(\varepsilon_{\alpha}+\zeta)}{{(\varepsilon_{\alpha}+\zeta)}^{1/\alpha}}\times q_{\alpha,\zeta}^{-1/\alpha}=
\frac{\tau_{\alpha}(\zeta)}{\zeta^{1/\alpha}}\times {(1-\tilde{P}^{\dagger}_{\alpha,\zeta})}^{-1},
\label{diversitycorrespondence}
\end{equation}
with $Q_{\alpha,\varepsilon_{\alpha}+\zeta}(q_{\alpha,\zeta})=1-\tilde{P}^{\dagger}_{\alpha,\zeta}$
\end{prop}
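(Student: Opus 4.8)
The plan is to identify $\hat{Z}$ as the almost-sure limit of $\hat{K}_n/n^\alpha$, using the multistage construction of Section~\ref{samplingcoag}, and then to convert the resulting identity $\hat{Z}=q_{\alpha,\zeta}Z$ into the stated closed forms by elementary algebra. Throughout I would condition on $(\varepsilon_\alpha,\zeta)$ with $\zeta>0$, all the limit statements being almost sure conditionally on these variables. The starting point is that $\mathrm{PG}(\alpha,\varepsilon_\alpha+\zeta)$ is a $\mathrm{PK}_\alpha(h\cdot f_\alpha)$ law with total-mass variable $T=\tau_\alpha(\varepsilon_\alpha+\zeta)/(\varepsilon_\alpha+\zeta)^{1/\alpha}$ (recorded in the statement and in Pitman and Yor~\cite[p.\ 877--878]{PY97}); hence by the $\alpha$-diversity relation~(\ref{alphadiversity}) of Pitman~\cite{Pit02,Pit06} the block count $K_n$ of the $\mathrm{PG}(\alpha,\varepsilon_\alpha+\zeta)$-EPPF used as the input in step~[(ii)] satisfies $K_n\simeq n^\alpha Z$ with $Z=T^{-\alpha}$, and $K_n\uparrow\infty$.

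The crux is the effect of the coagulation step~[(iv)]. Labelling the blocks of the $\mathrm{PG}(\alpha,\varepsilon_\alpha+\zeta)$-partition of $\mathbb N$ by order of appearance, so that the blocks present among $[n]$ are exactly blocks $1,\dots,K_n$, each block $j$ carries its own atom $U^{*}_j$, and by~(\ref{indicatorzeta}) the merge indicators $I_j=\indic_{\{\lambda^{-1}_{\alpha,\zeta}(U^{*}_j)=U_1\}}$ form, conditionally on $q_{\alpha,\zeta}$, an i.i.d.\ $\mathrm{Bernoulli}(1-q_{\alpha,\zeta})$ sequence independent of the partition (this is the content of Remark~\ref{Binomial}). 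Since $M_n:=|B'_1|=\sum_{j=1}^{K_n}I_j$ and the relabelling rule forces $K_n-M_n\le\hat{K}_n\le K_n-M_n+1$, the ordinary strong law of large numbers evaluated at the diverging index $K_n$ gives $M_n/K_n\to 1-q_{\alpha,\zeta}$, so that $\hat{K}_n/n^\alpha=(\hat{K}_n/K_n)(K_n/n^\alpha)\to q_{\alpha,\zeta}Z$ almost surely. Thus $\hat{K}_n\simeq n^\alpha\hat{Z}$ with $\hat{Z}=q_{\alpha,\zeta}Z$; because $\mathrm{EPG}(\alpha,\zeta)$ is itself a $\mathrm{PK}_\alpha(h\cdot f_\alpha)$ law (as noted at the head of this section), this limit is its $\alpha$-diversity in the sense of~(\ref{alphadiversity}), and its total mass is $\hat{T}=\hat{Z}^{-1/\alpha}=q_{\alpha,\zeta}^{-1/\alpha}T$, which is~(\ref{LJdecomp}).

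The remaining assertions are then algebra. Substituting $q_{\alpha,\zeta}=\zeta/(\varepsilon_\alpha+\zeta)$ and $T=\tau_\alpha(\varepsilon_\alpha+\zeta)/(\varepsilon_\alpha+\zeta)^{1/\alpha}$ into~(\ref{LJdecomp}) cancels the $(\varepsilon_\alpha+\zeta)^{1/\alpha}$ factors and yields $\hat{T}=\tau_\alpha(\varepsilon_\alpha+\zeta)/\zeta^{1/\alpha}$, i.e.~(\ref{EPT}). For~(\ref{diversitycorrespondence}) I would use~(\ref{LittleBigp}) together with the additivity convention $\tau_\alpha(\varepsilon_\alpha)+\tau_\alpha(\zeta)=\tau_\alpha(\varepsilon_\alpha+\zeta)$ to write $Q_{\alpha,\varepsilon_\alpha+\zeta}(q_{\alpha,\zeta})=1-\tilde{P}^{\dagger}_{\alpha,\zeta}=\tau_\alpha(\zeta)/\tau_\alpha(\varepsilon_\alpha+\zeta)$, after which $\tau_\alpha(\zeta)\zeta^{-1/\alpha}(1-\tilde{P}^{\dagger}_{\alpha,\zeta})^{-1}$ telescopes to $\tau_\alpha(\varepsilon_\alpha+\zeta)/\zeta^{1/\alpha}=\hat{T}$, noting that $\tau_\alpha(\zeta)/\zeta^{1/\alpha}$ is, by the Section~3 construction, exactly the total-mass variable of $\mathrm{PG}(\alpha,\zeta)$. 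I expect the only point requiring any care to be the middle paragraph: choosing the appearance-order labelling of the input blocks so that $M_n$ becomes an honest partial sum $\sum_{j=1}^{K_n}I_j$ to which the random-index law of large numbers applies, and invoking the $\mathrm{PK}_\alpha$ membership of $\mathrm{EPG}(\alpha,\zeta)$ to legitimately call the limit $\hat{Z}$ an $\alpha$-diversity; everything else is routine.
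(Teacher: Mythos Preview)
Your proposal is correct and follows essentially the same approach as the paper: both argue via the sampling scheme of Section~\ref{samplingcoag}, writing $\hat{K}_n$ in terms of $K_n$ and the Bernoulli thinning variables of Remark~\ref{Binomial}, applying the law of large numbers at the random index $K_n$, and combining with $K_n\simeq n^{\alpha}Z$ to obtain $\hat{Z}=q_{\alpha,\zeta}Z$. Your treatment is marginally more careful in two respects---you bracket $\hat{K}_n$ by $K_n-M_n$ and $K_n-M_n+1$ rather than asserting equality, and you spell out the algebra behind~(\ref{EPT}) and~(\ref{diversitycorrespondence})---but there is no substantive difference in method.
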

\begin{proof}
From the previous Section and Remark~\ref{Binomial}, unless $\zeta=0,$ for large $n$ it suffices to use the relation,
$$
\hat{K}_{n}=K_{n}-|B'_{1}|+1
$$
where $K_{n}$ are the number of blocks of a $\mathrm{PG}(\alpha,\varepsilon_{\alpha}+\zeta)$ partition of $[n].$ Operating conditionally on $(\varepsilon_{\alpha},\zeta)$ and again setting $1-q_{\alpha,\zeta}=\varepsilon_{\alpha}/(\varepsilon_{\alpha}+\zeta),$ one can write
$$
K_{n}-|B'_{1}|=\sum_{i=1}^{K_{n}}\xi_{i}
$$
where $\xi_{i}$ are iid Bernoulli$(q_{\alpha,\zeta}).$ Hence, setting $\bar{\xi}_{K_{n}}=\sum_{i=1}^{K_{n}}\xi_{i}/K_{n},$
$$
\hat{K}_{n}\simeq
K_{n}\bar{\xi}_{K_{n}}.
$$
By the law of large numbers $\bar{\xi}_{K_{n}}$ converges almost surely to $q_{\alpha,\zeta}=\zeta/(\varepsilon_{\alpha}+\zeta).$
Since $K_{n}$ is determined by $\mathrm{PG}(\alpha,\varepsilon_{\alpha}+\zeta)$ it follows from Pitman~\cite[Section 6.1]{Pit02} that $K_{n}\simeq n^{\alpha}Z,$
where
$$
Z=\frac{(\varepsilon_{\alpha}+\zeta)}{{[\tau_{\alpha}(\varepsilon_{\alpha}+\zeta)]}^{\alpha}}.
$$
hence $\hat{Z}=Z\times q_{\alpha,\zeta}$ concluding the result.
\end{proof}

\begin{rem}
The result suggests that $EPG(\alpha,\zeta)$ models are special cases of $\mathrm{PK}_{\alpha}(h\cdot f_{\alpha})$ with mixing distribution $h(s)f_{\alpha}(s)$ equivalent to that of the random variable in~(\ref{EPT}). However, it is not immediately obvious how to obtain a precise form of the density of the random variable that identifies an explicit form for $h(s).$ We do this next.
\end{rem}
\subsection{$\mathrm{EPG}(\alpha,\zeta)$ laws are sized biased  $\mathrm{PG}(\alpha,\zeta)$ laws}
\begin{prop}\label{PKprop}Consider the setting in Proposition~(\ref{MarkovAlpha}). Conditional on $\zeta,$ the random variable
$$
\hat{T}=\frac{\tau_{\alpha}(\varepsilon_{\alpha}+\zeta)}{\zeta^{1/\alpha}}.
$$
appearing in Proposition~(\ref{MarkovAlpha}), has density
$$
s\frac{1}{\alpha}\zeta^{1/\alpha-1}{\mbox e}^{-(s\zeta^{1/\alpha}-\zeta)}f_{\alpha}(s).
$$
Hence $\hat{T}$ is the size-biased distribution of $T=\tau_{\alpha}(\zeta)/\zeta^{1/\alpha}$ that satisfies (\ref{alphadiversity})
for $\mathrm{PG}(\alpha,\zeta).$ It follows that $\mathrm{EPG}(\alpha,\zeta)$ laws are $\mathrm{PK}_{\alpha}(h\cdot f_{\alpha})$ laws with
\begin{equation}
h(s)=s\mathbb{E}[\frac{1}{\alpha}\zeta^{1/\alpha-1}{\mbox e}^{-(s\zeta^{1/\alpha}-\zeta)}].
\label{sizebiasedh}
\end{equation}
This verifies that $\tilde{P}^{\dagger}_{\alpha,\zeta}=1-(T/\hat{T})$ is the first size-biased pick from $\mathrm{EPG}(\alpha,\zeta).$
\end{prop}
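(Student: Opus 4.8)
The plan is to work conditionally on $\zeta$ and to pin down the conditional law of $\hat{T}=\tau_{\alpha}(\varepsilon_{\alpha}+\zeta)/\zeta^{1/\alpha}$ through its Laplace transform. Using the additivity convention, $\tau_{\alpha}(\varepsilon_{\alpha}+\zeta)=\tau_{\alpha}(\zeta)+\tau_{\alpha}(\varepsilon_{\alpha})$ with $\tau_{\alpha}(\varepsilon_{\alpha})$ independent of $\tau_{\alpha}(\zeta)$ given $\zeta$; since $\varepsilon_{\alpha}\overset{d}{=}\gamma_{(1-\alpha)/\alpha}$ and $\mathbb{E}[e^{-t\gamma_{b}}]=(1+t)^{-b}$, the L\'evy exponent~(\ref{GGexp}) gives
\[
\mathbb{E}\bigl[e^{-\lambda\tau_{\alpha}(\varepsilon_{\alpha}+\zeta)}\mid\zeta\bigr]
=e^{-\zeta\psi_{\alpha}(\lambda)}\bigl(1+\psi_{\alpha}(\lambda)\bigr)^{-(1-\alpha)/\alpha}
=e^{\zeta}\,(1+\lambda)^{\alpha-1}\,e^{-\zeta(1+\lambda)^{\alpha}}.
\]
On the other side, the candidate density $s\,\tfrac{1}{\alpha}\zeta^{1/\alpha-1}e^{-(s\zeta^{1/\alpha}-\zeta)}f_{\alpha}(s)$ for $\hat{T}$ has exactly this Laplace transform: after the substitution $u=\zeta^{1/\alpha}s$ one reduces to the elementary identity $\int_{0}^{\infty}s\,f_{\alpha}(s)e^{-\mu s}\,ds=-\tfrac{d}{d\mu}e^{-\mu^{\alpha}}=\alpha\mu^{\alpha-1}e^{-\mu^{\alpha}}$ with $\mu=(1+\lambda)\zeta^{1/\alpha}$, which reproduces the right-hand side. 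Uniqueness of Laplace transforms then yields the first assertion (and, as a by-product, that the expression is a bona fide density, its normalization being $\mathbb{E}[T\mid\zeta]=\alpha\zeta^{1-1/\alpha}$).

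Next I would record the size-biasing reading. By the generalized gamma structure~(\ref{GGexp2}) the conditional density of $T=\tau_{\alpha}(\zeta)/\zeta^{1/\alpha}$ given $\zeta$ is $f_{\alpha}(s)e^{-(s\zeta^{1/\alpha}-\zeta)}$, with conditional mean $\zeta\psi_{\alpha}'(0)/\zeta^{1/\alpha}=\alpha\zeta^{1-1/\alpha}$; hence the density of $\hat{T}$ just obtained equals $s\,f_{T}(s\mid\zeta)/\mathbb{E}[T\mid\zeta]$, so conditionally on $\zeta$ the variable $\hat{T}$ is precisely the size-biased version of $T$. Equivalently this is the Perman--Pitman--Yor size-biasing identity for the generalized gamma subordinator: its L\'evy density is $\zeta\,\tfrac{\alpha}{\Gamma(1-\alpha)}x^{-1-\alpha}e^{-x}$, the associated length-biased jump density $x\mapsto x\cdot(\text{L\'evy density})/\text{mean}$ is exactly the $\gamma_{1-\alpha}$ density, and size-biasing $\tau_{\alpha}(\zeta)$ amounts to adding an independent such jump --- which is $\tau_{\alpha}(\varepsilon_{\alpha})\overset{d}{=}\gamma_{1-\alpha}$, matching $\tau_{\alpha}(\varepsilon_{\alpha}+\zeta)=\tau_{\alpha}(\zeta)+\tau_{\alpha}(\varepsilon_{\alpha})$.

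For the $\mathrm{PK}_{\alpha}(h\cdot f_{\alpha})$ identification I would invoke the characterization recalled after~(\ref{PKh}) (Pitman~\cite{Pit02,Pit06}): a law on $\mathcal{P}_{\infty}$ is $\mathrm{PK}_{\alpha}(h\cdot f_{\alpha})$ exactly when the variable $T$ of~(\ref{alphadiversity}) it determines has density $h(t)f_{\alpha}(t)$. Proposition~\ref{MarkovAlpha} already identifies the $\alpha$-diversity of $\mathrm{EPG}(\alpha,\zeta)$ with $\hat{T}$ of~(\ref{EPT}); taking expectation over $\zeta$ in its conditional density produces the unconditional density $h(s)f_{\alpha}(s)$ with $h$ as in~(\ref{sizebiasedh}), which gives $\mathrm{EPG}(\alpha,\zeta)=\mathrm{PK}_{\alpha}(h\cdot f_{\alpha})$.

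Finally, for the size-biased pick, note from~(\ref{LittleBigp}) that $\tilde{P}^{\dagger}_{\alpha,\zeta}=\tau_{\alpha}(\varepsilon_{\alpha})/(\tau_{\alpha}(\varepsilon_{\alpha})+\tau_{\alpha}(\zeta))=1-\tau_{\alpha}(\zeta)/\tau_{\alpha}(\varepsilon_{\alpha}+\zeta)=1-T/\hat{T}$, and that~(\ref{EPGbridge2}) exhibits $\mathrm{EPG}(\alpha,\zeta)$ as a $\mathrm{PG}(\alpha,\zeta)$ mass partition $(Q_{k})$ rescaled by $1-\tilde{P}^{\dagger}_{\alpha,\zeta}=T/\hat{T}$ together with an extra atom $\tilde{P}^{\dagger}_{\alpha,\zeta}$ inserted at an independent uniform location. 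One then appeals to the insertion/deletion calculus for size-biased sampling of Poisson--Kingman partitions (\cite{PPY92}, Pitman~\cite{Pit06}): since $\hat{T}$ is the size-biased version of $T$ and the inserted mass equals $1-T/\hat{T}$, this insertion coincides with a size-biased insertion --- equivalently, deleting the first size-biased pick of $\mathrm{EPG}(\alpha,\zeta)$ and renormalizing returns $\mathrm{PG}(\alpha,\zeta)$ --- so $\tilde{P}^{\dagger}_{\alpha,\zeta}=1-T/\hat{T}$ is the first size-biased pick. I expect this last step to be the main obstacle: marginal agreement of $\hat{T}$ with the size-biased $T$ does not by itself suffice, and one must check that the \emph{joint} law of $\bigl(\tilde{P}^{\dagger}_{\alpha,\zeta},\,(1-\tilde{P}^{\dagger}_{\alpha,\zeta})(Q_{k})\bigr)$ matches that of the pair (first size-biased pick, renormalized remainder) under $\mathrm{EPG}(\alpha,\zeta)$, which needs the Perman--Pitman--Yor description of the conditional law of the first size-biased pick given the total mass in a $\mathrm{PK}_{\alpha}$ model rather than merely its total-mass marginal; by comparison, the Laplace-transform identity of the first paragraph, though routine, is the concrete computational core.
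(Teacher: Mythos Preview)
Your proof is correct and takes a genuinely different route from the paper for the core density computation. The paper writes $\hat{T}$ as a product $\bigl[\tau_{\alpha}(\varepsilon_{\alpha}+\zeta)/(\varepsilon_{\alpha}+\zeta)^{1/\alpha}\bigr]\times\bigl[(\varepsilon_{\alpha}+\zeta)/\zeta\bigr]^{1/\alpha}$, writes down the conditional density given $(\varepsilon_{\alpha},\zeta)$, integrates out $\varepsilon_{\alpha}\sim\gamma_{(1-\alpha)/\alpha}$ via the substitution $v=\zeta/(y+\zeta)$, and then invokes the identity $S_{\alpha}\overset{d}{=}S_{\alpha,1}\times\beta^{-1/\alpha}_{1,(1-\alpha)/\alpha}$ from James~\cite{JamesLamperti} to evaluate the remaining beta-mixture $\int_{0}^{1}f_{\alpha}(sv^{1/\alpha})(1-v)^{(1-\alpha)/\alpha-1}\,dv=\Gamma(\tfrac{1-\alpha}{\alpha})\,\tfrac{s}{\alpha}f_{\alpha}(s)$. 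Your argument bypasses this entirely: you compute the conditional Laplace transform of $\tau_{\alpha}(\varepsilon_{\alpha}+\zeta)$ using only $\psi_{\alpha}$ and the gamma moment generating function, and match it against the Laplace transform of the candidate density via $\int s f_{\alpha}(s)e^{-\mu s}\,ds=\alpha\mu^{\alpha-1}e^{-\mu^{\alpha}}$. Your route is more elementary and self-contained; the paper's route has the advantage of tying the result explicitly to the $(S_{\alpha},S_{\alpha,1})$ identity that drives the Markov chains in Section~\ref{MCcoag}.

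On the final claim that $\tilde{P}^{\dagger}_{\alpha,\zeta}$ is the first size-biased pick: your caution is well placed, and in fact you go further than the paper, whose proof section addresses only the density computation and leaves this assertion as an unargued consequence. The PPY Palm argument you sketch is exactly what is needed and is sufficient: for the generalized gamma subordinator the length-biased jump is $\gamma_{1-\alpha}\overset{d}{=}\tau_{\alpha}(\varepsilon_{\alpha})$ independent of the remaining mass $\tau_{\alpha}(\zeta)$, so the construction~(\ref{EPGbridge2}) is precisely the size-biased insertion of \cite[Proposition~35]{PY97} applied conditionally on $\zeta$, which identifies the joint law of $(\tilde{P}^{\dagger}_{\alpha,\zeta},(1-\tilde{P}^{\dagger}_{\alpha,\zeta})(Q_{k}))$ with that of the first size-biased pick and the remainder. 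There is no gap here once that Palm identity is invoked.
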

\begin{proof}Write
$$
\hat{T}=\frac{\tau_{\alpha}(\varepsilon_{\alpha}+\zeta)}{{(\varepsilon_{\alpha}+\zeta)}^{1/\alpha}}\times \frac{{(\varepsilon_{\alpha}+\zeta)}^{1/\alpha}}{\zeta^{1/\alpha}}=\frac{\tau_{\alpha}(\zeta)}{\zeta^{1/\alpha}}\times \frac{\tau_{\alpha}(\varepsilon_{\alpha}+\zeta)}
{\tau_{\alpha}(\zeta)}.
$$
Then, using the first equality, conditional on $\varepsilon_{\alpha}$ and $\zeta,$ the density of $\hat{T}$ is of the form
$$
cf_{\alpha}(sc){\mbox e}^{-(s{(\varepsilon_{\alpha}+\zeta)}^{1/\alpha}-(\varepsilon_{\alpha}+\zeta))},
$$
where $c^{\alpha}=\zeta/(\varepsilon_{\alpha}+\zeta).$ Using the fact that $\varepsilon_{\alpha}\overset{d}=\gamma_{(1-\alpha)/\alpha},$ and applying the change of variable $v=\zeta/(y+\zeta),$ leads to the conditional density given $\zeta,$ expressible as,
$$
\zeta^{1/\alpha-1}{\mbox e}^{-(s\zeta^{1/\alpha}-\zeta)}
\frac{1}{\Gamma(\frac{1-\alpha}{\alpha})}\int_{0}^{1}f_{\alpha}(sv^{1/\alpha})(1-v)^{\frac{(1-\alpha)}{\alpha}-1}dv.
$$
Using the identity $S_{\alpha}\overset{d}=S_{\alpha,1}\times \beta^{-1/\alpha}_{1,(1-\alpha)/\alpha},$ see again  James~\cite[p.8, eq. (2.11)]{JamesLamperti}, shows that
$$
\frac{1}{\Gamma(\frac{1-\alpha}{\alpha})}
\int_{0}^{1}f_{\alpha}(sv^{1/\alpha})(1-v)^{\frac{(1-\alpha)}{\alpha}-1}dv=\frac{s}{\alpha}f_{\alpha}(s).
$$
\end{proof}
\begin{rem} The sampling scheme in Section~\ref{samplingcoag} shows how to obtain a partition  ${\{B_{1},\ldots,B_{\hat{K}_{n}}\}},$ from an
$\mathrm{EPG}(\alpha,\zeta)-\mathrm{EPPF}.$ Once we have identified
$$
h(s)=s\mathbb{E}[\frac{1}{\alpha}\zeta^{1/\alpha-1}{\mbox e}^{-(s\zeta^{1/\alpha}-\zeta)}],
$$
it is straightforward to obtain expressions for the EPPF (and other quantities), see \cite[Theorem 4.5, p.86]{Pit06}, and also~\cite{GnedinPitmanI,Pit02}. One can apply the identities in Section 3.
\end{rem}
\subsection{When are EPG models  $\mathrm{PG}(\alpha,\zeta)$ models}
The next result, which allows one to  identify when an extended model is a $PG(\alpha,\zeta)$ model, reveals important recursive relations.
\begin{prop}\label{recursiverelations}For $0\leq \alpha<1$, and $\zeta$ a non-negative random variable there is the relation
$$
\mathrm{EPG}(\alpha,\gamma_{1}+\zeta)=\mathrm{PG}(\alpha,\zeta).
$$
Noting that $\varepsilon_{\alpha}+\gamma_{1}\overset{d}=\gamma_{1/\alpha}$ gives $\mathrm{EPG}(\alpha,\gamma_{1/\alpha}+\zeta)=\mathrm{PG}(\alpha,\varepsilon_{\alpha}+\zeta).$
Leading to the following implications,
\begin{eqnarray}
F_{\alpha,\zeta}(y)& = & F_{\alpha,\gamma_{1}+\varepsilon_{\alpha}+\zeta}(\lambda_{\alpha,\zeta}(y))
\label{EPGrecurse}\\
& =&\frac{\tau_{\alpha}(\zeta)}{\tau_{\alpha}(\varepsilon_{\alpha})+\tau_{\alpha}(\zeta)}F_{\alpha,\gamma_{1}+\zeta}(y)
+\frac{\tau_{\alpha}(\varepsilon_{\alpha})}{\tau_{\alpha}(\varepsilon_{\alpha})+\tau_{\alpha}(\zeta)}\indic_{\{U_{1}\leq y\}}.
\label{EPGrecurse2}
\end{eqnarray}
with
$$
F_{\alpha,\gamma_{1}+\varepsilon_{\alpha}+\zeta}(y)=Q_{\alpha,\varepsilon_{\alpha}+\zeta}(y)=\frac{\tau_{\alpha}((\varepsilon_{\alpha}+\zeta)y)}
{\tau_{\alpha}(\varepsilon_{\alpha}+\zeta)}
$$
and where $\tau_{\alpha}(\varepsilon_{\alpha})\overset{d}=\gamma_{1-\alpha}$ is independent of $F_{\alpha,\gamma_{1}+\zeta}(y)=Q_{\alpha,\zeta}(y).$
\end{prop}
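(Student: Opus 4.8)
The plan is to reduce the claimed equality of laws on $\mathcal{P}_{\infty}$ to the $\mathrm{PK}_{\alpha}(h\cdot f_{\alpha})$ characterization. Since, for fixed $\alpha,$ a member of this class is determined by $h,$ equivalently by the law of the total–mass variable $T$ appearing in~\eqref{alphadiversity}, it suffices to match these for the two sides. By~\eqref{genh}, $\mathrm{PG}(\alpha,\zeta)$ is $\mathrm{PK}_{\alpha}(h\cdot f_{\alpha})$ with $h(s)=\mathbb{E}[\mathrm{e}^{-(s\zeta^{1/\alpha}-\zeta)}],$ and, conditionally on $\zeta,$ its $T=\tau_{\alpha}(\zeta)/\zeta^{1/\alpha}$ has density $f_{\alpha}(s)\mathrm{e}^{-(s\zeta^{1/\alpha}-\zeta)}.$ By Proposition~\ref{PKprop}, $\mathrm{EPG}(\alpha,\zeta)$ is $\mathrm{PK}_{\alpha}(h\cdot f_{\alpha})$ with the size–biased $h$ of~\eqref{sizebiasedh}, its $T$–variable being $\hat{T}=\tau_{\alpha}(\varepsilon_{\alpha}+\zeta)/\zeta^{1/\alpha},$ which given $\zeta$ has density $s\,\tfrac{1}{\alpha}\zeta^{1/\alpha-1}\mathrm{e}^{-(s\zeta^{1/\alpha}-\zeta)}f_{\alpha}(s).$ So the whole statement comes down to showing that substituting $\gamma_{1}+\zeta$ (with $\gamma_{1}$ an independent exponential$(1)$) for $\zeta$ in the $\mathrm{EPG}$ density recovers the $\mathrm{PG}(\alpha,\zeta)$ density.

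The core step is a one–line change of variables. Condition on $\zeta=c.$ Applying Proposition~\ref{PKprop} with $\gamma_{1}+c$ playing the role of the parameter and then averaging over $\gamma_{1}$ (density $\mathrm{e}^{-t},$ $t>0$), the density of $\hat{T}$ given $\zeta=c$ is $\tfrac{s}{\alpha}f_{\alpha}(s)\int_{0}^{\infty}(t+c)^{1/\alpha-1}\mathrm{e}^{-(s(t+c)^{1/\alpha}-(t+c))}\mathrm{e}^{-t}\,dt.$ The exponential factors combine to $\mathrm{e}^{c}\mathrm{e}^{-s(t+c)^{1/\alpha}},$ and the substitutions $u=t+c$ and then $w=u^{1/\alpha}$ turn the integral into $\mathrm{e}^{c}\alpha\int_{c^{1/\alpha}}^{\infty}\mathrm{e}^{-sw}\,dw=(\alpha/s)\mathrm{e}^{c-sc^{1/\alpha}},$ so the density equals $f_{\alpha}(s)\mathrm{e}^{-(sc^{1/\alpha}-c)},$ which is exactly the conditional density of $T$ for $\mathrm{PG}(\alpha,c).$ Integrating against the law of $\zeta$ shows $\hat{T}$ for $\mathrm{EPG}(\alpha,\gamma_{1}+\zeta)$ equals in law $T$ for $\mathrm{PG}(\alpha,\zeta),$ and hence by the $\mathrm{PK}_{\alpha}$–correspondence $\mathrm{EPG}(\alpha,\gamma_{1}+\zeta)=\mathrm{PG}(\alpha,\zeta).$ (This is the density–level form of the identity $\gamma_{1}\overset{d}{=}\tau_{\alpha}(\gamma_{1/\alpha}),\ \gamma_{1/\alpha}=\gamma'_{1}+\varepsilon_{\alpha}$ underlying Proposition~\ref{keyidgen}: the extra exponential$(1)$ factor is precisely what un–size–biases the generalized gamma.)

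For the remaining assertions, $\mathrm{EPG}(\alpha,\gamma_{1/\alpha}+\zeta)=\mathrm{PG}(\alpha,\varepsilon_{\alpha}+\zeta)$ follows by applying the first identity with $\varepsilon_{\alpha}+\zeta$ in place of $\zeta$ together with $\gamma_{1}+\varepsilon_{\alpha}\overset{d}{=}\gamma_{1/\alpha}$ (convolution of $\mathrm{Gamma}(1)$ and $\mathrm{Gamma}((1-\alpha)/\alpha)$), using that $\mathrm{EPG}(\alpha,\cdot)$ depends only on the law of its argument. Equation~\eqref{EPGrecurse} is then obtained from the definition~\eqref{EPGbridge}, $F_{\alpha,\zeta}(y)=Q_{\alpha,\varepsilon_{\alpha}+\zeta}(\lambda_{\alpha,\zeta}(y)),$ by inserting the bridge–level identity $F_{\alpha,\gamma_{1}+\varepsilon_{\alpha}+\zeta}=Q_{\alpha,\varepsilon_{\alpha}+\zeta}$ — which holds as a version once one realizes $\gamma_{1/\alpha}=\gamma_{1}+\varepsilon_{\alpha},$ via Kingman's correspondence since both bridges carry iid uniform atoms — and writing $Q_{\alpha,\varepsilon_{\alpha}+\zeta}(y)=\tau_{\alpha}((\varepsilon_{\alpha}+\zeta)y)/\tau_{\alpha}(\varepsilon_{\alpha}+\zeta);$ finally~\eqref{EPGrecurse2} is just~\eqref{EPGbridge2} after identifying $Q_{\alpha,\zeta}$ on its right–hand side with $F_{\alpha,\gamma_{1}+\zeta}$ and recalling $\tau_{\alpha}(\varepsilon_{\alpha})\overset{d}{=}\gamma_{1-\alpha}$ is independent of it. The main obstacle is really just the absorption computation of the second paragraph; beyond that, the only care required is to fix a single coupling of $(\varepsilon_{\alpha},\gamma_{1},\zeta,\tau_{\alpha})$ so that the ``same space'' displays~\eqref{EPGrecurse}--\eqref{EPGrecurse2} hold as genuine equalities rather than merely in distribution.
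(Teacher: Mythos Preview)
Your proof is correct and follows essentially the same approach as the paper: both reduce the identity $\mathrm{EPG}(\alpha,\gamma_{1}+\zeta)=\mathrm{PG}(\alpha,\zeta)$ to matching the $\mathrm{PK}_{\alpha}(h\cdot f_{\alpha})$ mixing functions via Proposition~\ref{PKprop}, and the core computation is the same integral $s\int_{0}^{\infty}(x+\zeta)^{1/\alpha-1}\mathrm{e}^{-s(x+\zeta)^{1/\alpha}}\,dx=\alpha\,\mathrm{e}^{-s\zeta^{1/\alpha}}$ (you just spell out the substitutions $u=t+c$, $w=u^{1/\alpha}$ more explicitly). Your derivation of the bridge-level consequences~\eqref{EPGrecurse}--\eqref{EPGrecurse2} from the definitions~\eqref{EPGbridge}--\eqref{EPGbridge2} is also in line with the paper's intent, and your closing remark about fixing a single coupling is a welcome clarification.
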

\begin{proof}Recall the form of $h(s)$ in~(\ref{sizebiasedh})
$$
h(s)=s\mathbb{E}[\frac{1}{\alpha}\zeta^{1/\alpha-1}{\mbox e}^{-(s\zeta^{1/\alpha}-\zeta)}].
$$
Then substituting $\zeta$ with $\gamma_{1}+\zeta$ and integrating with respect to $\gamma_{1},$ conditioned on $\zeta,$
leads to the evaluation of the integral
$$
s\int_{0}^{\infty}(x+\zeta)^{1/\alpha-1}{\mbox e}^{-s{(x+\zeta)}^{1/\alpha}}dx=\alpha {\mbox e}^{-s\zeta^{1/\alpha}}
$$
completing the result.
\end{proof}

\section{A homogeneous discrete Markov Chain for $\mathrm{PK}_{\alpha}(h\cdot f_{\alpha})$ models}\label{MCcoag}

In this Section we will show that Propositions~\ref{MarkovAlpha}, \ref{PKprop} and ~\ref{recursiverelations}, helps to identify a homogeneous Markov chain that is complementary to Perman,Pitman,Yor~\cite[Theorem 2.1]{PPY92} in the $\alpha-$stable case, and also relates to the results in~\cite{BertoinGoldschmidt2004,Dong2006,Haas}. In fact we argue that while the Markov Chains exhibited in Perman,Pitman,Yor~\cite[Theorem 2.1,Corollary 2.3, Corollary 3.15]{PPY92} can be seen as one derived from (stick-breaking) size biased sampling and excising of excursion intervals, the \emph{related} Chain that we shall discuss is one that can be derived from repeated application of the fragmentation or coagulation operations in~BDGM~\cite{BertoinGoldschmidt2004,Dong2006}. To clarify what we mean, we find it useful to first illustrate some relevant facts from the work of~\cite{PPY92} in case (I) below and then in (II) outline some key points about parallel chains derived from the operations in~\cite{BertoinGoldschmidt2004,Dong2006}, that we shall subsequently establish.

\begin{enumerate}
\item[(I)]\textsc{Markov Chains induced by stick-breaking in PPY\cite{PPY92}}
\item[(i)]Let $(P^{(0)}_{i})\in \mathcal{P}_{\infty}$, denote a mass partition following a $\mathrm{PD}(\alpha,0)$ law and let $\hat{T}^{-\alpha}_{\alpha,0}\overset{d}=S^{-\alpha}_{\alpha},$ denote its $\alpha$-diversity. We may assume that $(P^{(0)}_{i})\in \mathcal{P}_{\infty}$ are the normalized ranked lengths of excursion of $\mathcal{B}_{(0)}:=(B^{(0)}_{t},t>0)$ denoting a recurrent Bessel Process of dimension $2-2\alpha$ on $\mathbb{R}^{+},$ starting at $B^{(0)}_{0}=0$ or for $\alpha=1/2$ a Brownian Motion.
\item[(ii)]As in~\cite[Section 3]{PPY92},  starting from $(\mathcal{B}_{(0)},(P^{(0)}_{i})),$ one may construct a family of generalized Bessel bridges, with corresponding excursion lengths and $\alpha$-diversities denoted as ${((P_{i,k-1}),\hat{T}_{\alpha,{(k-1)}\alpha})}_{\{k\ge 1\}},$
$(P_{i,0})=(P^{(0)}_{i}),$
formed by the successive deletion of excursion lengths discovered by size-biased sampling(that is using the \emph{first size biased pick}, say $1-W_{k},$ from each subsequently formed~$(P_{i,k-1})$) and then re-sizing/re-scaling such that the subsequent interval has unit length.
\item[(iii)]In this setting, the collections $(W_{1},W_{2},\ldots)$ are independent with $W_{k}\overset{d}=\beta_{k\alpha,1-\alpha}.$ $\hat{T}^{-\alpha}_{\alpha,k\alpha}\overset{d}=S^{-\alpha}_{\alpha,k\alpha},$ is independent of $(W_{1},\ldots,W_{k}),$ and furthermore satisfies  $\hat{T}_{\alpha,k\alpha}=\hat{T}_{\alpha,(j-1)\alpha}\times \prod_{l=1}^{j}W_{l},$ for $j=1,\ldots, k.$ Note while $W_{1}$ is independent of $\hat{T}_{\alpha,\alpha},$ $W_{2}$ is not.
\item[(iv)]The family ${((P_{i,k-1}))}_{\{k\ge 1\}}$ forms a discrete Markov chain with states indicated by the family of laws
${(\mathrm{PD}(\alpha,(k-1)\alpha))}_{\{k\ge 1\}}.$ That is,
$$
\mathrm{PD}(\alpha,0),\mathrm{PD}(\alpha,\alpha), \mathrm{PD}(\alpha,2\alpha),\ldots
$$
\item[(v)]That is, reading left to right this is a chain that can be explained by the deletion of excursion lengths in the sense of \cite[Corollary 3.15]{PPY92} or equivalently by the deletion operation described in~Pitman and Yor~\cite[Propositions 34, Section 6.1]{PY97}. This description, although independence no longer applies in general, holds without much modification under a change of measure in the general  $\mathrm{PK}_{\alpha}(h\cdot f_{\alpha})$ setting, as it involves size-biased picks.
\item[(vi)]\cite[Corollary 3.15]{PPY92} shows that under the $\mathrm{PD}(\alpha,\theta),$ $\theta>-\alpha$ setting independence still holds and the description of relevant variables is obtained by replacing $(k-1)\alpha$ with $\theta+(k-1)\alpha.$ This generalizes the known Dirichlet process case, $\alpha=0,$ where starting with $\mathrm{PD}(0,\theta)$ induces a chain exhibiting the following well-known and characterizing invariance property
$$
\mathrm{PD}(0,\theta),\mathrm{PD}(0,\theta), \mathrm{PD}(0,\theta),\ldots
$$
\item[(vii)] Conversely a dual chain can be read from right to left by using a modification of the insertion operation in \cite[Proposition 35, Section 6.1]{PY97}. Note, a nice description of the insertion operation, in the general  $\mathrm{PK}_{\alpha}(h\cdot f_{\alpha})$ setting, requires some thought as the description in \cite[Proposition 35, Section 6.1]{PY97} relies heavily on independence in the $\mathrm{PD}(\alpha,\theta)$ setting.\qed
\end{enumerate}

We now outline some points that we shall establish and expand upon.

\begin{enumerate}
\item[(II)]\textsc{Dual Markov Chains induced by successive coag/frag operations in BDGM~\cite{BertoinGoldschmidt2004,Dong2006}.}

\item[(viii)]Starting from the same $(\mathcal{B}_{(0)},(P^{(0)}_{i})),$ under a $\mathrm{PD}(\alpha,0)$ law, one may construct another family of generalized Bessel bridges, with corresponding excursion lengths and $\alpha$-diversities denoted as ${((P^{(k-1)}_{l}),\hat{T}_{\alpha,{(k-1)}})}_{\{k\ge 1\}},$
formed by applying  successively a particular variation of the coagulation/fragmentation operation in BDGM~\cite{BertoinGoldschmidt2004,Dong2006}, that we shall describe.
\item[(ix)]These operations are encoded by the equality, for $k=1,2,\ldots,$ which is deduced from Proposition~\ref{MarkovAlpha},
$$
\hat{T}_{\alpha,k-1}=\hat{T}_{\alpha,k}\times V^{-1/\alpha}_{k},
$$
leading to a Markov Chain that complements \cite[Theorem 2.1, Corollary 2.3]{PPY92} in a $\mathrm{PD}(\alpha,0),$ hence $\mathrm{PK}_{\alpha}(h\cdot f_{\alpha}),$ setting.
\item[(x)]Under $\mathrm{PD}(\alpha,0),$ $(V_{1},V_{2},\ldots)$ are independent with $V_{k}\overset{d}=\beta_{((\alpha+k-1)/\alpha,(1-\alpha)/\alpha)}.$
$\hat{T}^{-\alpha}_{\alpha,k}\overset{d}=S^{-\alpha}_{\alpha,k},$ is independent of $(V_{1},\ldots,V_{k}),$ and furthermore satisfies
$\hat{T}^{-\alpha}_{\alpha,k}=\hat{T}^{-\alpha}_{\alpha,(j-1)}\times \prod_{l=1}^{j}V_{l},$ for $j=1,\ldots, k.$ Note again while $V_{1}$ is independent of $\hat{T}_{\alpha,1},$ $V_{2}$ is not.
\item[(xi)]Under $\mathrm{PD}(\alpha,\theta),$ $\hat{T}_{\alpha,0}\overset{d}=S_{\alpha,\theta},$ independence is preserved and otherwise the distributions are adjusted by replacing $k-1$ with $\theta+k-1,$ The family ${((P^{(k-1)}_{l}))}_{\{k\ge 1\}}$ forms a discrete Markov chain with states indicated by the family of laws
${(\mathrm{PD}(\alpha,\theta+(k-1)))}_{\{k\ge 1\}}.$ That is,
$$
\mathrm{PD}(\alpha,\theta),\mathrm{PD}(\alpha,\theta+1), \mathrm{PD}(\alpha,\theta+2),\ldots
$$
Reading left to right this is the fragmentation chain. Reading right to left is the dual coagulation chain. This naturally agrees with chains formed by the constructions in~BDGM~\cite{BertoinGoldschmidt2004,Dong2006}. However, points  [(ix)] and [(x)], which we believe offers further interpretation and provides a blueprint for constructing processes with Markovian properties,  are not emphasized .
\item[(xii)]Note, in the general setting,  a relation between the chains in (I) and (II) is seen in the following equality, again deduced from Proposition~\ref{MarkovAlpha},
$$
\hat{T}_{\alpha,0}=\hat{T}_{\alpha,1}\times V^{-1/\alpha}_{1}=\hat{T}_{\alpha,\alpha}\times W^{-1}_{1}
$$
Importantly this relation between $(V_{1},W_{1})$ does not extend in the same manner to $(V_{k},W_{k})$ for $k=2,\ldots,.$
\item[(xiii)]Lastly the typical $\mathrm{PD}(\alpha,\theta)$ constructions, in terms of notation, mask the fact that these processes are naturally connected to waiting times. This will be seen from the more general $\mathrm{EPG}(\alpha,\zeta)$ constructions
\qed
\end{enumerate}

The next result in the $\mathrm{PD}(\alpha,\theta)$ setting follows as a Corollary of Propositions~\ref{MarkovAlpha},\ref{PKprop} and ~\ref{recursiverelations}. Importantly it reveals the structure of the Markcov Chains we have in mind.

\begin{cor}Setting $\zeta=\gamma_{(\theta+\alpha)/\alpha}$ in Propositions~\ref{MarkovAlpha},\ref{PKprop} and ~\ref{recursiverelations}, leads to results for the $\mathrm{PD}(\alpha,\theta)$ case, for $\theta>-\alpha$
\begin{enumerate}
\item[(i)]There is the distributional identity,
$$
S_{\alpha,\theta}\overset{d}=\frac{\tau_{\alpha}(\varepsilon_{\alpha}+\gamma_{\frac{\theta+\alpha}{\alpha}})}{\gamma_{\frac{\theta+\alpha}{\alpha}}^{1/\alpha}}
=
\frac{\tau_{\alpha}(\varepsilon_{\alpha}+\gamma_{\frac{\theta+\alpha}{\alpha}})}{{(\varepsilon_{\alpha}+
\gamma_{\frac{\theta+\alpha}{\alpha}})}^{1/\alpha}}
\times {(\beta_{\left(\frac{\theta+\alpha}{\alpha},\frac{1-\alpha}{\alpha}\right)})}^{-1/\alpha},
$$
where the terms in the product are independent.
\item[(ii)] Indicating there is version of the variables $(S^{-\alpha}_{\alpha,\theta},S^{-\alpha}_{\alpha,1+\theta},S^{-\alpha}_{\alpha,\theta+\alpha}),$ corresponding to the $\alpha$-diversities of  $(\mathrm{PD}(\alpha,\theta),\mathrm{PD}(\alpha,1+\theta),\mathrm{PD}(\alpha,\theta+\alpha)),$ related by,
$$
S^{-\alpha}_{\alpha,\theta}=S^{-\alpha}_{\alpha,\theta+1}\times \beta_{\left(\frac{\theta+\alpha}{\alpha},\frac{1-\alpha}{\alpha}\right)}=
S^{-\alpha}_{\alpha,\theta+\alpha}\times \beta^{\alpha}_{\theta+\alpha,1-\alpha},
$$
\item[(iii)]This can also be expressed by the relationship
\begin{eqnarray*}
\nonumber P_{\alpha,\theta}(y)& = & P_{\alpha,1+\theta}
\left((1-\beta_{(\frac{1-\alpha}{\alpha},\frac{\theta+\alpha}{\alpha})})\mathbb{U}(y)+\beta_{(\frac{1-\alpha}{\alpha},
\frac{\theta+\alpha}{\alpha})}\indic_{(U_{1}\leq y)}\right)\\
                   & = & \beta_{\theta+\alpha,1-\alpha}P_{\alpha,\theta+\alpha}(y)+(1-\beta_{\theta+\alpha,1-\alpha})\indic_{\{U_{1}\leq y\}},
\end{eqnarray*}
where the relevant quantities on the right hand side are independent of one another.
\item[(iv)]Hence for each $n$; there exists a version of $(S_{\alpha,\theta+n-1},S_{\alpha,\theta+n})$ such that
$$
S_{\alpha,\theta+n-1}=S_{\alpha,\theta+n}
\times {(\beta_{\left(\frac{\theta+n-1+\alpha}{\alpha},\frac{1-\alpha}{\alpha}\right)})}^{-1/\alpha}.
$$
Note: Indicating the appropriate transitions for a Markov Chain.
\item[(v)] Furthermore, repeated application yields
\begin{equation}
S^{-\alpha}_{\alpha,\theta}=S^{-\alpha}_{\alpha,\theta+n}\times
\prod_{i=1}^{n}\beta_{\left(\frac{\theta+i-1+\alpha}{\alpha},\frac{1-\alpha}{\alpha}\right)}
\label{jamesid}
\end{equation}
\end{enumerate}
\end{cor}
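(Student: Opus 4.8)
The plan is to obtain all five assertions by substituting $\zeta=\gamma_{(\theta+\alpha)/\alpha}$ into Propositions~\ref{MarkovAlpha},~\ref{PKprop} and~\ref{recursiverelations} and translating via the dictionary $\mathrm{EPG}(\alpha,\gamma_{(\theta+\alpha)/\alpha})=\mathrm{PD}(\alpha,\theta)$ (valid since $(\theta+\alpha)/\alpha=1+\theta/\alpha$), $\mathrm{PG}(\alpha,\gamma_{\theta'/\alpha})=\mathrm{PD}(\alpha,\theta')$ for $\theta'\ge 0$ by Pitman--Yor~\cite[Prop.~21]{PY97}, and the Remark following~(\ref{PKh}) that the $\alpha$-diversity random variable $T$ of $\mathrm{PD}(\alpha,\theta)$ is exactly $S_{\alpha,\theta}$. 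The one elementary input is gamma additivity: $\varepsilon_{\alpha}+\gamma_{(\theta+\alpha)/\alpha}\overset{d}{=}\gamma_{(1-\alpha)/\alpha}+\gamma_{(\theta+\alpha)/\alpha}\overset{d}{=}\gamma_{(1+\theta)/\alpha}$, so that by Proposition~\ref{recursiverelations}, $\mathrm{PG}(\alpha,\varepsilon_{\alpha}+\gamma_{(\theta+\alpha)/\alpha})=\mathrm{PD}(\alpha,1+\theta)$. All parameter constraints needed ($\theta>-\alpha$ so that $S_{\alpha,\theta}$ is defined; $1+\theta>0$ so that \cite[Prop.~21]{PY97} applies to $\mathrm{PG}(\alpha,\gamma_{(1+\theta)/\alpha})$) hold because $\theta>-\alpha>-1$.

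For (i) and (ii) I would work from the decomposition $\hat T=T\times q_{\alpha,\zeta}^{-1/\alpha}$ of Proposition~\ref{MarkovAlpha} with $\zeta=\gamma_{(\theta+\alpha)/\alpha}$. Here $\hat T=\tau_{\alpha}(\varepsilon_{\alpha}+\gamma_{(\theta+\alpha)/\alpha})/\gamma_{(\theta+\alpha)/\alpha}^{1/\alpha}$ is the $\alpha$-diversity of $\mathrm{EPG}(\alpha,\gamma_{(\theta+\alpha)/\alpha})=\mathrm{PD}(\alpha,\theta)$, hence $\overset{d}{=}S_{\alpha,\theta}$; the factor $T=\tau_{\alpha}(\varepsilon_{\alpha}+\gamma_{(\theta+\alpha)/\alpha})/(\varepsilon_{\alpha}+\gamma_{(\theta+\alpha)/\alpha})^{1/\alpha}$ is the $\alpha$-diversity of $\mathrm{PD}(\alpha,1+\theta)$, hence $\overset{d}{=}S_{\alpha,1+\theta}$; and $q_{\alpha,\zeta}=\gamma_{(\theta+\alpha)/\alpha}/(\gamma_{(1-\alpha)/\alpha}+\gamma_{(\theta+\alpha)/\alpha})\overset{d}{=}\beta_{(\theta+\alpha)/\alpha,(1-\alpha)/\alpha}$, which by the beta--gamma algebra is independent of the sum $\varepsilon_{\alpha}+\gamma_{(\theta+\alpha)/\alpha}$ and therefore of $T$, a measurable function of $\tau_{\alpha}$ and that sum. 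This gives (i), and raising to the power $-\alpha$ gives the first equality of (ii). For the second equality of (ii) I would use instead the alternative form~(\ref{diversitycorrespondence}), $\hat T=\big[\tau_{\alpha}(\gamma_{(\theta+\alpha)/\alpha})/\gamma_{(\theta+\alpha)/\alpha}^{1/\alpha}\big]\times(1-\tilde P^{\dagger}_{\alpha,\zeta})^{-1}$: the bracket is the $\alpha$-diversity of $\mathrm{PG}(\alpha,\gamma_{(\theta+\alpha)/\alpha})=\mathrm{PD}(\alpha,\theta+\alpha)$, hence $\overset{d}{=}S_{\alpha,\theta+\alpha}$, while Proposition~\ref{PKprop} identifies $\tilde P^{\dagger}_{\alpha,\zeta}$ as the first size-biased pick of $\mathrm{PD}(\alpha,\theta)$, so $\tilde P^{\dagger}_{\alpha,\zeta}\overset{d}{=}\beta_{1-\alpha,\theta+\alpha}$ and $1-\tilde P^{\dagger}_{\alpha,\zeta}\overset{d}{=}\beta_{\theta+\alpha,1-\alpha}$ by (PD); independence of the two factors holds because $1-\tilde P^{\dagger}_{\alpha,\zeta}$ is a function of $\big(\tau_{\alpha}(\varepsilon_{\alpha}),\tau_{\alpha}(\gamma_{(\theta+\alpha)/\alpha})\big)$, whereas $S_{\alpha,\theta+\alpha}=\tau_{\alpha}(\gamma_{(\theta+\alpha)/\alpha})/\gamma_{(\theta+\alpha)/\alpha}^{1/\alpha}$ is independent of $\tau_{\alpha}(\gamma_{(\theta+\alpha)/\alpha})\overset{d}{=}\gamma_{\theta+\alpha}$ by the Lamperti-type relation~(\ref{PYlamperti}), and $\tau_{\alpha}(\varepsilon_{\alpha})$ is an increment over a disjoint time block. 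This reproduces~(\ref{PPYfundid}) and makes its correspondence with~(\ref{Jamesfundid}) explicit.

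Part (iii) is the bridge form of (i)--(ii): one specializes~(\ref{EPGbridge}) and~(\ref{EPGbridge2}) at $\zeta=\gamma_{(\theta+\alpha)/\alpha}$, using $F_{\alpha,\gamma_{(\theta+\alpha)/\alpha}}\overset{d}{=}P_{\alpha,\theta}$, $Q_{\alpha,\varepsilon_{\alpha}+\gamma_{(\theta+\alpha)/\alpha}}\overset{d}{=}P_{\alpha,1+\theta}$ and $Q_{\alpha,\gamma_{(\theta+\alpha)/\alpha}}\overset{d}{=}P_{\alpha,\theta+\alpha}$, which is exactly~(\ref{deleterep}). For (iv) I would apply (i) with $\theta$ replaced by $\theta+n-1$ (admissible since $\theta+n-1>-\alpha$); because the construction underlying Propositions~\ref{MarkovAlpha} and~\ref{recursiverelations} iterates---each step inserting an independent $\gamma_{1}$ worth of subordinator time via $\mathrm{EPG}(\alpha,\gamma_{1}+\zeta')=\mathrm{PG}(\alpha,\zeta')$---these identities hold simultaneously, as exact equalities among versions of $(S_{\alpha,\theta+k})_{k\ge 0}$ on one probability space, which is the claimed chain of transitions; (v) then follows by telescoping the $\alpha$-diversity form of (iv) from $i=1$ to $n$.

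The only genuine work is bookkeeping of the \emph{version} and independence clauses, not any new computation: every distributional identification must be paired with an independence assertion, and all of these reduce to just two facts---the beta--gamma independence of $\gamma_{a}/(\gamma_{a}+\gamma_{b})$ from $\gamma_{a}+\gamma_{b}$, and the Lamperti-type independence~(\ref{PYlamperti}) of $\tau_{\alpha}(\gamma_{c})/\gamma_{c}^{1/\alpha}$ from $\tau_{\alpha}(\gamma_{c})$. The main point requiring care is to keep the two decompositions of $\hat T$ (through $q_{\alpha,\zeta}$ and through $1-\tilde P^{\dagger}_{\alpha,\zeta}$) and their respective independence structures distinct, so that both equalities in (ii) are realized on the same space, while verifying that the parameter ranges stay valid at every substitution.
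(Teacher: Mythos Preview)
Your proposal is correct and follows essentially the same route as the paper: substitute $\zeta=\gamma_{(\theta+\alpha)/\alpha}$ into Propositions~\ref{MarkovAlpha}, \ref{PKprop} and~\ref{recursiverelations}, identify the factors via the $\mathrm{EPG}/\mathrm{PG}\leftrightarrow\mathrm{PD}$ dictionary, and obtain the independence in~(i) from Lukacs' beta--gamma calculus. If anything you are more explicit than the paper in tracking the independence for the second equality of~(ii) via~(\ref{PYlamperti}); the paper simply cites Pitman--Yor~\cite{PY97} for the $S_{\alpha,\theta+1}$ and $S_{\alpha,\theta+\alpha}$ identifications and James~\cite{JamesLamperti} for the in-distribution version.
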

\begin{proof} The bulk of this result follows immediately by substituting appropriately in Propositions~\ref{MarkovAlpha},\ref{PKprop} and~\ref{recursiverelations}.
However we find it illustrative to give some details about statement [(i)] and hence [(ii)].  The right hand side of statement [(i)] can be written more explicitly as
\begin{equation}
\frac{\tau_{\alpha}(\varepsilon_{\alpha}+\gamma_{\frac{\theta+\alpha}{\alpha}})}{\gamma^{1/\alpha}_{\frac{\theta+\alpha}{\alpha}}}=\frac{\tau_{\alpha}
(\varepsilon_{\alpha}+\gamma_{\frac{\theta+\alpha}{\alpha}})
}{{(\varepsilon_{\alpha}+\gamma_{\frac{\theta+\alpha}{\alpha}})}^{1/\alpha}}
\times \frac{{(\varepsilon_{\alpha}+\gamma_{\frac{\theta+\alpha}{\alpha}})}^{1/\alpha}}
{\gamma^{1/\alpha}_{\frac{\theta+\alpha}{\alpha}}}.
\label{basicChain1}
\end{equation}
where independence is seen to follow from the standard beta-gamma calculus of Lukacs~\cite{Lukacs}. The distributional identity for $S_{\alpha,\theta}$ can be deduced by substituting $\zeta=\gamma_{(\theta+\alpha)/\alpha}$ in~(\ref{sizebiasedh}). The distributional equivalences for $S_{\alpha,\theta+1}$ and $S_{\alpha,\theta+\alpha}$ to their $\mathrm{PG}(\alpha,\zeta)$ forms are already known from~Pitman and Yor\cite{PY97}. Otherwise, the  \textit{equivalent in distribution} version of these points can be read from James~\cite[p.8, eq. (2.11)]{JamesLamperti}.
\end{proof}

Although the independent structures above will be replaced by dependent quantities, similar relations hold for all $\mathrm{PK}_{\alpha}(h\cdot f_{\alpha})$
which is encoded and formalized in the next series of results. The first result describes a Markov Chain of $\alpha$-diversities
labeled in the general $\mathrm{PK}_{\alpha}(h\cdot f_{\alpha})$
setting as $(\hat{T}^{-\alpha}_{\alpha,0},\hat{T}^{-\alpha}_{\alpha,1},\hat{T}^{-\alpha}_{\alpha,2},\ldots)$
which complements Perman,Pitman,Yor~\cite[Theorem 2.1,Corollary 2,3]{PPY92} in the $\mathrm{PD}(\alpha,0)$ setting. In order to do this we only need to use the decomposition in the  canonical $\mathrm{PD}(\alpha,0)$ case,
\begin{equation}
S_{\alpha}=S_{\alpha,1}
\times {(\beta_{\left(1,\frac{1-\alpha}{\alpha}\right)})}^{-1/\alpha},
\label{basicChain1}
\end{equation}
which reads in terms of an $\mathrm{EPG}(\alpha,\gamma_{1})$ model as
\begin{equation}
\frac{\tau_{\alpha}(\varepsilon_{\alpha}+\gamma_{1})}{\gamma_{1}^{1/\alpha}}=\frac{\tau_{\alpha}(\varepsilon_{\alpha}+\gamma_{1})}{{(\varepsilon_{\alpha}+\gamma_{1})}^{1/\alpha}}
\times \frac{{(\varepsilon_{\alpha}+\gamma_{1})}^{1/\alpha}}{\gamma_{1}^{1/\alpha}}.
\label{basicChain1}
\end{equation}
The independence between the variables on the right hand side make this simple.

\begin{prop}\label{JPPY}For each $n,$ let $(\hat{T}_{\alpha,0}, \hat{T}_{\alpha,1},\ldots,\hat{T}_{\alpha,n})$ denote a vector of random variables such that $\hat{T}_{\alpha,0}\overset{d}=S_{\alpha}$ and there is the relationship for each integer $k$
\begin{equation}
\hat{T}_{\alpha,(k-1)}=\hat{T}_{\alpha,k}\times V^{-1/\alpha}_{k}
\label{Vequation}
\end{equation}
where $V_{k}\overset{d}=\beta_{((\alpha+k-1)/\alpha,(1-\alpha)/\alpha)}$ independent of $\hat{T}_{\alpha,k}$ and marginally
$\hat{T}_{\alpha,k}\overset{d}=S_{\alpha,k}.$ Then, the conditional distribution of $\hat{T}_{\alpha,k}$ given  $\hat{T}_{\alpha,k-1}=t$ is the same for all $k$ and equates to the density,
\begin{equation}
P(\hat{T}_{\alpha,1}\in ds|\hat{T}_{\alpha,0}=t)/ds=\frac{\alpha^{2}}{\Gamma(\frac{1-\alpha}{\alpha})}
\frac{(s/t)^{\alpha-1}(1-(s/t)^{\alpha})^{\frac{(1-\alpha)}{\alpha}-1}f_{\alpha}(s)}{t^{2}f_{\alpha}(t)},
\label{transitionV}
\end{equation}
for $s<t.$
By a change of variable $v=(s/t)^{\alpha}$ the density of $V_{1}|\hat{T}_{\alpha,0}=t$ is given by
\begin{equation}
P(V_{1}\in dv|\hat{T}_{\alpha,0}=t)/dv=\frac{\alpha}{\Gamma(\frac{1-\alpha}{\alpha})}
\frac{(1-v)^{\frac{(1-\alpha)}{\alpha}-1}f_{\alpha}(v^{1/\alpha}t)}{tf_{\alpha}(t)}.
\label{transitionV2}
\end{equation}
Furthermore $(V_{1},\ldots,V_{n})$ are independent variables, independent of $\hat{T}_{\alpha,n}.$
Hence, the sequence is a Markov Chain, governed by a $\mathrm{PD}(\alpha,0)$ law, with interpretations given by the discussion above.
\end{prop}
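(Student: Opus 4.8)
\textbf{Proof plan for Proposition~\ref{JPPY}.}
The plan is to realise the chain by the natural ``top--down'' construction, then read off the common transition kernel and the Markov property. The one-step building block is the $\theta=0$ specialisation of part (iv) of the preceding Corollary (which itself comes from James's identity~(\ref{Jamesfundid}) together with Proposition~\ref{recursiverelations}): for every integer $k\ge 1$,
$$
S_{\alpha,k-1}\overset{d}=S_{\alpha,k}\times {\bigl(\beta_{\left(\frac{k-1+\alpha}{\alpha},\frac{1-\alpha}{\alpha}\right)}\bigr)}^{-1/\alpha},
$$
with the beta factor independent of $S_{\alpha,k}$. I would then take $\hat T_{\alpha,n}\overset{d}=S_{\alpha,n}$ together with a family $V_{1},\dots,V_{n}$ that is mutually independent and independent of $\hat T_{\alpha,n}$, each $V_{k}\overset{d}=\beta_{((k-1+\alpha)/\alpha,(1-\alpha)/\alpha)}$, and define $\hat T_{\alpha,k-1}:=\hat T_{\alpha,k}V_{k}^{-1/\alpha}$ recursively downward. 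Descending induction using the displayed identity gives $\hat T_{\alpha,k}\overset{d}=S_{\alpha,k}$ for every $k$ (so in particular $\hat T_{\alpha,0}\overset{d}=S_{\alpha}$) and the relation~(\ref{Vequation}) with $V_{k}$ independent of $\hat T_{\alpha,k}$; moreover, at each level $(V_{1},\dots,V_{k})$ is, by construction, independent of the whole vector $(\hat T_{\alpha,n},V_{k+1},\dots,V_{n})$, hence in particular of $\hat T_{\alpha,k}$. So this realisation satisfies every hypothesis of the Proposition, and it is the ``version'' referred to; in particular $(V_{1},\dots,V_{n})$ are independent and independent of $\hat T_{\alpha,n}$.

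Next I would compute the one-step kernel. The pair $(\hat T_{\alpha,k},V_{k})$ has joint density $f_{S_{\alpha,k}}(s)\,b_{k}(v)$, where by~(\ref{PDdiversity}) $f_{S_{\alpha,k}}(s)=c_{\alpha,k}s^{-k}f_{\alpha}(s)$ and $b_{k}$ is the $\mathrm{Beta}((k-1+\alpha)/\alpha,(1-\alpha)/\alpha)$ density, whose two parameters sum to $k/\alpha$. Changing variables from $(s,v)$ to $(s,t)$ via $t=sv^{-1/\alpha}$, i.e. $v=(s/t)^{\alpha}$ on $0<s<t$ with Jacobian $\alpha s^{\alpha}t^{-\alpha-1}$, and dividing by the marginal $f_{S_{\alpha,k-1}}(t)=c_{\alpha,k-1}t^{-(k-1)}f_{\alpha}(t)$, the powers of $s$ collapse to $s^{\alpha-1}$, the powers of $t$ to $t^{-\alpha-1}$, and the Gamma constants ${c_{\alpha,k}}/{c_{\alpha,k-1}}$ and the beta normaliser combine (using $\Gamma(k/\alpha+1)=(k/\alpha)\Gamma(k/\alpha)$) to the $k$-free value $\alpha^{2}/\Gamma((1-\alpha)/\alpha)$. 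Rewriting $s^{\alpha-1}t^{-\alpha-1}$ as $(s/t)^{\alpha-1}t^{-2}$ this is exactly~(\ref{transitionV}), and since it does not depend on $k$ the transition law is the same at every step. The further substitution $v=(s/t)^{\alpha}$, $ds=(t/\alpha)v^{1/\alpha-1}\,dv$ converts~(\ref{transitionV}) into~(\ref{transitionV2}).

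Finally I would deduce the Markov property from the dependence structure of the construction: for fixed $k$, the coordinates $\hat T_{\alpha,j}$ with $j\ge k$ are deterministic functions of $(\hat T_{\alpha,n},V_{k+1},\dots,V_{n})$, while those with $j<k$ are deterministic functions of $(\hat T_{\alpha,k},V_{k},V_{k-1},\dots,V_{j+1})$; since $(V_{1},\dots,V_{k})$ is independent of $(\hat T_{\alpha,n},V_{k+1},\dots,V_{n})$, conditionally on $\hat T_{\alpha,k}$ the ``lower'' block $(\hat T_{\alpha,0},\dots,\hat T_{\alpha,k-1})$ and the ``upper'' block $(\hat T_{\alpha,k+1},\dots,\hat T_{\alpha,n})$ are independent. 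This is the Markov property (in either direction of the indexing), with common transition law the kernel computed above; together with $\hat T_{\alpha,k}^{-\alpha}\overset{d}=S_{\alpha,k}^{-\alpha}$ being the $\alpha$-diversity of $\mathrm{PD}(\alpha,k)$ this is the sense in which the chain is governed by $\mathrm{PD}(\alpha,0)$, matching the discussion preceding the statement. I expect the genuinely delicate step to be the independence bookkeeping in the first paragraph---arranging $(V_{1},\dots,V_{n})$ to be jointly independent and independent of $\hat T_{\alpha,n}$, rather than merely $V_{k}\perp\hat T_{\alpha,k}$ at each level---which is precisely why the recursion must be run from $\hat T_{\alpha,n}$ downward, inserting each $V_{k}$ as fresh randomness; the density algebra, though the bulk of the writing, is routine once $c_{\alpha,k}$ and the beta constant are tracked.
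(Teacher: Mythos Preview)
Your proposal is correct and mirrors the paper's own argument: both realise the chain via the top--down construction (the paper phrases this as ``the exact equality statement'' coming from the preceding $\mathrm{EPG}$ discussion), then obtain the transition density~(\ref{transitionV}) by writing the joint law of $(\hat T_{\alpha,k-1},\hat T_{\alpha,k})$ from the independent product $f_{\alpha,k}(s)\,b_k(v)$ and dividing by $f_{\alpha,k-1}(t)$---an elementary Bayes rule step in which the $k$-dependent Gamma factors cancel exactly as you describe. Your treatment of the independence bookkeeping is, if anything, more explicit than the paper's one-line ``otherwise evident from the exact equality statement.''
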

\begin{proof}
Because of the independence between $V_{k}$ and $\hat{T}_{\alpha,k}$ the proof just reduces to an elementary Bayes rule argument. Details are presented for clarity. The distribution of $\hat{T}_{\alpha,k-1}|\hat{T}_{\alpha,k}=s$ is just $V^{-1/\alpha}_{k}s,$ where $V_{k}\overset{d}=\beta_{((\alpha+k-1)/\alpha,(1-\alpha)/\alpha)}.$ Use the fact that for each $k,$  $\hat{T}_{\alpha,k}$ has density
$$
f_{\alpha,k}(s)=\frac{\Gamma(k+1)}{\Gamma(\frac{k+\alpha}{\alpha})}s^{-k}f_{\alpha}(s),
$$
to show that the joint density of $(\hat{T}_{\alpha,k-1},\hat{T}_{\alpha,k})$ is,
\begin{equation}
\frac{\alpha^{2}\Gamma(k)t^{-(k+1)}}{\Gamma(\frac{k+\alpha-1}{\alpha})\Gamma(\frac{1-\alpha}{\alpha})}
(s/t)^{\alpha-1}(1-(s/t)^{\alpha})^{\frac{(1-\alpha)}{\alpha}-1}f_{\alpha}(s).
\label{jointSk}
\end{equation}
 Now divide~(\ref{jointSk}) by the $f_{\alpha,k-1}(t)$ density of $\hat{T}_{\alpha,k-1},$ to obtain (\ref{transitionV}). Note again that the \textit{in distribution} version of~(\ref{Vequation}), which yields the same conditional density in~(\ref{transitionV}), can be read from James~\cite[p.8, eq. (2.11)]{JamesLamperti}. The Markov chain is otherwise evident from the exact equality statement.
\end{proof}

\begin{cor}\label{corollary1}
As consequences of Proposition~\ref{JPPY} the distribution of the quantities above with respect to a $PK(\rho_{\alpha};h\cdot f_{\alpha})$ are given by (\ref{transitionV}) and specifying $\hat{T}_{\alpha,0}$ to have density $h(t)f_{\alpha}(t).$
\begin{enumerate}
\item[(i)]
 In particular, the joint law of $(V_{1},\ldots,V_{n},\hat{T}_{\alpha,n})$ is given by,
\begin{equation}
\left[\prod_{k=1}^{n}f_{\beta_{k}}(v_{k})\right]h(s/\prod_{l=1}^{n}v^{1/\alpha}_{l})f_{\alpha,n}(s)ds
\label{jointV}
\end{equation}
where $f_{\beta_{k}}$ denotes the density of a $\beta_{(\alpha+k-1)/\alpha,(1-\alpha)/\alpha},$ variable.
\item[(ii)]Relative to  $\hat{T}_{\alpha,n},$ for each $j=1,2,\ldots,n$
$$
\hat{T}^{-\alpha}_{\alpha,j-1}=\hat{T}^{-\alpha}_{\alpha,n}\times \prod_{l=j}^{n}V_{l}
$$
\end{enumerate}
\end{cor}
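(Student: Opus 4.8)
The plan is to dispatch part (ii) first, since it is purely formal, and then feed it into a change-of-measure argument that reduces part (i) to the $\mathrm{PD}(\alpha,0)$ statement already established in Proposition~\ref{JPPY}.

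For (ii), raise the defining relation~(\ref{Vequation}) to the power $-\alpha$, giving $\hat{T}^{-\alpha}_{\alpha,k-1}=\hat{T}^{-\alpha}_{\alpha,k}V_{k}$, and iterate from $k=j$ up to $k=n$; the product telescopes to $\hat{T}^{-\alpha}_{\alpha,j-1}=\hat{T}^{-\alpha}_{\alpha,n}\prod_{l=j}^{n}V_{l}$. Taking $j=1$ records the fact needed for (i): $\hat{T}_{\alpha,0}=\hat{T}_{\alpha,n}\prod_{l=1}^{n}V_{l}^{-1/\alpha}$ is a measurable function of $(V_{1},\ldots,V_{n},\hat{T}_{\alpha,n})$.

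For (i), recall from Proposition~\ref{JPPY} that under $\mathrm{PD}(\alpha,0)$ the variables $V_{1},\ldots,V_{n}$ are mutually independent and independent of $\hat{T}_{\alpha,n}$, with $V_{k}\overset{d}{=}\beta_{((\alpha+k-1)/\alpha,(1-\alpha)/\alpha)}$ and $\hat{T}_{\alpha,n}\overset{d}{=}S_{\alpha,n}$ having density $f_{\alpha,n}$; hence in the $\mathrm{PD}(\alpha,0)$ case the joint density of $(V_{1},\ldots,V_{n},\hat{T}_{\alpha,n})$ is $\bigl[\prod_{k=1}^{n}f_{\beta_{k}}(v_{k})\bigr]f_{\alpha,n}(s)$. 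To pass to $\mathrm{PK}_{\alpha}(h\cdot f_{\alpha})$, invoke the defining mixture~(\ref{PKh}) and the Remark following it: a $\mathrm{PK}_{\alpha}(h\cdot f_{\alpha})$ realization is obtained from the $\mathrm{PD}(\alpha,0)$ one by endowing $T=\hat{T}_{\alpha,0}$ with density $h(t)f_{\alpha}(t)$ in place of $f_{\alpha}(t)$ while leaving every law conditional on $T$ unchanged (the generalized Bessel bridge / size-biased construction producing the $\hat{T}_{\alpha,k}$ and $V_{k}$ adds only independent auxiliary randomization on top of the mass partition, whose $\alpha$-diversity is $T^{-\alpha}$). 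Since by part (ii) $\hat{T}_{\alpha,0}$ is a function of $(V_{1},\ldots,V_{n},\hat{T}_{\alpha,n})$, this change of measure multiplies the $\mathrm{PD}(\alpha,0)$ joint density by the Radon--Nikodym factor $h(\hat{T}_{\alpha,0})=h\bigl(s/\prod_{l=1}^{n}v_{l}^{1/\alpha}\bigr)$, which is exactly~(\ref{jointV}); normalization is automatic from $\mathbb{E}_{\mathrm{PD}(\alpha,0)}[h(\hat{T}_{\alpha,0})]=\mathbb{E}[h(S_{\alpha})]=1$.

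I expect the one genuinely delicate point to be the justification that this change of measure acts on $(V_{1},\ldots,V_{n},\hat{T}_{\alpha,n})$ simply through the tilt $h(\hat{T}_{\alpha,0})$; this rests on $\hat{T}_{\alpha,0}$ being recoverable from these variables (part (ii)) and on the conditional law of the full construction given $\hat{T}_{\alpha,0}$ being common to both models. A self-contained alternative that uses only the transition kernel~(\ref{transitionV}) is to write the joint density of $(\hat{T}_{\alpha,0},\ldots,\hat{T}_{\alpha,n})$ under $\mathrm{PK}_{\alpha}(h\cdot f_{\alpha})$ as $h(t_{0})f_{\alpha}(t_{0})$ times the product over $k=1,\ldots,n$ of the kernels in~(\ref{transitionV}), note that the factors $f_{\alpha}(t_{k})/f_{\alpha}(t_{k-1})$ telescope so that all interior occurrences of $f_{\alpha}$ cancel and only $h(t_{0})f_{\alpha}(t_{n})$ survives, then change variables from $(t_{0},\ldots,t_{n-1})$ to $(v_{1},\ldots,v_{n})$ by $v_{k}=(t_{k}/t_{k-1})^{\alpha}$, so that $t_{0}=s\prod_{l}v_{l}^{-1/\alpha}$, and check that the Jacobian together with the remaining powers of $t$ reassemble into $\prod_{k}f_{\beta_{k}}(v_{k})f_{\alpha,n}(s)$.
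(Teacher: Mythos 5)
Your argument is correct and matches the paper's (implicit) approach: the paper states Corollary~\ref{corollary1} without proof as a direct consequence of the conditioning framework introduced in Section~3.3 and of Proposition~\ref{JPPY}, and your change-of-measure argument is exactly the intended mechanism. Part (ii) is the telescoping of~(\ref{Vequation}) raised to the power $-\alpha$, which you handle correctly, and part (i) follows because both $\mathrm{PD}(\alpha,0)$ and $\mathrm{PK}_{\alpha}(h\cdot f_{\alpha})$ share the same conditional law given $T=\hat{T}_{\alpha,0}$ (namely $\mathrm{PD}(\alpha\,|\,t)$ with independent auxiliary randomization), so passing between them tilts the $\mathrm{PD}(\alpha,0)$ joint density of $(V_{1},\ldots,V_{n},\hat{T}_{\alpha,n})$ by $h(\hat{T}_{\alpha,0})=h(s/\prod_{l}v_{l}^{1/\alpha})$; normalization is precisely Remark~\ref{Expecth}. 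Your alternative transition-kernel computation is a useful self-contained check and also works out: the $f_{\alpha}$-factors telescope to leave $h(t_{0})f_{\alpha}(t_{n})$, and the Jacobian of $(t_{0},\ldots,t_{n-1})\mapsto(v_{1},\ldots,v_{n})$ at fixed $t_{n}=s$ is $\alpha^{-n}\prod_{k}t_{k-1}/v_{k}$, after which the powers of $t_{k-1}=s\prod_{l\geq k}v_{l}^{-1/\alpha}$ recombine to give $s^{-n}\prod_{k}v_{k}^{(k-1)/\alpha}$ and the gamma constants collapse telescopically to $\Gamma(n+1)/\Gamma(n/\alpha+1)$, reproducing $\prod_{k}f_{\beta_{k}}(v_{k})\,f_{\alpha,n}(s)$ exactly as claimed.
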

\begin{rem}\label{Expecth} The fact that the quantity in~(\ref{jointV}) integrates to $1,$ follows from the identity~(\ref{jamesid}). Which reads as
$$
\mathbb{E}_{\alpha,0}[h(S_{\alpha})]=\mathbb{E}_{\alpha,0}[h(S_{\alpha,n}\times \prod_{i=1}^{n}\beta^{-1/\alpha}_{\left(\frac{i-1+\alpha}{\alpha},\frac{1-\alpha}{\alpha}\right)})]=1.
$$
\end{rem}

\begin{rem}\label{randomtrees}Comparing (\ref{transitionV}) with Haas, Miermont, Pitman and Winkel~\cite[Proposition 18, (b),(c)]{Haas} shows that under a $\mathrm{PD}(\alpha,1-\alpha)$ model, where for each $k=1,2,\ldots;$ $\hat{T}_{\alpha,k-1}\overset{d}=S_{\alpha,k-\alpha},$ $\hat{T}^{-\alpha}_{\alpha,k-1}$ equates to the total length  of $\mathcal{\tilde{R}}^{\mathrm{ord}}_{k},$ say $
\mathbb{D}(\mathcal{\tilde{R}}^{\mathrm{ord}}_{k})=\hat{T}^{-\alpha}_{\alpha,k-1}.$
Where $\mathcal{\tilde{R}}^{\mathrm{ord}}_{k}$ is a member of an increasing family
$(\mathcal{\tilde{R}}^{\mathrm{ord}}_{k})$
of leaf-labeled $\mathbb{R}$-trees with edge lengths, arising as limits in Ford's sequential construction. It follows from~(\ref{Vequation}) that, in this setting, $(V_{k})$ can be interpreted as
$$
V_{k}=\frac{\mathbb{D}(\mathcal{\tilde{R}}^{\mathrm{ord}}_{k})}{
\mathbb{D}(\mathcal{\tilde{R}}^{\mathrm{ord}}_{k+1})}\overset{d}=\beta_{(\frac{k}{\alpha},\frac{1-\alpha}{\alpha})},
$$
which is independent of $\mathbb{D}(\mathcal{\tilde{R}}^{\mathrm{ord}}_{k+1})=\hat{T}^{-\alpha}_{\alpha,k}\overset{d}=S^{-\alpha}_{\alpha,k+1-\alpha}.$ In fact $(V_{1},\ldots,V_{k})$ are mutually independent and independent of $\mathbb{D}(\mathcal{\tilde{R}}^{\mathrm{ord}}_{k+1}).$
See~\cite{Haas} for a more precise interpretation of $(\mathcal{\tilde{R}}^{\mathrm{ord}}_{k}).$ See also \cite{Dong2006} for related discussions involving fragmentation by  $\mathrm{PD}(\alpha,1-\alpha)$ models.
\end{rem}
\subsection{Results for $\mathrm{PG}(\alpha,\zeta)$} We now address the specific cases of $\mathrm{PG}(\alpha,\zeta)$ and $\mathrm{EPG}(\alpha,\zeta).$
Let $(\mathrm{e}_{i})$ denote a collection of iid exponential~$(1)$ random variables and, independent of this, let $(\varepsilon_{\alpha,i})$ be a
collection of iid variables such that~$\varepsilon_{\alpha,i}\overset{d}=\gamma_{(1-\alpha)/\alpha}$
Note that for each $i,$
$$
\mathrm{e}_{i}+\varepsilon_{\alpha,i}\overset{d}=\gamma_{\frac{1}{\alpha}}
$$
Define a sequence of nested sums, such that $\zeta_{\alpha,0}:=\zeta,$ and
\begin{equation}
\zeta_{\alpha,k}=\sum_{i=1}^{k}(\mathrm{e}_{i}+\varepsilon_{\alpha,i})+\zeta\overset{d}=\gamma_{\frac{k}{\alpha}}+\zeta
\label{partialsumalpha}
\end{equation}
Hence $\zeta_{\alpha,i}=\mathrm{e}_{i}+\varepsilon_{\alpha,i}+\zeta_{\alpha,i-1}.$ The next result arises in the description of $(V_{k})$ and follows from an elementary conditioning argument.
\begin{lem}\label{lemmaphi}Let $\zeta_{\alpha,k}$ be random variables described in~(\ref{partialsumalpha}) then for any $\zeta>0,$ the conditional joint density of the vector $(\vartheta_{\alpha,1},\ldots,\vartheta_{\alpha,n}),$ given $\zeta,$ where
$$\vartheta_{\alpha,k}:=\frac{{\zeta_{\alpha,k-1}}}{{\zeta_{\alpha,k}}};{\mbox { for }} k=1,\ldots,n
$$
is given by
$$
\frac{\zeta^{\frac{n}{\alpha}}{\mbox e}^{\zeta}}{{[\Gamma(\frac{1}{\alpha})]}^{n}}
{\mbox e}^{-\zeta/(\prod_{l=1}^{n}y_{l})}\prod_{l=1}^{n}y^{-\frac{(n-l+1)}{\alpha}-1}_{l}(1-y_{l})^{\frac{1}{\alpha}-1}.
$$
\begin{enumerate}
\item[(i)]When $\zeta\overset{d}=\gamma_{\theta/\alpha},$ corresponding to the $\mathrm{PD}(\alpha,\theta)$ case for $\theta \ge 0,$ it follows that the
$\vartheta_{\alpha,k}$ are independent with
$$\vartheta_{\alpha,k}\overset{d}=\beta_{(\frac{(\theta+k-1)}{\alpha},\frac{1}{\alpha})}
$$
 for $k=1,2,\ldots$.
\item[(ii)]The case $\zeta=0,$ is equivalent to setting $\theta=0.$ Hence $\vartheta_{\alpha,1}=0,$ and otherwise
$$\vartheta_{\alpha,k}\overset{d}=\beta_{(\frac{(k-1)}{\alpha},\frac{1}{\alpha})}.
$$
Note conditioning in the case $\zeta=0$ not only corresponds to the $\mathrm{PD}(\alpha,0)$ model but any $\mathrm{PG}(\alpha,\zeta)$ model where $P(\zeta=0)>0.$
\end{enumerate}
\end{lem}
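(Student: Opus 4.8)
The plan is to compute the conditional density for a fixed value $\zeta>0$ by an explicit change of variables, and then to deduce (i) and (ii) by integrating $\zeta$ against a gamma density. Working conditionally on $\zeta$, set $D_k:=\mathrm{e}_k+\varepsilon_{\alpha,k}$, so that the $D_k$ are iid with the $\gamma_{1/\alpha}$ density $x^{1/\alpha-1}e^{-x}/\Gamma(1/\alpha)$ and $\zeta_{\alpha,k}=\zeta+\sum_{i=1}^{k}D_i$. Since $D_k=\zeta_{\alpha,k}-\zeta_{\alpha,k-1}$ (with $\zeta_{\alpha,0}=\zeta$) is a unit-Jacobian triangular map, the conditional joint density of $(\zeta_{\alpha,1},\dots,\zeta_{\alpha,n})$ given $\zeta$ is
$$
\frac{e^{\zeta}}{[\Gamma(1/\alpha)]^{n}}\,e^{-\zeta_{\alpha,n}}\prod_{k=1}^{n}(\zeta_{\alpha,k}-\zeta_{\alpha,k-1})^{1/\alpha-1}
$$
supported on $\{\zeta<\zeta_{\alpha,1}<\dots<\zeta_{\alpha,n}\}$, where I used $\sum_{i=1}^{n}D_i=\zeta_{\alpha,n}-\zeta$.

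Next I would change variables to the ratios $y_k:=\vartheta_{\alpha,k}=\zeta_{\alpha,k-1}/\zeta_{\alpha,k}$. Telescoping gives $\prod_{l=1}^{k}y_l=\zeta/\zeta_{\alpha,k}$, hence $\zeta_{\alpha,k}=\zeta\prod_{l=1}^{k}y_l^{-1}$, $\zeta_{\alpha,k}-\zeta_{\alpha,k-1}=\zeta(1-y_k)\prod_{l=1}^{k}y_l^{-1}$ for all $k=1,\dots,n$ (the $k=1$ term fits the same pattern because $\zeta_{\alpha,0}=\zeta$), and $\zeta_{\alpha,n}=\zeta/\prod_{l=1}^{n}y_l$; the support maps onto $(0,1)^{n}$. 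Since $\zeta_{\alpha,k}$ depends only on $y_1,\dots,y_k$, the Jacobian matrix is lower triangular and has determinant $\prod_{k=1}^{n}|\partial\zeta_{\alpha,k}/\partial y_k|=\zeta^{n}\prod_{l=1}^{n}y_l^{-(n-l+2)}$. Substituting these expressions into the density above and collecting powers — the exponent of $\zeta$ is $n(1/\alpha-1)+n=n/\alpha$, and that of $y_l$ is $-(n-l+1)(1/\alpha-1)-(n-l+2)=-(n-l+1)/\alpha-1$ — produces exactly the asserted conditional density.

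For (i) I would then integrate this conditional density against the $\gamma_{\theta/\alpha}$ density $\zeta^{\theta/\alpha-1}e^{-\zeta}/\Gamma(\theta/\alpha)$: the factor $e^{\zeta}$ cancels and $\int_0^{\infty}\zeta^{(n+\theta)/\alpha-1}e^{-\zeta/\prod_{l}y_l}\,d\zeta=\Gamma((n+\theta)/\alpha)\,(\prod_{l}y_l)^{(n+\theta)/\alpha}$; the exponent of $y_l$ then becomes $(\theta+l-1)/\alpha-1$, so the joint density factorizes over $l$ into $y_l^{(\theta+l-1)/\alpha-1}(1-y_l)^{1/\alpha-1}$ and the leftover constant telescopes to $\prod_{l=1}^{n}\Gamma((\theta+l)/\alpha)/[\Gamma((\theta+l-1)/\alpha)\Gamma(1/\alpha)]$, which is precisely the product of the normalizing constants of the $\beta_{(\theta+l-1)/\alpha,\,1/\alpha}$ densities; since $n$ is arbitrary this yields the claimed mutual independence. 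For (ii), taking $\zeta=0$ forces $\vartheta_{\alpha,1}=0$ and leaves $(\vartheta_{\alpha,2},\dots)$ with the $\theta=0$ specialization of (i); since this conditional law does not depend on the marginal law of $\zeta$, the same description is valid for any $\mathrm{PG}(\alpha,\zeta)$ with $P(\zeta=0)>0$.

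The computation is elementary, as the lemma states; the only steps I expect to require care are verifying the lower-triangular Jacobian in the passage to the ratios (and that the $k=1$ boundary term behaves like the others), and checking that the Gamma-constants in (i) telescope to exactly the Beta normalizations.
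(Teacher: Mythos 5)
Your proof is correct and carefully done. The paper explicitly omits the proof of this lemma, noting only that it ``follows quite obviously from elementary conditioning arguments,'' so there is no argument in the paper to compare against; your change-of-variables computation is a natural and valid realization of that remark. I checked the bookkeeping: the triangular map $(\zeta_{\alpha,1},\dots,\zeta_{\alpha,n})\mapsto(y_1,\dots,y_n)$ is indeed lower triangular with $|\partial\zeta_{\alpha,k}/\partial y_k|=\zeta_{\alpha,k}/y_k$, giving $|\det J|=\zeta^n\prod_{l=1}^n y_l^{-(n-l+2)}$; the power of $\zeta$ collects to $n(1/\alpha-1)+n=n/\alpha$ and that of $y_l$ to $-(n-l+1)(1/\alpha-1)-(n-l+2)=-(n-l+1)/\alpha-1$, matching the stated density; and the gamma constants in (i) telescope exactly to $\prod_{l=1}^n\Gamma((\theta+l)/\alpha)/[\Gamma((\theta+l-1)/\alpha)\Gamma(1/\alpha)]$, the product of the $\mathrm{Beta}((\theta+l-1)/\alpha,1/\alpha)$ normalizations. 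One small remark: part (i) also follows immediately, without integrating the conditional density, from Lukacs's beta--gamma independence (used elsewhere in the paper), since under $\zeta\overset{d}=\gamma_{\theta/\alpha}$ the sequence $\zeta_{\alpha,0}<\zeta_{\alpha,1}<\cdots$ is a gamma chain with increments $\gamma_{1/\alpha}$, making the consecutive ratios $\vartheta_{\alpha,k}$ independent $\mathrm{Beta}((\theta+k-1)/\alpha,1/\alpha)$ variables independent of $\zeta_{\alpha,n}$; your route has the advantage of deriving (i) directly from the conditional density you already established.
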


\begin{prop}
Consider the setting in Proposition~\ref{JPPY} then in the $\mathrm{PG}(\alpha,\zeta)$ case the  joint density of $(V_{1},\ldots,V_{n},\hat{T}_{\alpha,n})$ conditioned on $\zeta,$ is expressed as follows
\begin{equation}
\left[\prod_{k=1}^{n}f_{\beta_{k}}(v_{k})\right]{\mbox e}^{\zeta}{\mbox e}^{-s{(\frac{\zeta}{\prod_{l=1}^{n}v_{l}})}^{1/\alpha}}f_{\alpha,n}(s)
\label{jointVG1}
\end{equation}
and the joint density of $(V_{1},\ldots,V_{n})|\zeta$ can be expressed as
\begin{equation}
\left[\prod_{k=1}^{n}f_{\beta_{k}}(v_{k})\right]{\mbox e}^{\zeta}\mathbb{S}_{\alpha,n}\left({\left(\frac{\zeta}{\prod_{l=1}^{n}v_{l}}\right)}^{1/\alpha}\right)
\label{jointVG2}
\end{equation}
where $f_{\beta_{k}}$ denotes the density of a $\beta_{(\alpha+k-1)/\alpha,(1-\alpha)/\alpha},$ variable. See (\ref{Sid}) for various forms of $\mathbb{S}_{\alpha,n}(y).$
\begin{enumerate}
\item[(i)]When $\zeta=0,$ it follows from~(\ref{jointVG1}),(\ref{jointVG2}), that for $k=1,2,\ldots$
$$
V_{k}\overset{d}=\beta_{(\frac{(\alpha+k-1)}{\alpha},\frac{(1-\alpha)}{\alpha})}
$$
and are independent.
\end{enumerate}
\end{prop}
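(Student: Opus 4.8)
The plan is to obtain both displays by conditioning on $\zeta$ and invoking Corollary~\ref{corollary1}(i). First I would observe that, conditional on $\zeta$, a sequence $(P_{i})\sim\mathrm{PG}(\alpha,\zeta)$ is exactly a normalized generalized gamma law, i.e.\ the $\mathrm{PK}_{\alpha}(h_{\zeta}\cdot f_{\alpha})$ model with mixing weight
\[
h_{\zeta}(s)={\mbox e}^{-(s\zeta^{1/\alpha}-\zeta)}={\mbox e}^{\zeta}{\mbox e}^{-s\zeta^{1/\alpha}},
\]
which is simply the conditional-on-$\zeta$ form of $h$ in~(\ref{genh}); in particular $\hat T_{\alpha,0}$ then has density $h_{\zeta}(s)f_{\alpha}(s)$, consistent with the conditional density of $T$ given $\zeta$ recorded in Section~3. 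Since Proposition~\ref{JPPY} and Corollary~\ref{corollary1} are stated for an arbitrary $\mathrm{PK}_{\alpha}(h\cdot f_{\alpha})$ law, they apply to this conditional law with $h$ replaced by $h_{\zeta}$.

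Next I would substitute $h=h_{\zeta}$ into the joint density~(\ref{jointV}) of $(V_{1},\dots,V_{n},\hat T_{\alpha,n})$ from Corollary~\ref{corollary1}(i). There $h$ is evaluated at $s/\prod_{l=1}^{n}v_{l}^{1/\alpha}$, and since $\prod_{l=1}^{n}v_{l}^{1/\alpha}=(\prod_{l=1}^{n}v_{l})^{1/\alpha}$ one gets
\[
h_{\zeta}\!\left(\frac{s}{\prod_{l=1}^{n}v_{l}^{1/\alpha}}\right)={\mbox e}^{\zeta}\exp\!\left(-\,s\left(\frac{\zeta}{\prod_{l=1}^{n}v_{l}}\right)^{1/\alpha}\right),
\]
which, multiplied by $\big[\prod_{k=1}^{n}f_{\beta_{k}}(v_{k})\big]f_{\alpha,n}(s)$, is precisely~(\ref{jointVG1}). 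To pass to~(\ref{jointVG2}) I would integrate~(\ref{jointVG1}) over $s\in(0,\infty)$: writing $f_{\alpha,n}(s)=c_{\alpha,n}s^{-n}f_{\alpha}(s)$ as in~(\ref{PDdiversity}) and using the definition~(\ref{basicsurvival}), $\mathbb{S}_{\alpha,n}(y)=c_{\alpha,n}\int_{0}^{\infty}{\mbox e}^{-ys}s^{-n}f_{\alpha}(s)\,ds$, the $s$-integral with $y=(\zeta/\prod_{l=1}^{n}v_{l})^{1/\alpha}$ evaluates to $\mathbb{S}_{\alpha,n}\big((\zeta/\prod_{l=1}^{n}v_{l})^{1/\alpha}\big)$, yielding~(\ref{jointVG2}).

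Finally, for part~(i) I would set $\zeta=0$ in~(\ref{jointVG1}): then ${\mbox e}^{\zeta}=1$ and $\exp(-s(\zeta/\prod_{l=1}^{n}v_{l})^{1/\alpha})=1$, so the joint density collapses to $\big[\prod_{k=1}^{n}f_{\beta_{k}}(v_{k})\big]f_{\alpha,n}(s)$, a product form that simultaneously exhibits the mutual independence of $V_{1},\dots,V_{n}$, with $V_{k}\overset{d}=\beta_{((\alpha+k-1)/\alpha,(1-\alpha)/\alpha)}$, and their independence from $\hat T_{\alpha,n}$; this is of course consistent with $\mathrm{PG}(\alpha,0)=\mathrm{PD}(\alpha,0)$ and with Proposition~\ref{JPPY}. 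Every step here is a direct substitution or a recognition of the $s$-integral as the survival function~(\ref{basicsurvival}), so there is no real obstacle; the only point needing a word of care is the justification that the conditional-on-$\zeta$ law is genuinely a $\mathrm{PK}_{\alpha}(h\cdot f_{\alpha})$ model to which Corollary~\ref{corollary1} applies.
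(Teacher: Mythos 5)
Your proposal is correct and follows exactly the route the paper leaves implicit (the proposition is stated without a separate proof block): substitute the conditional-on-$\zeta$ mixing weight $h_{\zeta}(s)={\mbox e}^{-(s\zeta^{1/\alpha}-\zeta)}$ into the joint density~(\ref{jointV}) of Corollary~\ref{corollary1}, use $\prod_{l}v_{l}^{1/\alpha}=(\prod_{l}v_{l})^{1/\alpha}$ to get~(\ref{jointVG1}), and recognize the $s$-marginal as $\mathbb{S}_{\alpha,n}$ via~(\ref{basicsurvival}) to obtain~(\ref{jointVG2}); the $\zeta=0$ case then collapses to the product form by inspection. No gaps.
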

\begin{rem}Note for clarity, through various cancellations the joint density in~(\ref{jointVG1}) has the precise form
$$
{\mbox e}^{\zeta}{\mbox e}^{-s{(\frac{\zeta}{\prod_{l=1}^{n}v_{l}})}^{1/\alpha}}s^{-n}f_{\alpha}(s)\prod_{k=1}^{n}
\frac{(1-\alpha)}{{[\Gamma(\frac{1}{\alpha})]}}
v^{\frac{k-1}{\alpha}}_{k}{(1-v_{k})}^{\frac{1-\alpha}{\alpha}-1}.
$$

\end{rem}

We now identify the random variables in the following theorem.
\begin{thm}\label{TheoremPGV}In the $\mathrm{PG}(\alpha,\zeta)$ case, where $\zeta$ is a non-negative random variable,  the variables defined in Proposition~\ref{JPPY} are identified explicitly as follows.

\begin{enumerate}
\item[(i)]The equation in~(\ref{Vequation}), $\hat{T}_{\alpha,(k-1)}=\hat{T}_{\alpha,k}\times V^{-1/\alpha}_{k},$
reads in this case as,
\begin{equation}
\frac{\tau_{\alpha}(\mathrm{e}_{k}+\varepsilon_{\alpha,k}+\zeta_{\alpha,k-1})}{{(\mathrm{e}_{k}+\zeta_{\alpha,k-1})}^{1/\alpha}}
=\frac{\tau_{\alpha}(\zeta_{\alpha,k})}{{(\zeta_{\alpha,k})}^{1/\alpha}}\times V^{-1/\alpha}_{k}
\end{equation}
\item[(ii]For each $k,$
$$
V_{k}=\frac{\mathrm{e}_{k}+\zeta_{\alpha,k-1}}{\zeta_{\alpha,k}}:=1-\beta^{(k)}_{(\frac{(1-\alpha)}{\alpha},1)}[1-\vartheta_{\alpha,k}],
$$
where $(\beta^{(k)}_{({(1-\alpha)}/{\alpha},1)})$ are a collection of iid $\mathrm{Beta}((1-\alpha)/\alpha,1)$ variables  independent of $(\vartheta_{\alpha,k}).$
More specifically, $1-V_{k}$ decomposes in terms of
$$
\frac{\varepsilon_{\alpha,k}}{\mathrm{e}_{k}+\varepsilon_{\alpha,k}}:=\beta^{(k)}_{(\frac{(1-\alpha)}{\alpha},1)}{\mbox { and }}\vartheta_{\alpha,k}=\frac{{\zeta_{\alpha,k-1}}}{{\zeta_{\alpha,k}}}
$$
and hence $(V_{1},\ldots,V_{n})$ are conditionally independent given $(\vartheta_{\alpha,1},\ldots, \vartheta_{\alpha,n})$

\end{enumerate}
\end{thm}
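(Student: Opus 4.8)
The plan is to realise the Markov chain of Proposition~\ref{JPPY} in the $\mathrm{PG}(\alpha,\zeta)$ case as the sequence of $\alpha$-diversities $\hat{T}_{\alpha,k}=\tau_{\alpha}(\zeta_{\alpha,k})/\zeta_{\alpha,k}^{1/\alpha}$ of the $\mathrm{PG}(\alpha,\zeta_{\alpha,k})$ models, and to read off $V_{k}$ by feeding Proposition~\ref{recursiverelations} into the decomposition~(\ref{LJdecomp}) of Proposition~\ref{MarkovAlpha}. First I would record the easy endpoints: $\hat{T}_{\alpha,0}=\tau_{\alpha}(\zeta)/\zeta^{1/\alpha}$ is exactly the $\alpha$-diversity variable $T$ of the $\mathrm{PG}(\alpha,\zeta)$ model, hence has density $h(s)f_{\alpha}(s)$ with $h$ as in~(\ref{genh}), so the chain is correctly initialised; and, when $\zeta=\gamma_{\theta/\alpha}$, (\ref{partialsumalpha}) gives $\zeta_{\alpha,k}\overset{d}=\gamma_{(\theta+k)/\alpha}$, so $\mathrm{PG}(\alpha,\zeta_{\alpha,k})=\mathrm{PD}(\alpha,\theta+k)$ and the states match the $\mathrm{PD}$ picture (for $\zeta\equiv 0$ one recovers $\hat{T}_{\alpha,k}\overset{d}=S_{\alpha,k}$ as in Proposition~\ref{JPPY}).

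For part~(i), fix $k$ and condition on $\mathrm{e}_{k}$ and $\zeta_{\alpha,k-1}$. Proposition~\ref{recursiverelations} states $\mathrm{EPG}(\alpha,\gamma_{1}+\zeta)=\mathrm{PG}(\alpha,\zeta)$; applied with the realisation $\gamma_{1}=\mathrm{e}_{k}$ and $\zeta=\zeta_{\alpha,k-1}$, this lets me view $\hat{T}_{\alpha,k-1}$ as the $\alpha$-diversity of an $\mathrm{EPG}(\alpha,\mathrm{e}_{k}+\zeta_{\alpha,k-1})$ model, which by~(\ref{EPT}) — taking the auxiliary variable $\varepsilon_{\alpha}$ there to be $\varepsilon_{\alpha,k}$ and using $\mathrm{e}_{k}+\varepsilon_{\alpha,k}+\zeta_{\alpha,k-1}=\zeta_{\alpha,k}$ — equals $\tau_{\alpha}(\zeta_{\alpha,k})/(\mathrm{e}_{k}+\zeta_{\alpha,k-1})^{1/\alpha}$, the left-hand side of the displayed identity in~(i). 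Now apply the decomposition~(\ref{LJdecomp}) of Proposition~\ref{MarkovAlpha} to this very $\mathrm{EPG}(\alpha,\mathrm{e}_{k}+\zeta_{\alpha,k-1})$ model: it writes the same quantity as $T\times q^{-1/\alpha}$ with $T=\tau_{\alpha}(\zeta_{\alpha,k})/\zeta_{\alpha,k}^{1/\alpha}=\hat{T}_{\alpha,k}$ the $\mathrm{PG}(\alpha,\zeta_{\alpha,k})$-diversity and $q=(\mathrm{e}_{k}+\zeta_{\alpha,k-1})/\zeta_{\alpha,k}$. Comparing with~(\ref{Vequation}) forces $V_{k}=(\mathrm{e}_{k}+\zeta_{\alpha,k-1})/\zeta_{\alpha,k}$, giving (i) and the first formula of~(ii). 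Since the pairs $(\mathrm{e}_{i},\varepsilon_{\alpha,i})$ are iid, the one-step transition so produced is independent of $k$ and coincides with~(\ref{transitionV}), so the sequence is a time-homogeneous version of the chain of Proposition~\ref{JPPY}, now governed by the $\mathrm{PG}(\alpha,\zeta)$ law.

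For part~(ii), the rest is algebra and the beta--gamma calculus. Set $\beta^{(k)}:=\varepsilon_{\alpha,k}/(\mathrm{e}_{k}+\varepsilon_{\alpha,k})$ (the variable $\beta^{(k)}_{(\frac{(1-\alpha)}{\alpha},1)}$ of the statement). Then $1-V_{k}=\varepsilon_{\alpha,k}/\zeta_{\alpha,k}=\beta^{(k)}\cdot(\zeta_{\alpha,k}-\zeta_{\alpha,k-1})/\zeta_{\alpha,k}=\beta^{(k)}[1-\vartheta_{\alpha,k}]$ with $\vartheta_{\alpha,k}=\zeta_{\alpha,k-1}/\zeta_{\alpha,k}$. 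Since $\mathrm{e}_{k}\overset{d}=\gamma_{1}$ and $\varepsilon_{\alpha,k}\overset{d}=\gamma_{(1-\alpha)/\alpha}$ are independent, Lukacs' beta--gamma algebra gives $\beta^{(k)}\overset{d}=\beta_{(1-\alpha)/\alpha,1}$ and, crucially, independent of the sum $\mathrm{e}_{k}+\varepsilon_{\alpha,k}$; moreover the pairs $(\mathrm{e}_{k}+\varepsilon_{\alpha,k},\beta^{(k)})$ are iid in $k$ and independent of $\zeta$. Hence $(\beta^{(1)},\dots,\beta^{(n)})$ is independent of $\sigma(\zeta,\mathrm{e}_{1}+\varepsilon_{\alpha,1},\dots,\mathrm{e}_{n}+\varepsilon_{\alpha,n})$, a $\sigma$-field containing $\sigma(\vartheta_{\alpha,1},\dots,\vartheta_{\alpha,n})$ because $\vartheta_{\alpha,j}=(\zeta+\sum_{i<j}(\mathrm{e}_{i}+\varepsilon_{\alpha,i}))/(\zeta+\sum_{i\le j}(\mathrm{e}_{i}+\varepsilon_{\alpha,i}))$. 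Passing to this sub-$\sigma$-field preserves the iid $\mathrm{Beta}((1-\alpha)/\alpha,1)$ structure of the $\beta^{(k)}$, and since given the $\vartheta_{\alpha,k}$ each $V_{k}$ is a deterministic increasing function of $\beta^{(k)}$ alone, the $V_{k}$ are conditionally independent given $(\vartheta_{\alpha,1},\dots,\vartheta_{\alpha,n})$. As a consistency check one can recover~(\ref{jointVG1}) by combining the conditional density of $(\vartheta_{\alpha,1},\dots,\vartheta_{\alpha,n})$ from Lemma~\ref{lemmaphi} with the independent $\mathrm{Beta}((1-\alpha)/\alpha,1)$ factors and the generalised-gamma density of $\hat{T}_{\alpha,n}$, using $\prod_{l=1}^{n}\vartheta_{\alpha,l}=\zeta/\zeta_{\alpha,n}$.

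The main obstacle is the bookkeeping in part~(i): one must be careful that the displayed identity is a statement about a joint realisation of the consecutive pair $(\hat{T}_{\alpha,k-1},\hat{T}_{\alpha,k})$ — built from the single value $\tau_{\alpha}(\zeta_{\alpha,k})$ normalised in two different ways, so that $\hat{T}_{\alpha,k-1}=\hat{T}_{\alpha,k}V_{k}^{-1/\alpha}$ is a genuine equality — and not a single-path identity across all $k$ at once (the sequence $\tau_{\alpha}(\zeta_{\alpha,k})/\zeta_{\alpha,k}^{1/\alpha}$ from one fixed subordinator path does \emph{not} satisfy~(\ref{Vequation}) almost surely). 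Verifying that stitching these one-step couplings together, using the freshness of $(\mathrm{e}_{k},\varepsilon_{\alpha,k})$ at each stage, yields an honest time-homogeneous version of the Proposition~\ref{JPPY} chain rather than merely a list of marginal laws is the delicate point; Propositions~\ref{MarkovAlpha} and~\ref{recursiverelations} do the substantive work, and everything else reduces to the algebra above.
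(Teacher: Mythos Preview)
Your approach is correct but takes a different route from the paper. The paper's proof is purely computational: it takes the candidate $V_k$ from~(ii), writes down the conditional density of $(V_1,\dots,V_n)$ given $(\vartheta_{\alpha,1},\dots,\vartheta_{\alpha,n},\zeta)$ as a product of shifted Beta factors, integrates against the density of Lemma~\ref{lemmaphi} by augmenting with the Laplace-transform identity $e^{-\zeta/(\prod y_l)^{\alpha}}=\int e^{-s(\zeta/\prod y_l)^{1/\alpha}}f_\alpha(s)\,ds$, and then performs an iterated gamma integral (via $r=y^{-1/\alpha}$) to land exactly on~(\ref{jointVG1})/(\ref{jointVG2}). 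No reference to Propositions~\ref{MarkovAlpha} or~\ref{recursiverelations} appears in the proof itself; those results motivate the form of $V_k$ but are not invoked.

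Your route is more structural: you \emph{derive} the form of $V_k$ by recognising $\mathrm{PG}(\alpha,\zeta_{\alpha,k-1})=\mathrm{EPG}(\alpha,\mathrm{e}_k+\zeta_{\alpha,k-1})$ and reading off $V_k=q_{\alpha,\zeta}$ from the decomposition~(\ref{LJdecomp}); the beta--gamma argument for~(ii) is then clean and complete. This is a genuine gain in insight, since it explains \emph{why} the answer has this shape rather than merely verifying it. However, you correctly flag the cost: the one-step couplings you build do not chain along a single subordinator path (your $\hat{T}_{\alpha,k}$ receives two incompatible pathwise expressions at consecutive steps), so the structural argument alone does not establish the joint law of $(V_1,\dots,V_n,\hat{T}_{\alpha,n})$. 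What you call a ``consistency check'' --- combining Lemma~\ref{lemmaphi}, the independent $\mathrm{Beta}((1-\alpha)/\alpha,1)$ factors, and the generalised-gamma density of $\hat{T}_{\alpha,n}$ to recover~(\ref{jointVG1}) --- is exactly the paper's proof, and is not optional: it is where the joint identification is actually nailed down. So the two arguments converge at the same integral; yours front-loads the conceptual identification, the paper goes straight to the computation.
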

\begin{proof}We show that the $(V_{k})$ variables defined in [(ii)] satisfy (\ref{jointVG2}). The remaining results are then easily checked. The description in [(ii)] shows that the joint density of the variables given $(\vartheta_{\alpha,1}=y_{1},\ldots, \vartheta_{\alpha,n}=y_{n},\zeta)$ is of the form
$$
\alpha^{-n}\prod_{l=1}^{n}(1-v_{l})^{\frac{1-\alpha}{\alpha}-1}{(1-y_{l})}^{-(\frac{1-\alpha}{\alpha})}
$$
for $0<y_{l}<v_{l}<1, l=1,\ldots,n.$ Integrating with respect to the density in Lemma~\ref{lemmaphi} amounts to evaluating the integral
$$
\int_{0}^{v_{1}}\cdots\int_{0}^{v_{n}}{\mbox e}^{-\zeta/(\prod_{l=1}^{n}y_{l})}\prod_{l=1}^{n}y^{-\frac{(n-l+1)}{\alpha}-1}_{l}dy_{n}\ldots dy_{1}
$$
Now use the fact that
$$
{\mbox e}^{-\zeta/(\prod_{l=1}^{n}y_{l})}=\int_{0}^{\infty}{\mbox e}^{-s{(\frac{\zeta}{\prod_{l=1}^{n}y_{l}})}^{1/\alpha}}f_{\alpha}(s)ds.
$$
Augmenting this expression within the multiple integral shows that what is left is to evaluate,
$$
\int_{0}^{v_{1}}\cdots\int_{0}^{v_{n}}{\mbox e}^{-s{(\frac{\zeta}{\prod_{l=1}^{n}y_{l}})}^{1/\alpha}}\prod_{l=1}^{n}y^{-\frac{(n-l+1)}{\alpha}-1}_{l}dy_{n}\ldots dy_{1}.
$$
Using the change of variable $r=y^{-1/\alpha},$ leads to a multiple gamma integral,
$$
\alpha^{n}\int_{v^{-1/\alpha}_{1}}^{\infty}\cdots\int_{v^{-1/\alpha}_{n}}^{\infty}{\mbox e}^{-s{({\zeta^{1/\alpha}}{\prod_{l=1}^{n}r_{l}})}}\prod_{l=1}^{n}r^{{n-l}}_{l}dr_{n}\ldots dr_{1}
$$
where, when first integrating with respect to $r_{n},$ gives
$$
\left[\prod_{l=1}^{n-1}r^{{n-l}}_{l}\right]\int_{v^{-\frac{1}{\alpha}}_{n}}^{\infty}{\mbox e}^{-s{({\zeta^{1/\alpha}}{\prod_{l=1}^{n}r_{l}})}}dr_{n},
$$
is equal to
$$
s^{-1}\zeta^{-1/\alpha}{\mbox e}^{-{(\frac{\zeta}{v_{n}})}^{1/\alpha}{(s{\prod_{l=1}^{n-1}r_{l}})}}\prod_{l=1}^{n-2}r^{{n-l-1}}_{l}.
$$
Repeating this procedure for $r_{n-1},$ generates an expression of the form 
$$
s^{-2}\zeta^{-2/\alpha}v^{1/\alpha}_{n}{\mbox e}^{-{(\frac{\zeta}{v_{n}v_{n-1}})}^{1/\alpha}{(s{\prod_{l=1}^{n-2}r_{l}})}}\prod_{l=1}^{n-3}r^{{n-l-2}}_{l}.
$$
Continuing with $r_{n-2}$ etc. reveals the result.
\end{proof}
The proof reveals a joint density of $(V_{1},\ldots,V_{n},\vartheta_{\alpha,1},\ldots,\vartheta_{\alpha,n},\hat{T}_{\alpha,n})$ given $\zeta,$ with arguments $(v_{1},\ldots,v_{n},y_{1},\ldots,y_{n},s),$ that is proportional to
$$
f_{\alpha}(s){\mbox e}^{-s{(\frac{\zeta}{\prod_{l=1}^{n}y_{l}})}^{1/\alpha}}\prod_{l=1}^{n}y^{-\frac{(n-l+1)}{\alpha}-1}_{l}\prod_{l=1}^{n}(1-v_{l})^{\frac{1-\alpha}{\alpha}-1},
$$
 $y_{l}<v_{l}<1, l=1,\ldots,n.$ Furthermore, there is the relation $\zeta_{\alpha,k}=\zeta/(\prod_{l=1}^{k},\vartheta_{\alpha,l}.$ Leading to the following Corollary.

\begin{cor}From Theorem~\ref{TheoremPGV} there are the following conditional relations.
\begin{enumerate}
\item[(i)] Given $(\zeta_{\alpha,k},\vartheta_{\alpha,k})$ the random variables $(V_{k},\hat{T}_{\alpha,k})$, are conditionally independent with joint density, having respective arguments $(v,s),$
$$
\frac{1-\alpha}{\alpha}{(1-v)}^{\frac{1-\alpha}{\alpha}-1}{(1-\vartheta_{\alpha,k})}^{-\frac{1-\alpha}{\alpha}}
{\mbox e}^{-(s\zeta^{1/\alpha}_{\alpha,k}-\zeta_{\alpha,k})}f_{\alpha}(s),
$$
for $\vartheta_{\alpha,k}<v<1.$
\item[(ii)] The joint density of   $(\vartheta_{\alpha,k},\zeta_{\alpha,k})|\zeta,$ with arguments $(u,y),$ is
$$
\frac{1}{\Gamma(\frac{k-1}{\alpha})\Gamma(\frac{1}{\alpha})}
{u}^{\frac{k-1}{\alpha}-1}{(1-u)}^{\frac{1-\alpha}{\alpha}}
{(y-\zeta/u)}^{\frac{k-1}{\alpha}-1}y^{\frac{1}{\alpha}}
{\mbox e}^{-(y-\zeta)}
$$
for $y>\zeta/u.$
\end{enumerate}
\end{cor}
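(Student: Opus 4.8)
The plan is to obtain both conditional laws by reading directly off the explicit representations furnished by Theorem~\ref{TheoremPGV}, invoking only the elementary beta--gamma calculus, the generalized gamma conditional density recorded in Section~3, and a single Jacobian computation. Working one index $k$ at a time is cleaner than manipulating the $(2n+1)$-variable joint density displayed just above the Corollary, though either route works.

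For part (i): from Theorem~\ref{TheoremPGV}(i) one has, on the common probability space, $\hat{T}_{\alpha,k}=\tau_{\alpha}(\zeta_{\alpha,k})/\zeta_{\alpha,k}^{1/\alpha}$, and from part (ii) of the same theorem
$$
V_{k}=1-\frac{\varepsilon_{\alpha,k}}{\zeta_{\alpha,k}}=1-\beta^{(k)}_{(\frac{1-\alpha}{\alpha},1)}(1-\vartheta_{\alpha,k}),\qquad \beta^{(k)}_{(\frac{1-\alpha}{\alpha},1)}=\frac{\varepsilon_{\alpha,k}}{\mathrm{e}_{k}+\varepsilon_{\alpha,k}}.
$$
I would first record three structural facts. (a) Both $\zeta_{\alpha,k}=\zeta_{\alpha,k-1}+(\mathrm{e}_{k}+\varepsilon_{\alpha,k})$ and $\vartheta_{\alpha,k}=\zeta_{\alpha,k-1}/\zeta_{\alpha,k}$ are measurable functions of the pair $(\zeta_{\alpha,k-1},\,\mathrm{e}_{k}+\varepsilon_{\alpha,k})$ alone. (b) The pair $(\mathrm{e}_{k}+\varepsilon_{\alpha,k},\,\beta^{(k)}_{(\frac{1-\alpha}{\alpha},1)})$ is independent of $\zeta_{\alpha,k-1}$ and, by the beta--gamma splitting, its two coordinates are mutually independent, $\mathrm{e}_{k}+\varepsilon_{\alpha,k}\overset{d}=\gamma_{1/\alpha}$ and $\beta^{(k)}_{(\frac{1-\alpha}{\alpha},1)}\overset{d}=\mathrm{Beta}((1-\alpha)/\alpha,1)$; together with (a) this makes $\beta^{(k)}_{(\frac{1-\alpha}{\alpha},1)}$ independent of the pair $(\zeta_{\alpha,k},\vartheta_{\alpha,k})$. (c) The subordinator $\tau_{\alpha}$ is independent of all the $(\mathrm{e}_{i},\varepsilon_{\alpha,i})$, so conditionally on $\zeta_{\alpha,k}$ the increment $\tau_{\alpha}(\zeta_{\alpha,k})$, hence $\hat{T}_{\alpha,k}$, is conditionally independent of $(\vartheta_{\alpha,k},V_{k})$ and has the generalized gamma conditional density $f_{\alpha}(s)\,\mathrm{e}^{-(s\zeta_{\alpha,k}^{1/\alpha}-\zeta_{\alpha,k})}$ of Section~3. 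Facts (b)--(c) give the asserted conditional independence of $V_{k}$ and $\hat{T}_{\alpha,k}$ given $(\zeta_{\alpha,k},\vartheta_{\alpha,k})$; the $\hat{T}$-factor is as above; and for $V_{k}$ one notes that, given $(\zeta_{\alpha,k},\vartheta_{\alpha,k})$, the variable $\beta^{(k)}_{(\frac{1-\alpha}{\alpha},1)}$ still has its $\mathrm{Beta}((1-\alpha)/\alpha,1)$ law with density $\tfrac{1-\alpha}{\alpha}b^{\frac{1-\alpha}{\alpha}-1}$, so the change of variables $1-v=\beta^{(k)}_{(\frac{1-\alpha}{\alpha},1)}(1-\vartheta_{\alpha,k})$ turns this into $\tfrac{1-\alpha}{\alpha}(1-v)^{\frac{1-\alpha}{\alpha}-1}(1-\vartheta_{\alpha,k})^{-\frac{1-\alpha}{\alpha}}$ on $v\in(\vartheta_{\alpha,k},1)$. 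Multiplying the two conditional densities yields the displayed formula.

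For part (ii): by construction $\zeta_{\alpha,k-1}=\zeta+\sum_{i=1}^{k-1}(\mathrm{e}_{i}+\varepsilon_{\alpha,i})$, where the sum is $\overset{d}=\gamma_{(k-1)/\alpha}$ and independent of $\mathrm{e}_{k}+\varepsilon_{\alpha,k}\overset{d}=\gamma_{1/\alpha}$, and $\zeta_{\alpha,k}=\zeta_{\alpha,k-1}+(\mathrm{e}_{k}+\varepsilon_{\alpha,k})$. Hence, conditionally on $\zeta$, the pair $(\zeta_{\alpha,k-1},\zeta_{\alpha,k})=(x,y)$ has joint density
$$
\frac{(x-\zeta)^{\frac{k-1}{\alpha}-1}(y-x)^{\frac{1}{\alpha}-1}\mathrm{e}^{-(y-\zeta)}}{\Gamma(\frac{k-1}{\alpha})\Gamma(\frac{1}{\alpha})},\qquad \zeta<x<y.
$$
The change of variables $(x,y)\mapsto(u,y)=(x/y,\,y)$ has Jacobian $y$; using $(uy-\zeta)^{\frac{k-1}{\alpha}-1}=u^{\frac{k-1}{\alpha}-1}(y-\zeta/u)^{\frac{k-1}{\alpha}-1}$, $(y-uy)^{\frac{1}{\alpha}-1}=y^{\frac{1}{\alpha}-1}(1-u)^{\frac{1}{\alpha}-1}$, and $\frac{1}{\alpha}-1=\frac{1-\alpha}{\alpha}$, the density becomes exactly the stated one for $(\vartheta_{\alpha,k},\zeta_{\alpha,k})\mid\zeta$ on $\{0<u<1,\ y>\zeta/u\}$.

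I expect the only genuine subtlety to be the independence bookkeeping in part (i): one must be explicit that $(\zeta_{\alpha,k},\vartheta_{\alpha,k})$ depends only on $(\zeta_{\alpha,k-1},\mathrm{e}_{k}+\varepsilon_{\alpha,k})$, that the ratio $\beta^{(k)}_{(\frac{1-\alpha}{\alpha},1)}$ is independent of that pair, and that $\hat{T}_{\alpha,k}$ is, conditionally on $\zeta_{\alpha,k}$, built only from the independent subordinator $\tau_{\alpha}$. Once this is in place, both densities drop out from routine beta--gamma algebra and the single Jacobian computation above, with no appeal to any identity beyond those already established in Section~3 and in the proof of Theorem~\ref{TheoremPGV}.
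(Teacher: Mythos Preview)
Your proof is correct. The paper does not give a detailed argument for this Corollary; it simply notes that the proof of Theorem~\ref{TheoremPGV} reveals the joint density of $(V_{1},\ldots,V_{n},\vartheta_{\alpha,1},\ldots,\vartheta_{\alpha,n},\hat{T}_{\alpha,n})\mid\zeta$ and says this leads to the Corollary. Your route---working a single index $k$ at a time directly from the explicit identifications $\hat{T}_{\alpha,k}=\tau_{\alpha}(\zeta_{\alpha,k})/\zeta_{\alpha,k}^{1/\alpha}$ and $V_{k}=1-\beta^{(k)}_{(\frac{1-\alpha}{\alpha},1)}(1-\vartheta_{\alpha,k})$---is a genuine alternative to marginalising the $(2n+1)$-variable joint density, and is arguably cleaner: the beta--gamma independence in your fact~(b) and the independence of the subordinator from the auxiliary gamma variables in~(c) make the conditional independence in~(i) transparent, and~(ii) then reduces to a single Jacobian. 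The paper's approach has the advantage of being an immediate by-product of the computation already performed in the proof of Theorem~\ref{TheoremPGV}, but it requires the reader to carry out the change of variables $\zeta_{\alpha,k}=\zeta/\prod_{l=1}^{k}\vartheta_{\alpha,l}$ inside that larger joint density and then integrate out the remaining coordinates, which is more bookkeeping. Either way, the content is the same; only the order of operations differs. One small remark: your derivation of~(ii) implicitly assumes $k\geq 2$ (so that $\gamma_{(k-1)/\alpha}$ is non-degenerate), which is also implicit in the paper's formula via the $\Gamma((k-1)/\alpha)$ normaliser.
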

\subsection{Results for $\mathrm{EPG}(\alpha,\zeta)$}
We now describe results in the $\mathrm{EPG}(\alpha,\zeta)$ case which builds on Theorem~\ref{TheoremPGV}.
For the $\mathrm{EPG}(\alpha,\zeta)$ case we relabel, the general variables $(V_{1},\ldots,V_{n},\hat{T}_{\alpha,n})$  appearing in Proposition~\ref{JPPY}. as $(q_{\alpha,1},\ldots,q_{\alpha,n},\tilde{S}_{\alpha,n}).$

\begin{prop}\label{propEPGV}
In the $\mathrm{EPG}(\alpha,\zeta)$ case of  Proposition~\ref{JPPY} the  joint density of $(q_{\alpha,1},\ldots,q_{\alpha,n},\tilde{S}_{\alpha,n})$ conditioned on $\zeta,$ is expressed as follows.
\begin{equation}
\frac{1}{\alpha}\left[\prod_{k=1}^{n}v^{-1/\alpha}_{k}f_{\beta_{k}}(v_{k})\right]\zeta^{\frac{1}{\alpha}-1}{\mbox e}^{\zeta}{\mbox e}^{-s{(\frac{\zeta}{\prod_{l=1}^{n}v_{l}})}^{1/\alpha}}sf_{\alpha,n}(s)ds
\label{jointVEPG}
\end{equation}

\begin{enumerate}
\item[(i)]The conditional density of $q_{\alpha,1}$ given $\zeta,$ has the explicit form,
$$
f_{q_{\alpha,1}}(v)=\frac{\zeta^{\frac{1-\alpha}{\alpha}}}{\Gamma(\frac{1-\alpha}{\alpha})}v^{-1/\alpha}{(1-v)}^{\frac{1-\alpha}{\alpha}-1}{\mbox e}^{-(\zeta/v-\zeta)}
$$
verifying $q_{\alpha,1}=\zeta/({\varepsilon_{\alpha}+\zeta}).$
\item[(ii)] For each $n,$ the conditional density of  $(q_{\alpha,2},\ldots,q_{\alpha,n},\tilde{S}_{\alpha,n})$  given $(q_{\alpha,1},\zeta)$ can be expressed as.
\begin{equation}
\left[\prod_{k=2}^{n}f_{\beta_{k-1}}(v_{k})\right]{\mbox e}^{\varepsilon_{\alpha}+\zeta}{\mbox e}^{-s{(\frac{\varepsilon_{\alpha}+\zeta}{\prod_{l=2}^{n}v_{l}})}^{1/\alpha}}f_{\alpha,n-1}(s)
\label{jointEVG1}
\end{equation}
\end{enumerate}
\end{prop}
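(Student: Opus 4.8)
The plan is to read off the joint density (\ref{jointVEPG}) directly from the general $\mathrm{PK}_{\alpha}(h\cdot f_{\alpha})$ formula of Corollary~\ref{corollary1}(i). Recall from Proposition~\ref{PKprop} that, conditionally on $\zeta$, the leading variable $\tilde{S}_{\alpha,0}=\hat{T}$ of the chain of Proposition~\ref{JPPY} has density $\frac{1}{\alpha}\zeta^{1/\alpha-1}s\,{\mbox e}^{-(s\zeta^{1/\alpha}-\zeta)}f_{\alpha}(s)$, so the $\mathrm{EPG}(\alpha,\zeta)$ model is $\mathrm{PK}_{\alpha}(h\cdot f_{\alpha})$ with (conditional) $h(s)=\frac{1}{\alpha}\zeta^{1/\alpha-1}s\,{\mbox e}^{-(s\zeta^{1/\alpha}-\zeta)}$. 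Inserting this $h$, evaluated at $s/\prod_{l=1}^{n}v_{l}^{1/\alpha}$, into the joint law $[\prod_{k=1}^{n}f_{\beta_{k}}(v_{k})]\,h(s/\prod_{l}v_{l}^{1/\alpha})\,f_{\alpha,n}(s)\,ds$ of Corollary~\ref{corollary1}(i), and using the two elementary identities $(s/\prod_{l}v_{l}^{1/\alpha})\,\zeta^{1/\alpha}=s\,(\zeta/\prod_{l}v_{l})^{1/\alpha}$ and $1/\prod_{l}v_{l}^{1/\alpha}=\prod_{l}v_{l}^{-1/\alpha}$, immediately produces (\ref{jointVEPG}) after grouping the $v_{k}$-factors.

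For part (i) I would set $n=1$ in (\ref{jointVEPG}), where $f_{\beta_{1}}$ is the $\mathrm{Beta}(1,(1-\alpha)/\alpha)$ density and $f_{\alpha,1}(s)=\frac{1}{\Gamma(1/\alpha+1)}s^{-1}f_{\alpha}(s)$, and integrate out $s$ via the defining relation $\int_{0}^{\infty}{\mbox e}^{-\omega s}f_{\alpha}(s)\,ds={\mbox e}^{-\omega^{\alpha}}$ with $\omega=(\zeta/v)^{1/\alpha}$, so that $\omega^{\alpha}=\zeta/v$. The Gamma constants collapse, since $\frac{1}{\alpha}\,\Gamma(1/\alpha)/\Gamma(1/\alpha+1)=1$, leaving precisely the stated $f_{q_{\alpha,1}}$ (using $\zeta^{1/\alpha-1}=\zeta^{(1-\alpha)/\alpha}$ and ${\mbox e}^{\zeta}{\mbox e}^{-\zeta/v}={\mbox e}^{-(\zeta/v-\zeta)}$). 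Recognizing this expression as the pushforward of the $\gamma_{(1-\alpha)/\alpha}$-density of $\varepsilon_{\alpha}$ under the map $v=\zeta/(\varepsilon_{\alpha}+\zeta)$ then confirms $q_{\alpha,1}\overset{d}=\zeta/(\varepsilon_{\alpha}+\zeta)=q_{\alpha,\zeta}$, consistent with the decomposition (\ref{LJdecomp}).

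For part (ii) the cleanest route is structural rather than computational. By (\ref{LJdecomp}) one has $\tilde{S}_{\alpha,0}=\tilde{S}_{\alpha,1}\times q_{\alpha,1}^{-1/\alpha}$ with $q_{\alpha,1}=\zeta/(\varepsilon_{\alpha}+\zeta)$ and $\tilde{S}_{\alpha,1}\overset{d}=T=\tau_{\alpha}(\varepsilon_{\alpha}+\zeta)/(\varepsilon_{\alpha}+\zeta)^{1/\alpha}$, which is precisely the $\alpha$-diversity variable of a $\mathrm{PG}(\alpha,\varepsilon_{\alpha}+\zeta)$ model; moreover conditioning on $(q_{\alpha,1},\zeta)$ is the same as conditioning on $(\varepsilon_{\alpha},\zeta)$, and the downstream transition kernels in Proposition~\ref{JPPY}, namely (\ref{transitionV}), do not depend on the model. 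Hence, given $(q_{\alpha,1},\zeta)$, the vector $(q_{\alpha,2},\ldots,q_{\alpha,n},\tilde{S}_{\alpha,n})$ has exactly the joint law of $(V_{1},\ldots,V_{n-1},\hat{T}_{\alpha,n-1})$ for the chain governed by $\mathrm{PG}(\alpha,\varepsilon_{\alpha}+\zeta)$, conditioned on $\varepsilon_{\alpha}+\zeta$; that is (\ref{jointVG1}) with $\zeta$ replaced by $\varepsilon_{\alpha}+\zeta$ and $n$ by $n-1$, which is (\ref{jointEVG1}). Alternatively one can simply divide (\ref{jointVEPG}) by $f_{q_{\alpha,1}}(v_{1})$ from part (i): the powers of $\zeta$, the factor $v_{1}^{-1/\alpha}$ and the factor $(1-v_{1})^{(1-\alpha)/\alpha-1}$ cancel, ${\mbox e}^{\zeta}/{\mbox e}^{-(\zeta/v_{1}-\zeta)}={\mbox e}^{\zeta/v_{1}}={\mbox e}^{\varepsilon_{\alpha}+\zeta}$, and $\zeta/\prod_{l=1}^{n}v_{l}=(\varepsilon_{\alpha}+\zeta)/\prod_{l=2}^{n}v_{l}$.

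The only genuinely delicate point — and where I would be most careful in the computational variant of (ii) — is the index shift on the Beta factors: one checks from a one-line computation of the Beta parameters that $v^{-1/\alpha}f_{\beta_{k}}(v)$ and $f_{\beta_{k-1}}(v)$ carry the same powers of $v$ and of $1-v$, hence differ only by a constant, and then verifies that the accumulated product of these constants, together with the leading $\Gamma(1/\alpha)/\alpha$ and the ratio $s\,f_{\alpha,n}(s)/f_{\alpha,n-1}(s)$, telescopes to $1$ using $\Gamma(j/\alpha+1)=(j/\alpha)\Gamma(j/\alpha)$. The structural argument of the previous paragraph avoids this bookkeeping entirely, so I would present that as the primary proof and treat the division-of-densities check as a confirmation.
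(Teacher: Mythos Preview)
Your proposal is correct and follows essentially the same route as the paper: the joint density (\ref{jointVEPG}) is read off from Corollary~\ref{corollary1}(i) with the size-biased $h$ of Proposition~\ref{PKprop}; part (i) follows by setting $n=1$ and integrating out $s$ via the stable Laplace transform (the paper phrases the constant cancellation as $s f_{\alpha,1}(s)=\alpha f_{\alpha}(s)$, which is shorthand for the same $\Gamma(1/\alpha)/\Gamma(1/\alpha+1)=\alpha$ identity you use); and part (ii) is obtained by dividing (\ref{jointVEPG}) by $f_{q_{\alpha,1}}(v_{1})$, exactly as the paper does. Your additional structural argument for (ii) --- identifying the conditional law given $(q_{\alpha,1},\zeta)$ with the $\mathrm{PG}(\alpha,\varepsilon_{\alpha}+\zeta)$ chain --- is not stated in the proof of this proposition but is precisely the observation the paper exploits in the next result (Theorem~\ref{TheoremEPGV}), so you are anticipating rather than diverging.
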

\begin{proof}
Statement [(i)] follows by setting n=1 in~(\ref{jointVEPG}), using $sf_{\alpha,1}(s)=\alpha f_{\alpha}(s)$ and the Laplace transform of $f_{\alpha}.$
Dividing~(\ref{jointVEPG}) by $f_{q_{\alpha,1}}(v_{1})$ leads to~(\ref{jointEVG1})
\end{proof}

For shorthand write $\tilde{\zeta}_{\alpha,0}=\zeta$ and for $k=1,2,\ldots,$
$$
\tilde{\zeta}_{\alpha,k}=\mathrm{e}_{k}+\zeta_{\alpha,k-1}+\varepsilon_{\alpha}=\mathrm{e}_{k}+\sum_{l=1}^{k-1}(\mathrm{e}_{l}+\varepsilon_{\alpha,l})+\varepsilon_{\alpha}+\zeta.
$$
or
$$
\tilde{\zeta}_{\alpha,k}=\zeta_{k}+\sum_{l=1}^{k}\varepsilon_{\alpha,l-1}
$$
with $\varepsilon_{\alpha,0}=\varepsilon_{\alpha},$ and there is the relation
$$
\tilde{\zeta}_{\alpha,k-1}+\varepsilon_{\alpha,k-1}=\zeta_{\alpha,n-1}+\varepsilon_{\alpha}.
$$

\begin{thm}\label{TheoremEPGV}
In the $\mathrm{EPG}(\alpha,\zeta)$ case of Proposition~\ref{JPPY} the variables are identified explicitly as follows.

\begin{enumerate}
\item[(i)]The equation in~(\ref{Vequation}), here written as $\tilde{S}_{\alpha,(k-1)}=\tilde{S}_{\alpha,k}\times q^{-1/\alpha}_{\alpha,k},$
reads for  $k=1,2\ldots,$
\begin{equation}
\frac{\tau_{\alpha}(
\tilde{\zeta}_{\alpha,k-1}+\varepsilon_{\alpha,k-1})}
{{(\tilde{\zeta}_{\alpha,k-1})}^{1/\alpha}}
=\frac{\tau_{\alpha}(
\tilde{\zeta}_{\alpha,k-1}+\varepsilon_{\alpha,k-1})}
{{(\tilde{\zeta}_{\alpha,k-1}+\varepsilon_{\alpha,k-1})}^{1/\alpha}}
\times q^{-1/\alpha}_{\alpha,k}
\label{qspec}
\end{equation}
\item[((ii)] For $k=2,3,\ldots,$
$$
q_{\alpha,k}=\frac{\tilde{\zeta}_{\alpha,k-1}}
{\tilde{\zeta}_{\alpha,k-1}+\varepsilon_{\alpha,k-1}}:=1-\beta^{(k)}_{(\frac{1-\alpha}{\alpha},1)}[1-\hat{\vartheta}_{\alpha,k}]
$$
where $(\beta^{(k)}_{({(1-\alpha)}/{\alpha},1)})$ are a collection of iid $\mathrm{Beta}((1-\alpha)/\alpha,1)$ variables  independent of $(\hat{\vartheta}_{\alpha,k}).$
Specifically, the decomposition of $1-q_{\alpha,k}$ is given by,
$$
\frac{\varepsilon_{\alpha,k-1}}{\mathrm{e}_{k-1}+\varepsilon_{\alpha,k-1}}:=\beta^{(k)}_{(\frac{1-\alpha}{\alpha},1)}
{\mbox { and }}
\hat{\vartheta}_{\alpha,k}=
\frac{\tilde{\zeta}_{\alpha,k-2}}{\tilde{\zeta}_{\alpha,k-1}}
=\frac{\zeta_{\alpha,k-2}+\varepsilon_{\alpha}}{\zeta_{\alpha,k-1}+\varepsilon_{\alpha}}.
$$
Note $\hat{\vartheta}_{\alpha,1}:=0.$
\end{enumerate}
\end{thm}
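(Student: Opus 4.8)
The plan is to reduce the whole statement to Theorem~\ref{TheoremPGV} applied to the model $\mathrm{PG}(\alpha,\varepsilon_{\alpha}+\zeta)$. The starting point is Proposition~\ref{propEPGV}, which already gives $q_{\alpha,1}=\zeta/(\varepsilon_{\alpha}+\zeta)$ and, conditionally on $(\varepsilon_{\alpha},\zeta)$ --- equivalently on $(q_{\alpha,1},\zeta)$ --- the joint density~(\ref{jointEVG1}) of the remaining vector $(q_{\alpha,2},\ldots,q_{\alpha,n},\tilde{S}_{\alpha,n})$. The first thing I would record is that, after shifting the indices $v_{2},\ldots,v_{n}$ down by one and replacing $n$ by $n-1$, the density~(\ref{jointEVG1}) is \emph{verbatim} the density~(\ref{jointVG1}) of $(V_{1},\ldots,V_{n-1},\hat{T}_{\alpha,n-1})$ for a $\mathrm{PG}(\alpha,\cdot)$ chain whose conditioning variable is $\varepsilon_{\alpha}+\zeta$. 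Hence, conditionally on $(\varepsilon_{\alpha},\zeta)$, the tail of the $\mathrm{EPG}(\alpha,\zeta)$ chain is an exact copy of the \emph{full} $\mathrm{PG}(\alpha,\varepsilon_{\alpha}+\zeta)$ chain; this is the pathwise refinement of Proposition~\ref{recursiverelations}, the initial $(\mathrm{e}_{1}+\varepsilon_{\alpha})$-step of the extended chain being precisely the step carrying $\mathrm{EPG}(\alpha,\zeta)$ onto $\mathrm{PG}(\alpha,\varepsilon_{\alpha}+\zeta)$.

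Next I would invoke Theorem~\ref{TheoremPGV} for $\mathrm{PG}(\alpha,\varepsilon_{\alpha}+\zeta)$, that is, with the nested sums $\zeta^{*}_{\alpha,0}=\varepsilon_{\alpha}+\zeta$ and $\zeta^{*}_{\alpha,j}=\sum_{i=1}^{j}(\mathrm{e}_{i}+\varepsilon_{\alpha,i})+\varepsilon_{\alpha}+\zeta$, built from the same families $(\mathrm{e}_{i})$ and $(\varepsilon_{\alpha,i})$ already in play. That theorem identifies $V_{j}=(\mathrm{e}_{j}+\zeta^{*}_{\alpha,j-1})/\zeta^{*}_{\alpha,j}$, with $1-V_{j}=\beta^{(j+1)}_{(\frac{1-\alpha}{\alpha},1)}[1-\vartheta^{*}_{\alpha,j}]$, where $\beta^{(j+1)}_{(\frac{1-\alpha}{\alpha},1)}=\varepsilon_{\alpha,j}/(\mathrm{e}_{j}+\varepsilon_{\alpha,j})$ and $\vartheta^{*}_{\alpha,j}=\zeta^{*}_{\alpha,j-1}/\zeta^{*}_{\alpha,j}$, jointly, the $\beta^{(j+1)}$ being iid and independent of the $\vartheta^{*}$. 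Putting $V_{j}\equiv q_{\alpha,j+1}$ and using $\zeta^{*}_{\alpha,j}=\zeta_{\alpha,j}+\varepsilon_{\alpha}$ together with $\tilde{\zeta}_{\alpha,k}=\mathrm{e}_{k}+\zeta^{*}_{\alpha,k-1}$ (so $\tilde{\zeta}_{\alpha,k}+\varepsilon_{\alpha,k}=\zeta^{*}_{\alpha,k}$) rewrites every quantity in the $\tilde{\zeta}_{\alpha,\cdot}$ notation and gives part~[(ii)] for $k\ge 2$; the value $\hat{\vartheta}_{\alpha,1}:=0$ is only a labelling convention, the genuine $k=1$ case being $q_{\alpha,1}=\zeta/(\varepsilon_{\alpha}+\zeta)$ of Proposition~\ref{propEPGV}(i). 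For part~[(i)], the displayed rewriting of~(\ref{Vequation}) is the transfer of Theorem~\ref{TheoremPGV}(i): for $k\ge 2$ one uses that $\tilde{S}_{\alpha,k}$ is the level-$(k-1)$ diversity $\hat{T}_{\alpha,k-1}$ of the $\mathrm{PG}(\alpha,\varepsilon_{\alpha}+\zeta)$ chain, which carries the two representations $\tau_{\alpha}(\zeta^{*}_{\alpha,k-1})/(\zeta^{*}_{\alpha,k-1})^{1/\alpha}$ (as target of step $k-1$) and $\tau_{\alpha}(\mathrm{e}_{k-1}+\varepsilon_{\alpha,k-1}+\zeta^{*}_{\alpha,k-2})/(\mathrm{e}_{k-1}+\zeta^{*}_{\alpha,k-2})^{1/\alpha}$ (as source of step $k-1$), and then substitutes $\mathrm{e}_{k-1}+\varepsilon_{\alpha,k-1}+\zeta^{*}_{\alpha,k-2}=\zeta^{*}_{\alpha,k-1}=\tilde{\zeta}_{\alpha,k-1}+\varepsilon_{\alpha,k-1}$ and $\mathrm{e}_{k-1}+\zeta^{*}_{\alpha,k-2}=\tilde{\zeta}_{\alpha,k-1}$ to obtain~(\ref{qspec}); the $k=1$ instance is anchored by $\tilde{S}_{\alpha,0}=\hat{T}=\tau_{\alpha}(\varepsilon_{\alpha}+\zeta)/\zeta^{1/\alpha}$ from~(\ref{EPT}) and $\tilde{S}_{\alpha,1}=\tilde{S}_{\alpha,0}\,q_{\alpha,1}^{1/\alpha}=\tau_{\alpha}(\varepsilon_{\alpha}+\zeta)/(\varepsilon_{\alpha}+\zeta)^{1/\alpha}$.

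I expect the only real work to be the bookkeeping in the middle step: checking that the three auxiliary families $(\mathrm{e}_{k})$, $(\varepsilon_{\alpha,k})$ and $\varepsilon_{\alpha}$, together with the time arguments of the single subordinator $\tau_{\alpha}$, line up so that the $\tilde{\zeta}_{\alpha,\cdot}$ of the extended chain are exactly the once-shifted nested sums $\zeta^{*}_{\alpha,\cdot}$ of the embedded $\mathrm{PG}(\alpha,\varepsilon_{\alpha}+\zeta)$ chain, and that each $\tilde{S}_{\alpha,k}$ really admits the two representations needed to make~(\ref{qspec}) a tautology once $q_{\alpha,k}$ is inserted. There is no new analytic obstacle: the stable-density integral underlying these densities was already carried out in the proof of Theorem~\ref{TheoremPGV}, and the density~(\ref{jointEVG1}) we match against is supplied by Proposition~\ref{propEPGV}. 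Should one want a self-contained argument instead, one can bypass the transfer and rerun the multiple-integral computation in the proof of Theorem~\ref{TheoremPGV} against the conditional density of $(\vartheta_{\alpha,1},\ldots,\vartheta_{\alpha,n})$ from Lemma~\ref{lemmaphi}, now dragging along the extra $q_{\alpha,1}$-layer; the recursion route is simply shorter and makes the structure transparent.
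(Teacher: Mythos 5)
Your argument is the same one the paper uses: rewrite~(\ref{qspec}) in the $\zeta_{\alpha,\cdot}$ variables, observe that the conditional density~(\ref{jointEVG1}) of $(q_{\alpha,2},\ldots,q_{\alpha,n},\tilde{S}_{\alpha,n})$ given $(q_{\alpha,1},\zeta)$ coincides, after an index shift, with the density~(\ref{jointVG1}) of $(V_{1},\ldots,V_{n-1},\hat{T}_{\alpha,n-1})$ for $\mathrm{PG}(\alpha,\varepsilon_{\alpha}+\zeta)$, and then read off the variables from Theorem~\ref{TheoremPGV}. Your write-up supplies slightly more bookkeeping (the $\zeta^{*}_{\alpha,\cdot}$ dictionary and the explicit anchoring of the $k=1$ step), but the proof is the paper's.
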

\begin{proof} First notice that (\ref{qspec}) can be written as,
$$
\frac{\tau_{\alpha}(\varepsilon_{\alpha,k-1}+\mathrm{e}_{k-1}+\zeta_{\alpha,k-2}
+\varepsilon_{\alpha})}{{(\mathrm{e}_{k-1}+\zeta_{\alpha,k-2}+\varepsilon_{\alpha})}^{1/\alpha}}
=\frac{\tau_{\alpha}(\zeta_{\alpha,k-1}+\varepsilon_{\alpha})}{{(\zeta_{\alpha,k-1}+\varepsilon_{\alpha})}^{1/\alpha}}\times q^{-1/\alpha}_{\alpha,k}.
$$
Comparing ~(\ref{jointEVG1}) with~(\ref{jointVG1}) shows that the conditional density of  $(q_{\alpha,2},\ldots,q_{\alpha,n},\tilde{S}_{\alpha,n})$  given $(q_{\alpha,1},\zeta)$ is equivalent, jointly and component-wise, to that of the conditional joint density of  $(V_{1},\ldots,V_{n-1},\hat{T}_{\alpha,n-1})$ given $\varepsilon_{\alpha}+\zeta,$ under a  $\mathrm{PG}(\alpha,\varepsilon_{\alpha}+\zeta)$ model. It remains to apply Theorem~\ref{TheoremPGV} for $\mathrm{PG}(\alpha,\varepsilon_{\alpha}+\zeta)$ variables.
\end{proof}
\subsection{BDGM Coag/Frag duality for $\mathrm{EPG}(\alpha,\zeta)$ models}\label{BDGMSection}
We now use Theorem~\ref{TheoremEPGV} to demonstrate that Proposition~\ref{JPPY} and surrounding results can be cast in terms of Markov chains induced by coagulation/fragmentation operations on
$\mathcal{P}_{\infty}$ as described in  BDGM~\cite{BertoinGoldschmidt2004,Dong2006}. and related results, explicitly for the $\mathrm{EPG}(\alpha,\zeta)$ case. As a reminder, we work with a bijective notion of operations on bridges. Coagulation in this setting equates to compositions of $\mathrm{PG}(\alpha,\zeta)$ bridges with simple bridges. See~Bertoin\cite{BerFrag} and  Bertoin and LeGall\cite{BerLegall03}.  Recall that,
$$
\tilde{\zeta}_{\alpha,k}=\zeta_{k}+\sum_{l=1}^{k}\varepsilon_{\alpha,l-1}.
$$
Theorem~\ref{TheoremEPGV} indicates a Markov chain on $\mathcal{P}_{\infty}$ with states described by the sequence of laws, $(\mathrm{EPG}(\alpha, \tilde{\zeta}_{\alpha,k-1})_{\{k\ge 1\}}),$ that is for each $n$
\begin{equation}
\mathrm{EPG}(\alpha,\zeta),\mathrm{EPG}(\alpha,\mathrm{e}_{1}+\varepsilon_{\alpha}+\zeta),\ldots,
\mathrm{EPG}(\alpha,\zeta_{n}+\sum_{l=0}^{n-1}\varepsilon_{\alpha,l})
\label{EPGBDGMchain}.
\end{equation}
Applying Proposition~\ref{recursiverelations}, (\ref{EPGBDGMchain}) can also be read as,
$$
\mathrm{EPG}(\alpha,\zeta),\mathrm{PG}(\alpha,\varepsilon_{\alpha}+\zeta),\ldots,
\mathrm{PG}(\alpha,\zeta_{\alpha,n-1}+\varepsilon_{\alpha})
$$
where $\tilde{\zeta}_{\alpha,n-1}+\varepsilon_{\alpha,n-1}=\zeta_{\alpha,n-1}+\varepsilon_{\alpha}.$ (\ref{EPGBDGMchain}) read left to right corresponds to a discrete fragmentation process. A coagulation process is then read right to left. A chain with states entirely in the $\mathrm{PG}(\alpha,\zeta)$
class of laws, specifically the collection $(\mathrm{PG}(\alpha, \zeta_{\alpha,k-1})_{\{k\ge 1\}}),$ is obtained by replacing $\zeta$ with $\gamma_{1}+\zeta.$ We first highlight a few pertinent points and constructions.

\begin{enumerate}
\item[(III)] \textsc{Constructions and Pertinent Points}
\item[(i)] Define simple bridges,
\begin{equation}
\tilde{\lambda}_{\alpha,k}(y)=q_{\alpha,k}\mathbb{U}(y)+(1-q_{\alpha,k})\indic_{\{U_{k}\leq y\}}
\label{qbridge}
\end{equation}
where $q_{\alpha,k}=\tilde{S}^{-\alpha}_{\alpha,k-1}/\tilde{S}^{-\alpha}_{\alpha,k},$ are specified by Theorem~\ref{TheoremEPGV}.
\item[(ii)]For each $k,$
$$
\tilde{P}^{\dagger}_{\alpha,\tilde{\zeta}_{\alpha,k-1}}=
\frac{\tau_{\alpha}(\varepsilon_{\alpha,k-1})}{({\tau_{\alpha}(\varepsilon_{\alpha,k-1})+
\tau_{\alpha}(\tilde{\zeta}_{\alpha,k-1})})},$$ is the first size biased pick
from a $\mathrm{EPG}(\alpha,\tilde{\zeta}_{\alpha,k-1})$ mass partition denoted as $(P^{(k-1)}_{i}).$
\item[(iv)]There is the decomposition $$(P^{(k-1)}_{i})=\mathrm{Rank}((P^{*}_{l,k-1}),\tilde{P}^{\dagger}_{\alpha,\tilde{\zeta}_{\alpha,k-1}}),$$ where $(P^{*}_{l,k-1})$ denotes the points of $(P^{(k-1)}_{i})$ remaining after selecting $\tilde{P}^{\dagger}_{\alpha,\tilde{\zeta}_{\alpha,k-1}}.$
\item[(ii)]Construct a sequence of iid $\mathrm{PD}(\alpha,1-\alpha)$ bridges as follows
$$
P^{(k-1)}_{\alpha,1-\alpha}(y)=\frac{\tau_{\alpha}(\varepsilon_{\alpha,k-1}y)}{\tau_{\alpha}(\varepsilon_{\alpha,k-1})}
$$
where $\varepsilon_{\alpha,0}=\varepsilon_{\alpha}.$
\item[(v)]Recall from (\ref{PYlamperti})
that $\tau_{\alpha}(\varepsilon_{\alpha,k-1})\overset{d}=\gamma_{1-\alpha}$ is independent of $P^{(k-1)}_{\alpha,1-\alpha}.$
\item[(vi)]From Proposition~\ref{recursiverelations}, $F_{\alpha,\tilde{\zeta}_{\alpha,k}}=Q_{\alpha,\varepsilon_{\alpha}+\zeta_{\alpha,k-1}}=Q_{\alpha,\varepsilon_{\alpha,k-1}+\tilde{\zeta}_{\alpha,k-1}}.$\qed
\end{enumerate}
\begin{prop}\label{bijectDGM}
Let $(F_{\alpha,\tilde{\zeta}_{\alpha,0}}, F_{\alpha,\tilde{\zeta}_{\alpha,1}},\ldots)$ denote a family of $(\mathrm{EPG}(\alpha,\tilde{\zeta}_{\alpha,k-1}))_{{k\ge 1}}$ bridges,
with corresponding $\alpha$-diversities $(\tilde{S}^{-\alpha}_{\alpha,0},\tilde{S}^{-\alpha}_{\alpha,1},\ldots)$ satisfying a Markov Chain described in Theorem~\ref{TheoremEPGV}. Then the bridges have the following properties,
\begin{enumerate}
\item[(i)]
$
F_{\alpha,\zeta}(y)=F_{\mathrm{e}_{1}+\zeta+\varepsilon_{\alpha}}(\tilde{\lambda}_{\alpha,1}(y))
$
and generally for $k=1,2,\ldots,$
\begin{equation}
F_{\alpha,\tilde{\zeta}_{\alpha,k-1}}(y)=F_{\alpha,\tilde{\zeta}_{\alpha,k}}(\tilde{\lambda}_{\alpha,k}(y))
\label{coagF}
\end{equation}
\item[(ii)]Thus for each $n,$ $\tilde{S}^{-\alpha}_{\alpha,0}=\tilde{S}^{-\alpha}_{\alpha,n}\times \prod_{j=1}^{n}q_{\alpha,j}$ and
$$
F_{\alpha,\zeta}=F_{\alpha,\tilde{\zeta}_{\alpha,n}}\circ \tilde{\lambda}_{\alpha,n}\circ\cdots\circ\tilde{\lambda}_{\alpha,1}.
$$
\item[(iii)]There is the following identity
\begin{equation}
F_{\alpha,\tilde{\zeta}_{\alpha,k}}(y)=
\frac{\tau_{\alpha}(\tilde{\zeta}_{\alpha,k-1}y)}
{\tau_{\alpha}(\varepsilon_{\alpha,k-1})+\tau_{\alpha}(\tilde{\zeta}_{\alpha,k-1})}
+\tilde{P}^{\dagger}_{\alpha,\tilde{\zeta}_{\alpha,k-1}}P^{(k-1)}_{\alpha,1-\alpha}(y),
\label{bridgefrag}
\end{equation}
where $P^{(k-1)}_{\alpha,1-\alpha}$ is independent of $F_{\alpha,\tilde{\zeta}_{\alpha,j}}$ for $j=1,\ldots,k-1.$
\end{enumerate}
\end{prop}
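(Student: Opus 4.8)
The plan is to observe that parts (i) and (ii) are essentially reformulations of the defining relation (\ref{EPGbridge}) together with Proposition~\ref{recursiverelations} and the explicit identification of the $q_{\alpha,k}$ carried out in Theorem~\ref{TheoremEPGV}, while part (iii) is an interval decomposition of the generalized gamma subordinator underlying $F_{\alpha,\tilde{\zeta}_{\alpha,k}}$.

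For (i), I would apply the $\mathrm{EPG}$-bridge construction (\ref{EPGbridge}) at parameter $\tilde{\zeta}_{\alpha,k-1}$ using the fresh variable $\varepsilon_{\alpha,k-1}\overset{d}=\gamma_{(1-\alpha)/\alpha}$ and the atom $U_k$ in place of the generic $\varepsilon_\alpha$ and $U_1$; this gives $F_{\alpha,\tilde{\zeta}_{\alpha,k-1}}(y)=Q_{\alpha,\varepsilon_{\alpha,k-1}+\tilde{\zeta}_{\alpha,k-1}}(\lambda_{\alpha,\tilde{\zeta}_{\alpha,k-1}}(y))$ where $\lambda_{\alpha,\tilde{\zeta}_{\alpha,k-1}}$ is the simple bridge (\ref{basicbridge}) built from $q_{\alpha,\tilde{\zeta}_{\alpha,k-1}}=\tilde{\zeta}_{\alpha,k-1}/(\tilde{\zeta}_{\alpha,k-1}+\varepsilon_{\alpha,k-1})$. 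By Theorem~\ref{TheoremEPGV}(ii) this ratio is exactly $q_{\alpha,k}$, so $\lambda_{\alpha,\tilde{\zeta}_{\alpha,k-1}}=\tilde{\lambda}_{\alpha,k}$ as in (\ref{qbridge}); and by Proposition~\ref{recursiverelations} (point (III) of Section~\ref{BDGMSection}) one has $Q_{\alpha,\varepsilon_{\alpha,k-1}+\tilde{\zeta}_{\alpha,k-1}}=F_{\alpha,\tilde{\zeta}_{\alpha,k}}$ on the common probability space. Substituting yields (\ref{coagF}), and the case $k=1$ with $\tilde{\zeta}_{\alpha,0}=\zeta,\ \varepsilon_{\alpha,0}=\varepsilon_\alpha$ is the stated special case. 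Part (ii) is then obtained by iterating (i): $n$ successive applications give $F_{\alpha,\zeta}=F_{\alpha,\tilde{\zeta}_{\alpha,n}}\circ\tilde{\lambda}_{\alpha,n}\circ\cdots\circ\tilde{\lambda}_{\alpha,1}$, and since all the $\tilde{\lambda}_{\alpha,k}$ and $F_{\alpha,\tilde{\zeta}_{\alpha,n}}=Q_{\alpha,\varepsilon_{\alpha,n-1}+\tilde{\zeta}_{\alpha,n-1}}$ live on the single space built in Theorem~\ref{TheoremEPGV}, this is an honest composition, not merely an equality in law. The diversity statement $\tilde{S}^{-\alpha}_{\alpha,0}=\tilde{S}^{-\alpha}_{\alpha,n}\prod_{j=1}^{n}q_{\alpha,j}$ follows by iterating the exact equality $\tilde{S}_{\alpha,k-1}=\tilde{S}_{\alpha,k}q_{\alpha,k}^{-1/\alpha}$ of Theorem~\ref{TheoremEPGV}(i) (equivalently Proposition~\ref{JPPY}) raised to the power $-\alpha$.

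For (iii), I would start from $F_{\alpha,\tilde{\zeta}_{\alpha,k}}=Q_{\alpha,\varepsilon_{\alpha,k-1}+\tilde{\zeta}_{\alpha,k-1}}$, realized via (\ref{Laplacerpm}) as the normalization of $\tau_\alpha$ run over a time interval of length $\varepsilon_{\alpha,k-1}+\tilde{\zeta}_{\alpha,k-1}$ with its jumps relocated to independent uniforms. Using the additivity convention recorded below (\ref{GGexp}), I split this interval into the sub-interval of length $\tilde{\zeta}_{\alpha,k-1}$ and the following sub-interval of length $\varepsilon_{\alpha,k-1}$: the denominator becomes $\tau_\alpha(\tilde{\zeta}_{\alpha,k-1})+\tau_\alpha(\varepsilon_{\alpha,k-1})$, the total mass carried by the second block is $\tau_\alpha(\varepsilon_{\alpha,k-1})/(\tau_\alpha(\varepsilon_{\alpha,k-1})+\tau_\alpha(\tilde{\zeta}_{\alpha,k-1}))$, which is $\tilde{P}^{\dagger}_{\alpha,\tilde{\zeta}_{\alpha,k-1}}$ by (\ref{LittleBigp}), and the second block normalized by its own mass is the bridge $P^{(k-1)}_{\alpha,1-\alpha}(y)=\tau_\alpha(\varepsilon_{\alpha,k-1}y)/\tau_\alpha(\varepsilon_{\alpha,k-1})$ of (III)(ii); here I invoke (\ref{PYlamperti}) with $\theta=1-\alpha$, so that $\varepsilon_{\alpha,k-1}\overset{d}=\gamma_{(1-\alpha)/\alpha}$, $\tau_\alpha(\varepsilon_{\alpha,k-1})\overset{d}=\gamma_{1-\alpha}$, the normalized jumps are $\mathrm{PD}(\alpha,1-\alpha)$, and the total is independent of them. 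Collecting the two contributions gives precisely (\ref{bridgefrag}). The independence of $P^{(k-1)}_{\alpha,1-\alpha}$ from $F_{\alpha,\tilde{\zeta}_{\alpha,j}}$ for $j\le k-1$ follows because the second block uses the fresh variable $\varepsilon_{\alpha,k-1}$ and an increment of $\tau_\alpha$ that does not enter the construction of the earlier bridges, by the nested structure $\tilde{\zeta}_{\alpha,k}=\mathrm{e}_k+\zeta_{\alpha,k-1}+\varepsilon_\alpha$ and the independent-increments property of $\tau_\alpha$.

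The main obstacle is bookkeeping rather than a genuine estimate: one must keep the various \emph{versions} straight and, in (iii), verify carefully that the increment of $\tau_\alpha$ defining $P^{(k-1)}_{\alpha,1-\alpha}$, together with the uniform labels attached to its jumps, is independent of everything used to build $F_{\alpha,\tilde{\zeta}_{\alpha,0}},\dots,F_{\alpha,\tilde{\zeta}_{\alpha,k-1}}$. This is exactly the consistency that Theorem~\ref{TheoremEPGV} and the constructions listed under (III) were arranged to guarantee, so the write-up consists of making these identifications explicit, after which the three equalities drop out.
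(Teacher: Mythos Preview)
Your proposal is correct and follows essentially the same route as the paper's own proof, just with considerably more detail. The paper's proof is very terse: it says that Theorem~\ref{TheoremEPGV} identifies the relevant $\mathrm{EPG}$ parameters, that the results then follow from the explicit $\mathrm{EPG}(\alpha,\zeta)$ constructions (with Proposition~\ref{MarkovAlpha} verifying the diversity correspondence), and that part~(iii) comes from the construction~(\ref{EPGbridge2}) together with the cancellation of the $\tau_{\alpha}(\varepsilon_{\alpha,k-1})$ term when one multiplies $\tilde{P}^{\dagger}_{\alpha,\tilde{\zeta}_{\alpha,k-1}}$ by $P^{(k-1)}_{\alpha,1-\alpha}$; this is exactly the interval decomposition and cancellation you carry out explicitly.
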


\begin{proof}Theorem~\ref{TheoremEPGV} serves as a blueprint to identify specifically the $\mathrm{EPG}(\alpha,\zeta)$ models involved, the results are then mainly concluded from our explicit construction of $\mathrm{EPG}(\alpha,\zeta)$ processes. One can further use Proposition~\ref{MarkovAlpha} to verify the correspondence between the families $(\hat{S}^{-\alpha}_{\alpha,k-1})_{\{k\ge 1\}},$ and $(F_{\alpha,\tilde{\zeta}_{\alpha,k-1}})_{\{k\ge 1\}}.$ [(iii)] relies on the construction~(\ref{EPGbridge2}), and the explicit construction of  $P^{(k-1)}_{\alpha,1-\alpha}$ which when multiplied by
$\tilde{P}^{\dagger}_{\alpha,\tilde{\zeta}_{\alpha,k-1}}$ leads to a cancellation of the $\tau_{\alpha}(\varepsilon_{\alpha,k-1})$ term.
\end{proof}
Now, for each $k=1,2,\ldots,$ let
$(P^{(k-1)}_{l})$
and $(P^{(k)}_{i}),$ living in $\mathcal{P}_{\infty},$ denote respectively the $\mathrm{EPG}(\alpha,\tilde{\zeta}_{\alpha,k-1})$ and $\mathrm{EPG}(\alpha,\tilde{\zeta}_{\alpha,k})$ mass partitions of $F_{\alpha,\tilde{\zeta}_{\alpha,k-1}},$ and $F_{\alpha,\tilde{\zeta}_{\alpha,k}}.$ Let $(U_{l,k})_{\{l\ge 1\}}$ denote the atoms of $F_{\alpha,\tilde{\zeta}_{\alpha,k}}$ and for each $l,$ define indicators
$I_{l,k}:=\indic_{\{\tilde{\lambda}^{-1}_{\alpha,k}(U_{l,k})=U_{k}\}}.$ which are conditionally iid $\mathrm{Bernoulli}(1-q_{\alpha,k}).$ We now summarize how Proposition~\ref{bijectDGM} describes  sequential  fragmentation and coagulation operations on $\mathcal{P}_{\infty}$

\begin{itemize}
\item \textsc{COAG} The relation~(\ref{coagF}) corresponds to the coagulation operation of \cite{BertoinGoldschmidt2004,Dong2006} as follows:
$$
(P^{(k-1)}_{i})=\mathrm{Rank}((P^{(k)}_{l}:I_{l,k}=0);\sum_{\{j:I_{j,k}=1\}}P^{(k)}_{j} ),
$$
for the input $(P^{(k)}_{l}).$
\item \textsc{FRAG}
The equation~(\ref{bridgefrag}) shows that $(P^{(k)}_{i}),$ the $\mathrm{EPG}(\alpha,\tilde{\zeta}_{\alpha,k})$ mass partition of $F_{\alpha,\tilde{\zeta}_{\alpha,k}},$ can be represented as
$$
(P^{(k)}_{i})=\mathrm{Rank}((P^{*}_{l,k-1});(Q^{(k-1)}_{l})\tilde{P}^{\dagger}_{\alpha,\tilde{\zeta}_{\alpha,k-1}}),
$$
where $(Q^{(k-1)}_{l})$ is the  $\mathrm{PD}(\alpha,1-\alpha)$ mass partition of  $P^{(k-1)}_{\alpha,1-\alpha}$ independent of $(P^{(k-1)}_{i}).$ This description of
$(P^{(k)}_{i})$ corresponds to the $\mathrm{PD}(\alpha,1-\alpha)$-fragmentation,  operation in \cite{BertoinGoldschmidt2004,Dong2006}, applied to the input $(P^{(k-1)}_{i}).$
\end{itemize}

We now describe some specifics in the $\mathrm{PD}(\alpha,\theta)$ case, which corresponds to the chain $\mathrm{PD}(\alpha,\theta),\mathrm{PD}(\alpha,\theta+1),\ldots$
\begin{cor}\label{CorollaryPDFrag} Setting $\zeta=\gamma_{(\theta+\alpha)/\alpha},$
in Proposition~\ref{bijectDGM} leads to results for $\mathrm{PD}(\alpha,\theta)$.  For $\ell=k-1=0,1,2,\ldots,$
\begin{equation}
q_{\alpha,\ell+1}=\tilde{S}^{-\alpha}_{\alpha,\ell}/\tilde{S}^{-\alpha}_{\alpha,\ell+1}:=S^{-\alpha}_{\alpha,\theta+\ell}/S^{-\alpha}_{\alpha,\theta+\ell+1}:=\beta_{\left(\frac{\theta+\ell+\alpha}{\alpha},\frac{1-\alpha}{\alpha}\right)}
\label{qrecursePD}
\end{equation}
are independent and define independent simple bridges $(\tilde{\lambda}_{\alpha,\ell+1}),$ such that for each fixed $l,$ $\tilde{\lambda}_{\alpha,\ell+1}$ is independent of $P_{\alpha,\theta+\ell+1},$ and hence its $\alpha$-diversity denoted as $\tilde{S}^{-\alpha}_{\alpha,\ell+1}:=S^{-\alpha}_{\alpha,\theta+\ell+1}.$ The relations in (\ref{coagF}) and (\ref{bridgefrag}) specialize to
\begin{eqnarray*}
\nonumber P_{\alpha,\theta+\ell}(y)&=&  P_{\alpha,\theta+\ell+1}\left(\beta_{\left(\frac{\theta+\ell+\alpha}{\alpha},\frac{1-\alpha}{\alpha}\right)}\mathbb{U}(y)+(1-\beta_{\left(\frac{\theta+\ell+\alpha}{\alpha},\frac{1-\alpha}{\alpha}\right)})\indic_{\{U_{\ell+1}\leq y\}}\right)\\
                   & = & \beta_{\theta+\alpha+\ell,1-\alpha}P_{\alpha,\theta+\alpha+\ell}(y)+(1-\beta_{\theta+\alpha+\ell,1-\alpha})\indic_{\{U_{\ell+1}\leq y\}},
\end{eqnarray*}
and
$
P_{\alpha,\theta+\ell+1}(y) =\beta_{\theta+\alpha+\ell,1-\alpha}P_{\alpha,\theta+\alpha+\ell}(y)+(1-\beta_{\theta+\alpha+\ell,1-\alpha})
                   P^{(\ell)}_{\alpha,1-\alpha}(y),
$
where terms on the right-hand side are independent of one another.
There is the relationship among the  the $\alpha$-diversities, of  $(\mathrm{PD}(\alpha,\theta+\ell),\mathrm{PD}(\alpha,1+\theta+\ell),\mathrm{PD}(\alpha,\theta+\alpha+\ell)),$
\begin{equation}
S^{-\alpha}_{\alpha,\theta+\ell}=S^{-\alpha}_{\alpha,\theta+\ell+1}\times \beta_{\left(\frac{\theta+\alpha+\ell}{\alpha},\frac{1-\alpha}{\alpha}\right)}=
S^{-\alpha}_{\alpha,\theta+\alpha+\ell}\times \beta^{\alpha}_{\theta+\alpha+\ell,1-\alpha},
\label{JPPYequality2}
\end{equation}
where respective terms in the products are independent. Furthermore
$$
P_{\alpha,\theta+\ell+1}(\beta_{\left(\frac{\theta+\alpha+\ell}{\alpha},\frac{1-\alpha}{\alpha}\right)})= \beta_{\theta+\alpha+\ell,1-\alpha}.
$$
is one minus the size biased pick from $\mathrm{PD}(\alpha,\theta+\ell)$
\end{cor}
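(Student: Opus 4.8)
The plan is to treat Corollary~\ref{CorollaryPDFrag} as a pure specialization: set $\zeta\overset{d}{=}\gamma_{(\theta+\alpha)/\alpha}$ throughout Theorem~\ref{TheoremEPGV} and Proposition~\ref{bijectDGM} and check that every ingredient collapses onto the classical $\mathrm{PD}(\alpha,\theta)$ objects. First I would do the bookkeeping of gamma shapes. From $\tilde\zeta_{\alpha,\ell}=\mathrm{e}_\ell+\zeta_{\alpha,\ell-1}+\varepsilon_\alpha$ together with $\mathrm{e}_i\overset{d}{=}\gamma_1$, $\varepsilon_{\alpha,i}\overset{d}{=}\gamma_{(1-\alpha)/\alpha}$ and $\zeta\overset{d}{=}\gamma_{(\theta+\alpha)/\alpha}$, additivity of the gamma shape gives $\tilde\zeta_{\alpha,\ell}\overset{d}{=}\gamma_{(\theta+\alpha+\ell)/\alpha}$, and $\tilde\zeta_{\alpha,\ell+1}=\tilde\zeta_{\alpha,\ell}+(\mathrm{e}_{\ell+1}+\varepsilon_{\alpha,\ell})$ exhibits $(\tilde\zeta_{\alpha,\ell})_{\ell\ge0}$ as a random walk with i.i.d.\ $\gamma_{1/\alpha}$ increments independent of $\tilde\zeta_{\alpha,0}=\zeta$. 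Hence $\mathrm{EPG}(\alpha,\tilde\zeta_{\alpha,\ell})=\mathrm{PD}(\alpha,\theta+\ell)$ (using $\mathrm{EPG}(\alpha,\gamma_{(\theta'+\alpha)/\alpha})=\mathrm{PD}(\alpha,\theta')$, valid here since $\theta+\ell>-\alpha$), each $F_{\alpha,\tilde\zeta_{\alpha,\ell}}$ is a $\mathrm{PD}(\alpha,\theta+\ell)$-bridge, and its $\alpha$-diversity $\tilde S^{-\alpha}_{\alpha,\ell}$ is a version of $S^{-\alpha}_{\alpha,\theta+\ell}$; this identifies the chain of laws $\mathrm{PD}(\alpha,\theta),\mathrm{PD}(\alpha,\theta+1),\dots$.

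Next I would identify the transition variables. By Theorem~\ref{TheoremEPGV}(ii), and Proposition~\ref{propEPGV}(i) for $\ell=0$, one has $q_{\alpha,\ell+1}=\tilde\zeta_{\alpha,\ell}/(\tilde\zeta_{\alpha,\ell}+\varepsilon_{\alpha,\ell})$ with $\tilde\zeta_{\alpha,\ell}$ and $\varepsilon_{\alpha,\ell}$ independent gammas, so the beta--gamma algebra gives $q_{\alpha,\ell+1}\overset{d}{=}\beta_{((\theta+\ell+\alpha)/\alpha,(1-\alpha)/\alpha)}$ and, crucially, $q_{\alpha,\ell+1}$ independent of $\tilde\zeta_{\alpha,\ell}+\varepsilon_{\alpha,\ell}$. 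Since $\tilde\zeta_{\alpha,\ell+1}=(\tilde\zeta_{\alpha,\ell}+\varepsilon_{\alpha,\ell})+\mathrm{e}_{\ell+1}$ and each later $q_{\alpha,j}$ is a deterministic function of $\tilde\zeta_{\alpha,\ell}+\varepsilon_{\alpha,\ell}$ and the fresh variables $\mathrm{e}_{\ell+1},\varepsilon_{\alpha,\ell+1},\mathrm{e}_{\ell+2},\dots$, an induction yields mutual independence of $(q_{\alpha,\ell+1})_{\ell\ge0}$, while independence from $\tilde S_{\alpha,\ell+1}\overset{d}{=}S_{\alpha,\theta+\ell+1}$ comes from Proposition~\ref{JPPY}. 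Equivalently, one may observe that in Corollary~\ref{corollary1}(i) the power-law weight $h(t)=c_{\alpha,\theta}t^{-\theta}$ is exactly what makes $h(s/\prod_l v_l^{1/\alpha})$ factor as a function of the $v_l$'s times a function of $s$, so that the joint density of $(q_{\alpha,1},\dots,q_{\alpha,n},\tilde S_{\alpha,n})$ is a full product --- the $\mathrm{PD}(\alpha,\theta)$ case being the one where the otherwise merely conditionally independent sticks decouple. The asserted identity $q_{\alpha,\ell+1}=\tilde S^{-\alpha}_{\alpha,\ell}/\tilde S^{-\alpha}_{\alpha,\ell+1}$ is then just \eqref{Vequation} read via Theorem~\ref{TheoremEPGV}(i), which is also the first equality of \eqref{JPPYequality2}; its second equality is the $n=1$, index-shifted instance of \eqref{jamesid} (equivalently \eqref{PPYfundid} raised to the $-\alpha$ power).

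Finally I would substitute into Proposition~\ref{bijectDGM}. Relation~\eqref{coagF}, $F_{\alpha,\tilde\zeta_{\alpha,\ell}}=F_{\alpha,\tilde\zeta_{\alpha,\ell+1}}\circ\tilde\lambda_{\alpha,\ell+1}$, becomes the stated deletion identity once the two bridges are read as $P_{\alpha,\theta+\ell}$ and $P_{\alpha,\theta+\ell+1}$ and the simple bridge $\tilde\lambda_{\alpha,\ell+1}$ is taken from \eqref{qbridge} with the $\beta_{((\theta+\ell+\alpha)/\alpha,(1-\alpha)/\alpha)}$-weight computed above; its second (Bernoulli) line follows from the subordinator identity $\tau_\alpha((\varepsilon_\alpha+\zeta)\lambda_{\alpha,\zeta}(\cdot))=\tau_\alpha(\zeta\,\cdot)+\tau_\alpha(\varepsilon_\alpha)\indic$ exactly as in \eqref{EPGbridge2}, with $\beta_{\theta+\alpha+\ell,1-\alpha}\overset{d}{=}\tau_\alpha(\varepsilon_{\alpha,\ell})/(\tau_\alpha(\varepsilon_{\alpha,\ell})+\tau_\alpha(\tilde\zeta_{\alpha,\ell}))$ by \eqref{PYlamperti}. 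For \eqref{bridgefrag} I would rewrite its first term as $(1-\tilde P^{\dagger}_{\alpha,\tilde\zeta_{\alpha,\ell}})\,Q_{\alpha,\tilde\zeta_{\alpha,\ell}}(y)$, note $Q_{\alpha,\tilde\zeta_{\alpha,\ell}}$ is a $\mathrm{PD}(\alpha,\theta+\alpha+\ell)$-bridge (since $\theta+\alpha+\ell\ge0$, where $\mathrm{PG}$ coincides with $\mathrm{PD}$), and compute $\tilde P^{\dagger}_{\alpha,\tilde\zeta_{\alpha,\ell}}=\tau_\alpha(\varepsilon_{\alpha,\ell})/(\tau_\alpha(\varepsilon_{\alpha,\ell})+\tau_\alpha(\tilde\zeta_{\alpha,\ell}))\overset{d}{=}\gamma_{1-\alpha}/(\gamma_{1-\alpha}+\gamma_{\theta+\alpha+\ell})\overset{d}{=}\beta_{1-\alpha,\theta+\alpha+\ell}$ by \eqref{PYlamperti}, with $P^{(\ell)}_{\alpha,1-\alpha}$ independent of the remaining factors again by \eqref{PYlamperti}; this gives the fragmentation identity. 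The last claim, that $P_{\alpha,\theta+\ell+1}(\beta_{((\theta+\alpha+\ell)/\alpha,(1-\alpha)/\alpha)})=\beta_{\theta+\alpha+\ell,1-\alpha}$ is one minus the first size-biased pick from $\mathrm{PD}(\alpha,\theta+\ell)$, follows by combining \eqref{LittleBigp} with the relation $F_{\alpha,\tilde\zeta_{\alpha,\ell+1}}=Q_{\alpha,\varepsilon_{\alpha,\ell}+\tilde\zeta_{\alpha,\ell}}$ from Proposition~\ref{recursiverelations} --- which give $1-\tilde P^{\dagger}_{\alpha,\tilde\zeta_{\alpha,\ell}}=F_{\alpha,\tilde\zeta_{\alpha,\ell+1}}(q_{\alpha,\ell+1})=P_{\alpha,\theta+\ell+1}(q_{\alpha,\ell+1})$ --- together with Proposition~\ref{PKprop}, which identifies $\tilde P^{\dagger}_{\alpha,\tilde\zeta_{\alpha,\ell}}$ as the first size-biased pick from $\mathrm{EPG}(\alpha,\tilde\zeta_{\alpha,\ell})=\mathrm{PD}(\alpha,\theta+\ell)$.

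I expect the main obstacle to be the independence bookkeeping of the second step: in the general $\mathrm{EPG}(\alpha,\zeta)$ setting the sticks $q_{\alpha,k}$ are only conditionally independent (given the $\hat\vartheta$'s), and it must be shown that choosing $\zeta$ to be $\gamma_{(\theta+\alpha)/\alpha}$ is precisely what makes both the $q_{\alpha,k}$ and the terminal $\alpha$-diversity genuinely independent --- the one place where the $\mathrm{PD}(\alpha,\theta)$ case is not automatic and needs the beta--gamma / power-law factorization argument. Everything else is substitution plus re-identification of bridges and diversities already constructed in the preceding results; one should also check the edge cases $\ell=0$ with $-\alpha<\theta<0$, which cause no trouble since $\theta+\alpha+\ell\ge0$ for all $\ell\ge0$.
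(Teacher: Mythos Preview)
Your proposal is correct and follows the same approach implicit in the paper: the corollary is stated without proof as a direct specialization of Proposition~\ref{bijectDGM} and Theorem~\ref{TheoremEPGV} to $\zeta=\gamma_{(\theta+\alpha)/\alpha}$, with the gamma bookkeeping and beta--gamma algebra supplying the independence claims. Your write-up is in fact more detailed than the paper on the independence step---particularly the inductive argument that $q_{\alpha,\ell+1}$ is independent of $\tilde\zeta_{\alpha,\ell}+\varepsilon_{\alpha,\ell}$ and hence of all later sticks---but this is exactly the content the paper leaves to the reader under the phrase ``setting $\zeta=\gamma_{(\theta+\alpha)/\alpha}$ leads to results for $\mathrm{PD}(\alpha,\theta)$.''
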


\begin{rem}
Setting $\zeta=0,$ yields $\mathrm{EPG}(\alpha,0):=\mathrm{PD}(\alpha,-\alpha),$ and states represented by the sequence
$$
\mathrm{PD}(\alpha,-\alpha),\mathrm{PD}(\alpha,1-\alpha),\mathrm{PD}(\alpha,2-\alpha),\ldots
$$
where $\mathrm{PD}(\alpha,-\alpha)((1,0,0,\ldots))=1.$ This indicates a $\mathrm{PD}(\alpha,1-\alpha)$ fragmentation procedure with natural initial state $(1,0,0,\ldots).$
\end{rem}
\begin{rem} It is interesting to note how the sequence of iid $\mathrm{PD}(\alpha,1-\alpha)$ \emph{fragmenting} bridges $(P^{(k-1)}_{\alpha,1-\alpha})$ have been carefully constructed from independent $\mathrm{Gamma}(1-\alpha)$ variables $(\tau_{\alpha}(\varepsilon_{\alpha,k-1})),$ where a nice interpretation of these, in terms of Ages of excursions at marked independent exponential times,  should be obtainable from the Big Poisson formulation in \cite[Section 3]{PY92}.
\end{rem}
\begin{rem}
Furthermore while these bridges are independent and identically distributed amongst themselves they, as should be expected, do not have the same joint behavior among other variables. For instance, in the $\mathrm{PD}(\alpha,\theta)$ setting,$ P^{(0)}_{\alpha,1-\alpha}$ is independent of $P_{\alpha,\theta}$ but not of the subsequent bridges $(P_{\alpha,\theta+k})_{\{k\ge 1\}}.$
\end{rem}

\begin{rem}\label{tagtime}Reading~(\ref{EPGBDGMchain}). and noting that $\tilde{\zeta}_{\alpha,k}-\tilde{\zeta}_{\alpha,k=1}=\mathrm{e}_{k}+\varepsilon_{\alpha,k-1}\overset{d}=\gamma_{1/\alpha},$ not depending on $\zeta,$ shows that the time of transitions can be tagged to independent exponential waiting times defined for $k=1,2,\ldots$ as
$$
E_{k}=\tau_{\alpha}(\tilde{\zeta}_{\alpha,k})-\tau_{\alpha}(\tilde{\zeta}_{\alpha,k-1})=\tau_{\alpha}(\mathrm{e}_{k}+\varepsilon_{\alpha,k-1})\overset{d}=\gamma_{1}.
$$
Leading to a continuous time description of the fragmentation and coagulation Markov chains.
\end{rem}
\subsection{Approximating $\mathrm{EPG}(\alpha,\zeta)$-bridges by flows of simple bridges}
Statement [(ii)] of Proposition~\ref{bijectDGM} shows that for $n=1,2,\ldots,$
\begin{equation}
F_{\alpha,\zeta}=F_{\alpha,\tilde{\zeta}_{\alpha,n}}\circ \tilde{\lambda}_{\alpha,n}\circ\cdots\circ\tilde{\lambda}_{\alpha,1}.
\label{Fid}
\end{equation}
Removing $F_{\alpha,\tilde{\zeta}_{\alpha,n}}$ leaves a composition of simple bridges
$$
\Lambda^{(n)}_{\alpha,\zeta}:=\tilde{\lambda}_{\alpha,n}\circ\cdots\circ\tilde{\lambda}_{\alpha,1}
$$
that is similar to the concept of flows of simple bridges discussed in~\cite{BerFrag,BerLegall00,BerLegall03} and also iterated random functions as discussed in Diaconis and Freedman~\cite{DiaconisFreedmanIt}. See also \cite[Section 2.3, Remarks a,b]{BertoinGoldschmidt2004} for related comments.
Here, it is shown that they converge weakly to a random measure having the same distribution as $F_{\alpha,\zeta},$ where weak convergence is in the usual sense for random measures on $[0,1].$ One can see for instance \cite{James2008} for specifics, as we shall use that work in the  next result. See also \cite[Section 4]{BerFrag}.
\begin{prop}\label{iteratedlimit}Let $(\tilde{\lambda}_{\alpha,k})$ denote the collection of simple bridges as defined in~(\ref{qbridge}) with $(q_{\alpha,k})$ specified by Theorem~\ref{TheoremEPGV}. That is the $\mathrm{EPG}(\alpha,\zeta)$ case. Then the exchangeable bridge defined as
$$
\Lambda^{(n)}_{\alpha,\zeta}:=\tilde{\lambda}_{\alpha,n}\circ\cdots\circ\tilde{\lambda}_{\alpha,1}
$$
converges weakly to $F_{\alpha,\zeta},$ an $\mathrm{EPG}(\alpha,\zeta)$-bridge, as $n\rightarrow \infty.$ This is written as $\lim_{n\rightarrow\infty}\Lambda^{(n)}_{\alpha,\zeta}(\cdot)\overset{d}=F_{\alpha,\zeta}(\cdot).$
\end{prop}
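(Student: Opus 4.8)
The plan is to bypass any direct analysis of the composed bridges and instead exploit the exact factorization already in hand. Proposition~\ref{bijectDGM}(ii), recorded as~(\ref{Fid}), asserts that on the common probability space carrying $(\mathrm{e}_l),(\varepsilon_{\alpha,l})$ and $\tau_\alpha$ one has the path-wise identity $F_{\alpha,\zeta}=F_{\alpha,\tilde{\zeta}_{\alpha,n}}\circ\Lambda^{(n)}_{\alpha,\zeta}$ for every $n$. Reading this as the statement that $\Lambda^{(n)}_{\alpha,\zeta}$ ``almost recovers'' $F_{\alpha,\zeta}$, up to the leftover bridge $F_{\alpha,\tilde{\zeta}_{\alpha,n}}$, the whole proposition reduces to showing that this leftover bridge degenerates to the identity map $\mathbb U$ as $n\to\infty$. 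Concretely, I would set $\delta_n:=\sup_{z\in[0,1]}|F_{\alpha,\tilde{\zeta}_{\alpha,n}}(z)-z|$ and, evaluating the factorization at $z=\Lambda^{(n)}_{\alpha,\zeta}(y)\in[0,1]$, record the one-line estimate $\sup_{y\in[0,1]}|F_{\alpha,\zeta}(y)-\Lambda^{(n)}_{\alpha,\zeta}(y)|\le\delta_n$; so it suffices to prove $\delta_n\to 0$ a.s., which then yields uniform (hence, a fortiori, weak-measure) convergence of $\Lambda^{(n)}_{\alpha,\zeta}$ to $F_{\alpha,\zeta}$. This is exactly the type of conclusion furnished, in greater generality, by the iterated-random-function / flow-of-bridges machinery of James~\cite{James2008} and~\cite[Section 4]{BerFrag}; I would cite that as the streamlined alternative and use the estimate above as the self-contained route.

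The identification of $F_{\alpha,\tilde{\zeta}_{\alpha,n}}$ is the key simplification. By Proposition~\ref{recursiverelations}---equivalently point~(III)(vi) in Section~\ref{BDGMSection}---the leftover bridge is, on this very space, a genuine normalized generalized gamma bridge: $F_{\alpha,\tilde{\zeta}_{\alpha,n}}=Q_{\alpha,V_n}$ with $V_n=\varepsilon_{\alpha,n-1}+\tilde{\zeta}_{\alpha,n-1}$, so that $F_{\alpha,\tilde{\zeta}_{\alpha,n}}(y)=\tau_\alpha(V_n y)/\tau_\alpha(V_n)$. Since $\tilde{\zeta}_{\alpha,n-1}=\zeta+\sum_{l=1}^{n-1}(\mathrm{e}_l+\varepsilon_{\alpha,l-1})$ dominates $\sum_{l=1}^{n-1}\mathrm{e}_l$, we have $V_n\to\infty$ a.s. The subordinator $\tau_\alpha$ has L\'evy exponent~(\ref{GGexp}) and finite mean $\psi_\alpha'(0)=\alpha$, so $\tau_\alpha(t)/t\to\alpha$ a.s. as $t\to\infty$ by the strong law of large numbers for subordinators; hence $\tau_\alpha(V_n y)/\tau_\alpha(V_n)\to y$ a.s. for each fixed $y$. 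Because $y\mapsto\tau_\alpha(V_n y)/\tau_\alpha(V_n)$ is nondecreasing on the compact interval $[0,1]$ with continuous pointwise limit $y\mapsto y$, the convergence is automatically uniform (Dini--P\'olya), i.e. $\delta_n\to 0$ a.s. Combined with the displayed estimate, this proves $\Lambda^{(n)}_{\alpha,\zeta}\to F_{\alpha,\zeta}$ uniformly, almost surely.

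I expect the main obstacle to be bookkeeping rather than analysis: making sure the factorization~(\ref{Fid}) is genuinely available as a path-wise equality on a single probability space (so that the composition estimate is legitimate), and checking that the identification $F_{\alpha,\tilde{\zeta}_{\alpha,n}}=Q_{\alpha,V_n}$ holds for the \emph{version} of $F_{\alpha,\tilde{\zeta}_{\alpha,n}}$ built into the chain of Theorem~\ref{TheoremEPGV} and Proposition~\ref{bijectDGM}, not merely in distribution. Both are delivered by the explicit $\mathrm{EPG}(\alpha,\zeta)$ construction of Section~4; granting them, the remaining ingredients---divergence of $V_n$, the subordinator law of large numbers, and the Dini--P\'olya upgrade---are routine, and the argument in fact produces a mode of convergence (uniform, almost sure) somewhat stronger than the weak convergence asserted.
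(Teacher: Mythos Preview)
Your approach matches the paper's at the structural level: both rely on the exact factorization~(\ref{Fid}) and reduce the problem to showing that the leftover bridge $F_{\alpha,\tilde{\zeta}_{\alpha,n}}$ degenerates to the identity $\mathbb{U}$ as $n\to\infty$. The paper handles this last step by observing that $F_{\alpha,\tilde{\zeta}_{\alpha,n}}$ behaves asymptotically like a $\mathrm{PD}(\alpha,n)$ bridge $P_{\alpha,n}$ and then invoking \cite[Proposition~2.3]{James2008} as a black box to conclude $F_{\alpha,\tilde{\zeta}_{\alpha,n}}(\cdot)\overset{d}{\to}\mathbb{U}(\cdot)$. You instead give a self-contained argument: identify $F_{\alpha,\tilde{\zeta}_{\alpha,n}}=Q_{\alpha,V_n}=\tau_\alpha(V_n\,\cdot)/\tau_\alpha(V_n)$ via Proposition~\ref{recursiverelations}, note $V_n\to\infty$ a.s.\ since it dominates a random walk with positive drift, apply the subordinator law of large numbers $\tau_\alpha(t)/t\to\psi_\alpha'(0)=\alpha$ a.s., and upgrade the resulting pointwise convergence to uniform by P\'olya's theorem for monotone functions. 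This is correct (and the independence of $(V_n)$ from $\tau_\alpha$, which you implicitly use, is indeed built into the construction). The payoff is that your route avoids the external citation, is fully elementary, and actually delivers almost-sure uniform convergence of $\Lambda^{(n)}_{\alpha,\zeta}$ to the specific realization $F_{\alpha,\zeta}$ on the common space, which is strictly stronger than the weak convergence claimed. The paper's route is shorter on the page but leans on prior work; yours is longer but transparent.
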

\begin{proof}
Notice that~(\ref{Fid}) is an equality that holds for each $n,$ and otherwise this is a relationship between distribution functions. Focusing on $F_{\alpha,\tilde{\zeta}_{\alpha,n}},$ it is not difficult to see that this process behaves asymptotically like $P_{\alpha,n}.$ Hence applying James~\cite[Proposition 2.3]{James2008} shows that $\lim_{n\rightarrow\infty}F_{\alpha,\tilde{\zeta}_{\alpha,n}}(\cdot)\overset{d}=\mathbb{U}(\cdot).$ Concluding the result.
\end{proof}
\begin{rem} Note that the result above leaves some wiggle room for the approximation of objects having an $\mathrm{EPG}(\alpha,\zeta)$ law but not necessarily arising from the constructions we have discussed. Obviously,
$$
F^{-1}_{\alpha,\zeta}=\tilde{\lambda}^{-1}_{\alpha,1}\circ \tilde{\lambda}^{-1}_{\alpha,2}\circ\cdots\circ \mathbb{U},
$$
which indicates a coalescent process starting at the trivial partition, i.e. all singletons, on $\mathbb{N}.$
\end{rem}
\subsubsection{Approximating Pitman-Yor processes aka $\mathrm{PD}(\alpha,\theta)$ bridges.}
Proposition~\ref{iteratedlimit} indicates that when $\zeta\overset{d}=\gamma_{(\theta+\alpha)/\alpha},$  $\lim_{n\rightarrow\infty}\Lambda^{(n)}_{\alpha,\zeta}(\cdot)\overset{d}=P_{\alpha,\theta}(\cdot),$ for $\theta>-\alpha.$  Thus indicating another method to approximate Pitman-Yor processes. These also are potentially useful for novel applications involving  hierarchical structures built from simple bridges with explicit limits. The same can be said of more general $\mathrm{EPG}(\alpha,\zeta)$-bridges. However, in this $\mathrm{PD}(\alpha,\theta)$ setting one can obtain much more explicit results as shown next.
As a reminder, here for $k=1,2,\ldots$
$$
\tilde{\lambda}_{\alpha,k}(y)=\beta_{\left(\frac{\theta+k-1+\alpha}{\alpha},\frac{1-\alpha}{\alpha}\right)}\mathbb{U}(y)+(1-\beta_{\left(\frac{\theta+k-1+\alpha}{\alpha},\frac{1-\alpha}{\alpha}\right)})\indic_{\{U_{k}\leq y\}}.
$$

\begin{prop}\label{idposterior} Let $K_{n}$ denote the random number of blocks of an
$\mathrm{PD(}\alpha,\theta)$-partition of $[n]=\{1,2,\ldots,n\},$ and consider the
random process
$$
b^{(K_{n})}_{\alpha,\theta}(\cdot)=\beta_{(\frac{\theta}{\alpha}+K_{n},\frac{n}{\alpha}-K_{n})}\mathbb{U}(\cdot)+\sum_{j=1}^{K_{n}}S_{j:n}\indic_{(U_{j}\leq\cdot)}
$$
where, conditioned on $K_{n}=k$, the vector
$(\beta_{(\frac{\theta}{\alpha}+k,\frac{n}{\alpha}-k)},
(S_{j:n})_{j=1}^{k}))$  has law $\mathrm{Dir}(\theta/\alpha+k,(n_{1}-\alpha)/\alpha,\ldots,(n_{k}-\alpha)/\alpha),$  denoting a Dirichlet vector with parameters as indicated.
$(n_{1},\ldots,n_{k})$ are the sizes of the $k$ blocks.
Hence,
$\beta_{(\frac{\theta}{\alpha}+K_{n},\frac{n}{\alpha}-K_{n})}$
represents the total mass of \emph{dust} of a random mass
partition with $K_{n}$ non-zero elements.
\begin{enumerate}
\item[(i)]$P_{\alpha,\theta}\overset{d}=P_{\alpha,\theta+n}\circ
b^{(K_{n})}_{\alpha,\theta}.$
\item[(ii)]Hence,
$$
b^{(K_{n})}_{\alpha,\theta}\overset{d}=\Lambda^{(n)}_{\alpha,\gamma_{\frac{\theta+\alpha}
{\alpha}}}.$$
\item[(iii)]For $q_{k}\overset{d}=\beta_{(\theta+\alpha+k-1)/\alpha,(1-\alpha)/\alpha)},$ there is the identity,
$$
\beta_{(\frac{\theta}{\alpha}+K_{n},\frac{n}{\alpha}-K_{n})}\overset{d}=\prod_{k=1}^{n}q_{k}.
$$
\item[(iv)]$\lim_{n\rightarrow\infty}b^{(K_{n})}_{\alpha,\theta}(\cdot)\overset{d}=P_{\alpha,\theta}(\cdot).$
\item[(v)]When $\alpha=0,$ $q_{k}=(\theta+k-1)/(\theta+k),$
\begin{equation}
b^{(K_{n})}_{0,\theta}(\cdot)\overset{d}=\frac{\theta}{\theta+n}\mathbb{U}(\cdot)+\frac{1}{\theta+n}\sum_{i=1}^{n}\indic_{(\tilde{U}_{i}\leq\cdot)}.
\label{BMQ}
\end{equation}
where the distribution of
$\mathbf{U}={(\tilde{U}_{1},\ldots,\tilde{U}_{n})}$ is the
exchangeable Blackwell-MacQueen~\cite{Blackwell} P\'olya urn
distribution.
$$
\pi_{\mathbf{U}}(du_{1},\ldots,du_{n})=\frac{\Gamma(\theta)}{\Gamma(\theta+n)}\prod_{i=1}^{n}
(\theta\mathbb{U}+\sum_{j=1}^{i-1}\delta_{u_{j}})(du_{i}).
$$
\end{enumerate}
\end{prop}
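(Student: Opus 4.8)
The plan is to prove part (ii) first — the distributional identity $b^{(K_{n})}_{\alpha,\theta}\overset{d}=\Lambda^{(n)}_{\alpha,\gamma_{\frac{\theta+\alpha}{\alpha}}}$ — and then to read off (i), (iii), (iv) from it together with Proposition~\ref{bijectDGM}, Corollary~\ref{CorollaryPDFrag} and Proposition~\ref{iteratedlimit}, obtaining (v) by letting $\alpha\downarrow 0$. Recall from Corollary~\ref{CorollaryPDFrag} (take $\zeta=\gamma_{\frac{\theta+\alpha}{\alpha}}$) that $\Lambda^{(n)}_{\alpha,\gamma_{\frac{\theta+\alpha}{\alpha}}}=\tilde\lambda_{\alpha,n}\circ\cdots\circ\tilde\lambda_{\alpha,1}$ is a composition of $n$ \emph{independent} simple bridges $\tilde\lambda_{\alpha,k}(z)=q_{k}\mathbb{U}(z)+(1-q_{k})\indic_{\{U_{k}\le z\}}$, with $q_{k}\overset{d}=\beta_{(\theta+\alpha+k-1)/\alpha,\,(1-\alpha)/\alpha}$ and $(U_{k})$ iid uniform; that this composition can, in the $\mathrm{PD}(\alpha,\theta)$ case, be taken independent of $P_{\alpha,\theta+n}$; and that by Proposition~\ref{bijectDGM}(ii), $P_{\alpha,\theta}\overset{d}=F_{\alpha,\gamma_{\frac{\theta+\alpha}{\alpha}}}=F_{\alpha,\tilde\zeta_{\alpha,n}}\circ\Lambda^{(n)}_{\alpha,\gamma_{\frac{\theta+\alpha}{\alpha}}}$ with $F_{\alpha,\tilde\zeta_{\alpha,n}}\overset{d}=P_{\alpha,\theta+n}$.

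For (ii) I would build the composition from the inside out, setting $\Lambda^{(0)}=\mathbb{U}$, $\Lambda^{(k)}=\tilde\lambda_{\alpha,k}\circ\Lambda^{(k-1)}$, and tracking $\Lambda^{(k)}$ as a random measure equal to $d_{k}$ times Lebesgue measure plus $\sum_{i}s_{i}^{(k)}\delta_{V_{i}}$, where $d_{k}=\prod_{l=1}^{k}q_{l}$. Reading how $\tilde\lambda_{\alpha,k}$ acts on $\Lambda^{(k-1)}$: the atom $U_{k}$ of the $k$-th bridge lands in the range-interval of an existing atom $V_{i}$ with conditional probability $s_{i}^{(k-1)}$, and then $s_{i}$ is updated to $q_{k}s_{i}^{(k-1)}+(1-q_{k})$ while every other mass and the dust are scaled by $q_{k}$; otherwise, with conditional probability $d_{k-1}$, a fresh atom of mass $1-q_{k}$ is created at a point which is (conditionally) uniform on $[0,1]$ and independent of the past configuration, and all previous masses are scaled by $q_{k}$. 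So the sequence of seatings is a restaurant-type process in which customer $k$ joins a block with probability proportional to its current mass, and, conditionally on it, the atom locations are iid uniform. I would then show, by induction on $k$, that conditionally on the seating having produced $j$ blocks of sizes $m_{1},\dots,m_{j}$ the vector $(d_{k},s_{1}^{(k)},\dots,s_{j}^{(k)})$ is $\mathrm{Dir}\big(\tfrac{\theta}{\alpha}+j,\tfrac{m_{1}-\alpha}{\alpha},\dots,\tfrac{m_{j}-\alpha}{\alpha}\big)$. The inductive step uses three elementary Dirichlet facts: conditioning on the region in which $U_{k}$ falls size-biases the corresponding coordinate, replacing $\mathrm{Dir}(a_{0},a_{1},\dots)$ by $\mathrm{Dir}(a_{0}+1,a_{1},\dots)$; appending the independent $q_{k}$-split by the amalgamation rule is justified precisely because the first parameter of $q_{k}$ equals the post-size-biasing total $\tfrac{\theta+k-1+\alpha}{\alpha}$; and merging two coordinates of a Dirichlet adds their parameters (used when $U_{k}$ joins an existing block). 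Taking the Dirichlet mean at each step shows the seating probabilities are exactly $\tfrac{\theta+j\alpha}{\theta+k-1}$ (new block) and $\tfrac{m_{i}-\alpha}{\theta+k-1}$ (block $i$): the two-parameter Chinese restaurant rule. Hence the seating partition of $[n]$ has the $\mathrm{PD}(\alpha,\theta)$-EPPF and, at $k=n$, $\Lambda^{(n)}_{\alpha,\gamma_{\frac{\theta+\alpha}{\alpha}}}$ has exactly the law of $b^{(K_{n})}_{\alpha,\theta}$.

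Given part (ii): part (i) follows because $P_{\alpha,\theta}\overset{d}=P_{\alpha,\theta+n}\circ\Lambda^{(n)}_{\alpha,\gamma_{\frac{\theta+\alpha}{\alpha}}}$ with independent factors, so $\Lambda^{(n)}$ may be replaced by an independent copy of $b^{(K_{n})}_{\alpha,\theta}$. Part (iii) is immediate since the dust mass of $b^{(K_{n})}_{\alpha,\theta}$ is $\beta_{(\theta/\alpha+K_{n},\,n/\alpha-K_{n})}$ by construction and that of $\Lambda^{(n)}_{\alpha,\gamma_{\frac{\theta+\alpha}{\alpha}}}$ is $\prod_{k=1}^{n}q_{k}$, so (ii) forces them equal in law. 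Part (iv) follows from (ii) and Proposition~\ref{iteratedlimit}, which gives $\lim_{n}\Lambda^{(n)}_{\alpha,\gamma_{\frac{\theta+\alpha}{\alpha}}}(\cdot)\overset{d}=F_{\alpha,\gamma_{\frac{\theta+\alpha}{\alpha}}}(\cdot)\overset{d}=P_{\alpha,\theta}(\cdot)$. For (v), let $\alpha\downarrow 0$: each $q_{k}\to(\theta+k-1)/(\theta+k)$ in probability so $\prod_{k=1}^{n}q_{k}\to\theta/(\theta+n)$ deterministically; the $\mathrm{PD}(\alpha,\theta)$-EPPF converges to the $\mathrm{PD}(0,\theta)$ one, i.e.\ the Blackwell--MacQueen~\cite{Blackwell} P\'olya urn law of $(\tilde U_{1},\dots,\tilde U_{n})$; and since all parameters of the defining Dirichlet vector grow like $1/\alpha$ with normalized means $\theta/(\theta+n)$ and $n_{i}/(\theta+n)$, the vector concentrates at its mean, giving~(\ref{BMQ}) after collecting the $n_{i}$ copies of each distinct value into $\sum_{i=1}^{n}\indic_{(\tilde U_{i}\le\cdot)}$.

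The main obstacle is the inductive step in (ii): one must check that size-biasing the current Dirichlet configuration by the region $U_{k}$ falls in and then combining with the independent $\beta$-split $q_{k}$ exactly yields a Dirichlet with the parameter of the affected block raised by $1$. The subtlety is that the first parameter of $q_{k}$ exceeds the total of the un-size-biased Dirichlet parameters by exactly $1$; this discrepancy is absorbed by the size-biasing, and verifying that this matching holds simultaneously with the fact that the implied seating rule is precisely the $\mathrm{PD}(\alpha,\theta)$ Chinese restaurant process is the heart of the argument.
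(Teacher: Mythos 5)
Your proposal is correct, and it takes a genuinely different route from the paper. The paper proves part (i) first by citing Pitman's posterior representation of the Pitman--Yor process given a sample of size $n$ (from Pitman \cite{Pit96}): it writes
$P_{\alpha,\theta}\overset{d}=R_{K_n}P_{\alpha,\theta+K_n\alpha}(\cdot)+\sum_{j=1}^{K_n}\tilde P_{j:n}\indic_{(U_j\leq\cdot)}$
with $(R_k,\tilde P_{1,n},\ldots,\tilde P_{k,n})\sim\mathrm{Dir}(\theta+k\alpha,n_1-\alpha,\ldots,n_k-\alpha)$, and then notes that, via the $\mathrm{PG}$ construction of Section~\ref{construction}, this is equivalent in distribution to $P_{\alpha,\theta+n}\circ b^{(K_n)}_{\alpha,\theta}$ (the change of Dirichlet parameterization being absorbed by Lukacs' beta--gamma algebra when one composes with the normalized generalized gamma bridge). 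Part (ii) is then read off from (i); (iii) is obtained by a moment/``simplifiable'' argument comparing~(\ref{jamesid}) with an identity for $\alpha$-diversities; (iv) from (ii); and (v) by a limit together with the Dirichlet posterior machinery of~\cite{James2005b}. You instead prove (ii) directly by building $\Lambda^{(n)}$ one simple bridge at a time, tracking the dust-plus-atoms measure as a Dirichlet vector, and verifying inductively that the key parameter match -- the first parameter of $q_k$ equalling the post-size-biasing Dirichlet total $(\theta+\alpha+k-1)/\alpha$ -- makes the configuration Dirichlet at every step and makes the induced seating process exactly the two-parameter Chinese restaurant process. That argument is correct and self-contained: one effectively re-derives, for a uniform base measure, the Pitman \cite{Pit96} posterior decomposition rather than invoking it, which is an expository gain. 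Your deductions of (i), (iii), (iv) as immediate corollaries of (ii) are also cleaner than the paper's (especially (iii), where the paper resorts to moment matching and a cancellation argument for $S_{\alpha,\theta+n}$); and your proof of (v) via concentration of the Dirichlet as all its parameters scale like $1/\alpha$ is essentially the argument the paper gestures at. The trade-off is length: the paper's route is shorter if one is willing to cite \cite{Pit96}, while yours is longer but elementary and exposes the mechanism behind part (ii) rather than treating it as a corollary.
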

\begin{proof}
From the description of the posterior distribution of $P_{\alpha,\theta}$ in Pitman~\cite{Pit96} one can obtain a mixture representation of $P_{\alpha,\theta}$ as follows,
\begin{equation}
P_{\alpha,\theta}(\cdot)\overset{d}=R_{K_{n}}P_{\alpha,\theta+K_{n}\alpha}(\cdot)+\sum_{j=1}^{K_{n}}\tilde{P}_{j:n}\indic_{(U_{j}\leq
\cdot)} \label{decomp1}
\end{equation}
where $
R_{K_{n}}=(1-\sum_{j=1}^{K_{n}}\tilde{P}_{j:n})\overset{d}=\beta_{\theta+K_{n}\alpha,n-K_{n}\alpha},$
and conditioned on the data, with $K_{n}=k,$
$P_{\alpha,\theta+k\alpha}$ is a
$\mathrm{PD}(\alpha,\theta+k\alpha)$-bridge and is independent of
the random Dirichlet vector,
$$
(R_{k},\tilde{P}_{1,n},\ldots,\tilde{P}_{k,n})\sim
\mathrm{Dir}(\theta+k\alpha,n_{1}-\alpha,\ldots,n_{k}-\alpha).
$$
By construction, similar to Section~\ref{construction}, it follows that the expressions in [(i)] are equivalent in distribution to~(\ref{decomp1}). [(ii)] now follows from
[(i)]. One can obtain [(iii)] by checking moments, there is also an identity deduced from~\cite{HJL} that gives $S^{-\alpha}_{\alpha,\theta}\overset{d}=S^{-\alpha}_{\alpha,\theta+n}\times \beta_{(\frac{\theta}{\alpha}+K_{n},\frac{n}{\alpha}-K_{n})}.$ Comparing this with
(\ref{jamesid}) and using the fact that $S_{\alpha,\theta+n}$ is \emph{simplifiable}, see~ \cite{Chaumont} for this concept, concludes the result. [(iv)] follows from [(ii)].
Item[(v)] can be obtained indirectly by taking limits as $\alpha\rightarrow 0$
and otherwise using ideas about mixture representations derived
from posterior distributions of a Dirichlet process as discussed in \cite{James2005b}.
\end{proof}
\section{Stick-breaking: revisiting Perman, Pitman and Yor}

\subsection{The PPY~\cite{PPY92} Markov chain in the case of $\mathrm{PK}_{\alpha}(h\cdot f_{\alpha})$}
From~\cite{PPY92}, we know the following. For $(P_{i})\sim \mathrm{PD}(\alpha,0),$ set $\hat{T}_{\alpha,0}=T_{\alpha}(1)\overset{d}=S_{\alpha}.$ furthermore let $(J_{i})$ denote the size-biased re-arrangement of the ranked jumps $(\Delta_{i})$, such that $\hat{T}_{\alpha,0}=\sum_{k=1}^{\infty}\Delta_{k}=\sum_{k=1}^{\infty}J_{k}$ and set $\hat{T}_{\alpha,k\alpha}=\hat{T}_{\alpha,0}-\sum_{l=1}^{k}J_{k}.$ Where under $\mathrm{PD}(\alpha,0),$ $\hat{T}_{\alpha,k\alpha}\overset{d}=S_{\alpha,k\alpha}.$ Then the stick-breaking weights $W_{k}$ satisfy the general relation
\begin{equation}
\hat{T}_{\alpha,(k-1)\alpha}=\hat{T}_{\alpha,k\alpha}\times W^{-1}_{k}
\label{stickfundamental}
\end{equation}
where $W^{-1}_{k}\overset{d}=\beta_{\alpha k,1-\alpha}$ and is independent of $\hat{T}_{\alpha,k\alpha}.$

The next result is merely a rephrasing of Perman, Pitman, Yor \cite{PPY92}.
\begin{prop}[Perman, Pitman, Yor \cite{PPY92}]\label{PPYtheorem}
Let $(\hat{T}_{\alpha,0}, \hat{T}_{\alpha,\alpha},\ldots,\hat{T}_{\alpha,n\alpha}),$ for each integer $n,$ denote a vector of random variables, defined as
 above under a $\mathrm{PD}(\alpha,0)$ model, such that $\hat{T}_{\alpha,0}\overset{d}=S_{\alpha}$ and satisfies the relationship for each integer $k,$
\begin{equation}
\hat{T}_{\alpha,(k-1)\alpha}=\hat{T}_{\alpha,k\alpha}\times W^{-1}_{k}
\label{Uequation}
\end{equation}
where $W_{k}\overset{d}=\beta_{k\alpha,1-\alpha}$ independent of $\hat{T}_{\alpha,k\alpha}$ and marginally
$\hat{T}_{\alpha,k\alpha}\overset{d}=S_{\alpha,k\alpha}.$ Then, it is easy to check, that the conditional distribution of $\hat{T}_{\alpha,k\alpha}$ given  $\hat{T}_{\alpha,(k-1)\alpha}=t$ is the same for all $k$ and equates to the density,
\begin{equation}
P(\hat{T}_{\alpha,\alpha}\in ds|\hat{T}_{\alpha,0}=t)/ds=\frac{\alpha}{\Gamma(1-\alpha)}\frac{(t-s)^{-\alpha}f_{\alpha}(s)}{tf_{\alpha}(t)},
\label{transitionU}
\end{equation}
$s<t.$
By the change of variable $w=s/t,$
\begin{equation}
P(W_{1}\in dw|\hat{T}_{\alpha,0}=t)/dw=\frac{\alpha}{\Gamma(1-\alpha)}\frac{{(1-w)}^{-\alpha}t^{-\alpha}f_{\alpha}(wt)}{f_{\alpha}(t)}.
\label{transitionW}
\end{equation}
Furthermore, for each $n,$ $(W_{1},\ldots,W_{n})$ are independent variables, independent of $\hat{T}_{\alpha,n\alpha}.$
Hence, the sequence is a time-homogeneous Markov Chain governed by a $PD(\alpha,0)$ law.
\end{prop}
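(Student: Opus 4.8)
The statement is, as the authors note, a re-packaging of the size-biased construction of~\cite{PPY92}, so the plan is to (a) import the structural facts from there and (b) carry out the elementary Bayes-rule verification of the transition density and of the factorisation that delivers the Markov and independence claims. For~(a): the existence of variables $(\hat{T}_{\alpha,k\alpha},W_k)$ with $\hat{T}_{\alpha,0}\overset{d}{=}S_{\alpha}$, $W_k\overset{d}{=}\beta_{k\alpha,1-\alpha}$ independent of $\hat{T}_{\alpha,k\alpha}$, marginals $\hat{T}_{\alpha,k\alpha}\overset{d}{=}S_{\alpha,k\alpha}$, and the recursion~(\ref{Uequation}) is exactly the $\alpha$-stable specialisation of~\cite[Lemma 3.11, Theorem 2.1, Corollary 3.15]{PPY92}: realise $\hat{T}_{\alpha,0}=T_{\alpha}(1)$ as the sum of the jumps of an $\alpha$-stable subordinator, take the size-biased permutation $(J_i)$ of these jumps, and put $\hat{T}_{\alpha,k\alpha}=\hat{T}_{\alpha,0}-\sum_{l=1}^{k}J_{l}$ and $W_k=\hat{T}_{\alpha,k\alpha}/\hat{T}_{\alpha,(k-1)\alpha}$.

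For~(b) I would obtain~(\ref{transitionU}) directly from the Perman first-size-biased-pick formula. Writing $\rho_{\alpha}(x)=\tfrac{\alpha}{\Gamma(1-\alpha)}x^{-1-\alpha}$ for the stable L\'evy density, Palm calculus for the Poisson process of jumps gives
\[
\mathrm{P}(J_{1}\in dx\mid\hat{T}_{\alpha,0}=t)=\frac{x\,\rho_{\alpha}(x)\,f_{\alpha}(t-x)}{t\,f_{\alpha}(t)}\,dx,
\]
the normalisation being the subordinator identity $\int_{0}^{t}x\,\rho_{\alpha}(x)\,f_{\alpha}(t-x)\,dx=t\,f_{\alpha}(t)$; substituting $s=t-x=\hat{T}_{\alpha,\alpha}$ yields~(\ref{transitionU}) at once. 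Equivalently, and this is the version I would actually write out so as to stay inside the identities of Section~2, one uses~(\ref{Uequation}) together with $W_{k}\perp\hat{T}_{\alpha,k\alpha}$ and the density $f_{\alpha,k\alpha}(s)=\tfrac{\Gamma(k\alpha+1)}{\Gamma(k+1)}s^{-k\alpha}f_{\alpha}(s)$ from~(\ref{PDdiversity}): a change of variables turns the product of the $\beta_{k\alpha,1-\alpha}$ density and $f_{\alpha,k\alpha}$ into the joint density of $(\hat{T}_{\alpha,(k-1)\alpha},\hat{T}_{\alpha,k\alpha})$, and dividing by the marginal $f_{\alpha,(k-1)\alpha}(t)$ of $\hat{T}_{\alpha,(k-1)\alpha}$ makes the $k$-dependence cancel through $\Gamma(k\alpha+1)/\Gamma(k\alpha)=k\alpha$ and $\Gamma(k)/\Gamma(k+1)=1/k$, leaving the $k$-free kernel~(\ref{transitionU}). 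The substitution $w=s/t$ in~(\ref{transitionU}) then gives~(\ref{transitionW}).

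Time-homogeneity is immediate, since~(\ref{transitionU}) carries no $k$; the Markov property follows because, conditionally on $\hat{T}_{\alpha,(k-1)\alpha}$, the mass remaining after the first $k-1$ size-biased deletions is again the total of an $\alpha$-stable subordinator, on which the construction simply repeats, so the future depends on the past only through $\hat{T}_{\alpha,(k-1)\alpha}$. For the joint independence of $(W_{1},\dots,W_{n})$ and their independence of $\hat{T}_{\alpha,n\alpha}$, I would iterate the transition kernel: the joint density of $(\hat{T}_{\alpha,0},\dots,\hat{T}_{\alpha,n\alpha})$ equals $f_{\alpha}(t_{0})\prod_{k=1}^{n}\tfrac{\alpha}{\Gamma(1-\alpha)}\tfrac{(t_{k-1}-t_{k})^{-\alpha}f_{\alpha}(t_{k})}{t_{k-1}f_{\alpha}(t_{k-1})}$, in which the intermediate stable densities telescope and leave only $f_{\alpha}(t_{n})$; changing variables to $(W_{1},\dots,W_{n},\hat{T}_{\alpha,n\alpha})$ the surviving factors separate as $\big[\prod_{k=1}^{n}f_{\beta_{k\alpha,1-\alpha}}(w_{k})\big]f_{\alpha,n\alpha}(s)$ (with $f_{\beta_{k\alpha,1-\alpha}}$ the $\beta_{k\alpha,1-\alpha}$ density), which gives both the mutual independence of the $W_{k}$ and their independence of $\hat{T}_{\alpha,n\alpha}$.

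I expect this last factorisation to be the only step that is not purely formal, and the one where I would display the cancellations explicitly: each $W_{k}$ is merely \emph{marginally} $\beta_{k\alpha,1-\alpha}$ and marginally independent of $\hat{T}_{\alpha,k\alpha}$, while its conditional law given $\hat{T}_{\alpha,(k-1)\alpha}$ genuinely depends on $\hat{T}_{\alpha,(k-1)\alpha}$ through~(\ref{transitionW}), so the substance of the proposition is precisely that these local dependencies decouple globally. This is, of course, exactly the beta--gamma algebra that underlies~\cite[Theorem 2.1, Corollary 3.15]{PPY92}, which one may simply cite; writing it in the present diversity-recursion form is the content of the ``rephrasing''.
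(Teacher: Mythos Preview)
Your proposal is correct and matches the paper's approach. The paper does not write out a proof for this proposition at all, treating it as a direct citation of~\cite{PPY92}; however, your Bayes-rule computation (form the joint density of $(\hat{T}_{\alpha,(k-1)\alpha},\hat{T}_{\alpha,k\alpha})$ from the independence of $W_k$ and $\hat{T}_{\alpha,k\alpha}$, then divide by $f_{\alpha,(k-1)\alpha}$ and watch the $k$-dependence cancel) is exactly the argument the paper gives for the parallel Proposition~\ref{JPPY}, so you have reproduced the intended method.
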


\begin{cor}\label{corollary2}
As consequences of Proposition~\ref{PPYtheorem} the distribution of the quantities above with respect to a $PK(\rho_{\alpha};h\cdot f_{\alpha})$ are given by (\ref{transitionW}) and specifying $\hat{T}_{\alpha,0}$ to have density $h(t)f_{\alpha}(t).$ In particular, the joint law of $(W_{1},\ldots,W_{n},\hat{T}_{\alpha,n\alpha})$ is given by,
\begin{equation}
\left[\prod_{k=1}^{n}f_{B_{k}}(w_{k})\right]h(s/\prod_{l=1}^{n}w_{l})f_{\alpha,n\alpha}(s)ds
\label{JointW}
\end{equation}
where $f_{B_{k}}$ denotes the density of a $\beta_{k\alpha,1-\alpha},$ variable.
\end{cor}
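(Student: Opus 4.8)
The plan is to argue exactly as in Corollary~\ref{corollary1}: the passage from $\mathrm{PD}(\alpha,0)$ to $\mathrm{PK}_{\alpha}(h\cdot f_{\alpha})$ is a change of measure that reweights by $h(\hat{T}_{\alpha,0})$, and since $\hat{T}_{\alpha,0}$ is the \emph{initial} state of the Markov chain of Proposition~\ref{PPYtheorem}, this change of measure only alters the law of that first coordinate, leaving the transition kernel~(\ref{transitionU})--(\ref{transitionW}) intact. Concretely: by~(\ref{PKh}), $\mathrm{PK}_{\alpha}(h\cdot f_{\alpha})$ mixes $\mathrm{PD}(\alpha\,|\,t)$ over $h(t)f_{\alpha}(t)$, whereas $\mathrm{PD}(\alpha,0)$ mixes $\mathrm{PD}(\alpha\,|\,t)$ over $f_{\alpha}(t)$; since $\hat{T}_{\alpha,0}=T$ is the total mass and every other variable in the chain is a measurable function of the $\mathrm{PD}(\alpha\,|\,T)$ structure, the Radon--Nikodym derivative carrying $\mathrm{PD}(\alpha,0)$ into $\mathrm{PK}_{\alpha}(h\cdot f_{\alpha})$ is precisely $h(\hat{T}_{\alpha,0})$, a bona fide density since $\mathbb{E}_{\alpha,0}[h(S_{\alpha})]=1$ by hypothesis.

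To obtain~(\ref{JointW}) I would first record the joint law under $\mathrm{PD}(\alpha,0)$: by Proposition~\ref{PPYtheorem} the $W_{k}$ are independent with density $f_{B_{k}}$ of $\beta_{k\alpha,1-\alpha}$, and are independent of $\hat{T}_{\alpha,n\alpha}\overset{d}=S_{\alpha,n\alpha}$ with density $f_{\alpha,n\alpha}$, so the joint density of $(W_{1},\ldots,W_{n},\hat{T}_{\alpha,n\alpha})$ is $\big[\prod_{k=1}^{n}f_{B_{k}}(w_{k})\big]f_{\alpha,n\alpha}(s)$. Iterating~(\ref{Uequation}) writes the initial coordinate as a deterministic function of these,
$$
\hat{T}_{\alpha,0}=\hat{T}_{\alpha,n\alpha}\prod_{l=1}^{n}W_{l}^{-1}=s\Big/\prod_{l=1}^{n}w_{l},
$$
so multiplying the joint density by $h(\hat{T}_{\alpha,0})=h\big(s/\prod_{l=1}^{n}w_{l}\big)$ yields precisely~(\ref{JointW}). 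As a check, integrating~(\ref{JointW}) gives $\mathbb{E}_{\alpha,0}[h(\hat{T}_{\alpha,0})]=\mathbb{E}_{\alpha,0}[h(S_{\alpha})]=1$, consistent with the iterated form of~(\ref{PPYfundid}), namely $S_{\alpha}\overset{d}=S_{\alpha,n\alpha}\times\prod_{l=1}^{n}\beta_{l\alpha,1-\alpha}^{-1}$.

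Finally, one observes that the reweighting, depending only on $\hat{T}_{\alpha,0}$, preserves the one-step transitions: conditioning on $\hat{T}_{\alpha,0}=t$ produces the law $\mathrm{PD}(\alpha\,|\,t)$ in both models, hence the conditional density of $\hat{T}_{\alpha,k\alpha}$ given $\hat{T}_{\alpha,(k-1)\alpha}=t$ is still~(\ref{transitionU}) (equivalently~(\ref{transitionW}) after the substitution $w=s/t$), and only the marginal of $\hat{T}_{\alpha,0}$ is deformed from $f_{\alpha}(t)$ to $h(t)f_{\alpha}(t)$; the chain thus remains Markov with this modified initial density and the same kernel. There is essentially no obstacle: the only points needing care are the bookkeeping of $\hat{T}_{\alpha,0}$ as the explicit ratio $s/\prod_{l}w_{l}$ via~(\ref{Uequation}), and the elementary fact that a change of measure through a function of the initial coordinate alone preserves the transition structure of a Markov chain.
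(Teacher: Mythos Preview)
Your proposal is correct and matches the paper's approach exactly: the paper states Corollary~\ref{corollary2} without a separate proof, treating it as an immediate consequence of Proposition~\ref{PPYtheorem} via the same change-of-measure device used for Corollary~\ref{corollary1} (reweighting by $h(\hat{T}_{\alpha,0})$ and reading off $\hat{T}_{\alpha,0}=s/\prod_{l}w_{l}$ from iterating~(\ref{Uequation})). Your write-up simply makes explicit the bookkeeping the paper leaves implicit, including the integration check analogous to Remark~\ref{Expecth}.
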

\section{Stick-breaking results for $\mathrm{PG}(\alpha,\zeta)$ and $\mathrm{EPG}(\alpha,\zeta)$}\
Here we use the notation
$(\tilde{W}_{1},\ldots,\tilde{W}_{n},\tilde{S}_{\alpha,n\alpha})$ to denote the variables $(W_{1},\ldots,W_{n},\hat{T}_{\alpha,n\alpha})$ under a
$\mathrm{EPG}(\alpha,\zeta)$ distribution and maintain  $(W_{1},\ldots,W_{n},\hat{T}_{\alpha,n\alpha})$ for the  $\mathrm{PG}(\alpha,\zeta)$ case.

Throughout, set $\zeta_{0}=\zeta$ and for $k=1,2,\dots$ set
\begin{equation}
\zeta_{k}=\mathrm{e}_{k}+\zeta_{k-1}=\sum_{l=1}^{k}\mathrm{e}_{l}+\zeta.
\label{zetarep}
\end{equation}
Similar to the role of Lemma~\ref{lemmaphi}, the next result provides an important component to the stick-breaking representation and analysis. The proof follows quite obviously from elementary conditioning arguments and is omitted.
\begin{lem}\label{Rkdist}Let $\zeta_{j}$ be random variables described in~(\ref{zetarep}) then for any $\zeta>0,$ the conditional joint density of the vector
$$(R_{1},\dots,R_{n})\overset{d}=(({\zeta}/{\zeta_{1}})^{\frac{1}{\alpha}},\ldots,({\zeta_{n-1}}/{\zeta_{n}})^{\frac{1}{\alpha}})$$ given $\zeta,$ is given by
$$
\alpha^{n}\zeta^{n}{\mbox e}^{\zeta}{\mbox e}^{-\zeta/(\prod_{l=1}^{n}y_{l})^{\alpha}}\prod_{l=1}^{n}y^{-(n-l+1)\alpha-1}_{l}
$$
\begin{enumerate}
\item[(i)]When $\zeta\overset{d}=\gamma_{\theta/\alpha},$ corresponding to the $\mathrm{PD}(\alpha,\theta)$ case for $\theta\ge 0,$ it follows that the
$R_{k}$ are independent with
$$R_{k}\overset{d}=\beta_{\theta+(k-1)\alpha,1}
$$
 for $k=1,2,\ldots$.
\item[(ii)]The case $\zeta=0,$ is equivalent to setting $\theta=0.$ Hence $R_{1}=0,$ and otherwise
$$R_{k}\overset{d}=\beta_{(k-1)\alpha,1}.
$$
Note, again, that conditioning in the case $\zeta=0$ not only corresponds to the $\mathrm{PD}(\alpha,0)$ model but any $\mathrm{PG}(\alpha,\zeta)$ model where $P(\zeta=0)>0.$
\end{enumerate}
\end{lem}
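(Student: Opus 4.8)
The plan is to compute the conditional density of $(R_1,\dots,R_n)$ given $\zeta$ by a single change of variables from $(\zeta_1,\dots,\zeta_n)$, and then to read off parts (i) and (ii) by integrating out $\zeta$. First I would note that, conditionally on $\zeta$, the increments $\mathrm{e}_k=\zeta_k-\zeta_{k-1}$ are iid exponential$(1)$, so the map $(\mathrm{e}_1,\dots,\mathrm{e}_n)\mapsto(\zeta_1,\dots,\zeta_n)$ has unit Jacobian and $(\zeta_1,\dots,\zeta_n)$ has conditional density $\prod_{k=1}^n e^{-(\zeta_k-\zeta_{k-1})}=e^{-(\zeta_n-\zeta)}$ supported on $\{\zeta<\zeta_1<\cdots<\zeta_n\}$. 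Inverting the definitions $R_k=(\zeta_{k-1}/\zeta_k)^{1/\alpha}$ recursively gives $\zeta_k=\zeta\prod_{l=1}^k R_l^{-\alpha}$, which forces each $R_k\in(0,1)$ (because $(\zeta_k)$ is strictly increasing) and sends the cone $\{\zeta<\zeta_1<\cdots<\zeta_n\}$ onto the unit cube $(0,1)^n$.

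The next step is the Jacobian. Writing $y_l$ for the value of $R_l$ and using $\zeta_k=\zeta\prod_{l=1}^k y_l^{-\alpha}$, one checks directly that $\partial\zeta_k/\partial y_j=-\alpha\zeta_k/y_j$ for $j\le k$ and $0$ for $j>k$, so the derivative matrix is triangular and has absolute determinant $\alpha^n\prod_{k=1}^n(\zeta_k/y_k)$. Since $y_l$ enters $\zeta_k$ exactly for $k\ge l$, we get $\prod_{k=1}^n\zeta_k=\zeta^n\prod_{l=1}^n y_l^{-\alpha(n-l+1)}$, whence $\alpha^n\prod_{k=1}^n(\zeta_k/y_k)=\alpha^n\zeta^n\prod_{l=1}^n y_l^{-\alpha(n-l+1)-1}$. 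Multiplying this by the conditional density $e^{-(\zeta_n-\zeta)}=e^{\zeta}e^{-\zeta/(\prod_l y_l)^\alpha}$ evaluated at the image point yields exactly the asserted formula $\alpha^n\zeta^n e^{\zeta}e^{-\zeta/(\prod_l y_l)^\alpha}\prod_{l=1}^n y_l^{-(n-l+1)\alpha-1}$ on $(0,1)^n$.

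For part (i) I would multiply the conditional density by the $\gamma_{\theta/\alpha}$ density $\zeta^{\theta/\alpha-1}e^{-\zeta}/\Gamma(\theta/\alpha)$ and integrate over $\zeta>0$; the factor $e^{\zeta}$ cancels $e^{-\zeta}$, leaving $\int_0^\infty\zeta^{n+\theta/\alpha-1}e^{-\zeta/(\prod_l y_l)^\alpha}\,d\zeta=\Gamma(n+\theta/\alpha)(\prod_l y_l)^{\alpha n+\theta}$. Re-collecting the powers of each $y_l$ leaves exponent $\theta+(l-1)\alpha-1$, and the leftover constant $\alpha^n\Gamma(n+\theta/\alpha)/\Gamma(\theta/\alpha)=\prod_{k=1}^n(\theta+(k-1)\alpha)$ is precisely the product of the normalizing constants of the $\beta_{\theta+(k-1)\alpha,1}$ densities, so the joint density factorizes and independence follows; alternatively this is immediate from the neutrality of the Dirichlet applied to the independent gammas $\zeta\sim\gamma_{\theta/\alpha}$ and $\mathrm{e}_1,\mathrm{e}_2,\dots\sim\gamma_1$, together with the elementary fact that a $\mathrm{Beta}(a,1)$ variable raised to the power $1/\alpha$ is $\beta_{\alpha a,1}$. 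Part (ii) is the specialization $\theta=0$, where $\gamma_{\theta/\alpha}$ degenerates to the constant $0$ so $R_1=(\zeta_0/\zeta_1)^{1/\alpha}=0$ and $\beta_{\theta+(k-1)\alpha,1}$ becomes $\beta_{(k-1)\alpha,1}$ for $k\ge 2$. I expect no genuine obstacle: the only place needing a little care is the bookkeeping of the exponents of the $y_l$ in $\prod_k\zeta_k$ and the check that the normalizing constant telescopes correctly, which is presumably why the authors state that the proof can be omitted.
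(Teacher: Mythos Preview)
Your proof is correct and is precisely the elementary conditioning and change-of-variables argument the paper alludes to; the authors in fact omit the proof, stating only that it ``follows quite obviously from elementary conditioning arguments.'' Your computation of the triangular Jacobian, the bookkeeping of the exponents via $\prod_{k=1}^n\zeta_k=\zeta^n\prod_{l=1}^n y_l^{-\alpha(n-l+1)}$, and the telescoping of $\alpha^n\Gamma(n+\theta/\alpha)/\Gamma(\theta/\alpha)$ into $\prod_{l=1}^n(\theta+(l-1)\alpha)$ are all clean and exactly what is needed.
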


\begin{lem}\label{intlem}For positive quantities $(v,t,\zeta,x)$ there is the integral identity,
$$
\zeta\int_{0}^{x}{(x-y)}^{\alpha-1}{\mbox e}^{-\frac{\zeta^{1/\alpha}t}{vy}}y^{-\alpha-1}dy=\Gamma(\alpha)v^{\alpha}t^{-\alpha}x^{\alpha-1}{\mbox e}^{-\frac{\zeta^{1/\alpha}t}{vx}}.
$$
\end{lem}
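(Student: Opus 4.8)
The plan is to recognize the left-hand side as a convolution-type integral that, after a single substitution, collapses to a standard Gamma integral. First I would make the change of variable $u = x - y$, so that $y = x - u$, $dy = -du$, and the limits run from $u = x$ (at $y=0$) to $u = 0$ (at $y = x$). This is tempting but in fact the cleaner substitution is $w = 1/y$, i.e. $y = 1/w$, $dy = -w^{-2}\,dw$, which turns $y^{-\alpha-1}\,dy$ into $w^{\alpha+1}\cdot w^{-2}\,dw = w^{\alpha-1}\,dw$ and turns the exponential $\mathrm{e}^{-\zeta^{1/\alpha}t/(vy)}$ into $\mathrm{e}^{-(\zeta^{1/\alpha}t/v)w}$, i.e. a clean exponential in $w$. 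The limits become $w$ from $1/x$ to $\infty$. The factor ${(x-y)}^{\alpha-1} = (x - 1/w)^{\alpha-1} = x^{\alpha-1}(1 - 1/(xw))^{\alpha-1} = x^{\alpha-1}(wx-1)^{\alpha-1}(xw)^{-(\alpha-1)} \cdot x^{\alpha-1}$ — so one should track the algebra carefully here rather than force it; the point is that after substitution the integrand is a power of $w$ shifted by $1/x$ times a pure exponential, which is exactly the form of an incomplete-to-complete Gamma reduction after one further shift $r = w - 1/x$.

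Concretely, after the substitution $w = 1/y$ the integral becomes
$$
\zeta \int_{1/x}^{\infty} \left(x - \frac{1}{w}\right)^{\alpha-1} \mathrm{e}^{-\frac{\zeta^{1/\alpha}t}{v}w}\, w^{\alpha-1}\,dw
= \zeta \int_{1/x}^{\infty} \frac{(xw-1)^{\alpha-1}}{w^{\alpha-1}} \mathrm{e}^{-\frac{\zeta^{1/\alpha}t}{v}w}\, w^{\alpha-1}\,dw
= \zeta \int_{1/x}^{\infty} (xw-1)^{\alpha-1} \mathrm{e}^{-\frac{\zeta^{1/\alpha}t}{v}w}\,dw.
$$
Now substitute $r = xw - 1$, so $w = (r+1)/x$, $dw = dr/x$, with $r$ running from $0$ to $\infty$:
$$
\zeta \int_{0}^{\infty} r^{\alpha-1} \mathrm{e}^{-\frac{\zeta^{1/\alpha}t}{vx}(r+1)}\,\frac{dr}{x}
= \frac{\zeta}{x}\, \mathrm{e}^{-\frac{\zeta^{1/\alpha}t}{vx}} \int_{0}^{\infty} r^{\alpha-1} \mathrm{e}^{-\frac{\zeta^{1/\alpha}t}{vx}r}\,dr
= \frac{\zeta}{x}\, \mathrm{e}^{-\frac{\zeta^{1/\alpha}t}{vx}}\, \Gamma(\alpha) \left(\frac{vx}{\zeta^{1/\alpha}t}\right)^{\alpha}.
$$
Simplifying the prefactor, $\frac{\zeta}{x}\left(\frac{vx}{\zeta^{1/\alpha}t}\right)^{\alpha} = \zeta \cdot x^{-1} \cdot v^{\alpha} x^{\alpha} \zeta^{-1} t^{-\alpha} = v^{\alpha} t^{-\alpha} x^{\alpha-1}$, which gives exactly $\Gamma(\alpha) v^{\alpha} t^{-\alpha} x^{\alpha-1} \mathrm{e}^{-\zeta^{1/\alpha}t/(vx)}$, as claimed.

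The computation is entirely elementary, so there is no real obstacle; the only place to be careful is bookkeeping the exponents through the two substitutions (in particular verifying that the $w^{\alpha-1}$ from the Jacobian exactly cancels the $w^{-(\alpha-1)}$ produced by factoring $(x - 1/w)^{\alpha-1}$), and checking convergence of the Gamma integral, which holds precisely because $\alpha > 0$ and $\zeta^{1/\alpha}t/(vx) > 0$ for positive $(v,t,\zeta,x)$. I would present the two substitutions explicitly and then quote $\int_0^\infty r^{\alpha-1}\mathrm{e}^{-\lambda r}\,dr = \Gamma(\alpha)\lambda^{-\alpha}$ to close.
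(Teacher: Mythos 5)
Your proof is correct and follows exactly the paper's own route: the paper's proof applies the change of variables $r=1/y$ followed by $s=rx-1$ to reduce to a Gamma integral, which is precisely your $w=1/y$ and $r=xw-1$ up to renaming. The exponent bookkeeping you flag (the cancellation of the $w^{\alpha-1}$ factors) is handled correctly, and the final simplification $\frac{\zeta}{x}\bigl(\frac{vx}{\zeta^{1/\alpha}t}\bigr)^{\alpha}=v^{\alpha}t^{-\alpha}x^{\alpha-1}$ is right.
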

\begin{proof}Apply the change of variables $r=1/y$ and then $s=rx-1$ to obtain a gamma integral.
\end{proof}
We now describe the specialization of Corollary~\ref{corollary2} to the $\mathrm{PG}(\alpha,\zeta)$ setting.
\begin{prop}\label{Wdensity}
Consider the setting in Proposition~\ref{PPYtheorem} then in the $\mathrm{PG}(\alpha,\zeta)$ case the  joint density of $(W_{1},\ldots,W_{n},\hat{T}_{\alpha,n\alpha})$ conditioned on $\zeta,$ is expressed as follows
\begin{equation}
\left[\prod_{k=1}^{n}f_{B_{k}}(w_{k})\right]{\mbox e}^{\zeta}{\mbox e}^{-(\frac{s\zeta^{1/\alpha}}{\prod_{l=1}^{n}w_{l}})}f_{\alpha,n\alpha}(s)
\label{jointVG3}
\end{equation}
and the joint density of $(W_{1},\ldots,W_{n})$ can be expressed as
\begin{equation}
\left[\prod_{k=1}^{n}f_{B_{k}}(w_{k})\right]{\mbox e}^{\zeta}\mathbb{S}_{\alpha,n\alpha}\left({\frac{\zeta^{1/\alpha}}{\prod_{l=1}^{n}w_{l}}}\right)
\label{jointVG4}
\end{equation}
where $f_{B_{k}}$ denotes the density of a $\beta_{k\alpha,1-\alpha},$ variable. See (\ref{Sid}) for various forms of $\mathbb{S}_{\alpha,n\alpha}(y).$
\begin{enumerate}
\item[(i)]When $\zeta=0,$ it follows from~(\ref{jointVG3}), (\ref{jointVG4}), that for $k=1,2,\ldots$
$$
W_{k}\overset{d}=\beta_{k\alpha,1-\alpha}
$$
and are independent.\qed
\end{enumerate}
\end{prop}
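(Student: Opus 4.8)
The plan is to obtain Proposition~\ref{Wdensity} as a direct specialization of Corollary~\ref{corollary2} to the $\mathrm{PG}(\alpha,\zeta)$ class, carried out conditionally on $\zeta$. First I would record that, conditionally on $\zeta$, the law $\mathrm{PG}(\alpha,\zeta)$ coincides with the Poisson--Kingman law $\mathrm{PK}_{\alpha}(h(\cdot\mid\zeta)\cdot f_{\alpha})$ whose tilting function is $h(s\mid\zeta)={\mbox e}^{-(s\zeta^{1/\alpha}-\zeta)}$; this is exactly~(\ref{genh}) read before taking the expectation over $\zeta$, and the identity $\mathbb{E}[h(S_{\alpha}\mid\zeta)]={\mbox e}^{\zeta}\mathbb{E}[{\mbox e}^{-\zeta^{1/\alpha}S_{\alpha}}]={\mbox e}^{\zeta}{\mbox e}^{-\zeta}=1$ confirms it is a legitimate $h$. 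The key structural observation is that the Perman--Pitman--Yor stick-breaking operation underlying Proposition~\ref{PPYtheorem} acts inside the conditional law $\mathrm{PD}(\alpha\mid t)$, so the $\zeta$-mixing only changes the marginal of the total mass $\hat{T}_{\alpha,0}$; hence Corollary~\ref{corollary2} applies verbatim conditionally on $\zeta$, giving the joint density of $(W_{1},\dots,W_{n},\hat{T}_{\alpha,n\alpha})$ given $\zeta$ as $\big[\prod_{k=1}^{n}f_{B_{k}}(w_{k})\big]\,h\!\big(s/\prod_{l=1}^{n}w_{l}\mid\zeta\big)\,f_{\alpha,n\alpha}(s)$, which is~(\ref{JointW}) with this $h$.

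Next I would substitute $h\!\big(s/\prod_{l=1}^{n}w_{l}\mid\zeta\big)={\mbox e}^{-(s\zeta^{1/\alpha}/\prod w_{l}-\zeta)}={\mbox e}^{\zeta}{\mbox e}^{-s\zeta^{1/\alpha}/\prod w_{l}}$, which yields~(\ref{jointVG3}) immediately. To pass to~(\ref{jointVG4}) I would integrate out $s$ over $(0,\infty)$: writing $f_{\alpha,n\alpha}(s)=c_{\alpha,n\alpha}s^{-n\alpha}f_{\alpha}(s)$ and comparing with~(\ref{basicsurvival}) one recognizes
$$\int_{0}^{\infty}{\mbox e}^{-s\zeta^{1/\alpha}/\prod_{l=1}^{n}w_{l}}f_{\alpha,n\alpha}(s)\,ds=c_{\alpha,n\alpha}\int_{0}^{\infty}{\mbox e}^{-s\zeta^{1/\alpha}/\prod_{l=1}^{n}w_{l}}s^{-n\alpha}f_{\alpha}(s)\,ds=\mathbb{S}_{\alpha,n\alpha}\!\left(\frac{\zeta^{1/\alpha}}{\prod_{l=1}^{n}w_{l}}\right),$$
i.e. the $s$-integral is just the Laplace transform $\mathbb{E}[{\mbox e}^{-yS_{\alpha,n\alpha}}]=\mathrm{P}(\gamma_{1}/S_{\alpha,n\alpha}>y)$ evaluated at $y=\zeta^{1/\alpha}/\prod w_{l}$. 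Multiplying back in the factor ${\mbox e}^{\zeta}\prod_{k}f_{B_{k}}(w_{k})$ gives~(\ref{jointVG4}).

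Finally, for part~(i), setting $\zeta=0$ makes ${\mbox e}^{\zeta}=1$ and the argument $\zeta^{1/\alpha}/\prod w_{l}=0$, and $\mathbb{S}_{\alpha,n\alpha}(0)=\mathrm{P}(\gamma_{1}/S_{\alpha,n\alpha}>0)=1$, so~(\ref{jointVG4}) collapses to $\prod_{k=1}^{n}f_{B_{k}}(w_{k})$; equivalently~(\ref{jointVG3}) factorizes as $\prod_{k=1}^{n}f_{B_{k}}(w_{k})\,f_{\alpha,n\alpha}(s)$. Either form shows $W_{1},\dots,W_{n}$ are mutually independent with $W_{k}\overset{d}=\beta_{k\alpha,1-\alpha}$, recovering the classical $\mathrm{PD}(\alpha,0)$ stick-breaking weights.

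There is no genuine obstacle here; the computation is entirely mechanical. The only delicate point worth stating carefully is the reduction to Corollary~\ref{corollary2} conditionally on $\zeta$: one must justify that the stick-breaking chain of Proposition~\ref{PPYtheorem} has the same transition structure across every $\mathrm{PK}_{\alpha}(h\cdot f_{\alpha})$ (only the marginal of $\hat{T}_{\alpha,0}$ changing), so that conditioning on $\zeta$ and then applying the $\zeta$-dependent change of measure $h(\hat{T}_{\alpha,0}\mid\zeta)$ is precisely what~(\ref{JointW}) records.
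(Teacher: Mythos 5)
Your proof is correct and follows precisely the route the paper intends (the paper closes the statement with a \qed, signalling that it is a direct specialization of Corollary~\ref{corollary2}): condition on $\zeta$, recognize the conditional $\mathrm{PG}(\alpha,\zeta)$ law as $\mathrm{PK}_{\alpha}(h\cdot f_{\alpha})$ with $h(s)={\mbox e}^{-(s\zeta^{1/\alpha}-\zeta)}$, substitute into~(\ref{JointW}) to obtain~(\ref{jointVG3}), and integrate out $s$ using~(\ref{basicsurvival}) (equivalently $\mathbb{E}[{\mbox e}^{-yS_{\alpha,n\alpha}}]=\mathbb{S}_{\alpha,n\alpha}(y)$) to obtain~(\ref{jointVG4}), with $\zeta=0$ collapsing everything to the product form. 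Your remark at the end about why Corollary~\ref{corollary2} can be applied conditionally on $\zeta$ is exactly the right point to be careful about, and you handle it correctly.
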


\begin{prop}\label{W2density}
Consider the setting in Proposition~\ref{PPYtheorem} then in the $\mathrm{EPG}(\alpha,\zeta)$ case the  joint density of $(\tilde{W}_{1},\ldots,\tilde{W}_{n},\tilde{S}_{\alpha,n\alpha})$ conditioned on $\zeta,$ is expressed as follows

$$
\zeta^{1/\alpha-1}(t^{*}/\alpha)\left[\prod_{k=1}^{n}f_{B_{k}}(v_{k})\right]{\mbox e}^{-(
t^{*}\zeta^{1/\alpha}-\zeta)}f_{\alpha,n\alpha}(t)
$$
where $t^{*}=t/(\prod_{l=1}^{n}v_{l}),$ and $B_{k}=\beta_{k\alpha,1-\alpha}.$
\begin{enumerate}
\item[(i)]For each $n,$ let  $(\tilde{W}_{2},\ldots,\tilde{W}_{n+1},\tilde{S}_{\alpha,
(n+1)\alpha})$ be determined by $\mathrm{EPG}(\alpha,\zeta)$ and let  $(W_{1},\ldots,W_{n},\hat{T}_{\alpha,n\alpha})$ be determined by $\mathrm{PG}(\alpha,\zeta).$ Then there is the joint equivalence (component-wise) in distribution
$$
\left(\tilde{W}_{2},\ldots,\tilde{W}_{n+1},\tilde{S}_{\alpha,
(n+1)\alpha}\right)\overset{d}=\left(W_{1},\ldots,W_{n},\hat{T}_{\alpha,n\alpha}\right)
$$
\item[(ii)]As a consequence of [(i)] the conditional density of $\tilde{W}_{1}$ given $\left(\tilde{W}_{2},\ldots,\tilde{W}_{n},\tilde{S}_{\alpha,
n\alpha}\right)=(w_{2},\ldots,w_{n},s)$ and $\zeta,$ is
\begin{equation}
\frac{1}{\Gamma(1-\alpha)}w^{\alpha-2}{{(1-w)}^{-\alpha}r^{1-\alpha}
{\mbox e}^{-r\frac{1-w}{w}}}
\label{wmixdensity2}
\end{equation}
where $r=s\zeta^{1/\alpha}/\prod_{k=2}^{n}w_{k}.$ It follows that~(\ref{wmixdensity2}) also corresponds to the conditional density of $\tilde{W}_{1}$ conditioned on $\zeta^{1/\alpha}\tilde{S}_{\alpha,n\alpha}/\prod_{k=2}^{n}\tilde{W}_{k}=r.$ Note furthermore that by definition $\zeta^{1/\alpha}\tilde{S}_{\alpha,n\alpha}/\prod_{k=2}^{n}\tilde{W}_{k}=\tau_{\alpha}(\zeta).$ This can be verified by comparing with~(\ref{wmixdensity}).

\end{enumerate}
\end{prop}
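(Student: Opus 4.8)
The plan is to derive the displayed joint density directly from Corollary~\ref{corollary2} and Proposition~\ref{PKprop}, and then to obtain (i) and (ii) from it by a single elementary one-dimensional integration.

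First, conditional on $\zeta$ the $\mathrm{EPG}(\alpha,\zeta)$ model is a $\mathrm{PK}_{\alpha}(h\cdot f_{\alpha})$ model, and by Proposition~\ref{PKprop} the relevant mixing density $h(\cdot\,|\,\zeta)f_{\alpha}(\cdot)$ is that of $\hat T=\tau_{\alpha}(\varepsilon_{\alpha}+\zeta)/\zeta^{1/\alpha}$, i.e. $h(s\,|\,\zeta)=\tfrac{s}{\alpha}\zeta^{1/\alpha-1}{\mbox e}^{-(s\zeta^{1/\alpha}-\zeta)}$. Since the Perman--Pitman--Yor transition kernels of Proposition~\ref{PPYtheorem} depend only on $\alpha$ and not on the law of $\hat T_{\alpha,0}$, Corollary~\ref{corollary2} applies verbatim conditional on $\zeta$; substituting this $h$ into~(\ref{JointW}), with $t^{*}=t/\prod_{l=1}^{n}v_{l}$, produces exactly the stated conditional joint density of $(\tilde W_{1},\ldots,\tilde W_{n},\tilde S_{\alpha,n\alpha})$. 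This is the first step.

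For part (i) I would take that density with $n$ replaced by $n+1$ and integrate out $v_{1}=\tilde W_{1}\in(0,1)$. Writing $\tilde P=\prod_{k=2}^{n+1}v_{k}$, the factors depending on $v_{1}$ are $\tfrac1\alpha t^{*}f_{B_{1}}(v_{1}){\mbox e}^{-t^{*}\zeta^{1/\alpha}}\propto v_{1}^{\alpha-2}(1-v_{1})^{-\alpha}{\mbox e}^{-r/v_{1}}$ with $r=t\zeta^{1/\alpha}/\tilde P$, and the substitution $u=1/v_{1}$ turns $\int_{0}^{1}v_{1}^{\alpha-2}(1-v_{1})^{-\alpha}{\mbox e}^{-r/v_{1}}\,dv_{1}$ into the gamma integral $\int_{1}^{\infty}(u-1)^{-\alpha}{\mbox e}^{-ru}\,du=\Gamma(1-\alpha)r^{\alpha-1}{\mbox e}^{-r}$. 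The remaining work is bookkeeping: after the integration one must check that the prefactor, together with the index shift $f_{B_{k+1}}(v_{k+1})\mapsto f_{B_{k}}(w_{k})$ (handled by the telescoping product $\prod_{j=2}^{n+1}\mathrm{B}((j-1)\alpha,1-\alpha)/\mathrm{B}(j\alpha,1-\alpha)=\Gamma(\alpha)\Gamma(n\alpha+1)/\Gamma((n+1)\alpha)$) and the ratio $f_{\alpha,(n+1)\alpha}(t)/f_{\alpha,n\alpha}(t)=\Gamma((n+1)\alpha+1)\,t^{-\alpha}/[(n+1)\Gamma(n\alpha+1)]$, collapses to $1$, all powers of $\zeta$, $t$ and $\tilde P$ cancelling. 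What remains is precisely the $\mathrm{PG}(\alpha,\zeta)$ joint density of Proposition~\ref{Wdensity} in the variables $w_{k}=v_{k+1}$, $s=t$, which gives the claimed component-wise distributional identity $(\tilde W_{2},\ldots,\tilde W_{n+1},\tilde S_{\alpha,(n+1)\alpha})\overset{d}=(W_{1},\ldots,W_{n},\hat T_{\alpha,n\alpha})$. Conceptually this is the stick-breaking avatar of the fact (Proposition~\ref{PKprop}, Proposition~\ref{recursiverelations}) that $\mathrm{EPG}(\alpha,\zeta)$ is the size-biased $\mathrm{PG}(\alpha,\zeta)$ law, so deleting its first size-biased pick --- i.e. its first PPY stick --- returns $\mathrm{PG}(\alpha,\zeta)$.

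For part (ii) I would simply read off from the displayed joint density the conditional law of $\tilde W_{1}$ given $(\tilde W_{2},\ldots,\tilde W_{n},\tilde S_{\alpha,n\alpha})=(w_{2},\ldots,w_{n},s)$ and $\zeta$: it is proportional (in $w$) to $w^{\alpha-2}(1-w)^{-\alpha}{\mbox e}^{-r/w}$ with $r=s\zeta^{1/\alpha}/\prod_{k=2}^{n}w_{k}$, and normalizing by the same integral $\int_{0}^{1}w^{\alpha-2}(1-w)^{-\alpha}{\mbox e}^{-r/w}\,dw=\Gamma(1-\alpha)r^{\alpha-1}{\mbox e}^{-r}$ yields~(\ref{wmixdensity2}). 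Because this expression depends on the conditioning variables only through $r$, it is also the conditional density given $\{\zeta^{1/\alpha}\tilde S_{\alpha,n\alpha}/\prod_{k=2}^{n}\tilde W_{k}=r\}$; and iterating the chain relation $\tilde S_{\alpha,(k-1)\alpha}=\tilde S_{\alpha,k\alpha}\tilde W_{k}^{-1}$ gives $\tilde S_{\alpha,n\alpha}/\prod_{k=2}^{n}\tilde W_{k}=\tilde S_{\alpha,\alpha}$, while by (i) (with $n=0$) $\zeta^{1/\alpha}\tilde S_{\alpha,\alpha}=\tau_{\alpha}(\zeta)$, identifying the conditioning quantity as $\tau_{\alpha}(\zeta)$ exactly as claimed.

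The main obstacle I anticipate is the constant-chasing in (i): one has to juggle the shift in the Beta indices, the tilting constants $c_{\alpha,n\alpha}$ hidden in $f_{\alpha,n\alpha}$, and the powers of $\zeta,t,\tilde P$ simultaneously, and verify that they conspire to give the clean identity with no leftover factor. The integration itself, the first step, and part (ii) are all routine once the joint density is in hand.
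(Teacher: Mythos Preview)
Your proposal is correct and follows essentially the same route as the paper: derive the displayed density by plugging the size-biased $h$ of Proposition~\ref{PKprop} into Corollary~\ref{corollary2}, then integrate out $v_{1}$ via the one-dimensional gamma integral $\int_{0}^{1}w^{\alpha-2}(1-w)^{-\alpha}e^{-r/w}\,dw=\Gamma(1-\alpha)r^{\alpha-1}e^{-r}$ to get (i), and read off (ii) by normalization. The only cosmetic difference is that the paper recognizes this integral as the normalization of the density~(\ref{wmixdensity}) (obtained from the structural identity $1-\tilde W_{1}=\gamma_{1-\alpha}/(\gamma_{1-\alpha}+\tau_{\alpha}(\zeta))$) rather than evaluating it by the substitution $u=1/w$, which lets it skip the explicit constant-chasing you flag; the underlying computation is identical.
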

\begin{proof}
Statement [(i)] is proved as follows. Note the joint density of

 $(\tilde{W}_{1},\ldots,\tilde{W}_{n+1},\tilde{S}_{\alpha,(n+1)\alpha}),$ can be expressed as,
$$
\zeta^{1/\alpha-1}(t^{*}/\alpha)\left[\prod_{k=1}^{n+1}f_{B_{k}}(v_{k})\right]{\mbox e}^{-(
t^{*}\zeta^{1/\alpha}-\zeta)}f_{\alpha,(n+1)\alpha}(t)
$$
where $t^{*}=t/(v_{1}\prod_{l=2}^{n+1}v_{l}),$ and $B_{k}=\beta_{k\alpha,1-\alpha}.$ Use the fact that,
$$
1-\tilde{W}_{1}\overset{d}=\frac{\gamma_{1-\alpha}}{\gamma_{1-\alpha}+\tau_{\alpha}(\zeta)}.
$$
Hence it follows that conditioned on $\tau_{\alpha}(\zeta)=s$, the density of $\tilde{W}_{1}$ has the form
\begin{equation}
\frac{1}{\Gamma(1-\alpha)}w^{\alpha-2}{{(1-w)}^{-\alpha}s^{1-\alpha}
{\mbox e}^{-s\frac{1-w}{w}}}
\label{wmixdensity}
\end{equation}
Now integrating over $v_{1},$ using the identity gained from the density in~(\ref{wmixdensity}), and comparing with~(\ref{jointVG3}), leads to the joint density of $({W}_{1},\ldots,{W}_{n},\hat{T}_{\alpha,n\alpha}).$
\end{proof}
\subsection{$\mathrm{PG}(\alpha,\zeta)$ simple sticks}
We now formally prove the stick-breaking representation for the  $\mathrm{PG}(\alpha,\zeta)$ case, which remarkably does not directly involve the
appearance of $\tau_{\alpha}.$
\begin{thm}\label{TheoremPGW} If $(P_{i})\sim\mathrm{PG}(\alpha,\zeta),$ then the sequence $(\tilde{P}_{k}),$ obtained by size-biased sampling from $(P_{i}),$
can be represented as $\tilde{P}_{k}=(1-{W}_{k})\prod_{l=1}^{k-1}W_{l},$ where for each $k,$
$\tilde{P}^{\dagger}_{\alpha,\zeta_{k}}=1-W_{k}$ is the first biased pick from an $\mathrm{EPG}(\alpha,\zeta_{k})=
\mathrm{PG}(\alpha,\zeta_{k-1})$  mass partition
$(P_{l,k-1}).$  The  $(W_{k})$ are generally dependent random variables represented as
\begin{eqnarray}
% \nonumber to remove numbering (before each equation)
W_{k}&=& 1-\beta^{(k)}_{({1-\alpha},\alpha)}[1-({\zeta_{k-1}}/{\zeta_{k}})^{\frac{1}{\alpha}}] \\
  & =& 1-\beta^{(k)}_{({1-\alpha},\alpha)}[1-R_{k}]\nonumber
  \label{weights}
\end{eqnarray}
for $(\beta^{(k)}_{(1-\alpha,\alpha)})$ iid $\mathrm{Beta}(1-\alpha,\alpha)$ variables independent of the $R_{k}={(\zeta_{k-1}/\zeta_{k})}^{1/\alpha}$ for  $(\zeta_{k})$  defined  in (\ref{zetarep}).
\end{thm}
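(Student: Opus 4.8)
The plan is to show that the variables $(W_1,\dots,W_n)$ described in the statement --- built, conditionally on $\zeta$, from the $R_k=(\zeta_{k-1}/\zeta_k)^{1/\alpha}$ of Lemma~\ref{Rkdist} and the independent $\beta^{(k)}_{1-\alpha,\alpha}$ --- have, for every $n$, exactly the conditional joint density~(\ref{jointVG4}) of Proposition~\ref{Wdensity}. Since $\tilde P_k=(1-W_k)\prod_{l=1}^{k-1}W_l$ is a deterministic function of $(W_l)_{l\le k}$, and since by Proposition~\ref{PPYtheorem} together with Proposition~\ref{Wdensity} the densities~(\ref{jointVG4}) characterise the law of the size-biased stick sequence of $\mathrm{PG}(\alpha,\zeta)$, matching them forces the sequence generated by the candidate $(W_k)$ to coincide in law with the size-biased sequence of a genuine $\mathrm{PG}(\alpha,\zeta)$ mass partition; Kingman's correspondence then returns $\mathrm{Rank}(\tilde P_k)\overset{d}=(P_i)\sim\mathrm{PG}(\alpha,\zeta)$. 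So the whole matter reduces to one density computation, to be carried out exactly as in the proof of Theorem~\ref{TheoremPGV}.

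First I would change variables from $(R_1,\dots,R_n,\beta^{(1)}_{1-\alpha,\alpha},\dots,\beta^{(n)}_{1-\alpha,\alpha})$ to $(W_1,\dots,W_n)$ through $\beta^{(k)}_{1-\alpha,\alpha}=(1-W_k)/(1-R_k)$; the Jacobian together with the $\mathrm{Beta}(1-\alpha,\alpha)$ densities contributes $\prod_{k=1}^n(1-w_k)^{-\alpha}(w_k-y_k)^{\alpha-1}$ on the region $\{y_k<w_k<1\}$. Multiplying by the joint density of $(R_1,\dots,R_n)$ from Lemma~\ref{Rkdist} and inserting the Laplace representation $e^{-\zeta/(\prod_l y_l)^\alpha}=\int_0^\infty e^{-s\zeta^{1/\alpha}/\prod_l y_l}f_\alpha(s)\,ds$ (the auxiliary $s$ being the eventual value of $\hat T_{\alpha,n\alpha}$), the task becomes the iterated integral $\int_0^{w_1}\!\cdots\!\int_0^{w_n}\!\left(\prod_l(w_l-y_l)^{\alpha-1}y_l^{-(n-l+1)\alpha-1}\right)e^{-s\zeta^{1/\alpha}/\prod_l y_l}\,dy_n\cdots dy_1$. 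The innermost variable $y_n$ carries the power $y_n^{-\alpha-1}$, so Lemma~\ref{intlem} applies verbatim, and the factor $\left(\prod_{l<n}y_l\right)^\alpha$ it returns lowers each remaining exponent $-(n-l+1)\alpha-1$ to $-(n-l)\alpha-1$, so that $y_{n-1}$ is again in the form Lemma~\ref{intlem} requires. Iterating $n$ times, every $w_k$ would acquire $w_k^{\alpha-1}$ at its own step and $w_k^{\alpha}$ at each of the subsequent $k-1$ steps, total $w_k^{k\alpha-1}$, while $s$ accumulates $s^{-n\alpha}$; gathering the $\Gamma(\alpha)$'s, the $\alpha^n\zeta^ne^\zeta$ and the beta normalising constants should leave $\left(\prod_{k=1}^n f_{B_k}(w_k)\right)e^\zeta e^{-s\zeta^{1/\alpha}/\prod_l w_l}f_{\alpha,n\alpha}(s)$, i.e.~(\ref{jointVG3}), and integrating out $s$ with $\int_0^\infty e^{-ys}f_{\alpha,n\alpha}(s)\,ds=\mathbb{S}_{\alpha,n\alpha}(y)$ (cf.~(\ref{basicsurvival})) then yields~(\ref{jointVG4}).

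It then remains to read off the structural statements. By Proposition~\ref{recursiverelations}, $\mathrm{EPG}(\alpha,\zeta_k)=\mathrm{EPG}(\alpha,\mathrm{e}_k+\zeta_{k-1})=\mathrm{PG}(\alpha,\zeta_{k-1})$, so the residual partition $(P_{l,k-1})$ carries the $\mathrm{PG}(\alpha,\zeta_{k-1})$ law; by Proposition~\ref{PKprop} the first size-biased pick of an $\mathrm{EPG}(\alpha,\zeta_k)$ mass partition is $\tilde P^\dagger_{\alpha,\zeta_k}=\gamma_{1-\alpha}/(\gamma_{1-\alpha}+\tau_\alpha(\zeta_k))$, cf.~(\ref{LittleBigp}); and unrolling the recursion of Proposition~\ref{W2density}(i) --- which, since $\mathrm{EPG}(\alpha,\zeta_1)=\mathrm{PG}(\alpha,\zeta_0)$, presents the $\mathrm{PG}(\alpha,\zeta)$ stick sequence as $\tilde P^\dagger_{\alpha,\zeta_1}$ prepended to a $\mathrm{PG}(\alpha,\zeta_1)$ stick sequence, then iterating with $\zeta$ replaced by $\zeta_1,\zeta_2,\dots$ --- together with the Perman--Pitman--Yor deletion picture of Proposition~\ref{PPYtheorem}, identifies $1-W_k=\tilde P^\dagger_{\alpha,\zeta_k}$ and the chain of residuals $(P_{l,k-1})\sim\mathrm{PG}(\alpha,\zeta_{k-1})$.

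The hard part will be the bookkeeping inside the iterated integral: one must track precisely how the exponents of the $y_l$ shift so that Lemma~\ref{intlem} continues to apply at every stage, and then verify that the product of the $\Gamma(\alpha)$'s, the $\mathrm{Beta}$ normalisers, the $\alpha^n\zeta^n$ and $c_{\alpha,n\alpha}$ collapses to exactly the advertised product of $\mathrm{Beta}(k\alpha,1-\alpha)$ densities times $f_{\alpha,n\alpha}$ --- this being the same mechanism that drives the proof of Theorem~\ref{TheoremPGV}. A minor secondary point is the degenerate case $\zeta=0$ (where $R_1=0$), which is dealt with exactly as in the $\zeta=0$ statements of Lemma~\ref{Rkdist} and Proposition~\ref{Wdensity}.
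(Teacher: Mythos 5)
Your proof follows exactly the paper's route: compute the conditional density of $(W_1,\ldots,W_n)$ given $(R_1,\ldots,R_n,\zeta)$ as $\prod_i C(1-w_i)^{-\alpha}(w_i-y_i)^{\alpha-1}$, multiply by the density of $(R_1,\ldots,R_n)$ from Lemma~\ref{Rkdist}, disintegrate $e^{-\zeta/(\prod_l y_l)^\alpha}$ via the stable Laplace transform, integrate out the $y_l$'s by iterating Lemma~\ref{intlem} from $y_n$ inward, and match the result with (\ref{jointVG3})--(\ref{jointVG4}) of Proposition~\ref{Wdensity}. The additional bookkeeping you sketch (the Jacobian of the change of variables, the exponent shift $-(n-l+1)\alpha-1 \mapsto -(n-l)\alpha-1$, the accumulation $w_k^{k\alpha-1}$ and $t^{-n\alpha}$, and the collapse of constants to $\prod_k f_{B_k}(w_k)\,f_{\alpha,n\alpha}$) is correct and simply makes explicit what the paper leaves implicit, so this is the same argument.
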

\begin{proof}
Setting $R_{k}=({\zeta_{k-1}}/{\zeta_{k}})^{\frac{1}{\alpha}};$ it follows that the conditional joint density of $(W_{1},\ldots,W_{n}),$ specified in the statements above, given $(R_{1},\ldots,R_{n},\zeta)$ is
$$
\prod_{i=1}^{n}C(1-w_{i})^{-\alpha}(w_{i}-y_{i})^{\alpha-1}\indic_{\{0<y_{i}<w_{i}<1\}}
$$
where $1/C=\Gamma(1-\alpha)\Gamma(\alpha).$
Now notice that
$$
{\mbox e}^{-\zeta/(\prod_{l=1}^{n}y_{l})^{\alpha}}=\int_{0}^{\infty}{\mbox e}^{-\frac{\zeta^{1/\alpha}t}{(\prod_{l=1}^{n}y_{l})}}f_{\alpha}(t)dt.
$$
Augmenting this expression, the result is obtained by showing that
$$
\int_{0}^{w_{1}}\cdots\int_{0}^{w_{n}}
\zeta^{n}
{\mbox e}^{-\frac{\zeta^{1/\alpha}t}{(\prod_{l=1}^{n}y_{l})}}\prod_{l=1}^{n}(w_{l}-y_{l})^{\alpha-1}y^{-(n-l+1)\alpha-1}_{l}dy_{n}\ldots dy_{1}
$$
is equal to
$$
{[\Gamma(\alpha)]}^{n}t^{-n\alpha}{\mbox e}^{-\frac{\zeta^{1/\alpha}t}{(\prod_{l=1}^{n}w_{l})}}\prod_{l=1}^{n}w^{l\alpha-1}_{l}.
$$
But this follows from repeated applications of Lemma~\ref{intlem}, starting with $y_{n}$ and initially setting $v=\prod_{i=1}^{n-1}y_{i}.$ Then integrate with respect to $y_{n-1}$, setting $v=w_{n}\prod_{i=1}^{n-2}y_{i}$ and so on. Showing agreement with the densities in 
(\ref{jointVG3}) and~(\ref{jointVG4}).
\end{proof}

Notice that the proof reveals a joint density of $(W_{1},\ldots,W_{n},R_{1},\ldots,R_{n},\hat{T}_{\alpha,n\alpha})$ given $\zeta$ proportional to 
$$
{\mbox e}^{-\frac{\zeta^{1/\alpha}t}{(\prod_{l=1}^{n}r_{l})}}\prod_{l=1}^{n}
{(1-w_{i})}^{-\alpha}
(w_{l}-r_{l})^{\alpha-1}r^{-(n-l+1)\alpha-1}_{l}f_{\alpha}(t)
$$
for $r_{i}<w_{i},$ $i=1,\dots,n.$ Furthermore there is the relation, $\zeta_{k}=\zeta/{(\prod_{l=1}^{k}R_{l})}^{\alpha}.$ We describe a simple Corollary.
\begin{cor}Given $(R_{k},\zeta_{k}),$ $W_{k}$ and $\hat{T}_{\alpha,k\alpha}$ are conditionally independent with joint density
$$
\frac{\sin(\pi \alpha)}{\pi}(1-w_{k})^{-\alpha}(w_{k}-r_{k})^{\alpha-1}{\mbox e}^{-(s\zeta_{k}^{1/\alpha}-\zeta_{k})}f_{\alpha}(s)
$$
for $r_{k}<w_{k}.$
\end{cor}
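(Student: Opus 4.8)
The plan is to obtain the Corollary directly from the joint density of $(W_{1},\ldots,W_{n},R_{1},\ldots,R_{n},\hat{T}_{\alpha,n\alpha})$ given $\zeta$ that was displayed in the proof of Theorem~\ref{TheoremPGW}, taken at $n=k$ (with $\hat{T}_{\alpha,k\alpha}$ the variable written $s$ in the statement). First I would use the relation $\zeta_{k}=\zeta/(\prod_{l=1}^{k}R_{l})^{\alpha}$ recorded just above, which gives $\zeta^{1/\alpha}/\prod_{l=1}^{k}r_{l}=\zeta_{k}^{1/\alpha}$, so the exponential factor in that display becomes ${\mbox e}^{-s\zeta_{k}^{1/\alpha}}$. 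The joint density of $(w_{1},\ldots,w_{k},r_{1},\ldots,r_{k},s)$ given $\zeta$ then visibly splits into a product of three groups of factors: one depending only on $(w_{1},\ldots,w_{k-1},r_{1},\ldots,r_{k})$; the factor $(1-w_{k})^{-\alpha}(w_{k}-r_{k})^{\alpha-1}$, supported on $\{r_{k}<w_{k}<1\}$, depending only on $(w_{k},r_{k})$; and the factor $f_{\alpha}(s){\mbox e}^{-s\zeta_{k}^{1/\alpha}}$ depending only on $(s,\zeta_{k})$.

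Next I would read off the conditional law from this factorization. Since $(\zeta,R_{1},\ldots,R_{k})$ and $(\zeta_{0},\zeta_{1},\ldots,\zeta_{k})$ are in bijection, the conditional density of $(W_{k},\hat{T}_{\alpha,k\alpha})$ given $(W_{1},\ldots,W_{k-1},R_{1},\ldots,R_{k},\zeta)$ is proportional to $(1-w_{k})^{-\alpha}(w_{k}-r_{k})^{\alpha-1}f_{\alpha}(s){\mbox e}^{-s\zeta_{k}^{1/\alpha}}$. Because this expression depends on the conditioning only through $(R_{k},\zeta_{k})$, it is also the conditional density given $(R_{k},\zeta_{k})$ alone, and the fact that it separates into a function of $w_{k}$ times a function of $s$ is exactly the asserted conditional independence of $W_{k}$ and $\hat{T}_{\alpha,k\alpha}$.

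To finish I would compute the two normalizing constants. The linear substitution $w_{k}=r_{k}+(1-r_{k})u$ turns $\int_{r_{k}}^{1}(1-w_{k})^{-\alpha}(w_{k}-r_{k})^{\alpha-1}\,dw_{k}$ into $\int_{0}^{1}u^{\alpha-1}(1-u)^{-\alpha}\,du=\Gamma(\alpha)\Gamma(1-\alpha)=\pi/\sin(\pi\alpha)$, which is in particular free of $r_{k}$; and $\int_{0}^{\infty}f_{\alpha}(s){\mbox e}^{-s\zeta_{k}^{1/\alpha}}\,ds=\mathbb{E}[{\mbox e}^{-\zeta_{k}^{1/\alpha}S_{\alpha}}]={\mbox e}^{-\zeta_{k}}$ by the defining Laplace transform of $S_{\alpha}$. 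Dividing by these constants yields precisely $\frac{\sin(\pi\alpha)}{\pi}(1-w_{k})^{-\alpha}(w_{k}-r_{k})^{\alpha-1}{\mbox e}^{-(s\zeta_{k}^{1/\alpha}-\zeta_{k})}f_{\alpha}(s)$ on $\{r_{k}<w_{k}<1\}$, the claimed density; consistency with the marginal forms (\ref{jointVG3}) and (\ref{jointVG4}) is then automatic.

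The only step that needs care is the middle one: one must check that the conditional density given the full vector $(W_{1},\ldots,W_{k-1},R_{1},\ldots,R_{k},\zeta)$ is a measurable function of $(R_{k},\zeta_{k})$ alone, so that it legitimately coincides with the $(R_{k},\zeta_{k})$-conditional density; everything else is bookkeeping for a factored density plus two elementary integrals. An essentially equivalent alternative is to combine the explicit decomposition $W_{k}=1-\beta^{(k)}_{1-\alpha,\alpha}[1-R_{k}]$ of Theorem~\ref{TheoremPGW}, which gives the scaled-$\mathrm{Beta}(1-\alpha,\alpha)$ conditional density of $W_{k}$ given $R_{k}=r_{k}$, with the identification of $\hat{T}_{\alpha,k\alpha}$ as the $\alpha$-diversity $\tau_{\alpha}(\zeta_{k})/\zeta_{k}^{1/\alpha}$ of a $\mathrm{PG}(\alpha,\zeta_{k})$ model from Proposition~\ref{PKprop}, together with the independence of the fresh variable $\beta^{(k)}_{1-\alpha,\alpha}$ from $\hat{T}_{\alpha,k\alpha}$.
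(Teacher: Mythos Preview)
Your proposal is correct and follows exactly the route the paper intends: the Corollary is stated immediately after the display of the joint density of $(W_{1},\ldots,W_{n},R_{1},\ldots,R_{n},\hat{T}_{\alpha,n\alpha})$ given $\zeta$, with the remark that $\zeta_{k}=\zeta/(\prod_{l=1}^{k}R_{l})^{\alpha}$, and is meant to be read off from that factorization precisely as you do. Your computation of the two normalizing constants (the $\mathrm{Beta}(\alpha,1-\alpha)$ integral yielding $\pi/\sin(\pi\alpha)$ and the stable Laplace transform yielding $e^{-\zeta_{k}}$) fills in the only details the paper leaves implicit.
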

We now show how to recover the stick-breaking result in the $\mathrm{PD}(\alpha,\theta)$ case for $\theta\ge 0,$ in terms of $(R_{k}).$
\begin{cor}When $\zeta=\gamma_{\theta/\alpha}$ for $\theta\ge0,$ $\mathrm{PG}(\alpha,\gamma_{\theta/\alpha})=\mathrm{PD}(\alpha,\theta),$ it follows that
for each $k,$ $\zeta_{k-1}$ has a $\mathrm{Gamma}(\theta/\alpha+k-1)$ distribution, and the $(R_{k})$ and $(W_{k})$ are collections of independent beta distributed variables where one can set,
$R_{k}:=\beta_{\theta+(k-1)\alpha,1}$
and
$$
1-W_{k}:=\beta^{(k-1)}_{1-\alpha,\alpha}[1-\beta_{\theta+(k-1)\alpha,1}]=\beta_{1-\alpha,\theta+k\alpha}
$$
When $\alpha=0,$ $W_{k}=R_{k}:=\beta_{\theta,1}.$ Hence recovering the stick-breaking representations for $P_{\alpha,\theta}.$
\end{cor}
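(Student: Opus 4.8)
The plan is to specialize Theorem~\ref{TheoremPGW} to $\zeta \overset{d}= \gamma_{\theta/\alpha}$ and carry out the resulting beta--gamma bookkeeping. First, since $\zeta_{k-1} = \zeta + \sum_{l=1}^{k-1}\mathrm{e}_{l}$ is a sum of the independent $\gamma_{\theta/\alpha}$ and $k-1$ independent $\mathrm{e}_{l}\overset{d}=\gamma_{1}$ variables, additivity of independent gammas gives $\zeta_{k-1}\overset{d}=\gamma_{\theta/\alpha+k-1}$. In particular $\zeta_{k-1}$ and $\mathrm{e}_{k}$ are independent, so the standard beta--gamma algebra (Lukacs~\cite{Lukacs}) yields $\zeta_{k-1}/\zeta_{k}=\zeta_{k-1}/(\zeta_{k-1}+\mathrm{e}_{k})\overset{d}=\beta_{\theta/\alpha+k-1,\,1}$, a variable independent of $\zeta_{k}=\zeta_{k-1}+\mathrm{e}_{k}$; iterating up the chain shows the ratios $(\zeta_{k-1}/\zeta_{k})_{k\ge 1}$ are mutually independent. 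Applying $x\mapsto x^{1/\alpha}$ and computing the distribution function gives $R_{k}=(\zeta_{k-1}/\zeta_{k})^{1/\alpha}\overset{d}=\beta_{\theta+(k-1)\alpha,\,1}$, with the $(R_{k})$ mutually independent --- which is precisely Lemma~\ref{Rkdist}(i), so one may alternatively just quote it.

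Next I invoke the representation $1-W_{k}=\beta^{(k)}_{1-\alpha,\alpha}\,[1-R_{k}]$ of Theorem~\ref{TheoremPGW}, in which the $(\beta^{(k)}_{1-\alpha,\alpha})$ are iid $\mathrm{Beta}(1-\alpha,\alpha)$ and independent of the $(R_{k})$. Hence the pairs $\{(\beta^{(k)}_{1-\alpha,\alpha},R_{k})\}_{k\ge 1}$ are mutually independent, and since each $W_{k}$ is a function of $(\beta^{(k)}_{1-\alpha,\alpha},R_{k})$ only, the $(W_{k})$ are independent, as are the $(R_{k})$ (the superscript $k-1$ versus $k$ on the $\beta^{(\cdot)}$ is immaterial, these being iid). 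For the marginal law, note $1-R_{k}\overset{d}=\beta_{1,\,\theta+(k-1)\alpha}$, and apply the classical identity for independent beta variables, $\beta_{a,b}\cdot\beta_{a+b,c}\overset{d}=\beta_{a,b+c}$, with $a=1-\alpha$, $b=\alpha$, $c=\theta+(k-1)\alpha$ (so $a+b=1$), to obtain
$$1-W_{k}=\beta^{(k)}_{1-\alpha,\alpha}\,[1-R_{k}]\overset{d}=\beta_{1-\alpha,\,\alpha+\theta+(k-1)\alpha}=\beta_{1-\alpha,\,\theta+k\alpha}.$$

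With these facts the conclusion is immediate: $\tilde{P}_{k}=(1-W_{k})\prod_{l=1}^{k-1}W_{l}$ with the $(W_{k})$ independent and $1-W_{k}\overset{d}=\beta_{1-\alpha,\theta+k\alpha}$ is exactly the $\mathrm{GEM}(\alpha,\theta)$ size-biased stick-breaking description recalled in (PD) and~(\ref{firststick}); moreover, by Theorem~\ref{TheoremPGW} together with $\mathrm{EPG}(\alpha,\zeta_{k})=\mathrm{PG}(\alpha,\zeta_{k-1})$ and $\zeta_{k-1}\overset{d}=\gamma_{\theta/\alpha+k-1}=\gamma_{(\theta+(k-1)\alpha)/\alpha}$, Pitman--Yor's Proposition~21 gives $\mathrm{PG}(\alpha,\zeta_{k-1})=\mathrm{PD}(\alpha,\theta+(k-1)\alpha)$, so $1-W_{k}$ is the first size-biased pick from $\mathrm{PD}(\alpha,\theta+(k-1)\alpha)$, matching the known interpretation. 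For $\alpha=0$ one specializes the parameters: $R_{k}\overset{d}=\beta_{\theta,1}$ and $\beta^{(k)}_{1-\alpha,\alpha}\to\beta_{1,0}$, a point mass at $1$, whence $W_{k}=1-\beta^{(k)}_{1,0}[1-R_{k}]=R_{k}\overset{d}=\beta_{\theta,1}$, recovering the $\mathrm{GEM}(\theta)$ representation of the Dirichlet process. The only place demanding care is the beta bookkeeping --- confirming $1-R_{k}\overset{d}=\beta_{1,\theta+(k-1)\alpha}$ and that the product identity applies in the stated form (which needs the two betas independent, supplied by Theorem~\ref{TheoremPGW}) --- and, for the limiting statement, the degeneracy of $\mathrm{Beta}(1-\alpha,\alpha)$ as $\alpha\downarrow 0$; neither is a genuine obstacle.
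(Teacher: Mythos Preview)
Your proof is correct and is exactly the argument the paper has in mind: the corollary is stated without proof as an immediate specialization of Theorem~\ref{TheoremPGW} together with Lemma~\ref{Rkdist}(i), and you have simply filled in the beta--gamma bookkeeping (gamma additivity for $\zeta_{k-1}$, Lukacs for the independence and law of $R_k$, and the beta product identity $\beta_{a,b}\cdot\beta_{a+b,c}\overset{d}=\beta_{a,b+c}$ for the law of $1-W_k$). Your handling of the $\alpha\to 0$ degeneration and the remark that the superscript labeling of the iid $\beta^{(\cdot)}_{1-\alpha,\alpha}$ is immaterial are both appropriate.
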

\subsection{$\mathrm{EPG}(\alpha,\zeta)$-a (stick/simple bridge) representation free of $\tau_{\alpha}$}
Effectively Theorem~\ref{TheoremPGW} combined with Proposition~\ref{W2density} establishes a stick-breaking representation for the 
$\mathrm{EPG}(\alpha,\zeta)$ as follows; first use Theorem~\ref{TheoremPGW} to obtain the explicit representation for $Q_{\alpha,\zeta},$ obtaining 
$(\tilde{W}_{2},\tilde{W}_{3},\ldots),$ then conditioning on $T:=\tau_{\alpha}(\zeta)/\zeta^{1/\alpha}$ to obtain $\tilde{W}_{1}.$ Note in the case of $\mathrm{PD}(\alpha,\theta),$ $T$ is independent of $\tilde{W}_{1}.$ However in general for $\tilde{W}_{1},$ without gamma randomization,  one cannot avoid dealing directly with $\tau_{\alpha},$ and from a practical point of view it is not obvious how  to best negotiate the dependence between this variable and the simple representations given in 
Theorem~\ref{TheoremPGW}. Here again using Theorem~\ref{TheoremPGW}, and our simple bridge representation,  we offer a tractable representation of an $\mathrm{EPG}(\alpha,\zeta)$ bridge that does not involve the direct appearance of $\tau_{\alpha}.$
\begin{prop}
Let $(\tilde{P}_{k})$ denote the stick-breaking sequence obtained by size-biased sampling from $(P_{i})\sim\mathrm{PG}(\alpha,\varepsilon_{\alpha}+\zeta),$
Then applying  Theorem~\ref{TheoremPGW}, the sequence can be represented as $\tilde{P}_{k}=(1-{W}_{k})\prod_{l=1}^{k-1}W_{l},$ where for each $k,$ 
$$
1-W_{k}=\beta^{(k)}_{1-\alpha,\alpha}[1-R_{\alpha,k}]
$$
where $R_{\alpha,k}={[(\zeta_{k-1}+\varepsilon_{\alpha})/(\zeta_{k}+\varepsilon_{\alpha})]}^{1/\alpha}.$ Then it follows by construction that an
$\mathrm{EPG}(\alpha,\zeta)$ bridge has the representation
$$
F_{\alpha,\zeta}(y)=\sum_{k=1}^{\infty}\tilde{P}_{k}\indic_{\{U'_{k}\leq \lambda_{\alpha,\zeta}(y)\}}
$$
where $\lambda_{\alpha,\zeta}$ is the simple bridge with $q_{\alpha,\zeta}:=q_{\alpha,1}=
\zeta/(\zeta+\varepsilon_{\alpha}).$ When $\zeta=\gamma_{(\theta+\alpha)/\alpha},$ $q_{\alpha,\zeta}=
\beta_{(\frac{\theta+\alpha}{\alpha},\frac{1-\alpha}{\alpha})}$ independent of $(R_{\alpha,k}),$ which are independent 
$\beta_{1+\theta+(k-1)\alpha,1}$ variables.
\end{prop}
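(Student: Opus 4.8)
The plan is to derive the whole statement from a single substitution in Theorem~\ref{TheoremPGW}, followed by the defining relation~(\ref{EPGbridge}) of the $\mathrm{EPG}(\alpha,\zeta)$-bridge. First I would observe that running the stick-breaking construction of Theorem~\ref{TheoremPGW} with the non-negative random variable $\varepsilon_{\alpha}+\zeta$ in the role of $\zeta$, the nested sums of~(\ref{zetarep}) built from $\varepsilon_{\alpha}+\zeta$ and a collection $(\mathrm{e}_{k})$ of iid exponential$(1)$ variables are $\hat{\zeta}_{k}:=(\varepsilon_{\alpha}+\zeta)+\sum_{l=1}^{k}\mathrm{e}_{l}=\varepsilon_{\alpha}+\zeta_{k}$, so the recursion $\hat{\zeta}_{k}=\mathrm{e}_{k}+\hat{\zeta}_{k-1}$ is preserved and the ratio variables of Theorem~\ref{TheoremPGW} are $R_{k}=(\hat{\zeta}_{k-1}/\hat{\zeta}_{k})^{1/\alpha}={[(\zeta_{k-1}+\varepsilon_{\alpha})/(\zeta_{k}+\varepsilon_{\alpha})]}^{1/\alpha}=R_{\alpha,k}$. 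Theorem~\ref{TheoremPGW} then yields verbatim that the size-biased sequence $(\tilde{P}_{k})$ from $(P_{i})\sim\mathrm{PG}(\alpha,\varepsilon_{\alpha}+\zeta)$ is $\tilde{P}_{k}=(1-W_{k})\prod_{l=1}^{k-1}W_{l}$ with $1-W_{k}=\beta^{(k)}_{1-\alpha,\alpha}[1-R_{\alpha,k}]$, where the $(\beta^{(k)}_{1-\alpha,\alpha})$ are iid and independent of $(R_{\alpha,k})$.

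For the bridge I would invoke~(\ref{EPGbridge}): an $\mathrm{EPG}(\alpha,\zeta)$-bridge is a version of $Q_{\alpha,\varepsilon_{\alpha}+\zeta}(\lambda_{\alpha,\zeta}(y))$, with $Q_{\alpha,\varepsilon_{\alpha}+\zeta}$ a $\mathrm{PG}(\alpha,\varepsilon_{\alpha}+\zeta)$-bridge and $\lambda_{\alpha,\zeta}$ the simple bridge~(\ref{basicbridge}) whose atom is $q_{\alpha,\zeta}=q_{\alpha,1}=\zeta/(\varepsilon_{\alpha}+\zeta)$. Since the uniform atoms of $Q_{\alpha,\varepsilon_{\alpha}+\zeta}$ are iid and independent of its masses, one may list those masses in the size-biased order of the preceding paragraph, so that $Q_{\alpha,\varepsilon_{\alpha}+\zeta}(u)=\sum_{k\ge1}\tilde{P}_{k}\indic_{\{U'_{k}\le u\}}$ with $(U'_{k})$ iid Uniform$[0,1]$ independent of $\lambda_{\alpha,\zeta}$; substituting $u=\lambda_{\alpha,\zeta}(y)$ gives $F_{\alpha,\zeta}(y)=\sum_{k\ge1}\tilde{P}_{k}\indic_{\{U'_{k}\le\lambda_{\alpha,\zeta}(y)\}}$, a representation in which $\tau_{\alpha}$ does not appear explicitly.

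For the $\mathrm{PD}(\alpha,\theta)$ specialization I would take $\zeta=\gamma_{(\theta+\alpha)/\alpha}$; then $\varepsilon_{\alpha}+\zeta\overset{d}{=}\gamma_{(1-\alpha)/\alpha}+\gamma_{(\theta+\alpha)/\alpha}\overset{d}{=}\gamma_{(1+\theta)/\alpha}$, so $\mathrm{PG}(\alpha,\varepsilon_{\alpha}+\zeta)=\mathrm{PD}(\alpha,1+\theta)$, and Lemma~\ref{Rkdist}(i) (applied with $\theta$ replaced by $1+\theta$) gives that the $(R_{\alpha,k})$ are independent with $R_{\alpha,k}\overset{d}{=}\beta_{1+\theta+(k-1)\alpha,1}$; also $q_{\alpha,\zeta}=\zeta/(\zeta+\varepsilon_{\alpha})\overset{d}{=}\beta_{(\theta+\alpha)/\alpha,(1-\alpha)/\alpha}$ by the beta--gamma algebra. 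The same beta--gamma identity shows $q_{\alpha,\zeta}$ is independent of the sum $\hat{\zeta}_{0}=\zeta+\varepsilon_{\alpha}$, hence of the whole sequence $(\hat{\zeta}_{0},\mathrm{e}_{1},\mathrm{e}_{2},\dots)$ since the $\mathrm{e}_{l}$ are independent of $(\zeta,\varepsilon_{\alpha})$, and therefore of each $R_{\alpha,k}$, which is a function of $(\hat{\zeta}_{0},\mathrm{e}_{1},\dots,\mathrm{e}_{k})$. Since $F_{\alpha,\gamma_{(\theta+\alpha)/\alpha}}(y)\overset{d}{=}P_{\alpha,\theta}(y)$ (the identity established for the $\mathrm{EPG}(\alpha,\zeta)$-bridge just after~(\ref{LittleBigp})), the displayed formula furnishes a stick/simple-bridge representation of the Pitman--Yor process $P_{\alpha,\theta}$ for all $\theta>-\alpha$.

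I expect no real computational obstacle: the substantial work is already packaged in Theorem~\ref{TheoremPGW}, and what remains is bookkeeping plus the elementary beta--gamma independence in the $\mathrm{PD}(\alpha,\theta)$ case. The point needing care is conceptual rather than technical: one must resist size-biasing $\mathrm{EPG}(\alpha,\zeta)$ outright, since then $\tilde{W}_{1}$ inevitably involves $\tau_{\alpha}(\zeta)$ (cf.\ Proposition~\ref{W2density} and the discussion preceding this proposition); the device is to stick-break only the $\mathrm{PG}(\alpha,\varepsilon_{\alpha}+\zeta)$ component and keep $\lambda_{\alpha,\zeta}$ intact, so that for general $\zeta$ the weights $(W_{k})$ and the atom $q_{\alpha,\zeta}$ remain dependent (as they must be, sharing $\varepsilon_{\alpha}$ and $\zeta$), while in the $\mathrm{PD}(\alpha,\theta)$ case the gamma structure decouples them.
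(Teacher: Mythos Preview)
Your proposal is correct and is precisely the argument the paper intends: the proposition is stated without a separate proof because it is meant to follow immediately from the substitution $\zeta\mapsto\varepsilon_{\alpha}+\zeta$ in Theorem~\ref{TheoremPGW} together with the defining relation~(\ref{EPGbridge}), and the $\mathrm{PD}(\alpha,\theta)$ specialization from Lemma~\ref{Rkdist}(i) and the beta--gamma algebra. Your handling of the independence of $q_{\alpha,\zeta}$ from $(R_{\alpha,k})$ via Lukacs' theorem (ratio independent of sum) is exactly the right justification and in fact spells out a detail the paper leaves implicit.
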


\section{An interpretation of $(R_{k})$ via the jumps of an independent stable subordinator}\label{CanonicalRep}
We have shown that a $\mathrm{PG}(\alpha,\zeta)$ bridge, $Q_{\alpha,\zeta}$  can be constructed from an iid set of
$\mathrm{Beta}(1-\alpha,\alpha)$ variables
$(\beta^{(k-1)}_{1-\alpha,\alpha})$ and independent of this $(R_{k}),$
$$
R_{k}=
({\zeta_{k-1}}/{\zeta_{k}})^{\frac{1}{\alpha}}
$$
which has practical implications both in terms of ease of use and modelling. Similarly we use these variables to circumvent the appearance of $\tau_{\alpha}$ in the formal representation of the stick-breaking representation for the $\mathrm{EPG}(\alpha,\zeta)$ case. Leading again to highly tractable representations of the bridge, $F_{\alpha,\zeta}.$ The $(R_{k}),$
 also have interpretations that may be deduced from variables, having the same notation, described in Pitman and Yor~\cite{PY97}.  In particular, for $\theta\ge 0,$ the case where the $R_{k}$ are independent $\mathrm{Beta}(\theta+(k-1)\alpha,1)$ variables,
corresponding to the $\mathrm{PG}(\alpha,\gamma_{\theta/\alpha})=\mathrm{PD}(\alpha,\theta)$ case.
This provides a concrete relation to the ranked jumps of a stable subordinator $(\Delta_{i})$ and the corresponding $(P_{i})$ having a $\mathrm{PD}(\alpha,0)$ distribution which we now describe.
\begin{prop} For each integer $n,$ let $(R_{1},\ldots, R_{n})$ denote the random vector as defined in Lemma~\ref{Rkdist}.
Let $(\Delta_{i})$ denote the sequence of ranked jumps of a stable subordinator defined by the L\'evy density $\rho_{\alpha}(s)=\alpha s^{-\alpha-1},$ for $s>0,$ and let $(P_{i}:=\Delta_{i}/T)$ denote the corresponding ranked sequence of probabilities following a $\mathrm{PD}(\alpha,0)$ law.
Then conditional on $\zeta,$ the vector $(R_{i}):=(R_{1},R_{2},\ldots)$ satisfies the following distributional relationship.
$$
\mathcal{L}((R_{i}))|\zeta)=\mathcal{L}(\left(\frac{\Delta_{i+1}}{\Delta_{i}}\right)|\Delta^{-\alpha}_{1}:=\zeta)
=\mathcal{L}(\left(\frac{P_{i+1}}{P_{i}}\right)|\Delta^{-\alpha}_{1}:=\zeta)
$$
This correspondence follows from the equivalence of the following conditional joint distribution,
$$
\mathcal{L}(\Delta^{-\alpha}_{2},\Delta^{-\alpha}_{3},\ldots|\Delta^{-\alpha}_{1}:=\zeta)=\mathcal{L}(\zeta_{1},\zeta_{2},\ldots|\zeta)
$$
with for $k=1,2,\ldots$
$$
\Delta^{-\alpha}_{k}\overset{d}=\zeta_{k-1}=\sum_{l=1}^{k-1}\mathrm{e}_{l}+\zeta.
$$
\end{prop}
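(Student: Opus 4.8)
The plan is to derive everything from the Poisson representation of the ranked jumps of the $\alpha$-stable subordinator, together with Lemma~\ref{Rkdist} and the construction of $\mathrm{PG}(\alpha,\zeta)$ from Section~3. Recall that for the L\'evy density $\rho_{\alpha}(s)=\alpha s^{-\alpha-1}$ one has $\int_{s}^{\infty}\rho_{\alpha}(u)\,du=s^{-\alpha}$, so the image of the point process $\{\Delta_{k}\}$ under $s\mapsto s^{-\alpha}$ is a homogeneous Poisson process of unit rate on $(0,\infty)$; writing $X_{k}:=\Delta^{-\alpha}_{k}$, the points are listed so that $X_{1}<X_{2}<\cdots$ since $\Delta_{1}>\Delta_{2}>\cdots$. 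This is exactly the relation recalled in Section~1 (Pitman and Yor~\cite[p.~870, eq.~(63)]{PY97}): $X_{1}\overset{d}{=}\gamma_{1}$, and by independence of the increments of a Poisson process (memorylessness at the first point $X_{1}$) the spacings $X_{k+1}-X_{k}$ are i.i.d.\ exponential$(1)$ variables, independent of $X_{1}$.

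First I would condition on $X_{1}=\Delta^{-\alpha}_{1}=\zeta$. The preceding paragraph gives at once that, conditionally, $X_{k}=\zeta+\sum_{l=1}^{k-1}\mathrm{e}_{l}=\zeta_{k-1}$ for $k\ge 1$, with $(\mathrm{e}_{l})$ i.i.d.\ exponential$(1)$ and $\zeta_{k}$ as in~(\ref{zetarep}); equivalently $\mathcal{L}(\Delta^{-\alpha}_{2},\Delta^{-\alpha}_{3},\ldots\mid \Delta^{-\alpha}_{1}:=\zeta)=\mathcal{L}(\zeta_{1},\zeta_{2},\ldots\mid\zeta)$, which is the displayed conditional identity. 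Mixing this regular conditional law over $\zeta\sim f_{\zeta}$ amounts precisely to the change of measure (in $f_{\zeta}(v)=g(v)\mathrm{e}^{-v}$) that replaces the canonical exponential $X_{1}=\mathrm{e}_{0}$ by the general $\zeta$ in the $\mathrm{PG}(\alpha,\zeta)$ construction of Section~3; this is what makes the substitution legitimate.

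Next I would translate this into the ratio statements appearing in Theorem~\ref{TheoremPGW}. Since $\Delta_{i}=X_{i}^{-1/\alpha}$, we have $\Delta_{i+1}/\Delta_{i}=(X_{i}/X_{i+1})^{1/\alpha}$, and conditionally on $X_{1}=\zeta$ this equals $(\zeta_{i-1}/\zeta_{i})^{1/\alpha}$; moreover $P_{i+1}/P_{i}=\Delta_{i+1}/\Delta_{i}$ because the normalizer $T=\sum_{k}\Delta_{k}$ cancels. On the other side, Lemma~\ref{Rkdist} states that, given $\zeta$, the vector $(R_{1},\ldots,R_{n})$ has the same law as $((\zeta_{0}/\zeta_{1})^{1/\alpha},\ldots,(\zeta_{n-1}/\zeta_{n})^{1/\alpha})$. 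Combining the two displays yields $\mathcal{L}((R_{i})_{i=1}^{n}\mid\zeta)=\mathcal{L}((\Delta_{i+1}/\Delta_{i})_{i=1}^{n}\mid\Delta^{-\alpha}_{1}:=\zeta)=\mathcal{L}((P_{i+1}/P_{i})_{i=1}^{n}\mid\Delta^{-\alpha}_{1}:=\zeta)$ for every $n$, and by Kolmogorov consistency of the finite-dimensional laws one passes to the full-sequence assertion $n\to\infty$.

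I do not expect a serious obstacle: the argument is essentially the assembly of the Poisson-process picture of $(\Delta^{-\alpha}_{k})$ with Lemma~\ref{Rkdist}. The only points requiring care are bookkeeping ones --- keeping the ranking direction straight ($X_{1}<X_{2}<\cdots$ against $\Delta_{1}>\Delta_{2}>\cdots$), phrasing ``conditioning on $\Delta^{-\alpha}_{1}:=\zeta$'' with a random $\zeta$ in terms of regular conditional distributions so that the mixing step is unambiguous, and observing that the degenerate boundary case $\zeta=0$ (i.e.\ $\Delta_{1}=\infty$, hence $R_{1}=0$) is handled in exactly the same way, matching Lemma~\ref{Rkdist}(ii) and the comment there on $\mathrm{PG}(\alpha,\zeta)$ models with $P(\zeta=0)>0$.
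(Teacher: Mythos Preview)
Your proposal is correct and follows essentially the same route as the paper: both identify $X_{k}:=\Delta_{k}^{-\alpha}$ with the ordered points of a unit-rate Poisson process on $(0,\infty)$ (the paper by citing \cite[Proposition~10(iii), eq.~(8)]{PY97}, you by the tail-integral transformation $\int_{s}^{\infty}\rho_{\alpha}=s^{-\alpha}$), then condition on $X_{1}=\zeta$ so that $X_{k}=\zeta_{k-1}$ and read off the ratio statement. Your write-up is more self-contained than the paper's one-line citation, but the substance is identical.
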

\begin{proof}
As in \cite{PY97} let $(X_{k}),$ satisfying $0<X_{1}<X_{2}<\cdots,$ denote the points of a homogeneous Poisson process on $(0,\infty).$ Where, we can set
$X_{1}=\mathrm{e}_{0}\overset{d}=\gamma_{1},$ and for each $k,$ $X_{k}=\sum_{l=1}^{k}\mathrm{e}_{l-1}.$
The result can be read directly from \cite[Proposition 10, (iii)]{PY97}. From there set $X_{n}=\Delta^{-\alpha}_{n},$ use their equation (8), and condition on
$\Delta^{-\alpha}_{1}=\zeta.$
\end{proof}

\subsection{Closing comments}
Notice there is the relation,
$$
\Delta_{k+1}/\Delta_{1}=\prod_{l=1}^{k}R_{l}.
$$
The fact that $Q_{\alpha.\zeta}$ is a function of $((\beta^{(k)}_{1-\alpha,\alpha})),(R_{k}))$ allows one to construct a random process
$\Sigma_{\alpha,\zeta}:=(\Sigma_{\alpha,\zeta}(t)=\sum_{k=1}^{\infty}\prod_{l=1}^{k}R_{l}\indic_{\{\tau_{k}\leq t\}}, t\ge0)$  on the same space. Where
$(\tau_{j})$ are the points of a homogeneous Poisson process on $(0,\infty)$ and conditional on $\zeta,$ equivalently $\Delta_{1},$  $(\Delta_{l+1}/\Delta_{1})$ are the jumps of a subordinator with L\'evy density
$\alpha\zeta u^{-\alpha-1}\indic_{\{u\leq 1\}},$ as described in \cite[Lemma 24 (iii)]{PY97}.  Making the transformation $s=-\ln(1-u)$ leads to a \emph{multiplicatiive subordinator} ${\mbox e}^{-Z_{\alpha,\theta}(t)},$ where $1-{\mbox e}^{-Z_{\alpha,\theta}(t)},$ is a random distribution function.  In general such types of processes arise in
Bayesian Non-parametric statistics \cite{Doksum,JamesNTR} where they are called Neutral to the Right~(NTR) processes, they also play the central role in the theory of regenerative compositions~\cite{GnedinPitman2}, arise in the theory of fragmentation/coagulation~\cite{Pit99,BerFrag}, and most recently are referred to as \emph{fragmenters} in \cite{PitmanWinkel2}.  Restricting $t$ to $[0,1],$ one can see that such processes arise in a Machine Learning context related to the construction of \emph{Indian Buffet} processes, see~\cite{Broderick1,Broderick2, GriffithsZ, TehG}.
Hence the pair $(Q_{\alpha,\zeta}, \Sigma_{\alpha,\zeta})$ allows one to construct more intricate models for applications, which we explore in a subsequent work with additional co-authors.  This also helps us provide an answer to follwing question that is relevant to applications. That is, can we identify applications where one should (not merely prefer to) use an $\mathrm{EPG}(\alpha,\zeta)$ model instead of the $\mathrm{PD}(\alpha,\theta)$ model?  We will demonstrate elsewhere that there are applications where one should choose $\zeta$ to have heavy tails,
which excludes the $\mathrm{PD}(\alpha,\theta)$ distribution where $\zeta$ is gamma distributed. This is in the same spirit as the important observation in Goldwater, Griffiths and Johnson~\cite{Goldwater} which pointed out that the Dirichlet process was completely inappropriate for modelling some aspects of a language model which are known to have power-law behavior. Moreover they proposed to use the Pitman-Yor process, which has the property that $K_{n}$ is of order $n^{\alpha}.$  This insight, along with \cite{IJ2001,IJ2003}, surely led to a substantial increase in the usage of these processes in a wider range of more applications.

\end{document}